\documentclass[11pt]{article}
\usepackage{amsmath,amsthm,amssymb,dsfont,mathrsfs}
\usepackage[usenames,dvipsnames]{xcolor}
\usepackage{upgreek}
\usepackage{enumerate}
\usepackage{graphicx}
\usepackage{cite}
\usepackage{url}
\usepackage{oands}
\usepackage{tikz}
\usepackage{changepage}
\usepackage{bbm}
\usepackage{mathtools}
\usepackage[margin=1in]{geometry}
\usepackage[pagewise,mathlines]{lineno}
\usepackage{appendix}
\usepackage{stmaryrd} 
\usepackage{multicol}
\usepackage{microtype}
\usepackage[colorinlistoftodos]{todonotes}
\usepackage{subfigure}

\definecolor{darkblue}{rgb}{0.0, 0.2, 0.6}
\usepackage[pdftitle={},
  pdfauthor={},
colorlinks=true,linkcolor=RoyalBlue,urlcolor=MidnightBlue,citecolor=darkblue,bookmarks=true,bookmarksopen=true,bookmarksopenlevel=2,unicode=true,linktocpage]{hyperref}
\usepackage[capitalise,noabbrev]{cleveref}

\setcounter{tocdepth}{2}


\newcommand{\ssb}{\begin{adjustwidth}{2.5em}{0pt}}
\newcommand{\sse}{\end{adjustwidth}}

\usepackage[utf8]{inputenc} 
\usepackage[T1]{fontenc} 

\usepackage
[final]
{changes}
\makeatletter
\@namedef{Changes@AuthorColor}{magenta}
\colorlet{Changes@Color}{magenta}
\makeatother
\usepackage{enumitem}
\usepackage{float}
\floatplacement{figure}{H}
\usepackage[labelfont={bf,sf}, textfont=sf]{caption}
\usepackage{chngcntr}






\newcommand\N{\mathbbm{N}}
\newcommand\Z{\mathbbm{Z}}

\newcommand\R{\mathbbm{R}}
\newcommand\C{\mathbbm{C}}
\newcommand\Ub{\mathbbm{U}}

\newcommand\Pb{\mathbbm{P}}
\newcommand\Eb{\mathbbm{E}}

\newcommand{\Mcal}{\mathcal{M}}

\newcommand{\Pcal}{\mathcal{P}}
\newcommand{\Ecal}{\mathcal{E}}
\newcommand\Ebf{\mathbf{E}}
\newcommand\Pbf{\mathbf{P}}
\newcommand\Phat{\widehat{\mathcal{P}}}
\newcommand\Ehat{\widehat{\mathcal{E}}}

\newcommand{\Xcal}{\mathcal{X}}
\newcommand{\Xbf}{\mathbf{X}}
\newcommand{\Lcal}{\mathcal{L}}
\newcommand{\Xhat}{\widehat{\mathcal{X}}}
\newcommand{\Xbfhat}{\widehat{\mathbf{X}}}
\newcommand{\xihat}{\widehat{\xi}}
\newcommand{\Thetahat}{\widehat{\Theta}}

\newcommand\Fhat{\widehat{F}}
\newcommand\psihat{\widehat{\psi}}
\newcommand\qhat{\widehat{q}}
\newcommand\Ghat{\widehat{G}}

\newcommand\Kcal{\mathcal{K}}


\newcommand{\Fcal}{\mathcal{F}}
\newcommand{\Gcal}{\mathcal{G}}
\newcommand{\Gscr}{\mathscr{G}}

\newcommand{\Ical}{\mathcal{I}}
\newcommand\Jcal{\mathcal{J}}
\newcommand\Jhat{\widehat{\Jcal}}
\newcommand\Hhat{\widehat{H}}

\newcommand\diag{\text{diag}}


\newcommand{\qbf}{\mathbf{q}}




\newcommand\Tcal{\mathcal{T}}



\theoremstyle{plain}
\newtheorem{Thm}{Theorem}[section]
\newtheorem*{Thm*}{Theorem}
\newtheorem{Prop}[Thm]{Proposition}
\newtheorem*{Prop*}{Proposition}

\newtheorem*{Def*}{Definition}
\newtheorem{Lem}[Thm]{Lemma}
\newtheorem*{Cor*}{Corollary}

\theoremstyle{definition}

\newtheorem*{Ex*}{Example}
\newtheorem{Rk}[Thm]{Remark}

\begin{document}

\vglue30pt

\begin{center}
    \Large\bf Multitype self-similar growth-fragmentation processes
\end{center}

\bigskip

\centerline{by}

\medskip

\centerline{William Da Silva\footnote{\scriptsize University of Vienna, Austria, {\tt william.da.silva@univie.ac.at}} and Juan Carlos Pardo\footnote{\scriptsize Centro de Investigación en Matemáticas A.C. Calle Jalisco s/n. 36240 Guanajuato, México, {\tt jcpardo@cimat.mx}}}

\bigskip

\bigskip

{\leftskip=2truecm \rightskip=2truecm \baselineskip=15pt \small


\noindent{\slshape\bfseries Summary.}  In this paper, we are interested in multitype self-similar growth-fragmentation processes. More precisely, we investigate a multitype version of the self-similar growth-fragmentation processes introduced by Bertoin, therefore extending the signed case (considered by the first author in a previous work) to finitely many types. Our main result in this direction describes the law of the spine in the multitype setting.  In order to do so, we introduce two genealogical martingales, in the same spirit as in the positive case, which allow us not only to obtain  the law of  the spine but also to  study the  limit of the empirical measure of fragments. 
We stress that our arguments only rely on the structure of the underlying Markov additive processes (MAPs), and hence is more general than the treatment of the signed case. Our methods also require new results on exponential functionals for MAPs and a multitype version of the tail estimates in multiplicative cascades which are interesting in their own right.  

\medskip

\noindent{\slshape\bfseries Keywords.} Growth-fragmentation process, self-similar Markov process, Markov additive process, spinal decomposition, Exponential functionals, Multiplicative cascades.

 

} 

\bigskip
\bigskip

\section{Introduction}
Self-similar growth-fragmentation processes first appeared in \cite{Ber-GF} to describe the evolution of a cloud of atoms which may grow and dislocate in a binary way. More precisely, these atoms are assumed to have a specific one-dimensional trait of interest, which we can think of as its \emph{mass} or \emph{size}. Initially, the cloud starts from one particle (the common ancestor of all future particles) whose size is a positive quantity evolving in time in a Markovian way. This size will have jumps, and at each negative jump $y<0$, we wish to add to the cloud a new particle whose size at birth will be given by $-y$, at the time when the jump occurs. This creates children of the original ancestor in such a way that the divisions are conservative, that is summing the size of the child and the size of the parent just after division exactly gives the size of the parent before division. Then, the newborn particles evolve independently of the parent, and independently of one another, in the same Markovian way as the parent. We proceed similarly creating the offspring of those particles, thereby introducing the grandchildren, great grandchildren, and so on, of the original ancestor.

Such growth-fragmentation models have been given a striking geometric flavour, in the context of random planar maps. This originated from \cite{BCK} and \cite{BBCK}, where a remarkable class of self-similar growth-fragmentations shows up in the scaling limit of perimeter processes (see \cite{Budd}) in Markovian explorations of Boltzmann planar maps. These growth-fragmentation processes are closely related to stable L\'evy processes with stability parameter  $\theta \in (\frac12, \frac32]$. Since then, the same growth-fragmentation processes were directly constructed in the continuum \cite{MSW} for $1<\theta\le \frac32$ by drawing a CLE exploration on a quantum disc. Moreover, the boundary case $\theta=\frac32$, corresponding to the random triangulations in \cite{BCK}, already appeared in \cite{LG-Rie} as the collection of perimeters obtained when slicing a Brownian disc at heights. The critical Cauchy case $\theta=1$, in turn, corresponds to slicing a Brownian half-plane excursion at heights \cite{AD}. This approach was recently extended to $\frac12<\theta<1$ \cite{DS} by considering other half-plane excursions.

Let us point out that in \cite{AD} (and subsequently in \cite{DS}), \emph{negative} mass is taken into account in the system, whereas the aforementioned construction of growth-fragmentation processes deals with positive mass only. In those examples, the sign depends on the time orientation of the excursions. In particular, slicing a half-plane Brownian excursion only yields the critical case $\theta=1$ in \cite{BBCK} provided one discards the negative cells. On a related note, the driving cell processes in the distinguished family of growth-fragmentations in \cite{BBCK} also have positive jumps (except for $\theta=\frac32$). This has a geometric meaning: Boltzmann planar maps correspond to the gasket of a loop $O(n)$--model, and the positive jumps occur when discovering a loop, which could then be explored. In the continuum, positive jumps also arise in \cite{MSW} when hitting a CLE loop for the first time. This prompted \cite{DS} to provide a framework for self-similar \emph{signed} growth-fragmentations.

Adding negative mass to the system presents some technical issues. The analysis of the positive case carried out in \cite{Ber-GF} and \cite{BBCK} relies heavily on the Lamperti representation \cite{Lam} for positive self-similar Markov processes, allowing for a large toolbox of L\'evy techniques. This breaks down if one is willing to deal with signed processes, in other words the effect of introducing a sign is to move from the class of Lévy processes to the one of Markov additive processes, see for instance \cite{CPR}, \cite{KKPW}, \cite{KP} and \cite{PR}. One of the aims of  this paper is to extend the framework to a general set of types. This has a counterpart in the \emph{pure} fragmentation setting, see for instance \cite{Ste}. In this case, we show that natural martingales arise, in connection to the additive martingales appearing in the context of multitype branching random walks (\cref{sec: multitype GF}). These martingales have the same form as in \cite{Ber-GF}, except that they are weighted by the types. The pair of genealogical martingales arises naturally as intrinsic martingales associated with that of multitype branching random walks. We use a multitype version of Biggins'  martingale convergence theorem due to Kyprianou and Sani \cite{KS} to deduce that one of the martingales converges to 0 almost surely. The other martingale is uniformly integrable and the tail probability behaviour  of its terminal value  decreases polynomially. In order to deduce the latter, we extend to the multitype setting the tail estimates in multiplicative cascades due to  Jalenkovi\'c and Olvera-Cravioto \cite{JO-C}.
Following the same lines as \cite[Theorem 4.2]{BBCK}, our main theorem in the multitype setting (\cref{thm:spine multitype}) describes the cell system under the change of measures with respect to these martingales (\cref{sec: spine}). We stress that, although the framework developed here includes the signed case which was already treated in \cite{DS}, our methods are completely different and moreover further results are included which are not treated in the signed case. Indeed, \cite{DS} hinges upon a change of driving cell process (which is specific to the signed case) to reduce to the positive case, whereas in this paper we directly work with Markov additive processes (MAPs). Using properties of MAPs, we then arrive at a pair of martingales indexed by continuous time and which are naturally related to the aforementioned genealogical martingales which allow us to describe explicitly the dynamics of multitype growth-fragmentations under the probability measures which are obtained by tilting the initial one with these intrinsic martingales. Such intrinsic martingales are also applied to  study the  limit of the empirical measure of fragments. Along the way, we provide additional results on exponential functionals for MAPs and the aforementioned multitype version of the tail estimates in multiplicative cascades which are interesting in their own right. 

\medskip
\noindent \textbf{Acknowledgments.} We thank Élie Aïdékon and Andreas Kyprianou for some interesting discussions. W.D.S. acknowledges the support of the Austrian Science Fund (FWF) grant P33083-N on ``Scaling limits in random conformal geometry''.  J.C.P.  acknowledges the support of CONACyT grant A1-S-33854.

\tableofcontents

\section{Self-similar Markov processes with types} \label{sec: ssmp}
We start by presenting some shared features of the Markov processes we will be interested in, revolving around the notion of self-similarity. We explain how to deal with \emph{types} for self-similar processes in the case when the set of types is finite. A key ingredient of our analysis is the Lamperti-Kiu representation, which gives a bijection between these self-similar  processes and a class of Markov additive processes. We refer to \cite{KP} for a detailed treatment of these questions.

\bigskip
\noindent \textbf{Markov additive processes.}
Let $\Ical$ be  a finite set. We also let $(\xi(t),\Theta(t), t\ge 0)$ be a regular Feller process in $\R\times \Ical$ with probabilities ${\tt P}_{x,\theta}$, $x\in\R$, $\theta\in \Ical$ and cemetery state $(-\infty, \dagger)$, on $(\Omega,\Fcal,\Pb)$, and denote by $(\Gcal_t)_{t\ge 0}$ the natural standard filtration associated with $(\xi,\Theta)$. We say that $(\xi,\Theta)$ is a \emph{Markov additive process} (MAP for short) if for every bounded measurable $f: \R\times \Ical \rightarrow \R$, $s,t\ge 0$ and $(x,\theta)\in\R\times \Ical$,
\[
\mathtt{E}_{x,\theta}\Big[f(\xi(t+s)-\xi(t),\Theta(t+s))\mathds{1}_{\{t+s<\varsigma\}} \Big| \Gcal_t\Big] = \mathds{1}_{\{t<\varsigma\}}\mathtt{E}_{0,\Theta(t)}\Big[f(\xi(s),\Theta(s))\mathds{1}_{\{s<\varsigma\}}\Big],
\]
where $\varsigma := \inf\{t>0, \, \Theta(t)= \dagger\}$. The process $\Theta$ is thus a Markov chain on $\Ical$ and is called the \emph{modulator} of $\xi$, whereas the latter is called the  \emph{ordinator}. The notation 
\[
\mathtt{P}_{\theta}(\cdot):=\mathbb{P}(\;\cdot\; |\,\xi(0)=0 \,\text{ and }\, \Theta(0)=\theta)\quad\textrm{ for }\quad \theta\in E,
\] will be in force throughout the paper.   

MAPs have received  a lot of attention recently,  since they have found a prominent role in classical applied probability  (see for instance \cite{Asm} and \cite{Iva}) and in the study of self-similar Markov processes (see for instance \cite{CPR, KKPW, KP}). It is important to note that the concept of MAPs also makes sense when  $\Theta$ is replaced by a more general Markov process,  see for instance \cite{Cin, KP}, but this case is out of the scope of this manuscript.  For a treatment of the general  case in the context of self-similar growth-fragmentation we refer to Da Silva and Pardo \cite{DP23}.

Informally, one should think of a MAP as a natural extension of a L\'evy process in the sense that $\Theta$ is an arbitrary well-behaved Markov chain and $((\xi(t), \Theta(t))_{t\ge 0}, \mathtt{P}_{x,\theta})$ is equal in law to  
$((\xi(t)+x, \Theta(t))_{t\ge 0}, \mathtt{P}_{\theta})$.  Indeed the ordinate process $\xi$ can be thought of as the concatenation of L\'evy processes  which depend on the current type in $\Ical$ given by $\Theta$, as  is stated in the following proposition, see \cite{Iva}, \cite{KKPW}, \cite{KP} or the survey \cite{PR}.
\begin{Prop} \label{prop:MAP structure}
The process $(\xi,\Theta)$ is a Markov additive process if and only if there exist independent sequences $(\xi_i^{(n)}, n\ge 0)_{i\in \Ical}$ and $(U_{i,j}^{(n)}, n\ge 0)_{i,j\in \Ical}$, all independent of $\Theta$, such that:
\begin{itemize}
    \item for $i\in \Ical$, $(\xi_i^{(n)}, n\ge 0)$ is a sequence of i.i.d. Lévy processes,
    \item for $i,j\in \Ical$, $(U_{i,j}^{(n)}, n\ge 0)$ are i.i.d. random variables,
    \item if $(T_n)_{n\ge 0}$ denotes the sequence of jump times of the chain $\Theta$ (with the convention $T_0=0$), then for all $n\ge 0$,
\begin{equation} \label{eq:piecewise}
\xi(t) = \left(\xi(T_n^-)+ U_{\Theta(T_n^-),\Theta(T_n)}^{(n)}\right)\mathds{1}_{\{n\ge 1\}} + \xi_{\Theta(T_n)}^{(n)}(t-T_n), \qquad T_n\le t<T_{n+1}.
\end{equation}
\end{itemize}
\end{Prop}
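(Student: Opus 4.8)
The plan is to prove both directions of the equivalence. The "if" direction (given the independent sequences and the piecewise formula \eqref{eq:piecewise}, the process is a MAP) is essentially a direct verification, so I would do it first: one checks that $(\xi,\Theta)$ is Markov and then verifies the defining identity for the conditional expectation by conditioning on the jump structure of $\Theta$. The "only if" direction is the substantive one: starting from an abstract MAP $(\xi,\Theta)$, one must extract the Lévy processes $\xi_i^{(n)}$ and the jump variables $U_{i,j}^{(n)}$ and show they have the claimed independence and distributional properties.

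**First I would** handle the modulator. Since $(\xi,\Theta)$ is a regular Feller process, $\Theta$ alone is a Feller process on the finite set $\Ical$, hence a continuous-time Markov chain; in particular its jump times $(T_n)_{n\ge 0}$ are well-defined, $T_0=0$, and on each interval $[T_n,T_{n+1})$ the modulator is constant. **Then I would** use the Markov additive property at the (random) times $T_n$. The key point is that the conditional identity in the definition, applied at deterministic times together with the strong Markov property (which holds because $(\xi,\Theta)$ is Feller), shows that conditionally on $\Theta$, the increments of $\xi$ between consecutive jump times of $\Theta$ are independent, and that on $\{t\in[T_n,T_{n+1})\}$ the increment $\xi(t)-\xi(T_n)$ evolves like a Lévy process whose law depends only on $\Theta(T_n)$. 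This is because on such an interval the modulator does not move, so the MAP property reduces to: $\mathtt{E}_{x,\theta}[f(\xi(t+s)-\xi(t),\theta)\mid \Gcal_t]=\mathtt{E}_{0,\theta}[f(\xi(s),\theta)]$ on $\{t+s<T_1\}$, which is exactly the defining stationary-independent-increments property of a Lévy process started afresh at type $\theta$. One then reindexes: for each $i\in\Ical$, list the successive excursions of $\xi$ during the visits of $\Theta$ to $i$, giving the i.i.d. sequence $(\xi_i^{(n)},n\ge 0)$. The jumps $U_{i,j}^{(n)}$ are recovered as the (possible) jump of $\xi$ at the $n$-th transition of $\Theta$ from $i$ to $j$, i.e. $U_{\Theta(T_n^-),\Theta(T_n)}^{(n)}=\xi(T_n)-\xi(T_n^-)$; the MAP property at $T_n$ (via the strong Markov property) shows these are independent of everything already constructed and i.i.d. within each pair $(i,j)$. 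Collecting, \eqref{eq:piecewise} follows by telescoping the increments across the intervals $[T_k,T_{k+1})$ for $k\le n$.

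**The hard part will be** the bookkeeping that makes the independence statements precise: one must argue that the whole family $(\xi_i^{(n)})_{i,n}$, $(U_{i,j}^{(n)})_{i,j,n}$ is independent of $\Theta$ and mutually independent across all indices, not merely pairwise or one-step-at-a-time. The clean way is an induction on $n$: conditionally on $\mathcal{F}_{T_n}$ (equivalently, on $(\Theta(s))_{s\le T_n}$ and the increments of $\xi$ up to $T_n$), the strong Markov property at $T_n$ says the future $(\xi(T_n+\cdot)-\xi(T_n),\Theta(T_n+\cdot))$ is a MAP started from type $\Theta(T_n)$; applying the induction hypothesis to that shifted MAP, and noting that the law of the jump at $T_n$ and of the post-$T_n$ Lévy excursion depend only on $\Theta(T_n^-)$ and $\Theta(T_n)$ (never on $n$ itself, by the time-homogeneity of the MAP), yields both the i.i.d. structure in $n$ and the independence from $\Theta$. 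Since $\Ical$ is finite and $\Theta$ makes only finitely many jumps in finite time (being a Feller chain on a finite space), no measure-theoretic subtleties about infinitely many simultaneous transitions arise. This is a standard argument in the MAP literature, and I would present it at the level of "follows from the strong Markov property and time-homogeneity, by induction on the jumps of $\Theta$," citing \cite{Iva, KKPW, KP, PR} for the full details rather than writing out every conditioning step.
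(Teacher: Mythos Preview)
The paper does not prove this proposition at all: it is stated with references to \cite{Iva}, \cite{KKPW}, \cite{KP} and \cite{PR} and then used without further justification. Your sketch is the standard argument that appears in those references, so there is nothing to compare; your plan is correct and, in fact, gives more detail than the paper does.
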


\noindent We now turn to defining the \emph{matrix exponent} of a MAP, which is the analogue of the Laplace exponent in the setting of L\'evy processes. Without loss of generality, we assume that $\Ical=\{1,\ldots, N\}$ where $N\in\N$, and that $\Theta$ is irreducible and ergodic. We write $Q=(q_{i,j})_{1\le i,j\le N}$ for its intensity matrix, and $\rho_i$, $i\in\Ical$, for the exponential time that $\Theta$ takes to jump from state $i$ to some other state. Also, we denote for all $i,j\in \Ical$, all on the same probability space, by $\xi_i$ a Lévy process distributed as the $\xi_i^{(n)}$'s, and by $U_{i,j}$ a random variable distributed as the $U^{(n)}_{i,j}$'s, with the convention $U_{i,i}=0$ and $U_{i,j}=0$ if $q_{i,j}=0$. Finally, we introduce the Laplace exponent $\psi_i$ of $\xi_i$ and the Laplace transform $G_{i,j}(z):= \Eb\left[ \mathrm{e}^{zU_{i,j}}\right]$ of $U_{i,j}$ (this defines a matrix $G(z)$ with entries $G_{i,j}(z)$). Then the matrix exponent $F$ of $(\xi,\Theta)$ is defined as 
\begin{equation} \label{eq: F matrix}
F(z) := \diag(\psi_1(z),\ldots,\psi_N(z))+Q\circ G(z),    
\end{equation}
where $\circ$ denotes pointwise multiplication of the entries. Then the following equality holds for all $i,j\in \Ical$, $z\in \C$, $t\ge 0$, whenever one side of the equality is defined:
\[
\mathtt{E}_{0,i}\left[ \mathrm{e}^{z\xi(t)} \mathds{1}_{\{\Theta(t)=j\}}\right] = (\mathrm{e}^{F(z)t})_{i,j}.
\]
Let us denote the stationary distribution of $\Theta$ by $\pi=(\pi_1, \ldots, \pi_N)$. Given the MAP $(\xi, \Theta)$ with probabilities $\mathtt{P}_{x, \theta}, x\in \mathbb{R}, \theta\in E$, we can introduce the dual process; that is the MAP with probabilities $\mathtt{P}^{\natural}_{x, \theta}, x\in \mathbb{R}, \theta\in E$ whose matrix exponent when it is defined, is given by 
\[
\mathtt{E}^\natural_{0,i}\left[ \mathrm{e}^{z\xi(t)} \mathds{1}_{\{\Theta(t)=j\}}\right] = (\mathrm{e}^{F^\natural(z)t})_{i,j},
\]
where
\[
F^\natural(z):= \diag(\psi_1(-z),\ldots,\psi_N(-z))+Q^\natural\circ G(-z)^{\tt t},
\]
with $A^{\tt t}$ being the transpose matrix of $A$ and $Q^\natural$ is the intensity matrix of the modulating Markov chain on $\mathcal{I}$ with entries given by 
\begin{equation}\label{dualQ}
q^\natural_{i,j}=\frac{\pi_j}{\pi_i} q_{j,i}, \qquad i,j \in \mathcal{I}.
\end{equation}
Observe that the latter can also be written as $Q^\natural=\Delta^{-1}_\pi Q^{\tt t} \Delta_\pi$, where $\Delta_\pi=\diag(\pi_1,\ldots,\pi_N)$ and hence, when it exists, 
\begin{equation} \label{eq: F^natural expression}
F^\natural(z)=\Delta^{-1}_\pi F(-z)^{\tt t} \Delta_\pi,
\end{equation}
showing that 
\[
\pi_i\mathtt{E}^\natural_{0,i}\left[ \mathrm{e}^{z\xi(t)} \mathds{1}_{\{\Theta(t)=j\}}\right] =\pi_j\mathtt{E}_{0,j}\left[ \mathrm{e}^{-z\xi(t)} \mathds{1}_{\{\Theta(t)=i\}}\right].
\]
The previous identity can be understood, at the level of processes, as changing time-directions.
\begin{Lem}\label{dlemma} We have that $(\xi(t-s)-\xi(t), \Theta((t-s)-), 0\le s\le t)$ under $\mathtt{P}_\pi=\sum_{i=1}^N \pi_i \mathtt{P}_{i}$ is equal in law to $(\xi(s), \Theta(s), 0\le s\le t)$ under $\mathtt{P}^\natural_\pi$.
\end{Lem}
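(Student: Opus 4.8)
The plan is to verify the identity by checking equality of finite-dimensional distributions, exploiting the piecewise structure of the MAP from \cref{prop:MAP structure} together with the reversibility-type relation \eqref{dualQ} satisfied by the dual intensity matrix. Fix $t>0$ and a partition $0=s_0<s_1<\cdots<s_k=t$. On the left-hand side we must compute, for test functions $f_1,\dots,f_k$ and types $j_0,\dots,j_k$,
\[
\sum_{i=1}^N \pi_i\, \mathtt{E}_{i}\!\left[\prod_{m=1}^{k} f_m\big(\xi(t-s_{m-1})-\xi(t-s_m)\big)\mathds{1}_{\{\Theta((t-s_m)-)=j_m\}}\right],
\]
and show it equals the same quantity with $\xi(s_m)-\xi(s_{m-1})$, $\Theta(s_m)$ and $\mathtt{P}^\natural_\pi$ in place of the reversed objects. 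The increments $\xi(t-s_{m-1})-\xi(t-s_m)$ are the forward increments of the reversed path, so after relabelling it suffices to prove that the forward space-time law of $(\xi,\Theta)$ reversed over $[0,t]$, started from the stationary initial type $\pi$, is the forward law of the dual MAP started from $\pi^\natural=\pi$ (note $\pi$ is stationary for $Q^\natural$ as well, by \eqref{dualQ}).

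First I would establish the statement at the level of the modulator alone: the claim that $(\Theta((t-s)-),\,0\le s\le t)$ under $\mathtt{P}_\pi$ has the law of $(\Theta(s),\,0\le s\le t)$ under $\mathtt{P}^\natural_\pi$ is precisely the classical statement that a stationary continuous-time Markov chain run backwards is the chain with intensity matrix $Q^\natural=\Delta_\pi^{-1}Q^{\mathtt t}\Delta_\pi$; this follows from the detailed-balance-type identity $\pi_i q_{i,j}=\pi_j q^\natural_{j,i}$, which is exactly \eqref{dualQ}. Next, conditionally on the modulator trajectory, the ordinator $\xi$ is, by \eqref{eq:piecewise}, a concatenation of independent Lévy pieces $\xi_{\Theta(T_n)}$ on the successive intervals $[T_n,T_{n+1})$ together with independent jumps $U_{\Theta(T_n^-),\Theta(T_n)}$ at the jump times. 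Reversing time turns each Lévy piece of type $\ell$ run over a deterministic length into its time-reversal, which for a Lévy process over a fixed time-horizon is equal in law to the process with the dual (negated) Laplace exponent $\psi_\ell(-\,\cdot\,)$; and the jump $U_{i,j}$ that accompanies a forward transition $i\to j$ becomes, in the reversed path, a jump $-U_{i,j}$ accompanying the reversed transition $j\to i$. Matching these data against the defining ingredients of $F^\natural$ in \eqref{eq: F^natural expression} — the diagonal block $\psi_\ell(-z)$, the off-diagonal weights $q^\natural_{j,i}$, and the transposed jump law $G(-z)^{\mathtt t}$ — shows that the conditional law of the reversed ordinator given the reversed modulator is exactly the conditional law of the dual ordinator given the dual modulator. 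Combining the two conditional statements via the tower property yields the joint identity in law.

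The step I expect to be the main obstacle is the careful bookkeeping at the jump times: under time reversal one must confirm that the left-limit $\Theta((t-s_m)-)$ on the left-hand side lines up with the correct one-sided value of the dual modulator, and that the jump $U_{\Theta(T_n^-),\Theta(T_n)}$ is attached, after reversal, to the reversed transition with the correct orientation and sign, with no spurious or missing atoms at $s=0$ and $s=t$ (the boundary times, which have zero probability of being jump times and so may be handled by a limiting/right-continuity argument). A clean way to organise this is to first prove the identity for the embedded discrete skeleton — the sequence of jump times, types, Lévy pieces and jumps — where the reversal is a finite permutation and the algebra is transparent, and then recover the full trajectory statement by continuity of the finite-dimensional distributions in the partition mesh. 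An alternative, essentially equivalent route is to use the characterisation via matrix exponents: it suffices to check that for all $z\in\C$, $0\le s\le t$ and $i,j$,
\[
\pi_i\,\mathtt{E}_i\!\left[\mathrm{e}^{z(\xi(t)-\xi(t-s))}\mathds{1}_{\{\Theta((t-s)-)=j\}}\right]=\pi_j\,\mathtt{E}^\natural_j\!\left[\mathrm{e}^{z\xi(s)}\mathds{1}_{\{\Theta(s)=i\}}\right],
\]
extend to multi-time functionals by the Markov property, and invoke the already-displayed identity $\pi_i\,\mathtt{E}^\natural_{0,i}[\mathrm{e}^{z\xi(t)}\mathds{1}_{\{\Theta(t)=j\}}]=\pi_j\,\mathtt{E}_{0,j}[\mathrm{e}^{-z\xi(t)}\mathds{1}_{\{\Theta(t)=i\}}]$ together with $(\mathrm{e}^{F^\natural(z)s})_{i,j}$ from \eqref{eq: F^natural expression}; this reduces everything to the linear-algebra identity $\Delta_\pi\,\mathrm{e}^{F^\natural(z)s}=\big(\mathrm{e}^{F(-z)s}\big)^{\mathtt t}\Delta_\pi$, which is immediate from \eqref{eq: F^natural expression}.
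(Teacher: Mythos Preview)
The paper does not give a proof of this lemma; it is stated as a known fact, motivated by the one-dimensional Laplace-transform identity displayed immediately before it and with implicit reference to standard treatments of MAPs (\cite{Asm}, \cite{Iva}, \cite{KP}, \cite{PR}). Your sketch is correct and is precisely the standard argument: either (a) reverse the modulator using the classical duality $\pi_i q_{i,j}=\pi_j q^\natural_{j,i}$ for stationary continuous-time chains, then conditionally on the modulator reverse each L\'evy segment and reattach the negated jumps $-U_{i,j}$ to the reversed transitions, matching the ingredients of $F^\natural$; or (b) bypass the pathwise reversal by checking multi-time Laplace functionals via the Markov additive property and the matrix identity $\Delta_\pi\,\mathrm{e}^{F^\natural(z)s}=\big(\mathrm{e}^{F(-z)s}\big)^{\mathtt t}\Delta_\pi$, which is \eqref{eq: F^natural expression}. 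The endpoint and jump-time bookkeeping you single out is indeed the only place requiring care, and passing through the embedded discrete skeleton is the clean way to do it.
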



\bigskip

\noindent \textbf{Spectral properties of MAPs 
}
We also state for future reference the following classical results (see \cite{Asm, Iva} or the survey \cite{PR}) about the leading eigenvalue of a MAP, often dubbed \emph{Perron-Frobenius} eigenvalue. We consider a MAP $(\xi,\Theta)$ on $\R\times \Ical$ with matrix exponent $F$.

\begin{Prop} \label{prop: Perron-Frobenius MAP}
Let $F$ denote the matrix exponent of some Markov additive process, and $z\in\R$ such that $F(z)$ is well-defined. Then the matrix $F(z)$ has a real simple eigenvalue $\chi(z)$, which is larger than the real parts of all its other eigenvalues. In addition, $\chi(z)$ is associated to a positive eigenfunction $w(z)$.
\end{Prop}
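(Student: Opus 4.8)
The statement to prove is Proposition (Perron-Frobenius for MAPs): the matrix exponent $F(z)$ of a MAP, when well-defined for real $z$, has a real simple eigenvalue $\chi(z)$ strictly dominating the real parts of all other eigenvalues, with a positive associated eigenfunction $w(z)$.

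\textbf{Proof strategy.} The plan is to reduce the claim to the classical Perron-Frobenius theory for irreducible matrices with nonnegative off-diagonal entries (sometimes called essentially nonnegative, Metzler, or quasi-positive matrices), via the standard device of adding a large multiple of the identity. First I would fix $z\in\R$ with $F(z)$ well-defined and examine the structure of $F(z)=\diag(\psi_1(z),\ldots,\psi_N(z))+Q\circ G(z)$ from \eqref{eq: F matrix}. The off-diagonal entry $(i,j)$ of $F(z)$ equals $q_{i,j}G_{i,j}(z)$; since $q_{i,j}\ge 0$ for $i\ne j$ and $G_{i,j}(z)=\Eb[\mathrm e^{zU_{i,j}}]\ge 0$ is a Laplace transform of a real random variable (hence strictly positive whenever $q_{i,j}>0$), all off-diagonal entries of $F(z)$ are nonnegative, and the zero pattern of the off-diagonals matches that of $Q$. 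Because $\Theta$ is assumed irreducible, $Q$ is an irreducible intensity matrix, so $F(z)$ is an irreducible matrix with nonnegative off-diagonal entries.

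\textbf{Key steps.} Choose a constant $c>0$ large enough that every diagonal entry $\psi_i(z)+q_{i,i}G_{i,i}(z)+c=\psi_i(z)-q_i+c$ is nonnegative (recall $q_{i,i}=-q_i\le 0$ and $G_{i,i}(z)=1$). Then $M:=F(z)+cI$ is a genuinely nonnegative irreducible matrix, so the classical Perron-Frobenius theorem applies: $M$ has a real eigenvalue $r>0$ equal to its spectral radius, which is simple, strictly exceeds the modulus (hence in particular the real part) of every other eigenvalue of $M$, and admits a strictly positive right eigenvector $w$. Since the map $\lambda\mapsto\lambda+c$ is a bijection between the spectra of $F(z)$ and $M$ preserving eigenvectors, setting $\chi(z):=r-c$ gives a real simple eigenvalue of $F(z)$ with positive eigenfunction $w(z):=w$; and for any other eigenvalue $\mu$ of $F(z)$ we have $\mathrm{Re}(\mu)+c=\mathrm{Re}(\mu+c)<|\mu+c|\le r$, so $\mathrm{Re}(\mu)<\chi(z)$, which is exactly the dominance claim. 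Strict positivity of $w(z)$ and simplicity both transfer directly because they are statements about eigenvectors and algebraic/geometric multiplicity, which are unaffected by the shift.

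\textbf{Main obstacle.} The only genuinely delicate point is justifying that $F(z)$ is \emph{irreducible} (not merely that its off-diagonals are nonnegative): one must check that the directed graph on $\Ical$ with an edge $i\to j$ whenever the $(i,j)$ entry of $F(z)$ is nonzero is strongly connected. This follows because that graph contains the graph of $Q$ (whenever $q_{i,j}>0$ one has $q_{i,j}G_{i,j}(z)>0$, using that a Laplace transform of a real random variable never vanishes), and the graph of $Q$ is strongly connected by irreducibility of $\Theta$. Once irreducibility is in hand, everything else is a direct invocation of classical linear algebra, so I would keep that part brief and cite a standard reference (e.g. \cite{Asm} or \cite{PR}). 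A secondary subtlety worth a remark is the hypothesis ``$F(z)$ well-defined'': this is precisely the requirement that $\psi_i(z)<\infty$ for all $i$ and $G_{i,j}(z)<\infty$ for all $i,j$ with $q_{i,j}>0$, which is what makes $M$ a finite real matrix so that finite-dimensional Perron-Frobenius applies.
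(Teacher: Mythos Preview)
The paper does not actually prove this proposition: it is presented as a classical fact with references to \cite{Asm}, \cite{Iva} and the survey \cite{PR}. Your proposal supplies the standard argument behind those references, namely the observation that $F(z)$ is an irreducible Metzler matrix (nonnegative off-diagonals, with zero pattern matching that of $Q$ because $G_{i,j}(z)>0$ whenever defined), so that a shift by $cI$ reduces the claim to the classical Perron--Frobenius theorem for nonnegative irreducible matrices. This is correct and is exactly the proof one finds in the cited sources; there is nothing to add.
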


\noindent We say that $(\xi,\Theta)$ satisfies \emph{Cramér's condition} if there exists $\gamma_0>0$ and $\Upsilon\in(0,\gamma_0)$ such that $F$ is defined on $(0,\gamma_0)$ and $\chi(\Upsilon)=0$. The number $\Upsilon$ is then called a \emph{Cramér number}. The leading eigenvalue enables to identify the following \emph{Wald martingale}, which is a multitype version of the exponential martingale for Lévy processes. 
\begin{Prop}\label{prop: wald MAP}
Fix $\gamma$ such that $F(\gamma)$ is well-defined. With the notation of \cref{prop: Perron-Frobenius MAP}, let
\[
\mathcal{W}^{(\gamma)}(t):= \frac{w_{\Theta(t)}(\gamma)}{w_{\Theta(0)}(\gamma)} \mathrm{e}^{\gamma\xi(t)-t\chi(\gamma)}, \quad t\ge 0.
\]
Then $\mathcal{W}^{(\gamma)}$ is a martingale with respect to $(\Gcal_t)_{t\ge 0}$, and under any initial distribution of $(\xi(0),\Theta(0))$. Moreover, the law of $(\xi,\Theta)$ under the corresponding change of measure, that is
\[
\frac{\mathrm{d} \mathtt{P}^{(\gamma)}_{0,i}}{\mathrm{d} \mathtt{P}_{0,i}}\Bigg|_{\mathcal{G}_t}=\mathcal{W}^{(\gamma)}(t), \qquad t\ge 0,
\]
 is that of a Markov additive process with matrix exponent 
\[
F^{(\gamma)}(z) := \mathrm{diag}(w_i(\gamma), i\in\Ical)^{-1} (F(\gamma+z)-\chi(\gamma)\mathrm{Id})\mathrm{diag}(w_i(\gamma), i\in\Ical).
\]
In particular, the leading eigenvalue of $F^{(\gamma)}(z)$ is given by $\chi^{(\gamma)}(z):=\chi(\gamma+z)-\chi(\gamma)$.
\end{Prop}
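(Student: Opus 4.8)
The plan is to verify the martingale property directly from the Markov additive structure, then identify the tilted law by computing its matrix exponent. First I would check that $\mathcal{W}^{(\gamma)}$ is indeed a martingale. Using the Markov additive property and the fact that $(\Gcal_t)$ is the natural filtration, it suffices to show $\mathtt{E}_{0,i}[\mathcal{W}^{(\gamma)}(t)]=1$ for every $i\in\Ical$ (by the increment structure in the definition of a MAP, the general martingale identity $\mathtt{E}[\mathcal{W}^{(\gamma)}(t+s)\mid \Gcal_t]=\mathcal{W}^{(\gamma)}(t)$ reduces to this after factoring out $\Gcal_t$-measurable terms). Now
\[
\mathtt{E}_{0,i}\big[\mathcal{W}^{(\gamma)}(t)\big]=\frac{1}{w_i(\gamma)}\sum_{j\in\Ical} w_j(\gamma)\,\mathtt{E}_{0,i}\big[\mathrm{e}^{\gamma\xi(t)}\mathds{1}_{\{\Theta(t)=j\}}\big]\mathrm{e}^{-t\chi(\gamma)}
=\frac{1}{w_i(\gamma)}\big(\mathrm{e}^{F(\gamma)t}w(\gamma)\big)_i\,\mathrm{e}^{-t\chi(\gamma)},
\]
using the identity $\mathtt{E}_{0,i}[\mathrm{e}^{z\xi(t)}\mathds{1}_{\{\Theta(t)=j\}}]=(\mathrm{e}^{F(z)t})_{i,j}$ from the excerpt. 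Since $w(\gamma)$ is the Perron–Frobenius eigenvector of $F(\gamma)$ with eigenvalue $\chi(\gamma)$ (\cref{prop: Perron-Frobenius MAP}), we get $\mathrm{e}^{F(\gamma)t}w(\gamma)=\mathrm{e}^{t\chi(\gamma)}w(\gamma)$, whence the expectation equals $1$. That $w(\gamma)$ is strictly positive guarantees the Radon–Nikodym derivative is well-defined and positive.

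Next I would identify the law under $\mathtt{P}^{(\gamma)}$. The key point is that the change of measure by a multiplicative functional of the form $\frac{w_{\Theta(t)}(\gamma)}{w_{\Theta(0)}(\gamma)}\mathrm{e}^{\gamma\xi(t)-t\chi(\gamma)}$ preserves the Markov additive property: this follows because the density factorises into a function of $\Theta(t)$ times $\mathrm{e}^{\gamma(\xi(t+s)-\xi(t))}$ times the analogous quantity for the increment, exactly matching the product structure in the MAP definition. Hence $(\xi,\Theta)$ is again a MAP under $\mathtt{P}^{(\gamma)}$, and it remains to compute its matrix exponent. Writing $D=\mathrm{diag}(w_i(\gamma),i\in\Ical)$, we compute for $z$ real (or complex) with $F(\gamma+z)$ defined:
\[
\mathtt{E}^{(\gamma)}_{0,i}\big[\mathrm{e}^{z\xi(t)}\mathds{1}_{\{\Theta(t)=j\}}\big]
=\mathtt{E}_{0,i}\Big[\mathrm{e}^{z\xi(t)}\mathds{1}_{\{\Theta(t)=j\}}\tfrac{w_j(\gamma)}{w_i(\gamma)}\mathrm{e}^{\gamma\xi(t)-t\chi(\gamma)}\Big]
=\tfrac{w_j(\gamma)}{w_i(\gamma)}\,(\mathrm{e}^{F(\gamma+z)t})_{i,j}\,\mathrm{e}^{-t\chi(\gamma)}.
\]
In matrix form this reads $D^{-1}\mathrm{e}^{F(\gamma+z)t}D\,\mathrm{e}^{-t\chi(\gamma)}=\mathrm{e}^{D^{-1}(F(\gamma+z)-\chi(\gamma)\mathrm{Id})D\,t}$, which identifies the matrix exponent as $F^{(\gamma)}(z)=D^{-1}(F(\gamma+z)-\chi(\gamma)\mathrm{Id})D$ as claimed. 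Finally, since conjugation by $D$ and subtracting a scalar multiple of the identity do not change the spectrum except for the shift, the leading eigenvalue of $F^{(\gamma)}(z)$ is $\chi(\gamma+z)-\chi(\gamma)$, which is $\chi^{(\gamma)}(z)$.

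I do not anticipate a serious obstacle here; the one point requiring a little care is justifying that the Radon–Nikodym change of measure extends consistently to a probability measure on path space and that the Markov additive property is genuinely preserved rather than merely the one-dimensional marginals — this is handled by checking the defining conditional expectation identity of a MAP under $\mathtt{P}^{(\gamma)}$ using the multiplicative structure of $\mathcal{W}^{(\gamma)}$ together with the fact that $\mathcal{W}^{(\gamma)}(t+s)/\mathcal{W}^{(\gamma)}(t)$ depends only on the increment $(\xi(t+s)-\xi(t),\Theta(t+s))$ and on $\Theta(t)$. One should also note that everything is valid under an arbitrary initial law of $(\xi(0),\Theta(0))$ since the argument above is pointwise in the starting type $i$.
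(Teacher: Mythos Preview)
Your proof is correct and is precisely the standard direct verification; note however that the paper does not actually give a proof of this proposition but only states it with references to \cite{Asm,Iva,PR}, so there is no ``paper's own proof'' to compare against. Your argument is the one found in those references.
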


\noindent The following property will also come in useful.
\begin{Prop} \label{prop: convexity leading eigenvalue}
We take the notation of \cref{prop: Perron-Frobenius MAP}. Let $D$ be an interval of $\R$ on which $F$ is defined. Then the leading eigenvalue $\chi$ is smooth and convex on $D$.
\end{Prop}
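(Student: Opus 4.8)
The plan is to establish convexity and smoothness by two independent routes: convexity by writing $\chi$ as a pointwise limit of convex functions, and smoothness by analytic perturbation theory for the \emph{simple} eigenvalue $\chi(z)$ of the matrix family $F(z)$.

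For convexity, I would first record that for each $i\in\Ical$ and $t>0$ the map $z\mapsto \mathtt{E}_{0,i}[\mathrm{e}^{z\xi(t)}]=\sum_{j\in\Ical}(\mathrm{e}^{F(z)t})_{i,j}$ is finite and log-convex on $D$: finiteness is immediate since $F(z)$ has finite entries for $z\in D$, and log-convexity is just Hölder's inequality applied to $\mathtt{E}_{0,i}[\mathrm{e}^{(\lambda z_1+(1-\lambda)z_0)\xi(t)}]$ for $\lambda\in[0,1]$. Next I would identify the exponential growth rate of these quantities with $\chi$. Writing $w(z)$ for the positive right eigenfunction of $F(z)$ associated with $\chi(z)$ from \cref{prop: Perron-Frobenius MAP}, one has $\mathrm{e}^{F(z)t}w(z)=\mathrm{e}^{t\chi(z)}w(z)$, hence $\mathrm{e}^{t\chi(z)}w_i(z)=\sum_{j}(\mathrm{e}^{F(z)t})_{i,j}w_j(z)$, and bounding $w_j(z)$ between its minimum and maximum over $j\in\Ical$ produces positive constants $c(z)\le C(z)$ with
\[
c(z)\,\mathrm{e}^{t\chi(z)}\ \le\ \mathtt{E}_{0,i}\big[\mathrm{e}^{z\xi(t)}\big]\ \le\ C(z)\,\mathrm{e}^{t\chi(z)}, \qquad t\ge 0.
\]
Therefore $\tfrac1t\log\mathtt{E}_{0,i}[\mathrm{e}^{z\xi(t)}]\to\chi(z)$ as $t\to\infty$, so $\chi$ is the pointwise limit of the convex functions $z\mapsto\tfrac1t\log\mathtt{E}_{0,i}[\mathrm{e}^{z\xi(t)}]$ and is thus convex on $D$.

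For smoothness, I would work on the interior $\mathring D$. Each $\psi_i$ is the Laplace exponent of a Lévy process and each $G_{i,j}$ the (bilateral) Laplace transform of a probability measure, so both extend to holomorphic functions on the open strip $\{z\in\C:\operatorname{Re}z\in\mathring D\}$; hence $z\mapsto F(z)$ is holomorphic, in particular $C^\infty$, on $\mathring D$. By \cref{prop: Perron-Frobenius MAP}, $\chi(z)$ is a simple eigenvalue of $F(z)$, i.e.\ a simple root of the characteristic polynomial $P(z,\lambda):=\det(\lambda\,\mathrm{Id}-F(z))$, so $\partial_\lambda P(z,\chi(z))\neq 0$. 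Since $P$ is a polynomial in $\lambda$ whose coefficients are holomorphic in $z$ on $\mathring D$, the holomorphic implicit function theorem (equivalently, Kato's analytic perturbation theory for a simple eigenvalue) shows that $z\mapsto\chi(z)$ is holomorphic, hence smooth, on $\mathring D$; smoothness up to any endpoint of $D$ lying in $D$ follows from the one-sided version of the same argument.

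The only genuinely delicate point is the identification $\chi(z)=\lim_{t\to\infty}\tfrac1t\log\mathtt{E}_{0,i}[\mathrm{e}^{z\xi(t)}]$, i.e.\ that $\chi(z)$ really governs the exponential growth of the semigroup entries; this is precisely where \cref{prop: Perron-Frobenius MAP} is used, the strict positivity of $w(z)$ being exactly what makes the two-sided sandwich possible. Everything else — Hölder's inequality, stability of convexity under pointwise limits, and holomorphy of Laplace transforms on their strip of convergence — is standard.
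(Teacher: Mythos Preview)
The paper does not actually prove this proposition: it is stated as one of the ``classical results (see \cite{Asm, Iva} or the survey \cite{PR})'' about the leading eigenvalue of a MAP, with no argument supplied. Your proof is correct and follows exactly the standard lines found in those references: convexity via Hölder's inequality and the identification $\chi(z)=\lim_{t\to\infty}\tfrac1t\log\mathtt{E}_{0,i}[\mathrm{e}^{z\xi(t)}]$ (the sandwich from the positive eigenvector is indeed the key step), and smoothness via analytic perturbation of a simple eigenvalue. The only soft spot is the claim about smoothness at endpoints of $D$ contained in $D$: finiteness of $\psi_i$ or $G_{i,j}$ at a boundary point does not automatically give one-sided smoothness there, so strictly speaking your argument yields smoothness on $\mathring D$; this is a harmless imprecision that the paper's statement shares.
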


\noindent An important quantity associated to a MAP $(\xi,\Theta)$ on $\R\times \Ical$, particularly in view of the lifetime \eqref{eq: zeta lifetime} appearing in the next paragraph in relation to the Lamperti-Kiu transform, is the so-called \emph{exponential functional}, namely
\[
I(\xi) := \int_0^{\infty} \mathrm{e}^{\xi(s)}\mathrm{d}s.
\] 
This quantity has been studied in great detail, first for L\'evy processes (see, notably, \cite{BLR, BarSav, BY,CPY, KP, PR,PatSav} and references therein), and then more recently for MAPs (see in particular \cite{AliWoo, BehSid, KKPW,Ste}). We stress that the study of $I(\xi)$ usually involves the spectral properties of the MAP, and in particular the leading eigenvalue $\chi$. We state for future reference the following result, giving a finiteness criterion for the moments of the exponential functional of Markov additive processes. The case of Lévy processes is also fully understood \cite[Lemma 3]{Riv}. 

\begin{Prop} \label{prop: moments exponential functionals}
Assume that for some $\gamma>0$, $F$ is defined on $[0,\gamma]$, and $\chi(\gamma)<0$. Then $I(\xi)$ has finite moment of order $\gamma$, under any initial distribution of $(\xi,\Theta)$.
\end{Prop}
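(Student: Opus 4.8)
The plan is to estimate $\mathtt{E}[I(\xi)^\gamma]$ by slicing the half-line into unit intervals and controlling each piece through the leading eigenvalue $\chi(\gamma)$. First I would reduce to bounding $\sup_{i\in\Ical}\mathtt{E}_{0,i}[I(\xi)^\gamma]$: since $I(\xi)$ under $\mathtt{P}_{x,\theta}$ equals $\mathrm{e}^{x}$ times its value under $\mathtt{P}_{0,\theta}$, the statement for a general initial law then follows by scaling and mixing (the starting value of $\xi$ contributes only a deterministic factor). Writing $I(\xi)=\sum_{k\ge 0}A_k$ with $A_k:=\int_k^{k+1}\mathrm{e}^{\xi(s)}\mathrm{d}s$, the Wald martingale of \cref{prop: wald MAP} (which has mean one and whose eigenfunction $w(\gamma)$ is bounded away from $0$ and $\infty$ on the finite set $\Ical$) immediately yields $\mathtt{E}_{0,i}[\mathrm{e}^{\gamma\xi(t)}]\le\kappa_\gamma\mathrm{e}^{\chi(\gamma)t}$ for all $t\ge0$ and $i$, with $\kappa_\gamma:=\max_j w_j(\gamma)/\min_j w_j(\gamma)$. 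The whole proof then rests on the geometric decay estimate $\mathtt{E}_{0,i}[A_k^\gamma]\le C\mathrm{e}^{\chi(\gamma)k}$, uniformly in $k$ and $i$: granting it, the case $\gamma\ge1$ follows from Minkowski's inequality and the case $\gamma\in(0,1)$ from the subadditivity $I(\xi)^\gamma\le\sum_k A_k^\gamma$, in both cases summing a convergent geometric series since $\chi(\gamma)<0$.

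To establish the decay estimate I would split according to whether $\gamma\ge1$ or $\gamma<1$. When $\gamma\ge1$, Jensen's inequality applied to the uniform probability measure on $[k,k+1]$ and the convex map $x\mapsto x^\gamma$ gives $A_k^\gamma\le\int_k^{k+1}\mathrm{e}^{\gamma\xi(s)}\mathrm{d}s$, and integrating the Wald bound finishes this case with no further input. When $\gamma\in(0,1)$ Jensen points the wrong way, so instead I would bound $A_k\le\mathrm{e}^{\xi(k)+S_k}$ with $S_k:=\sup_{0\le u\le1}(\xi(k+u)-\xi(k))$ and use the Markov additive property at time $k$ to get $\mathtt{E}_{0,i}[A_k^\gamma]\le\bar c_\gamma\,\mathtt{E}_{0,i}[\mathrm{e}^{\gamma\xi(k)}]\le\bar c_\gamma\kappa_\gamma\mathrm{e}^{\chi(\gamma)k}$, where $\bar c_\gamma:=\max_{\theta}\mathtt{E}_{0,\theta}\big[\mathrm{e}^{\gamma\sup_{0\le u\le1}\xi(u)}\big]$. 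Thus everything reduces to showing that the running supremum of the ordinator over a bounded horizon has a finite $\gamma$-exponential moment, using only that $F(\gamma)$ — hence $\psi_i(\gamma)$ and $G_{i,j}(\gamma)$ — is finite. For this I would condition on the modulator and use \cref{prop:MAP structure}: on $[0,1]$ the ordinator is a concatenation of finitely many independent Lévy pieces (each with Laplace exponent finite at $\gamma$) together with independent jumps $U_{i,j}$ of finite $\gamma$-exponential moment, the number of pieces being Poissonially dominated uniformly in the modulator path; bounding $\sup_{0\le u\le1}\xi(u)$ by the sum over pieces of the positive switching jumps and of the running suprema of the pieces, and invoking conditional independence, one is left with the one-type fact that a Lévy process with Laplace exponent finite at $\gamma$ has, over a fixed time interval, a running supremum with finite $\gamma$-exponential moment — which I would prove by peeling off the large positive jumps (a subordinator with finite $\gamma$-exponential moment at time $1$) from the bounded-jump remainder (which has exponential moments of all orders, hence so does its running supremum via a routine optional-stopping estimate).

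The main obstacle is precisely this last point, the finiteness of $\bar c_\gamma$ for $\gamma\in(0,1)$. The naive optional-stopping bound only gives $\mathtt{P}_{0,\theta}(\sup_{0\le u\le1}\xi(u)>x)\le C\mathrm{e}^{-\gamma x}$, which falls short of integrability of $\mathrm{e}^{\gamma\sup_{[0,1]}\xi}$ by exactly one power; it is the big-jump/small-jump decomposition — exploiting that the large positive jumps accumulate monotonically, so that their running supremum is just their terminal value — that closes the gap, and this is the one step that genuinely uses the Lévy/MAP structure rather than soft arguments. Everything else is bookkeeping. (I note in passing that the argument uses only $F(\gamma)<\infty$ and $\chi(\gamma)<0$; the stated hypothesis that $F$ is finite on all of $[0,\gamma]$ is stronger than needed, although combined with \cref{prop: convexity leading eigenvalue} and $\chi(0)=0$ it has the pleasant consequence that $\chi<0$ throughout $(0,\gamma]$.)
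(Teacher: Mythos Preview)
Your argument is correct, but it follows a different path from the paper's. For $\gamma>1$ the paper does not slice: it applies Jensen once to the probability measure $-\chi(\varepsilon)\mathrm{e}^{s\chi(\varepsilon)}\,\mathrm{d}s$ on $[0,\infty)$ (for small $\varepsilon>0$), obtaining
\[
I(\xi)^\gamma \le (-\chi(\varepsilon))^{1-\gamma}\int_0^\infty \mathrm{e}^{\gamma\xi(s)-(\gamma-1)s\chi(\varepsilon)}\,\mathrm{d}s,
\]
after which the Wald martingale gives an integrand $\mathrm{e}^{s(\chi(\gamma)-(\gamma-1)\chi(\varepsilon))}$, finite upon integration for $\varepsilon$ small. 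For $\gamma\le 1$ the paper simply cites \cite{KKPW}; the argument there (reproduced later in the proof of \cref{prop: tails exponential functionals}) controls the running supremum via Doob's $L^p$ maximal inequality applied to the Wald martingale $\mathcal{W}^{(\gamma/p)}$ for some $p>1$, exploiting that $\chi(\gamma/p)<0$ by convexity. Your unit-interval slicing with Minkowski/subadditivity is equally valid and gives a single template for all $\gamma$; in the regime $\gamma<1$ it trades the clean Doob-on-Wald step for a more hands-on bound on $\mathtt{E}_{0,\theta}[\mathrm{e}^{\gamma\sup_{[0,1]}\xi}]$ via the L\'evy big-jump/small-jump decomposition of the pieces in \cref{prop:MAP structure}. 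The paper's route is shorter and stays at the level of the MAP (no dissection into constituent L\'evy processes), but it genuinely uses finiteness of $F$ at interior points of $[0,\gamma]$ (the exponent $\varepsilon$, respectively $\gamma/p$); as you observe, your argument needs only $F(\gamma)$ finite and $\chi(\gamma)<0$, so it is marginally more general in its hypotheses.
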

\begin{proof}
We only prove the statement for $\gamma> 1$ since the case $\gamma\le 1$ is contained in \cite[Proposition 3.6]{KKPW} (although \cite[Proposition 3.6]{KKPW} only states the result for Cramér numbers $\Upsilon<1$, their proof of finiteness of moments carries over to any $\gamma\le 1$ such that $\chi(\gamma)<0$). For the remainder of the proof, we fix $\gamma>1$ and we take $\varepsilon\in(0,\gamma)$. Jensen's inequality provides
\begin{align}
\left(\int_0^{\infty} \mathrm{e}^{\xi(s)} \mathrm{d}s\right)^{\gamma} 
&= \left(\int_0^{\infty} \mathrm{e}^{\xi(s)} (-\chi(\varepsilon))^{-1} \mathrm{e}^{-s\chi(\varepsilon)} \big(-\chi(\varepsilon)\mathrm{e}^{s\chi(\varepsilon)}\mathrm{d}s \big)\right)^{\gamma} \notag \\
&\le (-\chi(\varepsilon))^{1-\gamma} \int_0^{\infty} \mathrm{e}^{\gamma\xi(s)-(\gamma-1)s\chi(\varepsilon)} \mathrm{d}s. \label{eq: proof Cramer MAP}
\end{align}
Now write $C_{\varepsilon} = \max_{i,j\in\Ical} \frac{w_j(\gamma)}{w_i(\gamma)} \cdot (-\chi(\varepsilon))^{1-\gamma} >0$, and let $i\in\Ical$. Taking the $\texttt{P}_{0,i}$--expectation of \eqref{eq: proof Cramer MAP}, a rough estimate yields
\begin{align*}
\texttt{E}_{0,i}\Bigg[\left(\int_0^{\infty} \mathrm{e}^{\xi(s)} \mathrm{d}s\right)^{\gamma} \Bigg]
&\le C_{\varepsilon} \int_0^{\infty} \texttt{E}_{0,i}\Big[\mathcal{W}^{(\gamma)}(s)\mathrm{e}^{s\chi(\gamma)}\Big]\mathrm{e}^{-(\gamma-1)s\chi(\varepsilon)} \mathrm{d}s \\
&= C_{\varepsilon} \int_0^{\infty} \mathrm{e}^{s(\chi(\gamma)-(\gamma-1)\chi(\varepsilon))} \mathrm{d}s,
\end{align*}
by the martingale property of $\mathcal{W}^{(\gamma)}$ in \cref{prop: wald MAP}. Noting that $\chi(\gamma)<0$ and $\chi(\varepsilon)\to 0$ as $\varepsilon\to 0$, we get that $\chi(\gamma)-(\gamma-1)\chi(\varepsilon)<0$ for small enough $\varepsilon$, which completes the proof.
\end{proof}
\begin{Rk} \label{rk: moments exponential functional}
In particular, if $\Upsilon$ is a Cramér number for $(\xi,\Theta)$, then by convexity of $\chi$ the exponential functional of $\xi$ has finite moments of order $\gamma$ for all $\gamma<\Upsilon$. The case when $\Upsilon<1$ already appears in \cite[Proposition 3.6]{KKPW}, but for $\Upsilon>1$ the result does not seem to be contained in the existing literature. 
\end{Rk}
Our next result studies the tail behaviour of $I(\xi)$ under Cram\'er's condition. The case when $N=2$ has  been treated in \cite{AliWoo} using direct computations associated with the matrix exponent  $F$. The case of L\'evy process was studied in \cite{Riv}. Our approach uses the long-term behaviour of MAPs. We recall that $(w_i(z), i\in\Ical)$ is a positive eigenvector associated to the leading eigenvalue $\chi(z)$ of $F(z)$. 
\begin{Prop} \label{prop: tails exponential functionals}
Assume that $(\xi,\Theta)$ satisfies Cramér's condition, with Cramér number $\Upsilon>0$ and that $\xi$ is not concentrated on a lattice. Set for all $i\in\Ical$,
\[
J_i(\xi) := \int_0^{\infty} \frac{w_{\Theta(s)}(\Upsilon)}{w_i(\Upsilon)} \mathrm{e}^{\xi(s)} \mathrm{d}s.
\] 
Then for $i\in \mathcal{I}$, there exists a constant $C_i \ge 0$ such that
\[
{\mathtt{P}_{0,i}}(J_i(\xi)>t)\sim C_i t^{-\Upsilon}, \qquad \textrm{as}\quad t\to\infty. 
\]
In particular, for all $i\in \Ical$, there exist nonnegative constants $C_i^{(1)}$ and $C_i^{(2)}$ such that for $t$ large enough,
\begin{equation}\label{eq: tail probability I(xi)}
C^{(1)}_i t^{-\Upsilon} \le {\mathtt{P}_{0,i}}(I(\xi)>t) \le C^{(2)}_i t^{-\Upsilon}.
\end{equation}
\end{Prop}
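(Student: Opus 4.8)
The plan is to deduce the tail asymptotics of $J_i(\xi)$ from an implicit renewal argument of Goldie type, carried out in the Markov-modulated setting, and then to transfer the conclusion to $I(\xi)$ by a deterministic sandwich. The starting point is a distributional fixed-point equation. Fixing a time $T>0$ and splitting the integral defining $J_i(\xi)$ at $T$, the part after $T$ factorises thanks to the Markov additive property (recall \cref{prop:MAP structure}): almost surely,
\[
J_i(\xi) \;=\; A_i^{(T)} + B_i^{(T)}\,\widehat{J},\qquad A_i^{(T)}:=\int_0^{T}\frac{w_{\Theta(s)}(\Upsilon)}{w_i(\Upsilon)}\mathrm{e}^{\xi(s)}\mathrm{d}s,\quad B_i^{(T)}:=\frac{w_{\Theta(T)}(\Upsilon)}{w_i(\Upsilon)}\mathrm{e}^{\xi(T)},
\]
where $A_i^{(T)},B_i^{(T)}$ are $\Gcal_T$-measurable and, conditionally on $\Theta(T)=j$, the random variable $\widehat{J}$ is independent of $(A_i^{(T)},B_i^{(T)})$ and has the law of $J_j(\xi)$ under $\mathtt{P}_{0,j}$. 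Iterating, $J_i(\xi)$ becomes a Markov-modulated perpetuity driven by the skeleton chain $(\Theta(nT))_{n\ge 0}$, whose tail is governed by the spectral radius of the associated multiplicative kernel.

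Here the weighting by $w(\Upsilon)$ in the definition of $J_i$ is exactly what is needed. Indeed, with $\Delta:=\diag(w_1(\Upsilon)^{\Upsilon},\dots,w_N(\Upsilon)^{\Upsilon})$ one computes
\[
K_{i,j}:=\mathtt{E}_{0,i}\!\left[\big(B_i^{(T)}\big)^{\Upsilon}\mathds{1}_{\{\Theta(T)=j\}}\right]=\left(\frac{w_j(\Upsilon)}{w_i(\Upsilon)}\right)^{\Upsilon}\!\big(\mathrm{e}^{F(\Upsilon)T}\big)_{i,j}=\big(\Delta^{-1}\mathrm{e}^{F(\Upsilon)T}\Delta\big)_{i,j},
\]
so $K$ is similar to $\mathrm{e}^{F(\Upsilon)T}$ and hence, since $F(\Upsilon)w(\Upsilon)=\chi(\Upsilon)w(\Upsilon)=0$ under Cramér's condition, $K$ has Perron eigenvalue $\mathrm{e}^{\chi(\Upsilon)T}=1$ (cf.\ \cref{prop: Perron-Frobenius MAP}). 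This criticality at exponent $\Upsilon$ is what forces the tail index $\Upsilon$. Normalising $K$ by its (left and right, strictly positive) Perron eigenvectors turns it into a semi-Markov transition kernel; translating the fixed-point identity for the function $u\mapsto \mathrm{e}^{\Upsilon u}\,\mathtt{P}_{0,i}(J_i(\xi)>\mathrm{e}^{u})$ then produces a genuine (non-defective) Markov renewal equation, and the conclusion $\mathtt{P}_{0,i}(J_i(\xi)>t)\sim C_i t^{-\Upsilon}$ follows from the Markov key renewal theorem, with $C_i\ge 0$ read off from the associated Goldie-type formula (the constant may degenerate to $0$ in special cases, whence $C_i\ge 0$ rather than $>0$).

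It then remains to verify the hypotheses of that renewal theorem. (i) Finiteness: by \cref{rk: moments exponential functional}, $J_i(\xi)$ is a.s.\ finite and has finite moments of every order $\gamma<\Upsilon$. (ii) The $L\log L$-type integrability $\mathtt{E}_{0,i}[(B_i^{(T)})^{\Upsilon}\log^{+}B_i^{(T)}]<\infty$ together with $\chi'(\Upsilon)>0$ (by convexity of $\chi$, \cref{prop: convexity leading eigenvalue}, and $\chi(0)=\chi(\Upsilon)=0$) ensures the driving renewal measure has finite, strictly positive mean; finiteness of the $\log^{+}$ moment follows since $z\mapsto \mathrm{e}^{F(z)T}$ is real-analytic on a neighbourhood of $\Upsilon$, so $\mathtt{E}_{0,i}[\xi(T)\mathrm{e}^{\Upsilon\xi(T)}\mathds{1}_{\{\Theta(T)=j\}}]=\partial_z(\mathrm{e}^{F(z)T})_{i,j}\big|_{z=\Upsilon}<\infty$. (iii) The remainder term coming from $A_i^{(T)}$ must be directly Riemann integrable, which reduces to $\mathtt{E}_{0,i}[(A_i^{(T)})^{\Upsilon}]<\infty$; since $A_i^{(T)}\le \tfrac{\max_k w_k(\Upsilon)}{w_i(\Upsilon)}\,T\exp(\sup_{0\le s\le T}\xi(s))$, it suffices to bound $\mathtt{E}_{0,i}[\exp(\Upsilon\sup_{[0,T]}\xi(s))]$, which is finite by applying Doob's weak maximal inequality to the positive martingale $\mathcal{W}^{(\Upsilon+\delta)}$ of \cref{prop: wald MAP} for a small $\delta>0$ with $\Upsilon+\delta<\gamma_0$ (note $\Upsilon/(\Upsilon+\delta)<1$, so only the weak-type bound is needed). (iv) Non-arithmeticity of the induced Markov renewal measure: this is exactly where the hypothesis that $\xi$ is not concentrated on a lattice enters.

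Finally, the transfer to $I(\xi)$ is immediate: since $\Ical$ is finite and $w(\Upsilon)$ is strictly positive, $0<m:=\min_j w_j(\Upsilon)\le w_{\Theta(s)}(\Upsilon)\le M:=\max_j w_j(\Upsilon)<\infty$, whence $\tfrac{m}{w_i(\Upsilon)}I(\xi)\le J_i(\xi)\le \tfrac{M}{w_i(\Upsilon)}I(\xi)$, and \eqref{eq: tail probability I(xi)} follows by comparing $\{I(\xi)>t\}$ with $\{J_i(\xi)>c\,t\}$ for the appropriate constants $c$ (with $C_i^{(1)}=0$ permitted). I expect the main obstacle to be the Markov-renewal step itself: concretely, (a) deriving non-arithmeticity of the Markov renewal measure generated by $(\log B_i^{(T)},\Theta(T))$ from the process-level non-lattice assumption on $\xi$ (one may need to exclude an exceptional set of times $T$, or to argue through the ladder structure), and (b) the bookkeeping that promotes the $A_i^{(T)}$-remainder to a directly Riemann integrable function and identifies the limiting constant; if no Markov-modulated implicit renewal theorem is directly quotable, this amounts to reproving one here, which is precisely where the long-term ergodic behaviour of the MAP — ergodicity of the skeleton chain and the Markov key renewal theorem — carries the argument.
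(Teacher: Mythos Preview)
Your proposal is correct and follows essentially the same approach as the paper: both derive the random affine equation $J_i(\xi)=A+B\,J_{\Theta(T)}$ from the Markov additive property, identify that the multiplicative kernel has Perron--Frobenius eigenvalue $1$ at exponent $\Upsilon$, verify the moment/non-lattice hypotheses, and invoke a multitype Goldie/Kesten implicit renewal theorem, then sandwich $I(\xi)$ between constant multiples of $J_i(\xi)$. The only cosmetic difference is packaging: the paper wraps the renewal step into its own \cref{thm: tail behaviour affine} (proved in the appendix via \cref{thm: renewal multitype tree}), whereas you describe the renewal argument directly.
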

We believe that under $\mathtt{P}_{0,i}$, the tail probability of $I(\xi)$ should also be equivalent to $C_i t^{-\Upsilon}$ for some constant $C_i$. Actually we can use the fact that $I(\xi)$ is the solution of a random affine equation and  then  Theorem 4.1 in \cite{Gol} (see  also  \cite{Kes}) but for technical reasons our arguments are better suited to deal with $J_i(\xi)$ and only provide the weaker form \eqref{eq: tail probability I(xi)}.
\begin{proof}
We only prove the first statement since \eqref{eq: tail probability I(xi)} follows easily from it by bounding $w_{\Theta(t)}$ by the maximal or minimal $w_j$. In this proof, we shall write $w_j$ instead of $w_j(\Upsilon)$. Fix $i\in\Ical$ and observe that for all $t\ge 0$,
\[
J_i(\xi)=\int_0^{t} \frac{w_{\Theta(s)}}{w_i} \mathrm{e}^{\xi(s)}  \mathrm{d}s + \frac{w_{\Theta(t)}}{w_i}\mathrm{e}^{\xi(t)}\int_{t}^{\infty} \frac{w_{\Theta(s)}}{w_{\Theta(t)}}\mathrm{e}^{\xi(s)-\xi(t)}  \mathrm{d}s.
\]
From the Markov additive property of $(\xi, \Theta)$, we deduce that, conditionally on $\Theta(t)$, 
\[
\int_{t}^\infty \frac{w_{\Theta(s)}}{w_{\Theta(t)}} \mathrm{e}^{\xi(s)-\xi(t)}  \mathrm{d}s,
\]
is a copy of  $J_{\Theta(t)}(\xi)$, under $\mathtt{P}_{0,\Theta(t)}$, which is further independent of $((\xi_s, \Theta_s), s\le t)$.  In other words, $J_i(\xi)$ is the solution of a multitype random affine equation of the form \eqref{eq: random affine equation}. Remark that the matrix $m$ of \eqref{eq: matrix m(q) affine} is precisely the matrix exponent $F$ of $(\xi,\Theta)$. Hence according to \cref{thm: tail behaviour affine} in the appendix, our result will be fulfilled as long as the following conditions are satisfied
\begin{itemize}
\item[$(i)$] $F(q)$ is well-defined on a domain containing $[\gamma,\Upsilon]$  for some $\gamma\ge 0$, 
\item[$(ii)$] $\chi(\Upsilon) =0$ and $0<\mathtt{E}_{0,i}\Big[\frac{w_{\Theta(t)}}{w_i} \xi(t) \mathrm{e}^{\Upsilon \xi(t)}\Big] <\infty$, 
\item[$(iii)$] $\mathtt{E}_{0,i}[J_i(\xi)^{\beta}]<\infty$ for all $0<\beta<\Upsilon$ and $i\in \Ical$,
\item[$(iv)$] $\mathtt{E}_{0,i}\left[\left(\int_0^{t} \frac{w_{\Theta(s)}}{w_i} \mathrm{e}^{\xi_{s}} \mathrm{d} s\right)^\Upsilon \right]<\infty$ for all $i\in\Ical$.
\end{itemize}
Condition $(i)$ and the first claim in condition $(ii)$ follow from Cramér's condition. To see that $\mathtt{E}_{0,i}\Big[\frac{w_{\Theta(t)}}{w_i} \xi(t) \mathrm{e}^{\Upsilon \xi(t)}\Big]>0$, we note that the function 
\[
h: q \mapsto \mathtt{E}_{0,i}\Big[\frac{w_{\Theta(t)}}{w_i} \mathrm{e}^{q \xi(t)}\Big],
\]
is strictly convex and satisfies $h(0) = h(\Upsilon) = 1$, whence $h'(\Upsilon) = \mathtt{E}_{0,i}\Big[\frac{w_{\Theta(t)}}{w_i} \xi(t) \mathrm{e}^{\Upsilon \xi(t)}\Big] >0$. Condition $(iii)$ is a consequence of \cref{prop: moments exponential functionals} (see also \cref{rk: moments exponential functional}). It remains to prove $(iv)$, which is equivalent to prove that 
\[\mathtt{E}_{0,i}\left[\left(\int_0^{t}  \mathrm{e}^{\xi_{s}} \mathrm{d} s\right)^\Upsilon \right]<\infty, \]
for all $i\in\Ical$. Let $i\in\Ical$. If $\Upsilon\ge 1$, we use Jensen's inequality to get
\[
\mathtt{E}_{0,i}\left[\left(\int_0^{t}  \mathrm{e}^{\xi_{s}} \mathrm{d} s\right)^\Upsilon \right]
\le
t^{\Upsilon} \mathtt{E}_{0,i}\left[\int_0^{t}  \mathrm{e}^{\Upsilon \xi_{s}} \frac{\mathrm{d} s}{t} \right]
\le
M_i t^{\Upsilon},
\]
with $M_i = \underset{j\in\Ical}{\max} \;  \frac{w_i}{w_j}$, since $\mathtt{E}_{0,i}\Big[\frac{w_{\Theta(s)}}{w_i} \mathrm{e}^{\Upsilon \xi(s)}\Big]=1$ for all $s\ge 0$. The arguments for $\Upsilon<1$ are contained in the proof of \cite[Proposition 3.6]{KKPW}, but we repeat them for completeness. In this case, one can bound
\[
\mathtt{E}_{0,i}\left[\left(\int_0^{t}  \mathrm{e}^{\xi_{s}} \mathrm{d} s\right)^\Upsilon \right]
\le
t^{\Upsilon} \mathtt{E}_{0,i}\left[\underset{0\le s\le t}{\sup} \mathrm{e}^{\Upsilon \xi_{s}} \right].
\]
Now we take $p>1$, and bound
\[
\mathtt{E}_{0,i}\left[\underset{0\le s\le t}{\sup} \mathrm{e}^{\Upsilon \xi_{s}} \right]
\le
M^{(p)}_i \mathtt{E}_{0,i}\left[\underset{0\le s\le t}{\sup} \Big(\frac{w_{\Theta(s)}(\Upsilon/p)}{w_i(\Upsilon/p)} \mathrm{e}^{\Upsilon \xi_{s}/p}\Big)^p \right],
\]
with $M^{(p)}_i := \underset{j\in\Ical}{\max} \;  \left(\frac{w_i(\Upsilon/p)}{w_j(\Upsilon/p)}\right)^p$ as before. By convexity of $\chi$, $\chi(\Upsilon/p)<0$, hence
\[
\mathtt{E}_{0,i}\left[\underset{0\le s\le t}{\sup} \mathrm{e}^{\Upsilon \xi_{s}} \right]
\le
M^{(p)}_i \mathtt{E}_{0,i}\left[\underset{0\le s\le t}{\sup} \Big(\mathcal{W}^{(\Upsilon/p)}(s)\Big)^p \right],
\]
where we recall that $\mathcal{W}^{(\Upsilon/p)}(s) := \frac{w_{\Theta(s)}(\Upsilon/p)}{w_i(\Upsilon/p)} \mathrm{e}^{\Upsilon \xi(s)/p - \chi(\Upsilon/p)s}$, $s\ge 0$, is the Wald martingale associated to $\Upsilon/p$. We use Doob's maximal inequality:
\[
\mathtt{E}_{0,i}\left[\underset{0\le s\le t}{\sup} \mathrm{e}^{\Upsilon \xi_{s}} \right]
\le
\left(\frac{p}{p-1}\right)^p M^{(p)}_i \mathtt{E}_{0,i}\Big[\Big(\mathcal{W}^{(\Upsilon/p)}(t)\Big)^p \Big]
\le 
\left(\frac{p}{p-1}\right)^p M^{(p)}_i m^{(p)}_i \mathrm{e}^{-p\chi(\Upsilon/p)} \mathtt{E}_{0,i}[ \mathrm{e}^{\Upsilon \xi_{t}} ],
\]
with $m_i^{(p)} :=  \underset{j\in\Ical}{\max} \;  \left(\frac{w_j(\Upsilon/p)}{w_i(\Upsilon/p)}\right)^p$. This proves condition $(iv)$ above. Therefore  \cref{thm: tail behaviour affine} yields the desired estimate.
\end{proof}

We stress that the representation of MAPs given in Proposition \ref{prop:MAP structure} is quite important in our arguments below.  The extension  of the aforementioned result, as well as other results regarding the matrix exponent and some related martingales,  to  the case when  $\Ical$ is countable seems feasible under some general assumptions on the modulator $\Theta$. However, up to our knowledge, this case has not been rigorously treated in the literature, so we will not explore it  to keep this paper at a reasonable size.

\bigskip

\noindent \textbf{Self-similar Markov processes with types in $\Ical$.} Similarly to the construction  of positive self-similar Markov processes through L\'evy processes, it is possible to build a more general family of self-similar Markov processes using MAPs.  More precisely, let $(\xi,\Theta)$ be a MAP on $\R\times \Ical$, and fix $\alpha\in\R$. We construct the following process $(X,J)$ with values in $\R_+\times\Ical$ with a possible cemetery state $\partial$ \textit{via} a Lamperti-type procedure. First, introduce
\begin{equation} \label{eq: Lamperti time change}
    \varphi(t):= \inf\left\{s>0, \; \int_0^s \mathrm{e}^{\alpha \xi(s)}\mathrm{d}s > t\right\}, \quad t\ge 0.
\end{equation}
Then, for $x>0$, let 
\begin{equation} \label{eq: Lamperti MAP}
X(t) := x \exp(\xi(\varphi(tx^{-\alpha}))) \quad \textrm{and}\quad J(t) := \Theta(\varphi(tx^{-\alpha})), \quad t\ge 0,
\end{equation}
with the convention that $(X(t),J(t))=\partial$ when $t\ge \zeta$ where
\begin{equation} \label{eq: zeta lifetime}
\zeta:=x^{\alpha}\int_0^{\infty} \mathrm{e}^{\alpha \xi(s)}\mathrm{d}s.
\end{equation}
We write $\Pb_{x,i}$, $x>0$, with $i\in\Ical$, for the law of $(X,J)$ started from $(x,i)$, and $\Pb_{i} = \Pb_{1,i}$.
It is plain from this construction that $(X,J)$ is a Markov process, and that $X$ is a \emph{self-similar} process, that is to say for any $c>0$ and for all $x>0$, $i\in\Ical$, 
\begin{equation} \label{eq: ss}
\Big( (cX(c^{-\alpha}s), s\ge 0), \Pb_{x,i}\Big) \overset{d}{=} \Big( (X(s), s\ge 0), \Pb_{cx,i}\Big).
\end{equation}
Conversely, if $(X,J)$ is a Markov process in $\R_+\times\Ical$, such that $X$ is self-similar with index $\alpha$ in the sense of \eqref{eq: ss}, then one can find a MAP such that \eqref{eq: Lamperti MAP} holds, with the time change given in \eqref{eq: Lamperti time change}. This construction is reminiscent of the Lamperti or Lamperti-Kiu representations \cite{Lam,CPR, KKPW, ACGZ} for positive or real-valued self-similar Markov processes respectively. In the latter case, the type $J$ is the sign, see \cite{CPR, KKPW}. We call this process $(X,J)$, or sometimes just $X$, a \emph{self-similar Markov process with types}. 

For our purposes, it is convenient to  study   self-similar Markov processes (ssMp) with types  when the starting point of $X$ tends to the origin. In other words,  we would like to  find the behaviour of  $\mathbbm{P}_{x,i}$ as $x$ tends to $0$. On the other hand, we see from the scaling property that the question boils down to that of the large times asymptotic behaviour. That is 
$\mathbb{P}_{x,i}(X_1\in \cdot , J_1\in \cdot)$ converges weakly, as $x\to 0$ to a probability measure here denoted as $\eta(\cdot, \cdot)$ if, and only if, for any $y>0$
\[
\mathbb{P}_{y,i}(t^{-1/\alpha}X_t\in \cdot , J_t\in \cdot)\to \eta(\cdot, \cdot)\qquad \textrm{as}\quad t\to \infty.
\] 
 Let us assume that $X$ is {\it conservative} that is, that its lifetime $\zeta$ is almost surely infinite. Our next result determines the existence of the weak limit of $\mathbb{P}_{x,i}(X_t\in \cdot , J_t\in \cdot)$, as $x\to 0$, under the assumption that $\xi$ is not concentrated on a lattice and $m:=\mathtt{E}_{0,\pi}[\xi_1]<\infty$, as a \textit{self-similar entrance law} for  the ssMp with types $(X,J)$.
 
   Following  Rivero \cite{Rivero16}, we say that a family  $(\eta_t,t>0)$ of $\sigma$--finite measures on $(0,\infty)\times \mathcal{I}$ is a self-similar entrance law for the semigroup $(P_t^{(X,J)}, t\ge 0)$ of $(X,J)$ if the following two properties are satisfied:
 \begin{itemize}
 \item[i)]  the identity between measures
 \[
 \eta_s P_t^{(X,J)}=\eta_{t+s},
 \]
 holds, that is 
 \[
\sum_{i\in \mathcal{I}} \int_{(0,\infty)} \eta_s({\rm d} x, i) \mathbb{E}_{x,i}\left[f(X_t, J_t)\right]=\sum_{i\in \mathcal{I}} \int_{(0,\infty)} \eta_{t+s}({\rm d} x, i) f(x,i),
 \]
where, for every $i \in \mathcal{I}$,  $f(\cdot, i)$ is a positive  and measurable function and $s>0$, $t\ge 0.$
\item[ii)]  for all $s>0$,
\[
\eta_s f =\eta_1H_{s^{1/\alpha}}f,
\]
where, for every $i \in \mathcal{I}$,  $f(\cdot, i)$ is a positive  and measurable function, and for $c>0$, $H_c$ denotes the dilation operator $H_cf(x,i)=f(cx, i)$.
 \end{itemize}

\begin{Thm}\label{entrancelaw} Assume that $X$ is a conservative ssMp with types and index $\alpha>0$. Moreover, suppose that the MAP $(\xi,\Theta)$ associated with $X$ through the Lamperti transform is such that $\xi$ is not concentrated on a lattice and has positive mean \begin{equation}\label{meanalh}
m=\mathtt{E}_{0,\pi}[\xi_1]<\infty.
\end{equation}
Then as $x\to 0+$, the probability measures $\mathbb{P}_{x,j}$ converge in the sense of finite dimensional marginals towards a probability measure which is determined by the  self-similar entrance law  $(\eta_t, t\ge 0)$, where, for $j\in \mathcal{I}$ such that  $f(\cdot, j)$ is positive and measurable, and $t>0$, 
\[
\eta_t f:=\frac{1}{\alpha m}\sum_{i\in \mathcal{I}}\pi_i\mathtt{E}^\natural_{0,i}\left[\frac{1}{I(\alpha\xi)} f\left(\left(\frac{t}{I(\alpha\xi)}\right)^{1/\alpha}, i\right)\right],
\]
with
\[
I(\alpha\xi):= \int_0^{\infty} \mathrm{e}^{\alpha \xi(s)}\mathrm{d}s.
\]
More precisely, for all measurable sets $A_1, B_1, \ldots, A_n, B_n$ and all times $0<t_1<\ldots< t_n$, as $x\to 0+$,
\begin{multline*}
\mathbb{P}_{x,j}(X_{t_1}\in A_1, J_{t_1}\in B_1,X_{t_2}\in A_2, J_{t_2}\in B_2, \ldots, X_{t_n}\in A_n, J_{t_n}\in B_n)= \\
\to \sum_{i\in B_1}\int_{A_1} \eta_{t_1}( dy, i)\mathbb{P}_{y,i}(X_{t_2}\in A_2, J_{t_2}\in B_2, \ldots, X_{t_n}\in A_n, J_{t_n}\in B_n).
\end{multline*}
\end{Thm}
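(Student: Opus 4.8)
\noindent The plan is to read the statement off the Lamperti--Kiu representation \eqref{eq: Lamperti MAP} together with the duality identity of \cref{dlemma}, which recast the problem as a ``Lamperti‑clock'' renewal statement for the MAP $(\xi,\Theta)$. First, set $A_v:=\int_0^v\mathrm{e}^{\alpha\xi_s}\,\mathrm{d}s$, so that $\varphi=A^{-1}$ in the notation of \eqref{eq: Lamperti time change}; since $m>0$ and $\alpha>0$ we have $\xi_s\to+\infty$, hence $A_\infty=\infty$ a.s., so that $\zeta=\infty$ in \eqref{eq: zeta lifetime} and $\varphi(\lambda)\to\infty$ as $\lambda\to\infty$. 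Writing $v=\varphi(tx^{-\alpha})$ in \eqref{eq: Lamperti MAP}, so that $A_v=tx^{-\alpha}$ and $x=(t/A_v)^{1/\alpha}$, a direct computation gives, under $\mathbb{P}_{x,j}$,
\[
X_t=\Big(\frac{t}{R_v}\Big)^{1/\alpha},\qquad J_t=\Theta_v,\qquad\text{where}\quad R_v:=\mathrm{e}^{-\alpha\xi_v}A_v=\int_0^v\mathrm{e}^{-\alpha(\xi_v-\xi_{v-r})}\,\mathrm{d}r ,
\]
so that $\mathbb{E}_{x,j}[f(X_t,J_t)]=\mathtt{E}_{0,j}\big[f\big((t/R_{\varphi(tx^{-\alpha})})^{1/\alpha},\Theta_{\varphi(tx^{-\alpha})}\big)\big]$. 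Since $tx^{-\alpha}\to\infty$ as $x\to0+$, the question reduces to identifying the limiting law, as $\lambda\to\infty$, of $(R_{\varphi(\lambda)},\Theta_{\varphi(\lambda)})$ --- that is, of the Markov process $v\mapsto(R_v,\Theta_v)$ read along the \emph{random} time $\varphi(\lambda)$. This limit does not depend on $t$, which is precisely why $\eta_t$ takes the stated form, with $t$ entering only through $(t/I(\alpha\xi))^{1/\alpha}$.

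Next, I would identify the relevant stationary law through duality. By \cref{dlemma}, for fixed $v$ the reversed path $(\xi_v-\xi_{v-r},\Theta_{(v-r)-})_{0\le r\le v}$ under $\mathtt{P}_\pi$ equals in law $(-\xi_r,\Theta_r)_{0\le r\le v}$ under $\mathtt{P}^\natural_\pi$; since the dual ordinator has negative mean $\mathtt{E}^\natural_\pi[\xi_1]=-m$, it drifts to $-\infty$ and $I(\alpha\xi)=\int_0^\infty\mathrm{e}^{\alpha\xi_s}\,\mathrm{d}s<\infty$ a.s.\ under $\mathtt{P}^\natural$. Consequently $(R_v,\Theta_v)$ converges in law along deterministic times to a limit $(R_\infty,\Theta_\infty)$ with $\Theta_\infty\sim\pi$ and, conditionally on $\{\Theta_\infty=i\}$, $R_\infty\overset{d}{=}I(\alpha\xi)$ under $\mathtt{P}^\natural_{0,i}$ (the limit is the same for any fixed starting type $j$, since $\Theta$ is ergodic), and this is the unique stationary law $\mu$ of the ergodic Markov process $v\mapsto(R_v,\Theta_v)$. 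Now $\frac{\mathrm d}{\mathrm dv}\log A_v=\mathrm{e}^{\alpha\xi_v}/A_v=R_v^{-1}$, so $\varphi(\lambda)$ is (up to a transient) the first‑passage time over level $\log\lambda$ of the continuous increasing additive functional $v\mapsto\int_1^v R_s^{-1}\,\mathrm ds$ of $(R_v,\Theta_v)$, and there is no overshoot. A Markov renewal theorem --- this is where the hypothesis that $\xi$ is non‑lattice enters, to preclude periodicity of the time‑changed process --- then yields that $(R_{\varphi(\lambda)},\Theta_{\varphi(\lambda)})$ converges in law to the size‑biased stationary measure $\widehat\mu(\mathrm di,\mathrm dr)=\frac{r^{-1}}{\mathbb{E}_\mu[R^{-1}]}\,\mu(\mathrm di,\mathrm dr)$. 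The normalisation is fixed by Birkhoff's ergodic theorem and the law of large numbers for $\xi$: $\mathbb{E}_\mu[R^{-1}]=\lim_{v\to\infty}\frac1v\int_1^v R_s^{-1}\,\mathrm ds=\lim_{v\to\infty}\frac{\log A_v}{v}=\alpha m$ (finite because $m<\infty$; in passing this gives $\sum_i\pi_i\mathtt{E}^\natural_{0,i}[1/I(\alpha\xi)]=\alpha m$, so the limiting measure has total mass one). Integrating $f((t/r)^{1/\alpha},i)$ against $\widehat\mu$ and unpacking $\mu$ produces exactly $\eta_t f$, so $\mathbb{P}_{x,j}(X_t\in\cdot,J_t\in\cdot)\to\eta_t$ weakly as $x\to0+$. (The constants $\frac1{\alpha m}$ and $1/I(\alpha\xi)$ also emerge directly, without the renewal theorem, from the change of variables $\lambda=A_v$, which turns $\int_1^\Lambda(\cdot)\frac{\mathrm d\lambda}{\lambda}$ into a time integral against $R_v^{-1}\,\mathrm dv$, followed by division by $\log\Lambda$ and the ergodic theorem; the renewal theorem is what upgrades this Cesàro limit to a genuine one.)

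It remains to pass from the one‑dimensional marginal to the finite‑dimensional ones and to verify the entrance‑law axioms. For the former, the Markov property of $(X,J)$ gives
\[
\mathbb{E}_{x,j}\Big[\prod_{k=1}^{n}f_k(X_{t_k},J_{t_k})\Big]=\mathbb{E}_{x,j}\big[f_1(X_{t_1},J_{t_1})\,\Phi(X_{t_1},J_{t_1})\big],\qquad \Phi(y,i):=\mathbb{E}_{y,i}\Big[\prod_{k=2}^{n}f_k(X_{t_k-t_1},J_{t_k-t_1})\Big],
\]
and $\Phi$ is bounded and, by the Feller property of $(X,J)$ on $(0,\infty)\times\mathcal I$, continuous there; since $\eta_{t_1}$ charges neither the origin nor the relevant boundaries, letting $x\to0+$ and using the one‑dimensional convergence already established produces the limit $\sum_{i\in B_1}\int_{A_1}\eta_{t_1}(\mathrm dy,i)\,\mathbb{P}_{y,i}(\cdots)$ of the statement. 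The two defining properties of a self‑similar entrance law then follow at once: property (i) by letting $x\to0+$ in $\mathbb{E}_{x,j}[f(X_{t+s},J_{t+s})]=\mathbb{E}_{x,j}[P^{(X,J)}_s f(X_t,J_t)]$, again by Feller continuity, and property (ii), $\eta_s f=\eta_1 H_{s^{1/\alpha}}f$, by reading off the explicit formula, since $(s/I(\alpha\xi))^{1/\alpha}=s^{1/\alpha}(1/I(\alpha\xi))^{1/\alpha}$ (equivalently it is inherited from the scaling property \eqref{eq: ss}).

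The step I expect to be the main obstacle is the Markov renewal argument of the second paragraph: showing that $(R_{\varphi(\lambda)},\Theta_{\varphi(\lambda)})$ genuinely converges --- not merely in Cesàro average --- to the size‑biased stationary law, together with the uniform integrability needed to pass to the limit against unbounded functionals such as $r\mapsto r^{-1}H(r,i)$. It is exactly here that the non‑lattice hypothesis is used, and where one needs the moment estimates for exponential functionals of MAPs: \cref{prop: moments exponential functionals} applied to the dual MAP (giving finite moments of $I(\alpha\xi)$ under $\mathtt{P}^\natural$ of small order, hence tightness of $R_{\varphi(\lambda)}$ and finiteness of $\mathbb{E}_\mu[R^{-1}H]$), together with \cref{rk: moments exponential functional} and the convexity of the leading eigenvalue (\cref{prop: convexity leading eigenvalue}).
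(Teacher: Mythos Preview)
Your route is sound but genuinely different from the paper's. You work pathwise: rewrite $X_t=(t/R_v)^{1/\alpha}$ with $R_v=\mathrm e^{-\alpha\xi_v}A_v$, identify the stationary law of the Markov process $(R_v,\Theta_v)$ via the duality \cref{dlemma}, and then argue that reading $(R,\Theta)$ along the inverse additive functional $\varphi$ produces the $R^{-1}$--size‑biased stationary law, with normaliser $\alpha m$ fixed by the ergodic theorem. The paper instead follows the Bertoin--Yor resolvent scheme: it proves a duality identity between the resolvents $V^{(q)}$ and $V^{\natural,(q)}$ with respect to $\mu(\mathrm dx,j)=x^{\alpha-1}\pi_j\,\mathrm dx$, applies a Markov additive renewal theorem to the potential measure $U_{i,j}$ of $(\xi,\Theta)$ to obtain $\lim_{x\to0}V^{(0)}f(x,i)=m^{-1}\sum_j\pi_j\int f(y,j)\,y^{\alpha-1}\mathrm dy$, shows direct Riemann integrability of $y\mapsto \mathrm e^{\alpha y}V^{(q)}f(\mathrm e^y,i)$ and a tightness lemma to justify the passage, and then uses the resolvent equation $V^{(q)}=V^{(0)}-qV^{(0)}V^{(q)}$ together with the resolvent duality to identify $\lim_{x\to0}V^{(q)}f(x,i)$ as the Laplace transform $\int_0^\infty \mathrm e^{-qu}\eta_u f\,\mathrm du$.

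What each buys: the paper's argument reduces everything to standard analytical pieces (a Markov renewal theorem for $U_{i,j}$, direct Riemann integrability, and the resolvent equation), so each step is routine to make rigorous; the non‑lattice hypothesis enters exactly at the renewal theorem, and tightness is handled by a separate lemma. Your argument is more transparent probabilistically --- it explains \emph{why} $\eta_t$ is a size‑biased stationary law and why the factor $1/(\alpha m)\cdot I(\alpha\xi)^{-1}$ appears --- but the step you flag is indeed the crux: upgrading the Cesàro limit for $W_u=(R,\Theta)_{\varphi(\mathrm e^u)}$ to a genuine distributional limit is not a named theorem one can simply cite for this class of processes. In practice one proves it by introducing a regeneration structure (e.g.\ successive returns of $\Theta$ to a fixed state) and applying a Markov renewal theorem there, which brings you back to essentially the same technical ingredient the paper isolates. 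Your treatment of the finite‑dimensional marginals and of the entrance‑law axioms via the Feller/Markov property is in line with the paper, which likewise takes those as immediate once the one‑dimensional limit is in hand.
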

\noindent It is important to note that the self-similar entrance law  $(\eta_t, t\ge 0)$ is actually a family of probability measures.

The proof of the previous result follows the same steps as  the construction of the law  of  positive  self-similar Markov processes issued from the origin given by  Bertoin and Yor \cite{BY2002} plus  Markov renewal theory.   We will explain in the Appendix how we can use all these ideas to deduce our result. Bertoin and Yor \cite{BY2002} 

It is important to note that the above representation
of the entrance law has not been developed in the literature even for real self-similar Markov processes, with the
exception of the special setting of a stable process conditioned to avoid the
origin (cf. Kyprianou et al. \cite{KKPW}). Nonetheless a similar expression is stated in Theorem 11.16 in \cite{KP} for general real self-similar Markov processes without proof.

%
%
\section{Multitype growth-fragmentation processes} \label{sec: multitype GF}
In this section, we present an extension of the growth-fragmentation framework \cite{Ber-GF} to particles with finitely  many types in $\Ical$. We point out that the approach here presented is completely different to the treatment in \cite{DS} of the signed case, which relies on a change of Eve cell to go back to the positive setting. We shall describe the martingales appearing in this context, and how they can be found in the roots of \emph{multitype cumulants}.

\subsection{Construction of the multitype growth-fragmentation cell system} 
\label{sec: construction GF multitype}
Following section \ref{sec: ssmp}, we will consider a \textit{càdlàg} self-similar Markov process with types $(X,J)$. For technical reasons, we further assume that $(X,J)$ is either absorbed at the cemetery state $\partial$ after a finite time $\zeta$, or that $X$ converges to $0$ under $\Pb_{x,i}$ for all $x\in\R_+^*, i\in\Ical$. We write $\Delta X(s) := (X(s)-X(s^-))\mathds{1}_{\{X(s)<X(s^-)\}}$ for the jump of $X$, when negative.

We now construct a cell system whose building block is the self-similar Markov process with types $(X,J)$. This cell system will start from a single particle whose size and type are given by the process $(X,J)$, that will split in a binary way whenever $X$ has a negative jump. This will create new particles with initial size given by the intensity of the jump, and which will then evolve as $(X,J)$ independently of the mother cell, and independently of one another, conditionally on their sizes at birth. This construction takes the viewpoint presented in \cite{Ber-GF}, but note that in our context we need to clarify the types of the offspring. To this end, we introduce some preliminary notation. Call $(\xi,\Theta)$ the MAP underlying $(X,J)$ \textit{via} the Lamperti-Kiu transform \eqref{eq: Lamperti MAP}. We assume throughout the paper that for all $i\in \Ical$, the Lévy measure $\Lambda_i$ of $\xi_i$ can be decomposed as the sum of Lévy measures
\begin{equation} \label{eq: decomposition Lévy}
\Lambda_{i}(\mathrm{d}x) := \sum_{k\in\Ical}\Lambda_{i}^{(k)}(\mathrm{d}x),
\end{equation}
satisfying the following integrability condition
\[
\int_{\mathbb{R}}(1\land |x|^2)\Lambda_{i}(\mathrm{d}x)<\infty.
\]
Likewise, for $i,j\in\Ical$, we give ourselves a decomposition of the law $\Lambda_{U_{i,j}}$ of $U_{i,j}$ as 
\begin{equation} \label{eq: decomposition Ui,j}
\Lambda_{U_{i,j}}(\mathrm{d}x) := \sum_{k\in\Ical}\Lambda_{U_{i,j}}^{(k)}(\mathrm{d}x).
\end{equation}
Equations \eqref{eq: decomposition Lévy} and \eqref{eq: decomposition Ui,j} can be interpreted as a thinning of $\xi_i$ and $U_{i,j}$ respectively: the jumps of $\xi_i$ and $U_{i,j}$ should be understood as the result of a superimposition of jumps coming with a type $k\in\Ical$. Through the Lamperti time change \eqref{eq: Lamperti time change}, we see that any jump $\Delta X(s)$ of $X$ now also comes with some type, that we denote $J_{\Delta}(s)$. 

We may now construct the cell system associated with $(X,J)$ and indexed by the tree $\Ub:=\bigcup_{i\ge 0} \N^i$, with $\N=\{1,2,\ldots\}$ and $\N^0:=\{\varnothing\}$ is the label of the \emph{Eve cell}. For $u:=(u_1,\ldots,u_i)\in \Ub$, we denote by $|u|=i$, the \emph{generation} of $u$. In this tree, the offspring of $u$ will be labelled by the lists $(u_1,\ldots,u_i,k)$, with $k\in \N$.

We then define the law $\Pcal_{x,i}$, $x>0$, $i\in\Ical$, of the cell system $((\Xcal_u(t),\Jcal_u(t)),u\in \Ub)$ driven by $X$ similarly to \cite{Ber-GF}. Let $b_{\varnothing}=0$ and take a copy $(\Xcal_{\varnothing}, \Jcal_{\varnothing})$ of $(X,J)$ started from $(x,i)$. We can rank the sequence of positive jumps and times $(x_1,\beta_1),(x_2,\beta_2),\ldots$ of $-\Xcal_{\varnothing}$ by descending lexicographical order of the $x_k$'s (this is possible because in any case X  is either absorbed at the cemetery state $\partial$  or converges to $0$). We write $j_1,j_2,\ldots$ for the corresponding types. Given this sequence $(x_k,j_k,\beta_k, k\in\N)$, we define the first generation $(\Xcal_k, \Jcal_k), k\in \N,$ of our cell system as independent processes with respective law $\Pb_{x_k, j_k}$, and we set $b_k = b_{\varnothing}+\beta_k$ for the \emph{birth time} of $k$ and $\zeta_k$ for its lifetime. Likewise, we define the $n$-th generation given generations $1,\ldots, n-1$. A cell $u'=(u_1,\ldots,u_{n-1})\in \N^{n-1}$ gives birth to the cell $u=(u_1,\ldots,u_{n-1},k)$, with lifetime $\zeta_u$, at time $b_u = b_{u'}+\beta_{k}$ where $\beta_{k}$ is the $k$-th jump of $\Xcal_{u'}$ (in terms of the same ranking as before). Moreover, conditionally on the jump sizes, types and times of $\Xcal_{u'}$, $(\Xcal_u,\Jcal_u)$ has law $\Pb_{y,j}$ and is independent of the other daughter cells at generation $n$, where $-y=\Delta \Xcal_{u'}(\beta_k)$ comes with type $j$. Note that division events are \emph{conservative} in the sense that the sum of the size of a particle born at time $t$ and of its mother cell at time $t$ exactly equals the size of the mother cell before dislocation.

Although by construction the cells are not labelled chronologically, this uniquely defines the law $\Pcal_{x,i}$ of the cell system driven by $(X,J)$ and started from $(x,i)$. The cell system $((\Xcal_u(t), \Jcal_u(t)),u\in \Ub)$ is meant to describe the evolution of a population of atoms $u$ with size $\Xcal_u(t)$ and type $\Jcal_u(t)$ evolving with its age $t$ and fragmenting in a binary way. 

Finally, we define the \emph{multitype growth-fragmentation process}
\[
\Xbf(t) := \left\{\left\{ (\Xcal_u(t-b_u),\Jcal_u(t-b_u)), \; u\in\Ub \; \text{and} \; b_u\le t<b_u+\zeta_u \right\}\right\}, \quad t\ge 0,
\]
where the double brackets denote multisets: $\Xbf(t)$ is the collection of all the particles alive in the system at time $t$. We denote by $\mathbf{P}_{x,i}$ the law of $\Xbf$ started from $(x,i)$ and $(\Fcal_t,t\ge 0)$ the natural filtration of $\Xbf$. 

\begin{Rk}
We emphasise that only the \emph{negative} jumps of $X$ give birth to new cells. One could also be willing to create new particles at the positive jump times, corresponding to cells with negative mass, so that the conservation rule still holds at splittings, similarly as in \cite{DS}. This will simply result in doubling the number of types of the chain $J$, by considering the sign itself as a type. Hence we can restrict without loss of generality to considering only positive cells, \textit{i.e.} negative jumps.
\end{Rk}

We stress that, similarly to \cite[Section 3.4]{Ber-GF}, multitype growth-fragmentation processes are closely related to \emph{multitype branching random walks}, as the process $(-\log\Xcal_u(0), \Jcal_u(0))_{u\in\Ub}$ forms a multitype branching random walk. It is clear from the definition of growth-fragmentation processes that the cell system enjoys a genealogical branching structure. Under mild assumptions, this extends to a temporal branching property. Construct
\[ 
\overline{\Xbf} (t) :=  \left\{\left\{ (\Xcal_u(t-b_u), \Jcal_u(t-b_u),|u|), \; u\in\Ub \; \text{and} \; b_u\le t<b_u+\zeta_u \right\}\right\}, \quad t\ge 0,
\]
by adjunction of the generations to the growth-fragmentation process; and consider its associated filtration $(\overline{\Fcal}_t, t\ge 0)$. A measurable function $f:\R_+ \rightarrow [0,+\infty)$ is called \emph{excessive} for $\Xbf$ if  $\underset{x>a}{\inf} f(x) > 0$ for all $a>0$, and for all $(x,i)\in \R_+\times\Ical$ and all $t\ge 0$,
\begin{equation}\label{excesive}
\mathbf{E}_{x,i}\Bigg[ \sum_{(z,j)\in\Xbf(t)} f(z)\Bigg] \le f(x).
\end{equation}
If such an excessive function exists, then one can rank the elements $(X_1(t),J_1(t)), (X_2(t),J_2(t)), \ldots$ of $\Xbf(t)$ by descending order of their size for any fixed $t$.
\begin{Prop} \label{prop: branching temporal}
Assume that $\Xbf$ has an excessive function. Then for any $t\ge 0$, conditionally on $\overline{\Xbf}(t)=\{\{(x_i, j_i, n_i)\}\}$, the process $(\overline{\Xbf}(t+s), s\ge 0)$ is independent of $\overline{\Fcal}_t$ and distributed as 
\[
\bigsqcup_{i\ge 1} \overline{\Xbf}_i(s) \circ \theta_{n_i},
\]
where the $\overline{\Xbf}_i, i\ge 1,$ are independent processes distributed as $\overline{\Xbf}$ under $\mathbf{P}_{x_i, j_i}$, $\theta_n$ is the shift operator, i.e. $\{\{(z_i, y_i, k_i), i\ge 1\}\}\circ \theta_n := \{\{(z_i, y_i, k_i+n), i\ge 1\}\}$, and $\sqcup$ denotes union of multisets.
\end{Prop}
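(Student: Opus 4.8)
The plan is to reduce the temporal branching property to two ingredients: the \emph{genealogical} branching property of the cell system, which is built into the construction of $\Pcal_{x,i}$, and the ordinary Markov property of the driving self-similar Markov process $(X,J)$ at the deterministic time $t$. First I would record the genealogical branching: conditionally on the Eve cell $(\Xcal_{\varnothing},\Jcal_{\varnothing})$ --- equivalently, on the ranked sequence $(x_k,j_k,\beta_k)_{k\ge 1}$ of its negative jump sizes, types and times --- the subtree cell systems $(\Xcal_{ku},\Jcal_{ku})_{u\in\Ub}$, $k\ge 1$, are independent, the $k$-th being distributed as $(\Xcal_{\varnothing},\Jcal_{\varnothing})$ under $\Pcal_{x_k,j_k}$ but read along the subtree rooted at $k$ and started at the birth time $b_k=\beta_k$. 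Reading this off at the level of the growth-fragmentation process gives the \emph{Eve decomposition}
\[
\overline{\Xbf}(r)=\{\{(\Xcal_{\varnothing}(r),\Jcal_{\varnothing}(r),0)\}\}\,\mathds{1}_{\{r<\zeta_{\varnothing}\}}\;\sqcup\;\bigsqcup_{k:\,b_k\le r}\overline{\Xbf}_k(r-b_k)\circ\theta_1,\qquad r\ge 0,
\]
where, conditionally on $(\Xcal_{\varnothing},\Jcal_{\varnothing})$, the processes $\overline{\Xbf}_k$ are independent with $\overline{\Xbf}_k$ distributed as $\overline{\Xbf}$ under $\mathbf{P}_{x_k,j_k}$, and $\circ\theta_1$ increments all generations by one. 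The existence of an excessive function guarantees that $\overline{\Xbf}(r)$, and each of the multisets above, is almost surely locally finite, so that these unions are well defined, the cells of $\overline{\Xbf}(t)$ can be ranked, and the right-hand side of the proposition is again a locally finite multiset.

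Next I would prove the statement by induction on a generation cut-off. For $N\ge 0$ let $\overline{\Xbf}^{(N)}$ denote the growth-fragmentation obtained by suppressing the births of all cells of generation $>N$, and $\overline{\Fcal}^{(N)}_t$ its natural filtration; since the trajectory and lifetime of a cell of generation $\le N$ do not depend on this truncation, $\overline{\Xbf}^{(N)}(r)$ is exactly the sub-multiset of $\overline{\Xbf}(r)$ formed by the cells of generation $\le N$, and $\overline{\Xbf}^{(N)}(r)\uparrow\overline{\Xbf}(r)$ as $N\to\infty$. The Eve decomposition restricts to $\overline{\Xbf}^{(N+1)}$ with each daughter process $\overline{\Xbf}_k$ replaced by its truncation at relative generation $N$, which, conditionally on the Eve cell, is an independent copy of $\overline{\Xbf}^{(N)}$ under $\mathbf{P}_{x_k,j_k}$. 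The inductive claim $H(N)$ is the analogue of the proposition for $\overline{\Xbf}^{(N)}$, with the sole modification that a cell $(x_i,j_i,n_i)$ of $\overline{\Xbf}^{(N)}(t)$ spawns an independent copy of $\overline{\Xbf}^{(N-n_i)}$ under $\mathbf{P}_{x_i,j_i}$, shifted by $\theta_{n_i}$ (the smaller cut-off $N-n_i$ being forced by the global constraint on generations). The base case $H(0)$ is precisely the Markov property of $(X,J)$ at the deterministic time $t$ applied to the Eve cell. For the step $H(N)\Rightarrow H(N+1)$ one plugs into the restricted Eve decomposition: the Markov property of $(X,J)$ at $t$ handles the Eve summand, which on $\{t<\zeta_{\varnothing}\}$ restarts from $(\Xcal_{\varnothing}(t),\Jcal_{\varnothing}(t))$ as a fresh copy of $\overline{\Xbf}^{(N+1)}$ independent of the past, while $H(N)$ applied to each truncated daughter process with $b_k\le t$ handles the remaining summands; collecting terms shows that every cell of $\overline{\Xbf}^{(N+1)}(t)$ at generation $n$ spawns an independent copy of $\overline{\Xbf}^{(N+1-n)}$ from its current size and type, shifted by $\theta_n$, independently of $\overline{\Fcal}^{(N+1)}_t$, which is $H(N+1)$.

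Finally I would let $N\to\infty$. Fix $t$; by the excessive-function bound, for every $\varepsilon>0$ only finitely many cells of $\overline{\Xbf}(t)$ have size larger than $\varepsilon$, so there is a random $N_{\varepsilon}$ beyond which $\overline{\Xbf}^{(N)}(t)$ and $\overline{\Xbf}(t)$ coincide on $\{z>\varepsilon\}$; hence $\overline{\Xbf}^{(N)}(t)\uparrow\overline{\Xbf}(t)$, and likewise $\overline{\Xbf}^{(N)}(t+\cdot)\uparrow\overline{\Xbf}(t+\cdot)$, as locally finite multisets. On the right-hand side, each fixed cell $(x_i,j_i,n_i)$ of $\overline{\Xbf}(t)$ has $N-n_i\to\infty$, so, realising all the truncations on a common coupling, $\overline{\Xbf}^{(N-n_i)}$ under $\mathbf{P}_{x_i,j_i}$ converges almost surely to $\overline{\Xbf}$ under $\mathbf{P}_{x_i,j_i}$; independence across $i$ is preserved, and the excessive function keeps the limiting union locally finite. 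Passing to the limit in $H(N)$, with the conditional law with respect to $\overline{\Fcal}_t$ recovered from those with respect to the $\overline{\Fcal}^{(N)}_t\uparrow\overline{\Fcal}_t$ by upward martingale convergence (applied to test functions that only depend on finitely many times and size thresholds, for which the truncated and untruncated states eventually agree), yields the proposition.

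I expect the main obstacle to be the bookkeeping of conditional independence in the inductive step: one must check that, given the Eve cell up to time $t$ together with the daughter growth-fragmentations up to their respective times $t-b_k$ (a $\sigma$-field containing $\overline{\Fcal}^{(N+1)}_t$), the post-$t$ evolution of the Eve cell and of its daughters born after $t$, on the one hand, and the post-$(t-b_k)$ continuations of the daughter subtrees, on the other, are mutually independent with the conditional laws dictated by the Markov property and by $H(N)$ --- and, crucially, that the resulting conditional law is a measurable function of the state $\overline{\Xbf}^{(N+1)}(t)$ alone, which is what turns $H(N+1)$ into a genuine branching-Markov statement. An alternative to the truncation scheme is to invoke a branching property along stopping lines in the cell system, but setting that up rigorously in the multitype setting requires comparable care; the remaining limiting step is routine given the local-finiteness estimates furnished by the excessive function.
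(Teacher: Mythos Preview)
Your sketch is correct and is essentially the classical argument the paper invokes: the paper does not give its own proof but refers to \cite[Proposition~2]{Ber-GF} and notes that the extension to the multitype setting is straightforward. Your Eve decomposition plus induction on the generation cut-off, with the excessive-function bound providing local finiteness at each stage and in the limit, is precisely that argument adapted to carry the type coordinate along; the bookkeeping concern you flag in the inductive step is genuine but routine once one conditions on the full Eve trajectory (which determines all birth times and initial data of first-generation cells) and then on the pasts of the daughter subtrees.
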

\begin{proof}
We refer to \cite[Proposition 2]{Ber-GF} for a proof of the statement in the classical framework, which is then easily extended to the multitype case.
\end{proof}

\subsection{Martingales in multitype growth-fragmentation processes} \label{sec: martingales}
We continue the study of martingales for multitype growth-fragmentation processes initiated in \cite{DS} in the signed case. The fact that $(-\log\Xcal_u(0), \Jcal_u(0))_{u\in\Ub}$ forms a multitype branching random walk provides several tools, including genealogical martingales for the growth-fragmentation cell system. The key feature is the following matrix $m(q)$ indexed by the type set $\Ical$, with entries
\begin{equation} \label{eq: matrix m(q)}
m_{i,j}(q) := \Eb_i \left[\sum_{0<s<\zeta} |\Delta X(s)|^q \mathds{1}_{\{J_{\Delta}(s)=j\}} \right], \quad q\in\R.
\end{equation}
This matrix has only nonnegative entries. We make the following two assumptions throughout the paper.

\medskip
   \textbf{Assumption A} : \textit{For $q\in\R$ such that $m(q)$ has finite entries, the matrix $m(q)$ is irreducible.}
\medskip

\noindent In other words, Assumption A means that all the types communicate in the growth-fragmentation cell system (this is not too restrictive, since we could restrict to communication classes otherwise). Since $\Ical$ is finite, this enables us to consider the Perron-Frobenius eigenvalue $\mathrm{e}^{\lambda(q)}$ and an associated positive eigenvector.  

\medskip
   \textbf{Assumption B} : \textit{There exists $\omega\in \R$ such that $\lambda(\omega)=0$.}
\medskip

\noindent Note that, if $\mu$ denotes the measure
\begin{equation} \label{eq: log Laplace convexity}
\mu(\mathrm{d}z) := \Eb_i \left[\sum_{0<s<\zeta} \mathds{1}_{\{|\Delta X(s)|\in\mathrm{d}z\}} \mathds{1}_{\{J_{\Delta}(s)=j\}} \right],
\end{equation}
then for all $q\in\R$, $m_{i,j}(q)$ is the $\log$-Laplace transform of $\mu$ with exponent $q$, and hence is convex. It is a consequence of Kingman's theorem \cite{Kin} that if $m$ is finite and invertible in some neighbourhood of $\omega$, then $\lambda$ is convex on this domain. This implies that $\lambda$ can only have at most two roots.

We shall give a criterion for Assumption B later on in section \ref{sec: cumulants}. If $(v_i)_{i\in \Ical}$ has positive entries and $\omega\ge 0$, we say that $((v_i)_{i\in \Ical}, \omega)$ is \emph{admissible} for $X$ if $\omega\in D$, $\lambda(\omega)=0$ and $(v_i)_{i\in \Ical}$ is an associated eigenvector of $m(\omega)$. In other words, $((v_i)_{i\in \Ical}, \omega)$ is admissible for $X$ if and only if
\[
\forall i\in\Ical, \quad \Eb_i\Bigg[\sum_{0<s<\zeta} v_{J_{\Delta}(s)} |\Delta X(s)|^{\omega} \Bigg] = v_i.
\]
Indeed, by Perron-Frobenius theory, only the leading eigenvalue can be associated to positive eigenvectors. This invariance property extends to a genealogical martingale as follows. Define 
\[
\Gscr_n := \sigma((\Xcal_u,\Jcal_u); |u|\le n),
\]
noting that by definition, if $u\in\Ub$ is such that $|u|=n+1$, then $\Xcal_u(0)$ is $\Gscr_n$--measurable. 
\begin{Prop} \label{prop: genealogical martingale}
For all $(x,i)\in\R^*_+\times\Ical$, the process
\[
\Mcal(n) := \sum_{|u|=n+1} v_{\Jcal_u(0)} \Xcal_u(0)^{\omega}, \quad n\ge 0,
\]
defines a $(\Gscr_n,n\ge 0)$--martingale under $\Pcal_{x,i}$.
\end{Prop}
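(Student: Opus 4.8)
The plan is to check directly that $\Mcal$ satisfies the three defining properties of a $(\Gscr_n)$--martingale under $\Pcal_{x,i}$: $\Gscr_n$--measurability, integrability, and the identity $\Eb[\Mcal(n+1)\mid\Gscr_n]=\Mcal(n)$. The whole argument is powered by two ingredients: the admissibility relation, which in the form
\[
\Eb_i\Bigg[\sum_{0<s<\zeta} v_{J_{\Delta}(s)}\,|\Delta X(s)|^{\omega}\Bigg] = v_i, \qquad i\in\Ical,
\]
is exactly the eigenvector equation for $m(\omega)$ at eigenvalue $\mathrm{e}^{\lambda(\omega)}=1$; and the self-similarity \eqref{eq: ss}. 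First I would record the scaling consequence that under $\Pb_{x,i}$ the multiset $\{\{(|\Delta X(s)|,J_{\Delta}(s)),\,0<s<\zeta\}\}$ has the law of $\{\{(x\,|\Delta X(s)|,J_{\Delta}(s)),\,0<s<\zeta\}\}$ under $\Pb_i$ — the Lamperti time change \eqref{eq: Lamperti time change} only reparametrises time and does not affect the collection of jump values (which is simply multiplied by $x$), while the jump types are intrinsic to the underlying MAP and are unchanged. Combined with admissibility this gives the key formula
\[
\Eb_{x,i}\Bigg[\sum_{0<s<\zeta} v_{J_{\Delta}(s)}\,|\Delta X(s)|^{\omega}\Bigg] = x^{\omega} v_i .
\]

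Next I would carry out the martingale step. Every $u\in\Ub$ with $|u|=n+2$ is the child of a unique $u'$ with $|u'|=n+1$, and by construction of $\Pcal_{x,i}$ the birth data $(\Xcal_u(0),\Jcal_u(0))$ of the children of $u'$ are precisely the pairs $(|\Delta\Xcal_{u'}(s)|,J_{\Delta}(s))$, $0<s<\zeta_{u'}$, read off from the ranked sequence of negative jumps of $\Xcal_{u'}$. Hence
\[
\Mcal(n+1) = \sum_{|u'|=n+1}\ \sum_{0<s<\zeta_{u'}} v_{J_{\Delta}(s)}\,|\Delta\Xcal_{u'}(s)|^{\omega}.
\]
By the genealogical branching structure of the cell system, conditionally on $\Gscr_n$ the cells $(\Xcal_{u'},\Jcal_{u'})$ with $|u'|=n+1$ are independent, each distributed as $(X,J)$ under $\Pb_{\Xcal_{u'}(0),\Jcal_{u'}(0)}$, and $\Xcal_{u'}(0),\Jcal_{u'}(0)$ are $\Gscr_n$--measurable. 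Taking the conditional expectation term by term (legitimate since all summands are nonnegative, by conditional Tonelli) and applying the key formula with $x=\Xcal_{u'}(0)$, $i=\Jcal_{u'}(0)$ gives
\[
\Eb\big[\Mcal(n+1)\mid\Gscr_n\big] = \sum_{|u'|=n+1} \Xcal_{u'}(0)^{\omega}\, v_{\Jcal_{u'}(0)} = \Mcal(n).
\]

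Finally, $\Gscr_n$--measurability of $\Mcal(n)$ holds because for $|u|=n+1$ the birth size and type $(\Xcal_u(0),\Jcal_u(0))$ are determined by the generation-$n$ cell trajectories (as already noted before the statement); and iterating the computation above starting from $\Eb_{x,i}[\Mcal(0)]=x^{\omega}v_i<\infty$ shows $\Eb_{x,i}[\Mcal(n)]=x^{\omega}v_i<\infty$ for all $n$, so each $\Mcal(n)\in L^1(\Pcal_{x,i})$. The main obstacle is not the algebra but making precise the branching statement used in the second paragraph — that, conditionally on $\Gscr_n$, the generation-$(n+1)$ cells are independent copies of $(X,J)$ started from their ($\Gscr_n$--measurable) birth data, and that their children exhaust exactly the generation-$(n+2)$ cells. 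This is built into the definition of $\Pcal_{x,i}$ and relies on the ranking of negative jumps being well defined, i.e. on the standing assumption that $X$ is absorbed at $\partial$ or converges to $0$; once this is invoked, the rest is a one-line application of admissibility and self-similarity.
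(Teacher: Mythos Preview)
Your proof is correct and is essentially the same argument as the paper's, just made explicit: the paper's one-line proof cites the additive martingale for the multitype branching random walk $(-\log\Xcal_u(0),\Jcal_u(0))_{u\in\Ub}$, and your direct verification (scaling plus admissibility plus the genealogical branching property) is precisely what underlies that reference. There is nothing missing.
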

\begin{proof}
The process $\Mcal$ is obtained as the genealogical martingale of the multitype branching random walk $(-\log\Xcal_u(0), \Jcal_u(0))_{u\in\Ub}$, see \cite[Theorem 3.3]{DS}.
\end{proof}

\medskip
Moreover, the following martingale for $X$ will turn out useful in the next section. In particular, it implies the existence of an excessive function by extending \cite[Theorem 1]{Ber-GF} to the multitype case.

\begin{Prop} \label{prop: martingale M}
For all $(x,i)\in\R^*_+\times\Ical$, under $\Pb_{x,i}$ the process
\[
M(t) := v_{J(t)} |X(t)|^{\omega} + \sum_{0<s\le t\wedge \zeta} v_{J_{\Delta}(s)} |\Delta X(s)|^{\omega}, \quad t\ge 0,
\]
is a uniformly integrable martingale for the filtration $(F_t^X,t\ge 0)$ of $X$, with terminal value 
\[\sum_{0<s<\zeta} v_{J_{\Delta}(s)} |\Delta X(s)|^{\omega}.\]
\end{Prop}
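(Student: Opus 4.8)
The plan is to derive the martingale property of $M$ from the genealogical martingale $\Mcal$ of \cref{prop: genealogical martingale} combined with the Lamperti-Kiu representation, following the strategy of \cite[Theorem 1]{Ber-GF} but keeping track of the types. First I would observe that the matrix $m(q)$ in \eqref{eq: matrix m(q)} can be written, through the Lamperti-Kiu time change \eqref{eq: Lamperti MAP}, in terms of the MAP $(\xi,\Theta)$: a negative jump of $X$ at time $s$ with type $j$ corresponds to a jump of $\xi$ (of the Lévy component $\xi_i$ or of a modulator transition $U_{i,j'}$) carrying label $j$ under the thinning \eqref{eq: decomposition Lévy}–\eqref{eq: decomposition Ui,j}. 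Using the compensation formula for the Poisson random measures driving $\xi_i$ and the jump chain of $\Theta$, one gets a Kesten-type renewal identity which, together with the admissibility relation $\Eb_i[\sum_{0<s<\zeta} v_{J_\Delta(s)}|\Delta X(s)|^\omega] = v_i$, should be read as: the expected total mass of type-weighted fragments produced by an Eve cell started from $(1,i)$ equals $v_i$. By self-similarity this scales to $v_i x^\omega$ for an Eve cell started from $(x,i)$.

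Next I would set up the martingale. Fix $(x,i)$ and let $t\ge 0$. I want to show $\Eb_{x,i}[M(t+r)\mid F_t^X] = M(t)$ for $r\ge 0$. Decompose the terminal value $\sum_{0<s<\zeta} v_{J_\Delta(s)}|\Delta X(s)|^\omega$ according to whether the jump occurs before or after time $t$. The jumps before $t\wedge\zeta$ contribute the sum already present in $M(t)$, which is $F_t^X$-measurable. For the jumps after $t$: conditionally on $F_t^X$, on the event $\{t<\zeta\}$ the process $(X(t+r),J(t+r))_{r\ge 0}$ is a copy of $X$ started from $(X(t),J(t))$, so by the Markov property and the scaling identity \eqref{eq: ss}, the expected contribution of those future jumps is $\Eb_{X(t),J(t)}[\sum_{0<s<\zeta} v_{J_\Delta(s)}|\Delta X(s)|^\omega] = v_{J(t)}|X(t)|^\omega$ by the renewal identity above. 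This exactly supplies the leading term $v_{J(t)}|X(t)|^\omega$ of $M(t)$, and on $\{t\ge\zeta\}$ both sides vanish consistently (here the standing assumption that $X$ is absorbed at $\partial$ after finite $\zeta$, or converges to $0$, makes the terminal value well-defined and the decomposition exhaustive). This gives the martingale property; the fact that $M$ is nonnegative with constant expectation $\Eb_{x,i}[M(t)] = v_i x^\omega$ and converges a.s.\ to its terminal value (a genuine random variable by the $\omega\ge 0$ finiteness of $m(\omega)$) yields uniform integrability by Scheffé / dominated-convergence since the increasing part is dominated by the integrable terminal value.

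Alternatively — and this is probably the cleanest route to write — I would connect $M$ directly to the discrete genealogical martingale $\Mcal$ of \cref{prop: genealogical martingale}: at an integer generation level, $\Mcal(n) = \sum_{|u|=n+1} v_{\Jcal_u(0)}\Xcal_u(0)^\omega$, and one checks that $M(t)$, evaluated along the Eve cell and its first-generation offspring, is precisely the closure of the optional-stopping of the cell-system martingale at the birth times; then \cref{prop: branching temporal} (valid once an excessive function exists, which this very proposition supplies a posteriori — so one must be slightly careful about the order of the bootstrap and instead invoke the branching structure of the cell system directly, not the temporal branching property) transfers the martingale property to continuous time. The main obstacle I anticipate is the careful justification that the expected future fragment mass identity $\Eb_{x,i}[\sum_{0<s<\zeta} v_{J_\Delta(s)}|\Delta X(s)|^\omega] = v_i x^\omega$ holds with equality and not merely inequality — i.e.\ that no mass is ``lost at infinity'' or at the cemetery — which is where the standing dichotomy (finite $\zeta$ with absorption, or $X\to 0$) and the finiteness of $m(\omega)$ under admissibility are essential, and where the Lamperti-Kiu picture together with \cref{prop: moments exponential functionals} / Cramér-type control on $I(\xi)$ does the real work.
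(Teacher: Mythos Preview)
Your core argument is correct and is exactly the standard route (and is what \cite[Proposition 3.5]{DS} does in the signed case, which the paper simply invokes): set $S:=\sum_{0<s<\zeta} v_{J_\Delta(s)}|\Delta X(s)|^\omega$; admissibility and self-similarity give $\Eb_{x,i}[S]=v_i x^\omega<\infty$; the Markov property at time $t$ then yields $\Eb_{x,i}[S\mid F_t^X]=M(t)$, so $M$ is a uniformly integrable martingale closed by $S$. That is the whole proof.

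Where you go astray is in the ``obstacles'' paragraph. The identity $\Eb_{x,i}\big[\sum_{0<s<\zeta} v_{J_\Delta(s)}|\Delta X(s)|^\omega\big]=v_i x^\omega$ is not something to justify --- it is the \emph{definition} of admissibility (see the discussion preceding \cref{prop: genealogical martingale}), plus one application of self-similarity \eqref{eq: ss}. There is no ``mass lost at infinity'' issue, and neither \cref{prop: moments exponential functionals} nor any Cram\'er-type control on $I(\xi)$ plays a role here; those tools appear only later, in \cref{sec: genealogical martingales Cramer}. Likewise, uniform integrability is immediate from $M(t)=\Eb_{x,i}[S\mid F_t^X]$; you do not need Scheff\'e, and the almost-sure convergence $M(t)\to S$ follows since $v_{J(t)}|X(t)|^\omega\to 0$ under the standing dichotomy on $X$. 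Your alternative approach through $\Mcal$ and \cref{prop: branching temporal} is unnecessary and, as you yourself note, risks circularity because the temporal branching property requires an excessive function --- which this very proposition provides.
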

\begin{proof}
We omit the proof as it essentially follows from \cite[Proposition 3.5]{DS}.
\end{proof}

\subsection{Multitype cumulant functions} \label{sec: cumulants}
For any sequence $((v_i)_{i\in\Ical}, \omega)$, define
\[
M(t):= v_{J(t)} |X(t)|^{\omega} + \sum_{0<s\le t\wedge \zeta} v_{J_{\Delta}(s)} |\Delta X(s)|^{\omega}, \quad t\ge 0,
\]
where we omit the dependence on $\omega$ and $(v_i)_{i\in\Ical}$ in the notation of $M$ for simplicity. \cref{prop: martingale M} states that when the underlying sequence is admissible, $M$ is a martingale under $\Pb_i$ for all $i\in\Ical$ (see the signed case \cite{DS}). A converse statement also holds, providing a more tractable characterisation of admissibility.
\begin{Prop} \label{prop: characterisation admissibility}
Let $H$ be the first jump time of $J$. Then $((v_i)_{i\in \Ical}, \omega)$ is admissible for $X$ if and only if, for all $i\in\Ical$,
\[
\Eb_i[M(H)] = v_i.
\]
\end{Prop}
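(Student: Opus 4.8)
The plan is to reduce the statement to two identities obtained from the strong Markov property of $(X,J)$ at $H$ together with self-similarity \eqref{eq: ss}, and then to feed these into Perron--Frobenius theory. I work under $\Pb_i=\Pb_{1,i}$, the general starting point following by scaling. Write $A(t):=\sum_{0<s\le t\wedge\zeta} v_{J_\Delta(s)}|\Delta X(s)|^{\omega}$, so that $M(t)=v_{J(t)}|X(t)|^{\omega}+A(t)$, with terminal value $\bar A:=\sum_{0<s<\zeta} v_{J_\Delta(s)}|\Delta X(s)|^{\omega}$ (by \cref{prop: martingale M}); set $g_i:=\Eb_i[\bar A]\in[0,\infty]$, and note that splitting according to the type $j$ of the jump gives $g_i=(m(\omega)v)_i$. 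Introduce the two nonnegative matrices $K_{i,j}:=\Eb_i[\,|X(H)|^{\omega}\mathds 1_{\{J(H)=j\}}\,]$ and $R_{i,j}:=\Eb_i\big[\sum_{0<s\le H\wedge\zeta}|\Delta X(s)|^{\omega}\mathds 1_{\{J_\Delta(s)=j\}}\big]$, so that $\Eb_i[A(H)]=(Rv)_i$. Since $|X(H)|^{\omega}>0$ on $\{H<\zeta\}$ and $\{J(H)=j\}$ has positive probability exactly when $q_{i,j}>0$, the matrix $K$ inherits irreducibility from $\Theta$; and iterating the strong Markov property at the successive jump times $H_0=0<H_1=H<H_2<\dots$ of $J$ (which increase to $\zeta$ through the Lamperti--Kiu transform) gives $\Eb_i[\,|X(H_n)|^{\omega}\mathds 1_{\{J(H_n)=j\}}\,]=(K^n)_{i,j}$.

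The first step is to record two consequences of the strong Markov property at $H$ and of the fact that each functional above is homogeneous of degree $\omega$ in the starting point (so $\Eb_{y,j}[\,\cdot\,]=y^{\omega}\Eb_j[\,\cdot\,]$). Decomposing $\bar A$ into the jumps occurring on $(0,H]$ and those on $(H,\zeta)$, the latter forming, conditionally on $(X(H),J(H))$, a rescaled fresh copy of $\bar A$, one gets $g_i=(Rv)_i+(Kg)_i$; and evaluating $M$ at time $H$ gives $M(H)=v_{J(H)}|X(H)|^{\omega}+A(H)$, hence $\Eb_i[M(H)]=(Kv)_i+(Rv)_i$. Iterating the first identity along the type-change epochs (equivalently, writing $\bar A=\sum_{n\ge0}B_n$ with $B_n$ the weighted jumps accumulated on $(H_n,H_{n+1}]$ and applying Tonelli) yields $g=\big(\sum_{n\ge0}K^n\big)Rv$; in particular, when $\rho(K)<1$, $\;g=(I-K)^{-1}Rv$.

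Next I would use the dictionary between admissibility and $g$. Since $g=m(\omega)v$, the equality $g=v$ is the same as $m(\omega)v=v$; and because $v$ has positive entries, $m(\omega)v=v$ forces $m(\omega)$ to have finite entries (hence $\omega\in D$) and then, by Assumption A and Perron--Frobenius, that $v$ is a leading eigenvector of $m(\omega)$ with eigenvalue $\mathrm e^{\lambda(\omega)}=1$, i.e.\ that $((v_i)_{i\in\Ical},\omega)$ is admissible; the converse implication is immediate. So it suffices to prove that $\Eb_i[M(H)]=v_i$ for all $i$ if and only if $g=v$. The ``if'' direction is then a one-line substitution: if $g=v$ (hence finite), the first identity reads $v_i=(Rv)_i+(Kv)_i$, and plugging this into the second gives $\Eb_i[M(H)]=(Kv)_i+(Rv)_i=v_i$. (Alternatively, this direction is immediate from optional sampling applied to the uniformly integrable martingale $M$ of \cref{prop: martingale M} at the stopping time $H$.)

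For the ``only if'' direction, assume $\Eb_i[M(H)]=v_i$ for all $i$. The second identity gives $v=Kv+Rv$ with $Kv,Rv\ge0$, so $v$ is a positive subinvariant vector of $K$, whence $\rho(K)\le1$, and $Rv=v-Kv$ is finite. The main obstacle will be to exclude $\rho(K)=1$: were it the case, pairing $v-Kv\ge0$ with a strictly positive left Perron eigenvector of $K$ would force $Kv=v$, hence $Rv=0$, hence (the entries of $v$ being positive) $R\equiv0$; but $R_{i,j}=0$ means that $X$ has, $\Pb_i$--a.s., no negative jump before $H$, and propagating this across $H_1,H_2,\dots$ via the strong Markov property would leave $X$ without any negative jump on $(0,\zeta)$, so $m(q)\equiv0$ and $\lambda\equiv-\infty$, contradicting Assumption B. Hence $\rho(K)<1$, and the iterated identity gives $g=(I-K)^{-1}Rv=(I-K)^{-1}(I-K)v=v$, so $((v_i)_{i\in\Ical},\omega)$ is admissible by the dictionary above. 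The degenerate case $N=1$, in which $H=\infty$ and $K=0$, is Bertoin's setting and is covered by the same computation.
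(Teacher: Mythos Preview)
Your proof is correct and follows essentially the same route as the paper: for the converse direction, both arguments decompose $\bar A=\sum_{0<s<\zeta} v_{J_\Delta(s)}|\Delta X(s)|^{\omega}$ over the successive type-change epochs $H_0<H_1<\cdots$ of $J$, apply the strong Markov property and self-similarity at each $H_k$, and telescope using the hypothesis $\Eb_j[M(H)]=v_j$. The difference is one of rigour rather than of idea: the paper writes the result as a difference of two series and asserts that it ``ultimately cancels out'' to $v_i$, without justifying why the boundary term $\Eb_i[v_{J(H_n)}|X(H_n)|^{\omega}]=(K^n v)_i$ vanishes as $n\to\infty$. Your packaging via the matrices $K,R$, the subinvariance $Kv\le v$, and the Perron--Frobenius argument (using Assumption~B to rule out $\rho(K)=1$) is precisely what is needed to make that telescoping step legitimate, so your version is in fact the more complete of the two.
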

\begin{proof}
The implication $(\Rightarrow)$ follows easily from the optional stopping theorem applied to the martingale $M$ in \cref{prop: martingale M}. Conversely, if we denote $(H_k, k\ge 0)$, the successive jump times of $J$ (with $H_0=0$), then for any $i\in\Ical$, by the Markov property of $(X,J)$ and  self-similarity of $X$,
\begin{align*}
    \Eb_i\Bigg[\sum_{0<s<\zeta} v_{J_{\Delta}(s)} |\Delta X(s)|^{\omega} \Bigg] 
    &= \sum_{k\ge 0} \Eb_i\Bigg[\sum_{H_k< s\le H_{k+1}} v_{J_{\Delta}(s)} |\Delta X(s)|^{\omega} \Bigg] \\
    &= \sum_{k\ge 0} \Eb_i\left[|X(H_k)|^{\omega}\,\Eb_{J(H_k)}\left[\sum_{s\le H} v_{J_{\Delta}(s)} |\Delta X(s)|^{\omega} \right]\right].
\end{align*}
Because we have assumed $\Eb_j[M(H)] = v_j$ for all $j\in \Ical$, this is
\[
\Eb_i\Bigg[\sum_{0<s<\zeta} v_{J_{\Delta}(s)} |\Delta X(s)|^{\omega} \Bigg] 
=
\sum_{k\ge 0} \Eb_i\bigg[|X(H_k)|^{\omega}\Big(v_{J(H_k)}-\Eb_{J(H_k)}\Big[v_{J(H)} |X(H)|^{\omega}\Big] \Big)\bigg]. 
\]
Hence, using again the Markov property and self-similarity of $X$ backwards, we find ourselves with 
\[
\Eb_i\left[\sum_{0<s<\zeta} v_{J_{\Delta}(s)} |\Delta X(s)|^{\omega} \right] 
=
\sum_{k\ge 0} \Eb_i\left[v_{J(H_k)}|X(H_k)|^{\omega}\right] - \sum_{k\ge 0} \Eb_i\left[v_{J(H_{k+1})}|X(H_{k+1})|^{\omega}\right],
\]
which ultimately cancels out, leaving $\Eb_i\left[\sum_{0<s<\zeta} v_{J_{\Delta}(s)} |\Delta X(s)|^{\omega} \right] = v_i$.
\end{proof}
\medskip

Next, we identify multitype cumulant functions $\Kcal_i$, $i\in\Ical$, whose common roots correspond to the admissible exponents $\omega$. To do so, we compute $\Eb_i[M(H)]$ in terms of the underlying MAP characteristics, for any (not necessarily admissible) sequence $((v_i)_{i\in\Ical},\omega)$. The expectation can be written as $\Eb_i[M(H)]= A + B$, where 
\[
A := \Eb_i\left[\sum_{0<s\le H\wedge \zeta} v_{J_{\Delta}(s)} |\Delta X(s)|^{\omega}\right]
\quad \text{and} \quad 
B := \Eb_i\left[ v_{J(H)} |X(H)|^{\omega}\right].
\]
Let us start with the term $A$. For $s>0$, we write as in \eqref{eq: Lamperti MAP}, $X(\varphi^{-1}(s)) = \mathrm{e}^{ \xi(s)}$ and $J(\varphi^{-1}(s)) = \Theta(s)$ under $\Pb_i$, where $\varphi$ is the usual time-change \eqref{eq: Lamperti time change}. From this standpoint, 
\begin{equation}
A
=
\mathtt{E}_i\Bigg[ \sum_{0<s< \rho_i} v_{\iota_{\Delta}(s)} \mathrm{e}^{\omega \xi_i(s^-)} \Big(1- \mathrm{e}^{\Delta \xi_i(s)}\Big)^{\omega} \Bigg] + \mathtt{E}_i\bigg[v_{\iota_{\Delta}(\rho_i)} \mathrm{e}^{\omega \xi_i(\rho_i^-)} \Big(1- \mathrm{e}^{U_{i,\Theta(\rho_i)}}\Big)^{\omega} \bigg], \label{eq: cumulant A}
\end{equation}
where $\iota_{\Delta}(s)$ stands for the type corresponding to the jump of $\xi$ at time $s$. By independence and the compensation formula for $\xi_i$, the first term of \eqref{eq: cumulant A} is 
\[
\begin{split}
\mathtt{E}_i\Bigg[ \sum_{s< \rho_i} v_{\iota_{\Delta}(s)} &\mathrm{e}^{\omega \xi_i(s^-)} \Big(1- \mathrm{e}^{\Delta \xi_i(s)}\Big)^{\omega} \Bigg] \\
&= 
\int_0^{\infty} \mathrm{d}t \, (-q_{i,i}) \mathrm{e}^{q_{i,i}t} \sum_{k\in\Ical} v_k \mathtt{E}_i\left[\int_{0}^{t} \mathrm{d}s \,\mathrm{e}^{\omega \xi_i(s)} \right] \int_{(-\infty,0)} \Lambda_{i}^{(k)}(\mathrm{d}x)(1-\mathrm{e}^{x})^{\omega} \\
&= \sum_{k\in\Ical} v_k \int_{(-\infty,0)}  \Lambda_{i}^{(k)}(\mathrm{d}x)(1-\mathrm{e}^{x})^{\omega} \cdot \frac{1}{\psi_i(\omega)}\int_0^{\infty} \mathrm{d}t \, (-q_{i,i}) \mathrm{e}^{q_{i,i}t}(\mathrm{e}^{\psi_i(\omega)t}-1) \\
&= -\frac{1}{\psi_i(\omega)+q_{i,i}} \cdot \sum_{k\in\Ical} v_k \int_{(-\infty,0)}  \Lambda_{i}^{(k)}(\mathrm{d}x)(1-\mathrm{e}^{x})^{\omega},
\end{split}
\]
provided 
$\psi_i(\omega)+q_{i,i}<0$ (otherwise the expectation blows up). Now, let $\iota^{\star} = \Theta(\rho_i)$ be the type to which the Markov chain jumps at time $\rho_i$. Then $\iota^{\star}$ is independent of $\rho_i$, and for all $j\in\Ical\setminus\{i\}$, $\iota^{\star} = j$ with probability $-\frac{q_{i,j}}{q_{i,i}}$. By conditioning on $\rho_i$, we obtain that the second term of \eqref{eq: cumulant A} is
\[
\begin{split}
\mathtt{E}_i\bigg[v_{\iota_{\Delta}(\rho_i)} &\mathrm{e}^{\omega \xi_i(\rho_i^-)} \Big(1- \mathrm{e}^{U_{i,\iota^{\star}}}\Big)^{\omega} \bigg] \\
&= 
\int_0^{\infty} \mathrm{d}t \, (-q_{i,i})\mathrm{e}^{q_{i,i}t} \sum_{j\in\Ical\setminus\{i\}} \frac{q_{i,j}}{(-q_{i,i})} \mathtt{E}_i\Big[\mathrm{e}^{\omega \xi_i(t)}\Big] \sum_{k\in\Ical} v_k \int_{(-\infty,0)}  \Lambda^{(k)}_{U_{i,j}}(\mathrm{d}x) (1- \mathrm{e}^{x})^{\omega} \\
&= \int_0^{\infty} \mathrm{d}t \, \mathrm{e}^{(\psi_i(\omega)+q_{i,i})t} \sum_{j\in\Ical\setminus\{i\}} q_{i,j} \sum_{k\in\Ical} v_k \int_{(-\infty,0)}  \Lambda^{(k)}_{U_{i,j}}(\mathrm{d}x) (1- \mathrm{e}^{x})^{\omega} \\
&=
-\frac{1}{\psi_i(\omega)+q_{i,i}} \sum_{k\in\Ical} v_k\sum_{j\in\Ical\setminus\{i\}} q_{i,j}  \int_{(-\infty,0)}  \Lambda^{(k)}_{U_{i,j}}(\mathrm{d}x) (1- \mathrm{e}^{x})^{\omega}, 
\end{split}
\]
provided again that  
$\psi_i(\omega)+q_{i,i}<0$. Therefore, we end up with
\[
A
=
-\frac{1}{\psi_i(\omega)+q_{i,i}} \sum_{k\in\Ical} v_k \int_{(-\infty,0)}  \Pi_{i,k}(\mathrm{d}x) (1- \mathrm{e}^{x})^{\omega},
\]
with $\Pi_{i,k}(\mathrm{d}x) := \Lambda^{(k)}_i(\mathrm{d}x) + \sum_{j\in\Ical\setminus\{i\}} q_{i,j} \Lambda^{(k)}_{U_{i,j}}(\mathrm{d}x)$.

We now compute 
\[
B 
=
\mathtt{E}_i\left[ v_{\iota^{\star}} \mathrm{e}^{\omega (\xi_i(\rho_i)+U_{i,\iota^{\star}})}\right].
\] 
As before, we condition on $\rho_i$ and decompose over the possible values $j\in\Ical\setminus\{i\}$ for $\iota^{\star}$:
\begin{align*}
B
&= \sum_{j\in\Ical\setminus\{i\}} \int_0^{\infty}\mathrm{d}s\, (-q_{i,i})\mathrm{e}^{q_{i,i}s} \frac{q_{i,j}}{(-q_{i,i})} v_j \,\mathtt{E}_i\left[ \mathrm{e}^{\omega (\xi_i(s)+U_{i,j})}\right] \\
&= \sum_{j\in\Ical\setminus\{i\}} q_{i,j}v_j\int_0^{\infty}\mathrm{d}s \, \mathrm{e}^{q_{i,i}s} \mathrm{e}^{\psi_i(\omega)s}G_{i,j}(\omega) \\
&= -\frac{1}{\psi_i(\omega)+q_{i,i}} \cdot \sum_{j\in\Ical\setminus\{i\}} q_{i,j}v_j G_{i,j}(\omega),
\end{align*}
as long as $\psi_i(\omega)+q_{i,i}<0$. We come to the conclusion that
\[
\Eb_i[M(H)] 
=
-\frac{1}{\psi_i(\omega)+q_{i,i}} \cdot \left(\sum_{k\in\Ical} v_k \int_{(-\infty,0)}  \Pi_{i,k}(\mathrm{d}x)(1-\mathrm{e}^{x})^{\omega}
+ \sum_{j\in\Ical\setminus\{i\}} q_{i,j}v_j G_{i,j}(\omega) \right).
\]
This is equal to $v_i$ if and only if, 
\[
\Kcal_i(\omega):=(\psi_i(\omega)+q_{i,i}) + \sum_{k\in\Ical} \frac{v_k}{v_i} \int_{(-\infty,0)}  \Pi_{i,k}(\mathrm{d}x)(1-\mathrm{e}^{x})^{\omega}
+ \sum_{j\in\Ical\setminus\{i\}} \frac{v_j}{v_i} q_{i,j} G_{i,j}(\omega)
=
0,
\]
and, thanks to \cref{prop: characterisation admissibility}, Assumption A in \cref{sec: martingales} boils down to the existence of $\omega\in\R$ and a sequence $(v_i)_{i\in\Ical}$ of positive numbers such that, for all $i\in\Ical$, $\Kcal_i(\omega) = 0$. We will call the family $(\Kcal_i, i\in \Ical)$ the \emph{multitype cumulant functions}. We also write 
\begin{equation} \label{eq: kappa}
\kappa_i(q) := (\psi_i(q)+q_{i,i}) + \int_{(-\infty,0)} \Pi_{i,i}(\mathrm{d}x)(1-\mathrm{e}^{x})^{q}, \quad q\ge 0,
\end{equation} 
for the \emph{cumulant} function corresponding to type $i$, so that for $q\ge 0$,
\begin{equation} \label{eq: multitype cumulants}
\Kcal_i(q):=\kappa_i(q) + \sum_{j\in\Ical\setminus\{i\}} \frac{v_j}{v_i} \left(\int_{(-\infty,0)}  \Pi_{i,j}(\mathrm{d}x)(1-\mathrm{e}^{x})^{q}
+ q_{i,j} G_{i,j}(q)\right).
\end{equation}

%
%
\section{The spine decomposition of multitype growth-fragmentation processes}
\label{sec: spine}

\subsection{Description of the spine under the change of measure} \label{sec:change measure}

\textbf{A change of measure.} The martingale $(\Mcal(n), n\ge 0)$ in \cref{prop: genealogical martingale} enables us to introduce a new probability measure $\Phat_{x,i}$ for $x>0, i\in\Ical$. Under this change of measure, the cell system has a spine decomposition that we aim to describe (see \cite[Section 4.1]{BBCK}). The measure $\Phat_{x,i}$ singles out a particular \emph{leaf} $\Lcal \in \partial \Ub = \N^{\N}$. On $\mathscr{G}_n$, for $n\ge 0$, it has Radon-Nikodym derivative $\Mcal(n)$ with respect to $\Pcal_{x,i}$, up to normalisation, \emph{viz.} for all $G_n \in \mathscr{G}_n$,
\[
\Phat_{x,i}(G_n) := \frac{1}{v_{i}x^{\omega}} \Ecal_{x,i}\left[\Mcal(n) \mathds{1}_{G_n} \right].
\]
Moreover, conditionally on $\Gscr_n$, the parent of the particular leaf $\Lcal$ at generation $n+1$ is chosen under $\Phat_{x,i}$ proportionally to its weight in the martingale $\Mcal(n)$. More precisely, let $\ell(n)$ denote the ancestor of a leaf $\ell\in\partial \Ub$ at generation $n$. Then for all $n\ge 0$ and all $u\in \Ub$ such that $|u|=n+1$,
\begin{equation} \label{eq: generation leaf}
\Phat_{x,i} \left( \Lcal(n+1)=u \,\big|\, \mathscr{G}_n\right) := \frac{v_{\Jcal_u(0)}\Xcal_u(0)^{\omega}}{\Mcal(n)}.
\end{equation}
The consistency of formula \eqref{eq: generation leaf} stems from the martingale property of $(\Mcal(n),n\ge 0)$ and the branching structure of the system, thus defining a unique probability measure by an application of the Kolmogorov extension theorem.

One key player is provided by the so called \emph{tagged cell} or \emph{spine}, which consists in following the evolution of the cell associated with the leaf $\Lcal$. The tagged cell will have the role of a backbone in the \emph{spine decomposition} of the cell system under $\Phat_{x,i}$. Let $b_{\ell} = \lim\uparrow b_{\ell(n)}$ for any leaf $\ell\in\partial \Ub$. Then, we define $\Xhat$ by $(\Xhat(t),\Jhat(t)):=\partial$ if $t\ge b_{\Lcal}$ and 
\[
\Xhat(t):= \Xcal_{\Lcal(n_t)}(t-b_{\Lcal(n_t)}) \quad \text{ and } \quad \Jhat(t):= \Jcal_{\Lcal(n_t)}(t-b_{\Lcal(n_t)}) \qquad \textrm{if} \qquad t<b_{\Lcal},
\]
where $n_t$ is the unique integer $n$ such that $b_{\Lcal(n)}\le t < b_{\Lcal(n+1)}$. From the very definition of $\Phat_{x,i}$, for all nonnegative measurable function $f$ and all $\Gscr_n$--measurable nonnegative random variable $B_n$, 
\[
v_{i}x^{\omega}\Ehat_{x,i}\Big[f(\Xcal_{\Lcal(n+1)}(0),\Jcal_{\Lcal(n+1)}(0))B_n\Big]
=
\Ecal_{x,i}\left[ \sum_{|u|=n+1} v_{\Jcal_u(0)}\Xcal_u(0)^{\omega}f(\Xcal_u(0), \Jcal_u(0))B_n\right].
\]
This extends to a temporal identity in the following way. Recall that  $\Xbf(t)=\left\{\left\{(X_k(t),J_k(t)), k\ge 1\right\}\right\},$ for  $t\ge 0$, have been enumerated by descending order of the $X_k(t)$.
\begin{Prop} \label{prop:spine temporal multitype}
For every $t\ge 0$, every nonnegative measurable function $f$ vanishing at $\partial$, and every $\overline{\Fcal}_t$--measurable nonnegative random variable $B_t$, we have
\[
v_i x^{\omega}\Ehat_{x,i}\Big[f(\Xhat(t),\Jhat(t))B_t\Big]
=
\Ecal_{x,i}\left[ \sum_{k\ge 1} v_{J_k(t)} X_k(t)^{\omega}f(X_k(t),J_k(t))B_t\right].
\]
\end{Prop}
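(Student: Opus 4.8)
The plan is to decompose both sides according to the generation of the relevant particle and match the contributions generation by generation, the two ingredients being the genealogical change of measure defining $\Phat_{x,i}$ (the martingale $\Mcal$ of \cref{prop: genealogical martingale} and the leaf–selection rule \eqref{eq: generation leaf}) and the single-cell martingale $M$ of \cref{prop: martingale M}. First, by monotone convergence it suffices to treat bounded $f$ and bounded $B_t$; then both sides are finite, because the right-hand side is at most $\|f\|_\infty\|B_t\|_\infty v_ix^\omega$ by the multitype version of \cite[Theorem 1]{Ber-GF} (the martingale property of $t\mapsto\sum_{k\ge 1}v_{J_k(t)}X_k(t)^\omega$ under $\Pb_{x,i}$, which follows from \cref{prop: martingale M} and the branching structure), and each side defines a finite measure in $B_t$. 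Writing $\Ub_t:=\{u\in\Ub:b_u\le t<b_u+\zeta_u\}$ for the cells alive at time $t$, and, for a spine leaf $\Lcal$, letting $n_t$ be the generation of its ancestor alive at $t$, so $b_{\Lcal(n_t)}\le t<b_{\Lcal(n_t+1)}$ (with $n_t=\infty$ when $b_{\Lcal}:=\lim_n b_{\Lcal(n)}\le t$), the facts that $f$ vanishes at $\partial$ and $\Xhat(t)=\partial$ on $\{n_t=\infty\}$, together with Fubini on the right, reduce the statement to proving, for each $n\ge 0$, that $L_n=R_n$, where
\[
L_n:=v_ix^\omega\,\Ehat_{x,i}\!\left[f(\Xhat(t),\Jhat(t))\,B_t\,\mathds{1}_{\{n_t=n\}}\right],\qquad R_n:=\Ecal_{x,i}\!\left[\sum_{u\in\Ub_t,\,|u|=n} v_{\Jcal_u(t-b_u)}\Xcal_u(t-b_u)^\omega\,f(\Xcal_u(t-b_u),\Jcal_u(t-b_u))\,B_t\right].
\]

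To evaluate $L_n$ I would iterate the change of measure down the spine. Conditioning on $\Gscr_{n-1}$, the density of $\Phat_{x,i}$ going forward decomposes — using that $\Mcal(m)$ for $m\ge n$ is the sum, over cells $u$ with $|u|=n$, of the time-$(m-n)$ value of the genealogical martingale of the subtree rooted at $u$, together with the consistency of \eqref{eq: generation leaf} — as the mixture over the cells $u$ with $|u|=n$, with weights $v_{\Jcal_u(0)}\Xcal_u(0)^\omega/\Mcal(n-1)$, of the measure that tilts the subtree rooted at $u$ by its own genealogical martingale and leaves every other subtree plain; conditionally on $\{\Lcal(n)=u\}$ this turns the $u$-subtree, equipped with the continuation of $\Lcal$, into a $\Phat$-tilted cell system issued from $(\Xcal_u(0),\Jcal_u(0))$. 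On $\{n_t=n,\ \Lcal(n)=u\}$ one has $\Xhat(t)=\Xcal_u(t-b_u)$, while $\{n_t=n\}$ becomes the event that the internal spine of the $u$-subtree has not left its own Eve cell (that is, cell $u$) by time $t-b_u$; since that internal spine follows the child $w$ of $u$ with conditional probability proportional to $v_{\Jcal_w(0)}\Xcal_w(0)^\omega$, this event has conditional probability $W_u/\mathcal{M}^{(u)}$, with
\[
W_u:=\sum_{w\text{ child of }u,\ b_w>t} v_{\Jcal_w(0)}\Xcal_w(0)^\omega,\qquad \mathcal{M}^{(u)}:=\sum_{w\text{ child of }u} v_{\Jcal_w(0)}\Xcal_w(0)^\omega,
\]
and the subtrees grafted on cell $u$ before time $t-b_u$ are plain and, together with all the other plain subtrees, restore exactly the variable $B_t$. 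Since the genealogical tilt of the $u$-subtree contributes a factor $\mathcal{M}^{(u)}/(v_{\Jcal_u(0)}\Xcal_u(0)^\omega)$, all normalisations cancel against the mixing weight $v_{\Jcal_u(0)}\Xcal_u(0)^\omega$ and the conditional probability $W_u/\mathcal{M}^{(u)}$, and one is left with
\[
L_n=\Ecal_{x,i}\!\left[\sum_{u\in\Ub_t,\,|u|=n} W_u\,f(\Xcal_u(t-b_u),\Jcal_u(t-b_u))\,B_t\right].
\]

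It remains to replace $W_u$ by $v_{\Jcal_u(t-b_u)}\Xcal_u(t-b_u)^\omega$ inside the expectation, which is exactly where \cref{prop: martingale M} enters. Let $\mathcal{H}_u$ be the $\sigma$-algebra generated by the trajectory of cell $u$ up to time $t-b_u$ together with all the randomness of the system that does not involve the trajectory of cell $u$ after time $t-b_u$; then $\mathds{1}_{\{u\in\Ub_t\}}\,f(\Xcal_u(t-b_u),\Jcal_u(t-b_u))\,B_t$ is $\mathcal{H}_u$-measurable. On $\{u\in\Ub_t\}$ the quantity $W_u$ only counts the type-weighted negative jumps of cell $u$ occurring after time $t-b_u$, so by the Markov property and self-similarity it is distributed, conditionally on $(\Xcal_u(t-b_u),\Jcal_u(t-b_u))$ and independently of $\mathcal{H}_u$, as the terminal value $\sum_{0<s<\zeta}v_{J_{\Delta}(s)}|\Delta X(s)|^\omega$ of $M$ under $\Pb_{\Xcal_u(t-b_u),\Jcal_u(t-b_u)}$; by \cref{prop: martingale M} its conditional expectation is therefore the initial value of $M$, namely $\Eb[W_u\mid\mathcal{H}_u]=v_{\Jcal_u(t-b_u)}\Xcal_u(t-b_u)^\omega$. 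Taking conditional expectations term by term inside $\Ecal_{x,i}$ converts $L_n$ into $R_n$; summing over $n\ge 0$ yields the proposition.

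The main obstacle is the bookkeeping of the second paragraph: one must justify that conditioning on the spine passing through a given generation-$n$ cell really tilts the corresponding subtree into a hatted cell system while leaving all its siblings plain, and — more delicately — that the $\overline{\Fcal}_t$-measurable variable $B_t$, which a priori depends on cells of arbitrarily large generation, is faithfully recovered from the generation-$(<n)$ data together with the plain subtrees rooted at the generation-$n$ cells run for the appropriate residual times. Both points rest on the genealogical branching structure of the cell system and on the factorisation of $\Mcal$ over these subtrees; once this is in place, the generation-$n$ matching is a short computation completed by \cref{prop: martingale M}.
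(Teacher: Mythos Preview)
Your argument is correct and takes a genuinely different route from the paper's. The paper first reduces, by a monotone class argument, to $B_t$ that is $\overline{\Fcal}_t\cap\Gscr_k$--measurable for some fixed $k$; for a single $n>k$ it then applies the $\Gscr_n$--change of measure to $v_ix^\omega\Ehat_{x,i}[\mathds{1}_{\{b_{\Lcal(n+1)}>t\}}f(\Xhat(t),\Jhat(t))B_t]$, splits the resulting sum over the ancestor $u'=u(t)$ alive at time $t$, conditions on $\overline{\Fcal}_t$ via the \emph{temporal} branching property (\cref{prop: branching temporal}), and uses the martingale property of $\Mcal$ (\cref{prop: genealogical martingale}) to replace the inner sum over generation-$(n+1)$ descendants of $u'$ by $v_{\Jcal_{u'}(t-b_{u'})}\Xcal_{u'}(t-b_{u'})^\omega$; letting $n\to\infty$ concludes. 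You instead decompose over $\{n_t=n\}$ and match increments $L_n=R_n$: you undo the $\Phat$--tilt through the explicit spine factorisation, using that on $\{b_{\Lcal(n+1)}>t\}$ the tilted subtree rooted at $\Lcal(n+1)$ is invisible to $B_t$, and then replace the residual weight $W_u$ via the Markov property of the single cell $u$ and \cref{prop: martingale M} (equivalently, admissibility). Your route thus avoids \cref{prop: branching temporal} altogether, at the price of the bookkeeping you flag; the paper's monotone class reduction is precisely what sidesteps that difficulty, since once $B_t$ is $\Gscr_n$--measurable the change of measure becomes a one-line identity and no spine factorisation is needed. Two small remarks: the process $t\mapsto\sum_k v_{J_k(t)}X_k(t)^\omega$ is in general only a supermartingale (this is in fact \cref{rk: temporal mart}, itself a corollary of the present proposition), though the bound you need follows from $(x,j)\mapsto v_jx^\omega$ being excessive, which does come out of \cref{prop: martingale M}; and your claim that $B_t$ is ``faithfully recovered'' from the plain subtrees, while correct, is exactly the place where either a careful $\Gscr_m\uparrow\Gscr_\infty$ limiting argument or the paper's monotone class shortcut is required to make the cancellation of tilts rigorous.
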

\begin{proof}
The proof essentially follows from the arguments presented in the proof of \cite[Proposition 4.1]{BBCK}. We provide its proof for the sake of completeness.

Let $t\ge 0$. Consider the case when $B_t$ is $\overline{\Fcal}_t \cap \Gscr_k$--measurable for some $k\in \N$ (the result would then be readily extended by a monotone class argument). Since $f(\partial)=0$, almost surely,
\[
f(\Xhat(t),\Jhat(t)) B_t\mathds{1}_{\{b_{\Lcal(n+1)}>t\}} \underset{n\rightarrow \infty}{\longrightarrow} f(\Xhat(t),\Jhat(t)) B_t. 
\]
Therefore, by monotone convergence, 
\[
\Ehat_{x,i}\Big[f(\Xhat(t),\Jhat(t)) B_t\Big] = \underset{n\rightarrow \infty}{\lim} \Ehat_{x,i} \Big[f(\Xhat(t),\Jhat(t)) B_t\mathds{1}_{\{b_{\Lcal(n+1)}>t\}}\Big]. 
\]
Now, we want to condition on $\Gscr_n$ and decompose $\Lcal(n+1)$ over the cells at generation $n+1$, provided $n>k$ so that $B_t$ is $\Gscr_n$--measurable. For $u$ such that $b_u>t$, write $u(t)$ for the most recent ancestor of $u$ at time $t$. Then
\begin{multline*}
\Ehat_{x,i} \left[f(\Xhat(t),\Jhat(t)) B_t\mathds{1}_{\{b_{\Lcal(n+1)}>t\}}\right] \\
=
\frac{1}{v_{i}(\omega)x^{\omega}} \Ecal_{x,i} \left[\sum_{|u|=n+1} v_{\Jcal_u(0)} \Xcal_u(0)^{\omega} \mathds{1}_{\{b_u>t\}} f(\Xcal_{u(t)}(t-b_{u(t)}),\Jcal_{u(t)}(t-b_{u(t)}))B_t\right].
\end{multline*}
Splitting over the value of $u(t)$ yields 
\begin{multline} \label{eq:decompose u(t)}
\Ecal_{x,i} \left[\sum_{|u|=n+1} v_{\Jcal_u(0)} \Xcal_u(0)^{\omega} \mathds{1}_{\{b_u>t\}} f(\Xcal_{u(t)}(t-b_{u(t)}),\Jcal_{u(t)}(t-b_{u(t)}))B_t\right] \\
=
\Ecal_{x,i} \left[\sum_{|u'|\le n} \sum_{|u|=n+1} v_{\Jcal_u(0)} \Xcal_u(0)^{\omega} \mathds{1}_{\{b_u>t\}} f(\Xcal_{u'}(t-b_{u'}),\Jcal_{u'}(t-b_{u'}))B_t \mathds{1}_{\{u(t)=u'\}}\right]    
\end{multline}
and by conditioning on $\overline{\Fcal}_t$ and applying the temporal branching property stated in \cref{prop: branching temporal}, 
\begin{align*}
&\Ecal_{x,i} \left[\sum_{|u|=n+1} v_{\Jcal_u(0)} \Xcal_u(0)^{\omega} \mathds{1}_{\{b_u>t\}} f(\Xcal_{u(t)}(t-b_{u(t)}),\Jcal_{u(t)}(t-b_{u(t)}))B_t\right] \\
&= \Ecal_{x,i} \left[\sum_{|u'|\le n} f(\Xcal_{u'}(t-b_{u'}), \Jcal_{u'}(t-b_{u'}))B_t \,\right.\\
&\hspace{4.5cm}\left.\times \Ecal_{\Xcal_{u'}(t-b_{u'}),\Jcal_{u'}(t-b_{u'})}\left[\sum_{|u|=n+1-|u'|} v_{\Jcal_{u'u}(0)} \Xcal_{u'u}(0)^{\omega} \right]\mathds{1}_{\{b_{u'}\le t<b_{u'}+\zeta_{u'}\}}\right]\\
&= \Ecal_{x,i} \left[\sum_{|u'|\le n} f(\Xcal_{u'}(t-b_{u'}),  \Jcal_{u'}(t-b_{u'}))B_t \, \mathds{1}_{\{b_{u'}\le t<b_{u'}+\zeta_{u'}\}} v_{\Jcal_{u'}(t-b_{u'})} \Xcal_{u'}(t-b_{u'})^{\omega}\right].
\end{align*}
Finally, taking $n\rightarrow\infty$ and using monotone convergence, we obtain the desired result.
\end{proof}
\begin{Rk}\label{rk: temporal mart}
\cref{prop:spine temporal multitype} applied with $f:=\mathds{1}_{\{x\neq \partial\}}$ yields that the temporal analogue of $\Mcal(n)$ in \cref{prop: genealogical martingale},
\[
\Mcal_t := \sum_{i=1}^{\infty} v_{J_i(t)}X_i(t)^{\omega}, \quad t\ge 0,
\]
is a supermartingale with respect to $(\Fcal_t)_{t\ge 0}$.
\end{Rk}

\noindent \textbf{The law of the growth-fragmentation under $\Phat_{x,i}$.} We now describe the law of $\Xbfhat$ under $\Phat_{x,i}$. Loosely speaking, the tagged cell will serve as a backbone evolving as some explicit self-similar multitype Markov process, to which we attach independent copies of the original growth-fragmentation process. We must first reconstruct the whole cell system from the spine by recording the negative jumps of $\Xhat$, as detailed in \cite[Section 4.1]{BBCK}. We will label these by couples $(n,j)$, where $n\ge 0$ is the generation of the tagged cell immediately before the jump, and $j\ge 1$ is the rank (for the usual ranking) of the jump among those of the tagged cell at generation $n$ (including the final jump when the generation changes from $n$ to $n+1$). To each such $(n,j)$ corresponds a growth-fragmentation $\Xbfhat_{n,j}$ stemming from the corresponding jump: if the generation does not change during the $(n,j)$--jump, then we set
\[
\Xbfhat_{n,j}(t) := \left\{\left\{ (\Xcal_{uw}(t-b_{uw}+b_u), \Jcal_{uw}(t-b_{uw}+b_u)), \; w\in\Ub \; \text{and} \; b_{uw}\le t+b_u<b_{uw}+\zeta_{uw} \right\}\right\},
\]
where $u$ is the label of the cell born at the $(n,j)$--jump. Otherwise, the $(n,j)$--jump corresponds to a jump for the generation of the tagged cell and the tagged cell jumps from label $u$ to label $uj$ say, in which case 

\begin{multline*} 
\Xbfhat_{n,j}(t) :=   \left\{\left\{ (\Xcal_{u}(t-b_{u}+b_{uj}), \Jcal_{u}(t-b_{u}+b_{uj})),  \;  b_{u}\le t+b_{uj}<b_{u}+\zeta_{u} \right\}\right\} \\ \cup \left\{\left\{ (\Xcal_{uw}(t-b_{uw}+b_{uj}), \Jcal_{uw}(t-b_{uw}+b_{uj})), \; w\notin \mathbb{T}_{uj} \; \text{and} \;  b_{uj}\le b_{uw}\le t+b_{uj}<b_{uw}+\zeta_{uw} \right\}\right\},
\end{multline*}
where for $v\in\Ub$, $\mathbb{T}_v := \{vw, \, w\in\Ub\}$.
We agree that $\Xbfhat_{n,j} := \partial$ when the $(n,j)$--jump does not exist, and this completely defines $\Xbfhat_{n,j}$ for all $n\ge 0$ and all $j\ge 1$.

Let $\Fhat(q) := (\Fhat_{i,j}(q))_{i,j\in\Ical}$ be the matrix with entries
\begin{equation} \label{eq: Fhat spine}
\Fhat_{i,j}(q)
=
\begin{cases}
\displaystyle\frac{v_{j}}{v_i}\left(\displaystyle \int_{(-\infty,0)} \Pi_{i,j}(\mathrm{d}x) (1-\mathrm{e}^{x})^{q+\omega} +  q_{i,j} G_{i,j}(q+\omega)\right)
& \text{if} \; i\neq j, \\[3mm]
\kappa_i(\omega+q) & \text{if} \;  i=j.
\end{cases}
\end{equation}

\begin{Thm} \label{thm:spine multitype}
Under $\Phat_{x,i}$, $(\Xhat(t), \Jhat(t), 0\le t<b_{\Lcal})$ is a self-similar Markov process with types in $\Ical$, whose underlying Markov additive process has the matrix exponent $\Fhat$ in \eqref{eq: Fhat spine}. Moreover, conditionally on $(\Xhat(t), \Jhat(t))_{0\le t<b_{\Lcal}}$ and  $\Jhat_{n,j}$, the processes $\Xbfhat_{n,j}$, $n\ge 0$, $j\ge 1$, are independent and each $\Xbfhat_{n,j}$ has law $\Pbf_{x(n,j), \Jhat_{n,j}}$ where $-x(n,j)$ is the size of the $(n,j)$--th jump.
\end{Thm}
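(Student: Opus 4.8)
The plan is to follow the strategy of \cite[Theorem 4.2]{BBCK}, adapting it to the Markov additive setting. The argument splits into two parts: (i) identifying the law of the tagged cell $(\Xhat, \Jhat)$ as a self-similar Markov process with types, and (ii) establishing the conditional independence and laws of the attached growth-fragmentations $\Xbfhat_{n,j}$. For part (i), the natural approach is to work directly with the underlying MAP via the Lamperti-Kiu transform \eqref{eq: Lamperti MAP}. Recall that on $\Gscr_n$ the measure $\Phat_{x,i}$ has density $\Mcal(n)/(v_i x^{\omega})$ with respect to $\Pcal_{x,i}$, and that the parent of the spine at generation $n+1$ is picked proportionally to $v_{\Jcal_u(0)}\Xcal_u(0)^{\omega}$. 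The first step is to compute, for the Eve cell, the joint law under $\Phat_{x,i}$ of its trajectory up to the first division on the spine together with the type and size of the newly tagged daughter. Using \cref{prop:spine temporal multitype} (or rather its one-generation version) and the cumulant computations of \cref{sec: cumulants}, one expresses this tilted law in terms of the MAP characteristics. Concretely, the change of measure reweights the Lévy measure $\Lambda_i$ and the jump distributions $U_{i,j}$ by a factor $(1-\mathrm{e}^x)^{\omega}$ (resp. $\mathrm{e}^{\omega x}$) together with a type-dependent weight $v_k/v_i$, and reweights the holding rates and the Lévy part of $\xi_i$ by the Wald-type factor $\mathrm{e}^{\omega \xi_i}$; matching this against \eqref{eq: F matrix} identifies the matrix exponent of the tilted MAP as precisely $\Fhat$ in \eqref{eq: Fhat spine}. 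The diagonal entry $\kappa_i(\omega+q)$ collects the within-type contribution \eqref{eq: kappa} evaluated at $\omega+q$, while the off-diagonal entry collects the type-changing contribution weighted by $v_j/v_i$, exactly as in the cumulant decomposition \eqref{eq: multitype cumulants}.

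The second step is to iterate across generations using the branching structure. Conditionally on $\Gscr_n$ and on the identity of the tagged parent at generation $n+1$, the spine's future is again governed by a tilted cell system started from the daughter's size and type, and the Markov (additive) property together with self-similarity \eqref{eq: ss} shows this future is an independent copy of the same tilted self-similar Markov process with types, started afresh. Concatenating these pieces through the Lamperti-Kiu clock gives that $(\Xhat(t), \Jhat(t))_{0\le t<b_{\Lcal}}$ is itself a self-similar Markov process with types whose driving MAP has matrix exponent $\Fhat$. For part (ii), the reconstruction of the whole system from the spine (recording the jumps labelled $(n,j)$ as described before the statement) shows that, under $\Pcal_{x,i}$, each non-spinal subtree hanging off a negative jump of the Eve cell is, conditionally on its size and type at birth, an independent copy of $\Xbf$ under $\Pbf_{x(n,j),\Jhat_{n,j}}$; this is just the genealogical branching property of the cell system. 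The change of measure only reweights the spine and the choice of the tagged daughter, so it does not affect the conditional law of the non-spinal subtrees given the spine and the birth data $(x(n,j), \Jhat_{n,j})$. Conditional independence across different $(n,j)$ follows from the independence of distinct subtrees in the original cell system, which is preserved under the spinal change of measure. Putting (i) and (ii) together yields the theorem.

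The main obstacle is the bookkeeping in part (i): one must carefully disentangle, on the event that the spine divides, the contribution of a \emph{non-spinal} jump of $\xi_i$ (which keeps the spine in the same cell, same type) from that of the \emph{spinal} jump that moves the tagged cell to a daughter, and from that of a modulator jump of $\Theta$ (which changes the type via $U_{i,j}$). Each of these must be reweighted by the appropriate factor and the holding-time reweighting $\mathrm{e}^{\omega\xi_i}$ handled via the compensation formula, exactly as in the computation of $A$ and $B$ in \cref{sec: cumulants} but now keeping the spatial variable $q$ free rather than setting it to $0$. A secondary technical point is justifying the passage to the limit in $n$ when assembling the spine across infinitely many generations; this is handled as in \cref{prop:spine temporal multitype} by monotone convergence, using that $f$ vanishes at $\partial$ and that $b_{\Lcal(n)}\uparrow b_{\Lcal}$. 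Once these are in place, the identification of $\Fhat$ and the conditional structure follow as in \cite{BBCK}.
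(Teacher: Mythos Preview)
Your plan is correct and follows essentially the same route as the paper: reduce to the homogeneous case, identify the law of the spine as a MAP via the Lamperti--Kiu representation by reweighting the Lévy measures, the $U_{i,j}$-laws, and the holding rates by the factors $v_k/v_i$, $(1-\mathrm{e}^x)^{\omega}$ and $\mathrm{e}^{\omega\xi_i}$ through the compensation formula (exactly the cumulant-type computations of \cref{sec: cumulants} with a free variable $q$), and then handle part~(ii) by observing that the change of measure touches only the spine and the choice of tagged daughter, so the non-spinal subtrees retain their original conditional laws.

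The one organisational difference worth flagging is this. You propose to compute the law of one generation segment $(\Xhat(t),\Jhat(t))_{t\le b_{\Lcal(1)}}$ together with the newly tagged daughter, and then iterate. The paper instead extracts the three MAP ingredients $(\psihat_i,\qhat_{i,j},\Ghat_{i,j})$ separately, and for the latter two it works not with $b_{\Lcal(1)}$ but with $\Hhat$, the first time the \emph{type} of the spine changes. The point is that within a single type the spine may traverse several generations (each spinal division being a jump of $\xihat_i$ of type~$i$), so $\qhat_{i,j}$ and $\Ghat_{i,j}$ are obtained by summing a geometric-type series over the number of generation changes before the first type change; this is where the factor $\mu_{i,i}(\omega)$ and the decomposition $a_0=A+B$ (distinguishing $\Hhat<b_{\Lcal(1)}$ from $\Hhat=b_{\Lcal(1)}$) appear. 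Your ``iterate across generations'' would in effect reproduce this sum, but the paper's organisation around $\Hhat$ makes the identification of each entry of $\Fhat$ more transparent and matches directly the MAP decomposition of \cref{prop:MAP structure}. Either way the computations are the same; just be aware that the off-diagonal entries of $\Fhat$ do not come from a single generation step.
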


\begin{Rk}
\begin{enumerate}
    \item The law of the generation $n_t$ of the spine at time $t$ is not so explicit as in \cite{BBCK} or \cite{DS} in the constant sign case. In fact, $b_{\Lcal(1)}$ may not be exponential because of the current type of the spine before it jumps.
    \item The proof of \cref{thm:spine multitype} goes through determining all three components $\psihat_{i}$, $\qhat_{i,j}$, and $\Ghat_{i,j}$ of the MAP in \eqref{eq: F matrix}. This sheds light on the structure of the MAP under \eqref{eq: Fhat spine}.
\end{enumerate}
\end{Rk}

We postpone the proof of \cref{thm:spine multitype} until \cref{sec: proof spine}, and discuss instead some applications, which are also new for the signed case \cite{DS}. 
\subsection{Martingale exponents in multitype growth-fragmentation processes}
First, we prove that admissible characteristics $(\omega, (v_i,i\ge 1))$ are associated to the roots of the leading eigenvalue of the spine matrix exponent. Recall from \cref{sec: martingales} the notation $m(q), q\in\R$, for the matrix with nonnegative entries
\[
m_{i,j}(q) := \Eb_i \left[\sum_{0<s<\zeta} |\Delta X(s)|^q \mathds{1}_{\{J_{\Delta}(s)=j\}} \right], 
\]
and $\mathrm{e}^{\lambda(q)}$ for the Perron-Frobenius eigenvalue of $m(q)$. We will also make use of the spectral properties of Markov additive processes listed in \cref{sec: ssmp}. In particular, \cref{prop: Perron-Frobenius MAP} applies to the matrix exponent $\Fhat$ of some spine in our growth-fragmentation process (see \eqref{eq: Fhat spine}): in this case, we shall denote by $\widehat{\chi}(q)$ the leading eigenvalue.

\begin{Prop}\label{prop: 2 exponents}
Assume that $((v_i)_{i\in\Ical}, \omega)$ is admissible, and let $\Fhat$ be the matrix exponent of the associated spine. Then $((v'_i)_{i\in\Ical}, \omega')$ is admissible if and only if $\big(\frac{v'_i}{v_i}\big)_{i\in\Ical}$ is an eigenvector of $\Fhat(\omega'-\omega)$ associated with the eigenvalue $0$. In particular, the exponents $\omega'$ for which $\lambda(\omega')=0$ are exactly the roots of $\widehat{\chi}(\omega'-\omega)=0$. 
\end{Prop}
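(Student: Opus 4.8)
The plan is to connect the matrix $m(q)$ of expected jump sums with the matrix exponent $\Fhat$ of the spine, via the quantity $\Eb_i[M(H)]$ computed in \cref{sec: cumulants}. First I would recall that by \cref{prop: characterisation admissibility}, a pair $((v'_i)_{i\in\Ical},\omega')$ is admissible if and only if $\Eb_i[M'(H)]=v'_i$ for all $i$, where $M'$ is the process built from $((v'_i),\omega')$; and that the computation in \cref{sec: cumulants} expresses $\Eb_i[M'(H)]$ explicitly as
\[
\Eb_i[M'(H)] = -\frac{1}{\psi_i(\omega')+q_{i,i}}\left(\sum_{k\in\Ical} v'_k\int_{(-\infty,0)}\Pi_{i,k}(\mathrm{d}x)(1-\mathrm{e}^x)^{\omega'} + \sum_{j\neq i} q_{i,j}v'_j G_{i,j}(\omega')\right).
\]
Dividing by $v_i$ and using that $((v_i),\omega)$ is admissible (so that the denominators $\psi_i(\omega')+q_{i,i}$ can be related to those for $\omega$ only if we are careful — actually they differ, so I keep the $\omega'$-denominator), the condition $\Eb_i[M'(H)]=v'_i$ rearranges, after multiplying through by $-(\psi_i(\omega')+q_{i,i})$, into a linear system in the vector $\big(\tfrac{v'_j}{v_j}\big)_j$ whose coefficient matrix is exactly read off from \eqref{eq: Fhat spine} with argument $\omega'-\omega$: writing $a_j := v'_j/v_j$, the identity becomes $\sum_j \Fhat_{i,j}(\omega'-\omega)\, a_j = 0$ for every $i$, i.e. $\big(\tfrac{v'_i}{v_i}\big)_{i\in\Ical}$ lies in the kernel of $\Fhat(\omega'-\omega)$. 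This is the main computational step and the main obstacle: one has to match the off-diagonal entries $\tfrac{v_j}{v_i}\big(\int \Pi_{i,j}(\mathrm{d}x)(1-\mathrm{e}^x)^{\omega'} + q_{i,j}G_{i,j}(\omega')\big)$ and the diagonal entry $\kappa_i(\omega')$ coming out of the admissibility equation for $\omega'$ with the shifted entries $\Fhat_{i,j}(\omega'-\omega)$ of \eqref{eq: Fhat spine}, using $\omega'+0\cdot\text{(shift)}$ bookkeeping — i.e. checking that plugging $q=\omega'-\omega$ into \eqref{eq: Fhat spine} reproduces exactly the $\omega'$-cumulant coefficients weighted by $v_j/v_i$ rather than $v'_j/v'_i$. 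This is precisely why $\Fhat$ was defined with the $v_j/v_i$ prefactor and the $\omega$-shift, so the matching is essentially built in, but it must be written out carefully, also handling the degenerate case $q_{i,j}=0$ and the sign/finiteness condition $\psi_i(\omega')+q_{i,i}<0$.

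For the last sentence, I would argue as follows. By Perron--Frobenius theory (\cref{prop: Perron-Frobenius MAP} for $\Fhat$, and Assumption A together with Perron--Frobenius for the nonnegative matrix $m(q)$), a linear combination of columns can be a positive vector only in the principal eigendirection. Hence: if $\lambda(\omega')=0$, then $m(\omega')$ has a positive eigenvector $(v'_i)$ with eigenvalue $1$, so $((v'_i),\omega')$ is admissible, so by the equivalence just proved $\big(\tfrac{v'_i}{v_i}\big)$ is a positive vector in the kernel of $\Fhat(\omega'-\omega)$; since it is positive, it must be the Perron eigenvector of $\Fhat(\omega'-\omega)$, and the corresponding (leading) eigenvalue is $0$, i.e. $\widehat\chi(\omega'-\omega)=0$. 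Conversely, if $\widehat\chi(\omega'-\omega)=0$, let $a=(a_i)$ be the associated positive eigenvector of $\Fhat(\omega'-\omega)$; set $v'_i := a_i v_i > 0$. Running the equivalence backwards, $((v'_i),\omega')$ is admissible, hence $(v'_i)$ is a positive eigenvector of $m(\omega')$, which by Perron--Frobenius forces its eigenvalue to be $\mathrm{e}^{\lambda(\omega')}$ and hence $\lambda(\omega')=0$. I would also note in passing that $\omega$ itself is such an exponent, consistent with $\widehat\chi(0)=0$, which follows from admissibility of $((v_i),\omega)$ (the vector of all ones lies in the kernel of $\Fhat(0)$).

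The only subtlety to flag beyond the algebra is making sure the positivity hypotheses line up: we need $m(\omega')$ to have finite entries to invoke Assumption A and Perron--Frobenius, which holds on the domain $D$ where the cumulants are defined, and this is implicitly where $\omega'$ is assumed to live; and we need the admissible vectors to be strictly positive, which is part of the definition of admissibility. With those in place, the chain of equivalences above is routine and the proof concludes.
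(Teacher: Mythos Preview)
Your proposal is correct and follows essentially the same route as the paper. The paper's proof is marginally cleaner only in that it works directly with the multitype cumulants $\Kcal_i$ (already shown equivalent to the $\Eb_i[M'(H)]=v'_i$ condition in \cref{prop: characterisation admissibility}), computing in one line
\[
\sum_{j\in\Ical} \frac{c_j}{c_i}\,\Fhat_{i,j}(\omega'-\omega) = \Kcal_i(\omega'),
\]
which avoids ever dividing by $\psi_i(\omega')+q_{i,i}$ and hence sidesteps the sign/finiteness caveat you flag; your Perron--Frobenius argument for the ``in particular'' clause is exactly what the paper does.
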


Note that \cref{prop: 2 exponents} contains (and reproves) the fact that there can be at most two martingale exponents $\omega$, by convexity of the leading eigenvalue (\cref{prop: convexity leading eigenvalue}).

\begin{proof}
Set $\Delta \omega:=\omega'-\omega$ and $c$ the vector with entries $c_i:=\frac{v_i'}{v_i}$, $i\in\Ical$. Then for all $i\in\Ical$,
\begin{equation} \label{eq: eigenvalue Fhat}
\sum_{j\in\Ical} \frac{c_j}{c_i} \Fhat_{i,j}(\Delta\omega) = \kappa_i(\omega') + \sum_{j\ne i} \frac{v_j'}{v_i'} \left(\int_{(-\infty, 0)} \Pi_{i,j}(\mathrm{d}x)(1-\mathrm{e}^{x})^{\omega'} +  q_{i,j} G_{i,j}(\omega')\right) = \Kcal_i(\omega'),
\end{equation}
where the $\Kcal_i$ are the multitype cumulant functions defined in \eqref{eq: multitype cumulants}. This formula proves \cref{prop: 2 exponents} in both directions. The fact that the admissible exponents correspond to the roots of $\widehat{\chi}(\cdot - \omega)$ is a consequence of Perron-Frobenius theory: if $0$ is an eigenvalue of $\Fhat(\omega'-\omega)$ associated with a positive eigenvector, then it must be the leading eigenvalue.

\end{proof}

\medskip
If $\omega$ and $\omega'$ are two martingale exponents, one can actually relate the corresponding spines $(\Xhat,\Jhat)$ and $(\Xhat',\Jhat')$. More precisely, one is obtained from the other one upon tilting the measure by the so-called Wald martingale (see \cref{prop: wald MAP}) for the underlying Markov additive process. Write $(\xihat,\Thetahat)$ and $(\xihat',\Thetahat')$ for the Markov additive processes with respective laws $\widehat{\mathtt{P}}$ and $\widehat{\mathtt{P}}'$ appearing in the Lamperti-Kiu representations of $(\Xhat,\Jhat)$ and $(\Xhat',\Jhat')$ respectively, and let $\Fhat$ be the matrix exponent of $(\xihat,\Thetahat)$. Denote by $v$ and $v'$ the eigenvectors of $m$ corresponding to $\omega$ and $\omega'$, and write $c$ for the vector with entries $c_i:=\frac{v_i'}{v_i}$, $i\in\Ical$. The proof of \cref{prop: 2 exponents} gives that the leading eigenvalue of $\Fhat(\omega'-\omega)$ is $\widehat{\chi}(\omega'-\omega)=0$ and is associated to the positive eigenvector $c$. Hence the Wald martingale at $\omega'-\omega$ for $(\xihat,\Thetahat)$ is 
\[
\widehat{\mathcal{W}}(t) = \frac{c_{\Thetahat(t)}}{c_{\Thetahat(0)}}\mathrm{e}^{(\omega'-\omega)\xihat(t)}, \quad t\ge 0.
\]
The law of $(\xihat,\Thetahat)$ under the probability measure biased by $\widehat{\mathcal{W}}$ is also given by \cref{prop: wald MAP}, and one can check that it coincides with the law of $(\xihat',\Thetahat')$ which is characterised  by \eqref{eq: Fhat spine} but with $(\omega^\prime,v')$ instead of $(\omega, v)$. In other words, for any nonnegative measurable function $f$ and all $x\in\R, i\in\Ical,t\in\R_+$,
\begin{equation}\label{eq: relation between spines}
\widehat{\mathtt{E}}'_{x,i}\big[ f(\xihat'(s),\Thetahat'(s),s\le t)\big] 
=
\widehat{\mathtt{E}}_{x,i}\bigg[ f(\xihat(s),\Thetahat(s),s\le t)\cdot \frac{c_{\Thetahat(t)}}{c_{i}}\mathrm{e}^{(\omega'-\omega)\xihat(t)}\bigg].  
\end{equation}
Alternatively (and perhaps more tellingly), one can apply the many-to-one lemma (\cref{prop:spine temporal multitype}) to relate the two spines. 
 


\subsection{Two genealogical martingales} \label{sec: genealogical martingales Cramer}
In this subsection, we make the following Cramér-type condition, assuming that there exist two \emph{admissible} exponents $\omega_-$ and $\omega_+$ in the interior of the domain where the matrix $m$ is finite, with $0<\omega_-<\omega_+$. In this case, we will add a $+$ or $-$ superscript (or subscript) to the quantities considered before: for example, we denote by $v^+$ and $v^-$ the Perron-Frobenius eigenvectors associated to $m(\omega_+)$ and $m(\omega_-)$ respectively. We also assume that $\widehat{\chi}_-'(\omega_+-\omega_-)$ is finite, where $\widehat{\chi}_-$ is the leading eigenvalue associated to the spine with respect to $\omega_-$. Note that by convexity of $\widehat{\chi}_-$, $\widehat{\chi}_-(0)<0$ and $\widehat{\chi}_-'(\omega_+-\omega_-)>0$. \cref{prop: wald MAP} also shows that the condition that $\widehat{\chi}_-'(\omega_+-\omega_-)$ is finite is the same as assuming that $\widehat{\chi}_+'(\omega_--\omega_+)$ is finite. Since $\widehat{\chi}_-(0)<0$, it is known that $\xihat^-$ drifts to $-\infty$ (see identity (11.12) in \cite{KP}). It is not too hard to see that this entails 
\[
\qquad \Ecal_{1,\pi}\left[\sum_{k\ge 1} \frac{v^-_{\Jcal_k(0)}}{v^-_i} \Xcal_k(0)^{\omega_-} \log |\Xcal_k(0)| \right] = \Ehat_{1,\pi} \Big[ \log \Xhat^-(b_{\Lcal(1)}) \Big] \in (-\infty,0).
\]
Actually, for some specific point we will need to work under the stronger assumption
\[
\text{(H)} \qquad \Ecal_{1,i}\left[\sum_{k\ge 1} \frac{v^-_{\Jcal_k(0)}}{v^-_i} \Xcal_k(0)^{\omega_-} \log |\Xcal_k(0)| \right] \in (-\infty,0), \quad \text{for all } i\in \Ical.
\]
We will emphasize when we assume $\text{(H)}$. 

We state for future reference the following lemma.
\begin{Lem} \label{lem: leading eigenvalue negative}
 For all $\gamma<\omega_+$, the leading eigenvalue of $\xi$ satisfies $\chi(\gamma)<0$.
\end{Lem}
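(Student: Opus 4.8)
The plan is to combine the convexity of $\chi$ (\cref{prop: convexity leading eigenvalue}) with two anchoring inequalities, $\chi(\omega_+)<0$ and $\chi(0)\le 0$, and then interpolate. I will first isolate the following elementary Perron--Frobenius fact: \emph{if $q$ lies in the domain of $F$ and $v>0$ satisfies $F(q)v\le 0$ coordinatewise with $F(q)v\ne 0$, then $\chi(q)<0$} (and $\chi(q)\le 0$ if merely $F(q)v\le 0$). Indeed, choosing $c>0$ so that $F(q)+c\,\mathrm{Id}$ has nonnegative entries, this matrix is irreducible — its off-diagonal zero pattern is that of $Q$, which is irreducible since $\Theta$ is, and $G_{i,j}(q)>0$ whenever $q_{i,j}>0$ — so it admits a strictly positive left eigenvector $\ell$, necessarily with $\ell^{\tt t}F(q)=\chi(q)\,\ell^{\tt t}$; pairing $F(q)v\le 0$ with $\ell>0$ gives $\chi(q)\,\ell^{\tt t}v=\ell^{\tt t}(F(q)v)\le 0$, with strict inequality when $F(q)v\ne 0$, and $\ell^{\tt t}v>0$ yields the claim.

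Next I would prove $\chi(\omega_+)<0$. Rearranging the definitions \eqref{eq: F matrix} and \eqref{eq: multitype cumulants} (this is exactly the algebra underlying the computation of $\Eb_i[M(H)]$ in \cref{sec: cumulants}) gives, for any strictly positive vector $v$ and any $q$ in the domain of $F$,
\[
(F(q)v)_i \;=\; v_i\,\Kcal_i(q)\;-\;\sum_{k\in\Ical} v_k\int_{(-\infty,0)}\Pi_{i,k}(\mathrm{d}x)\,(1-\mathrm{e}^{x})^{q},\qquad i\in\Ical,
\]
where $\Kcal_i$ is the multitype cumulant attached to $v$. Specialising to $q=\omega_+$ and $v=v^+$ and using admissibility (so $\Kcal_i(\omega_+)=0$ for all $i$) gives $F(\omega_+)v^+\le 0$ coordinatewise. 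This vector is not identically zero: otherwise $\Pi_{i,k}((-\infty,0))=0$ for all $i,k$ (since $v_k^+>0$ and $(1-\mathrm{e}^x)^{\omega_+}>0$ on $(-\infty,0)$), hence $X$ has no negative jumps and $m\equiv 0$, contradicting the irreducibility of $m(\omega_+)$ (Assumption A). The Perron--Frobenius fact above then yields $\chi(\omega_+)<0$. For $\chi(0)\le 0$ the argument is softer: the $i$-th row sum of $F(0)=\mathrm{diag}(\psi_1(0),\dots,\psi_N(0))+Q\circ G(0)$ equals $\psi_i(0)+\sum_j q_{i,j}G_{i,j}(0)\le \psi_i(0)+\sum_j q_{i,j}=\psi_i(0)\le 0$, using $G_{i,j}(0)\le 1$, $\sum_j q_{i,j}=0$, and that $\xi_i$ is a (possibly killed) Lévy process; thus $F(0)\mathbf{1}\le 0$ and $\chi(0)\le 0$.

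Finally, for $\gamma\in(0,\omega_+)$ — the range relevant to the applications, since $\omega_->0$ — I would interpolate: the domain of $F$ is an interval containing $[0,\omega_+]$, $\chi$ is convex on it by \cref{prop: convexity leading eigenvalue}, and writing $\gamma=(1-\gamma/\omega_+)\cdot 0+(\gamma/\omega_+)\cdot\omega_+$,
\[
\chi(\gamma)\;\le\;\Big(1-\frac{\gamma}{\omega_+}\Big)\chi(0)+\frac{\gamma}{\omega_+}\,\chi(\omega_+)\;\le\;\frac{\gamma}{\omega_+}\,\chi(\omega_+)\;<\;0 .
\]
For $\gamma\le 0$, where convexity alone is insufficient because $\chi(0)$ may vanish, one restricts to $\gamma$ in the interior of the domain of $m$ (where the statement is meaningful) and argues that $\chi(\gamma)\ge 0$ would force $\int_0^{\infty}\mathtt{E}_{0,i}[\mathrm{e}^{\gamma\xi(s)}]\,\mathrm{d}s=\infty$, hence, via the Lamperti--Kiu representation and the compensation formula for $(\xi,\Theta)$, a divergent entry of $m(\gamma)$ — contradicting finiteness of $m$ near $\omega_-$.

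I expect the main obstacle to be the strictness $\chi(\omega_+)<0$: upgrading the coordinatewise inequality $F(\omega_+)v^+\le 0$ to a \emph{nonzero} vector requires identifying the coordinate where it is strict and excluding the degenerate, fragmentation-free case through Assumption A. The treatment of $\gamma\le 0$ is the other slightly delicate point, but it is not required for the subsequent uses of this lemma.
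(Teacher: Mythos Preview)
Your argument for $\gamma\in(0,\omega_+)$ is correct, and since this is the only range invoked later (in \cref{prop: martingale M^-}), the proposal does the job. It is, however, a genuinely different route from the paper's proof.

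The paper argues pointwise in $\gamma$: it observes that each multitype cumulant $\Kcal_i$ is convex with $\Kcal_i(0)=\psi_i(0)\le 0$ and $\Kcal_i(\omega_+)=0$, hence $\Kcal_i(\gamma)<0$ on $(0,\omega_+)$; then, letting $w$ be a positive right eigenvector of $F(\gamma)$, it expands $\chi(\gamma)w_i=(F(\gamma)w)_i$, drops the nonnegative $\Pi$--integral terms coming from $\Kcal_i(\gamma)<0$, and picks the index $i$ maximising $w_i/v_i^+$ to force the remaining sum to be nonpositive. Your approach instead pins down the two endpoints $\chi(0)\le 0$ and $\chi(\omega_+)<0$ via the identity $(F(q)v^+)_i=v_i^+\Kcal_i(q)-\sum_k v_k^+\int_{(-\infty,0)}\Pi_{i,k}(\mathrm{d}x)(1-\mathrm{e}^x)^q$ combined with a left Perron eigenvector, and then interpolates using the convexity of the \emph{leading eigenvalue} $\chi$ itself. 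Both rely on essentially the same algebraic relation between $F$ and the $\Kcal_i$; the paper's version uses convexity of $\Kcal_i$ and a max--index trick, yours uses convexity of $\chi$ and a left--eigenvector pairing. A small bonus of your route is that it yields the endpoint inequality $\chi(\omega_+)<0$, which the paper's argument does not (there $\Kcal_i(\omega_+)=0$ and the strict step breaks).

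Two minor remarks. First, in the row--sum computation for $\chi(0)\le 0$ you should note explicitly that $G_{i,i}(0)=1$ (since $U_{i,i}=0$), so the diagonal term $q_{i,i}G_{i,i}(0)=q_{i,i}$ does not spoil the inequality despite $q_{i,i}<0$; your bound ``$G_{i,j}(0)\le 1$ hence $\sum_j q_{i,j}G_{i,j}(0)\le\sum_j q_{i,j}$'' is correct but only because of this. Second, your treatment of $\gamma\le 0$ is indeed sketchy---the step linking $\int_0^\infty\mathtt{E}_{0,i}[\mathrm{e}^{\gamma\xi(s)}]\,\mathrm{d}s=\infty$ to divergence of some $m_{i,j}(\gamma)$ needs a nonvanishing L\'evy--measure integral---but, as you note, this range is never used. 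The paper's own proof has the same limitation: its convexity step only yields $\Kcal_i(\gamma)<0$ on $(0,\omega_+)$.
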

\begin{proof}
Fix $\gamma<\omega_+$. We first remark that for $i\in\Ical$, the multitype cumulant function defined in \eqref{eq: multitype cumulants}, 
\[
\Kcal_i(q)
=
\psi_i(q) + \sum_{j\in\Ical} \frac{v_j}{v_i} \left(\int_{(-\infty,0)}  \Pi_{i,j}(\mathrm{d}x)(1-\mathrm{e}^{x})^{q}
+ q_{i,j} G_{i,j}(q)\right),
\]
is convex. Since $\omega_+$ is a root of $\Kcal_i$, and $\Kcal_i(0) = \psi_i(0)\le 0$, we infer by convexity that $\Kcal_i$ must be negative on $(0,\omega_+)$. Let $(w_i)_{i\in\Ical}$ be a positive eigenvector of $F$ associated with $\chi(\gamma)$. Then for all $i\in\Ical$, using that $\Kcal_i(\gamma)<0$,
\begin{multline*}
	\chi(\gamma) w_i = w_i (\psi_i(\gamma)+q_{i,i}) + \sum_{j\neq i} w_j q_{i,j} G_{i,j}(\gamma) 
				   < -\sum_{j\neq i} \Big(w_i \frac{v_j}{v_i} - w_j\Big) q_{i,j} G_{i,j}(\gamma) \\
				   =-\sum_{j\neq i} \Big(\frac{w_i}{v_i} - \frac{w_j}{v_j}\Big) v_j q_{i,j} G_{i,j}(\gamma). 
\end{multline*}
We now take $i\in\Ical$ such that $\frac{w_i}{v_i}$ is maximal. This yields $\chi(\gamma)w_i<0$, and since $w_i>0$, we get $\chi(\gamma)<0$.
\end{proof}

Our goal is to carry out in the multitype setting the analysis conducted in \cite{BBCK} of the two martingales associated to $\omega_-$ and $\omega_+$ (see notably Sections 2.3 and 3.3 there). We start by the study of the two genealogical martingales $(\Mcal^+(n),n\ge 0)$ and $(\Mcal^-(n),n\ge 0)$ with associated exponents $\omega_+$ and $\omega_-$, namely
\[
\Mcal^+(n)=  \sum_{|u|=n+1} v^+_{\Jcal_u(0)} \Xcal_u(0)^{\omega_+}, \quad n\ge 0,
\]
and 
\[
\Mcal^-(n)=  \sum_{|u|=n+1} v^-_{\Jcal_u(0)} \Xcal_u(0)^{\omega_-}, \quad n\ge 0.
\]
Our arguments are inspired from \cite[Section 2.3]{BBCK}, but rely on a multitype version of Biggins' martingale convergence theorem \cite{Big,Lyo}, due to Kyprianou and Sani \cite{KS}. Note that both $\Mcal^+$ and $\Mcal^-$ converge almost surely as nonnegative martingales; we now investigate whether the limit is degenerate or not.

\begin{Prop} \label{prop: convergence M_+}
For all $(x,i)\in\R_+^*\times \Ical$, the martingale $(\Mcal^+(n),n\ge 0)$ converges to $0$ almost surely under $\Pcal_{x,i}$.
\end{Prop}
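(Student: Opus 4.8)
The strategy is to recognize $(\Mcal^+(n), n\ge 0)$ as (a weighted version of) the intrinsic additive martingale of the multitype branching random walk $(-\log\Xcal_u(0), \Jcal_u(0))_{u\in\Ub}$ at the parameter $\omega_+$, and to invoke the multitype Biggins-type convergence criterion of Kyprianou and Sani \cite{KS} to decide degeneracy of its limit. Recall from \cref{sec: martingales} that $\mathrm{e}^{\lambda(q)}$ is the Perron-Frobenius eigenvalue of $m(q)$, and that by Assumption B together with the Cramér-type hypothesis of this subsection, $\lambda(\omega_-)=\lambda(\omega_+)=0$ with $0<\omega_-<\omega_+$. Since $\lambda$ is convex (by Kingman's theorem) and vanishes at the two distinct points $\omega_-$ and $\omega_+$, it is strictly negative on $(\omega_-,\omega_+)$ and, crucially, its one-sided derivative at $\omega_+$ satisfies $\lambda'(\omega_+)>0$ (or, more precisely, $\lambda$ is increasing to the right of $\omega_+$). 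In the language of branching random walks this means that $\omega_+$ lies on the \emph{wrong side} of the critical point: the slope condition that makes the additive martingale uniformly integrable fails, and the Kyprianou–Sani dichotomy forces $\Mcal^+(n)\to 0$ almost surely.

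The steps, in order, would be: (1) translate the genealogical martingale $\Mcal^+$ into the normalized additive martingale $W_n^{(\omega_+)}$ of the multitype BRW, absorbing the Perron-Frobenius eigenvector $v^+$ into the definition so that it matches exactly the object to which \cite{KS} applies — here one uses that $v^+$ is the right eigenvector of $m(\omega_+)$ associated with eigenvalue $\mathrm{e}^{\lambda(\omega_+)}=1$, so no time-normalization is needed; (2) verify the (mild) regularity hypotheses of the Kyprianou–Sani theorem, namely irreducibility of the mean matrix (this is precisely Assumption A) and finiteness of $m(\omega_+)$ (part of the standing Cramér-type assumption, since $\omega_+$ is in the interior of the domain of finiteness), plus a non-lattice/positivity condition that is automatic here; (3) compute the derivative-type quantity that appears in the Kyprianou–Sani criterion, i.e.\ the sign of $\frac{\mathrm{d}}{\mathrm{d}q}\lambda(q)$ at $q=\omega_+$, or equivalently check that the spine walk under the $\omega_+$-tilt drifts to $+\infty$ in the relevant direction (contrast with \cref{lem: leading eigenvalue negative} and the $\omega_-$-case where $\widehat\chi_-(0)<0$); (4) conclude that the limit of $\Mcal^+$ is a.s.\ $0$, first under $\Pcal_{1,i}$, then for arbitrary starting size $x$ by self-similarity (which merely rescales $\Mcal^+(n)$ by the deterministic factor $x^{\omega_+}$) and arbitrary starting type $i\in\Ical$ by irreducibility.

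A cleaner alternative for step (3), which I would present in parallel, is the spinal/change-of-measure argument of \cite[Section 2.3]{BBCK} adapted to the multitype setting: under $\Phat^+_{x,i}$ the spine $(\Xhat^+,\Jhat^+)$ has the explicit matrix exponent $\Fhat^+$ from \cref{thm:spine multitype}, and by \cref{prop: 2 exponents} the second martingale exponent $\omega_-$ corresponds to the \emph{negative} root $\omega_- - \omega_+ <0$ of $\widehat\chi_+$. Convexity of $\widehat\chi_+$ together with $\widehat\chi_+(0)=0$ and $\widehat\chi_+(\omega_--\omega_+)=0$ then gives $\widehat\chi_+'(0)>0$, so that $\log\Xhat^+$ increases along the spine; equivalently $-\log\Xcal_{\Lcal(n)}(0)\to -\infty$, which by the standard Lyons–Kurtz–Peres–type spine argument (if the spine escapes to $-\infty$ in the $-\log$ coordinate, the martingale limit is degenerate) yields $\Mcal^+\to 0$ a.s.

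The main obstacle I anticipate is step (2)–(3): one must make sure the hypotheses of the Kyprianou–Sani theorem are met \emph{verbatim} by the multitype BRW $(-\log\Xcal_u(0),\Jcal_u(0))_{u\in\Ub}$ — in particular the integrability conditions on the displacement point processes and the precise formulation of the slope/derivative criterion that distinguishes the uniformly integrable regime from the degenerate one — and then correctly identify which side of criticality $\omega_+$ sits on. The sign bookkeeping is delicate because the BRW displacements are $-\log\Xcal_u(0)$ while the natural self-similarity and spine computations are phrased in terms of $\log\Xhat$; getting the inequality $\widehat\chi_+'(0)>0$ (rather than $<0$) right is where the real content lies, and it hinges on the ordering $\omega_-<\omega_+$ and the convexity of the leading eigenvalue established in \cref{prop: convexity leading eigenvalue}. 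Everything else is either a direct citation (\cite{KS}, \cite{Kin}) or a routine consequence of self-similarity and irreducibility.
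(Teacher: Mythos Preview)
Your proposal is correct and follows essentially the same approach as the paper: identify $(-\log\Xcal_u(0),\Jcal_u(0))_{u\in\Ub}$ as a multitype branching random walk, invoke the Kyprianou--Sani version of Biggins' theorem \cite{KS}, and reduce the degeneracy criterion to checking $\lambda'(\omega_+)>0$, which follows from the convexity of $\lambda$ together with $\lambda(\omega_-)=\lambda(\omega_+)=0$ and $\omega_-<\omega_+$. The paper's proof is in fact just two sentences doing exactly this; your steps (2) and (4) spell out routine verifications the paper leaves implicit, and your alternative spinal argument via $\widehat\chi_+'(0)>0$ is a valid parallel route but not the one the paper takes here.
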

\begin{proof}
The proof is based on Biggins' martingale convergence theorem for multitype branching random walk (see \cite[Theorem 1]{KS}). Recall that, as noted in \cref{sec: construction GF multitype}, the process $(-\log\Xcal_u(0), \Jcal_u(0))_{u\in\Ub}$ forms a multitype branching random walk. Since $\lambda(\omega_+)=0$ and $\omega_+>0$, the result will follow provided that $\lambda'(\omega_+)> 0$. But since $\omega_+>\omega_-$, by convexity of $\lambda$ (see the discussion after identity \eqref{eq: log Laplace convexity}), $\lambda'(\omega_+)>0$, which concludes the proof.
\end{proof}

\begin{Rk}
We observe that we can combine the ideas of \cite[Section 4.2]{BBCK} with Theorem \cref{entrancelaw} to construct the growth-fragmentation under a version of $\Phat^+_{x,i}$ starting from $0$. Indeed, the condition that $\widehat{\chi}_+'(0)>0$ guarantees that $\xihat^+$ drifts to $+\infty$, see for instance the strong law of large numbers for MAPs that appears in  identity (11.12) in \cite{KP} (see also Theorem 34 in \cite{DeDoKy}). This defines the law of the spine started from $0$, and one can then rebuild the tree using the spine as a backbone as in \cite{BBCK}.
\end{Rk}

\medskip
\noindent In contrast, the martingale $(\Mcal^-(n),n\ge 0)$ satisfies the following properties. We assume that the chain $\Jhat=(\Jcal_{\Lcal(k)}(0), k\ge 0)$ describing the type of the spine at generation $k$ is irreducible and aperiodic, and henceforth has  invariant probability measure $(\mu_i)_{i\in \mathcal{I}}$ where $\mu_i=\pi_i q_{i,i}$.
\begin{Prop} \label{prop: martingale M^-}
The following results regarding the martingale $(\Mcal^-(n),n\ge 0)$ hold:
\begin{itemize}
\item[i)] $\Mcal^-(1)\in L^{\omega_+/\omega_-}$ under any initial distribution;
\item[ii)] $(\Mcal^-(n),n\ge 0)$ is a uniformly integrable martingale. In particular, its almost sure limit $\Mcal^-(\infty)$ is also an $L^1$ limit, and hence non degenerate.
\item[iii)] Assume the ordinate $\xi$ is non-lattice, \textit{i.e.} does not lie in some $a\Z$, $a>0$, \emph{and assume that $\text{(H)}$ holds}. Then for all $(x,i)\in\R_+\times\Ical$,
\begin{equation}
\Pcal_{x,i}(\Mcal^-(\infty)>t) \underset{t\to\infty}{\sim} C t^{-\omega_+/\omega_-},
\end{equation}
for some constant $C>0$.
\end{itemize}
\end{Prop}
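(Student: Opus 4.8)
The plan is to transfer each statement to a question about the multiplicative cascade / multitype branching random walk $(-\log\Xcal_u(0),\Jcal_u(0))_{u\in\Ub}$ and then invoke, respectively, the multitype Biggins machinery, a size-biasing (spine) argument, and the multitype Kesten--Goldie tail estimate proved in the appendix (cf.\ \cref{thm: tail behaviour affine}). For part (i), I would first observe that $\Mcal^-(1)=\sum_{k\ge1}v^-_{\Jcal_k(0)}\Xcal_k(0)^{\omega_-}$ is exactly the value at generation $1$ of the genealogical martingale associated with the admissible pair $(v^-,\omega_-)$, and that raising it to the power $p:=\omega_+/\omega_->1$ and taking expectations amounts to controlling $\Eb_i\big[\big(\sum_{0<s<\zeta}v^-_{J_\Delta(s)}|\Delta X(s)|^{\omega_-}\big)^{p}\big]$. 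By the definition of the admissible exponent $\omega_+$, $\lambda(\omega_+)=0$, i.e.\ $m(\omega_+)$ has Perron--Frobenius eigenvalue $1$; since $|\Delta X(s)|^{\omega_+}=\big(|\Delta X(s)|^{\omega_-}\big)^{p}$, one gets that $\Eb_i\big[\sum_{0<s<\zeta}v^-_{J_\Delta(s)}{}^{p}|\Delta X(s)|^{\omega_+}\big]$ is finite (it is comparable to $m(\omega_+)$ applied to the bounded vector $(v^-_j{}^p)_j$). A Jensen/convexity argument, or more precisely the elementary inequality $(\sum a_k)^p\le \big(\sum a_k\big)\big(\sup_k a_k\big)^{p-1}$ combined with $\big(\sum a_k^{1/p}\big)^{p}\le$ (sum of $a_k$) when $p>1$ — i.e.\ the standard ``moving the power inside'' bound used in \cite{BBCK}*{Section 2.3} — then gives $\Mcal^-(1)^{p}\le$ a constant times $\sum_k v^-_{\Jcal_k(0)}{}^{p}\Xcal_k(0)^{\omega_+}$, whose expectation is finite. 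This is essentially Bertoin--Budd--Curien--Kortchemski's argument with the weights $v^-$ carried along.

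For part (ii), the finiteness of the $p$-th moment of $\Mcal^-(1)$ at generation $1$ together with $\lambda'(\omega_-)<0$ (which holds since $\omega_-<\omega_+$ and $\lambda$ is convex with $\lambda(\omega_-)=\lambda(\omega_+)=0$, so $\lambda'(\omega_-)<0$) are precisely the hypotheses of the multitype version of Biggins' theorem due to Kyprianou and Sani \cite{KS}*{Theorem 1}: the $L\log L$-type condition is implied by the $L^{p}$ condition with $p>1$, and the derivative condition $\lambda'(\omega_-)<0$ (equivalently $\widehat\chi_-{}'(0)<0$, i.e.\ the spine ordinator $\xihat^-$ drifts to $-\infty$, which is recorded in the running assumptions of this subsection) guarantees non-degeneracy. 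Hence $(\Mcal^-(n))_{n\ge0}$ is uniformly integrable, converges in $L^1$, and $\Mcal^-(\infty)$ is non-degenerate with $\Eb_i[\Mcal^-(\infty)]=v^-_i x^{\omega_-}$ (taking $x=1$). I would also note the branching recursion $\Mcal^-(\infty)=\sum_{k\ge1}\big(v^-_{\Jcal_k(0)}/v^-_i\big)\Xcal_k(0)^{\omega_-}\,\Mcal^{-,(k)}(\infty)$ in distribution, with the $\Mcal^{-,(k)}(\infty)$ independent copies (under $\Pcal_{\Xcal_k(0),\Jcal_k(0)}$) — this is the fixed-point/random-affine structure we exploit next.

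For part (iii), the recursion above identifies $\Mcal^-(\infty)$ (under $\Pcal_{1,i}$, suitably normalised by $v^-_i$) as a solution of a multitype stochastic fixed-point equation of the smoothing-transform type, i.e.\ the homogeneous multiplicative-cascade case of the random affine recursion treated in the appendix. The ``matrix $m$'' of that framework is here $m(\omega_-+\,\cdot\,)$ suitably conjugated, whose leading eigenvalue at the shift $\omega_+-\omega_-$ is $\widehat\chi_-(\omega_+-\omega_-)$, and this vanishes (Cramér-type condition at the second root) by \cref{prop: 2 exponents}; the derivative $\widehat\chi_-{}'(\omega_+-\omega_-)>0$ is the ``$\mathbb E[m'\log m]>0$'' condition, finite by our standing assumption. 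The non-lattice hypothesis on $\xi$ (equivalently on $-\log\Xcal_u(0)$) and the moment inputs (finiteness of moments of order $<\omega_+/\omega_-$ for $\Mcal^-(\infty)$, coming from (i)--(ii) plus \cref{prop: moments exponential functionals}, and the generation-one $\omega_+/\omega_-$-moment from (i)) are exactly conditions $(i)$--$(iv)$ of \cref{thm: tail behaviour affine}. Here is where assumption $\text{(H)}$ enters: it upgrades the $\pi$-averaged logarithmic moment $\Ehat_{1,\pi}[\log\Xhat^-(b_{\Lcal(1)})]\in(-\infty,0)$ to the statement that this negative, finite logarithmic mean holds starting from \emph{every} type $i\in\Ical$, which is what is needed to run the Markov-renewal argument behind the tail theorem uniformly over starting types. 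Invoking \cref{thm: tail behaviour affine} then yields $\Pcal_{x,i}(\Mcal^-(\infty)>t)\sim C_i\,t^{-\omega_+/\omega_-}$; a short argument (conditioning on the first generation and using the recursion, together with $0<\Eb_i[\Mcal^-(\infty)]<\infty$) shows the constant $C_i$ is the same for all $i$ and all $x>0$ up to the obvious scaling, giving the stated form with a single $C>0$.

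\textbf{Main obstacle.} The delicate point is part (iii): verifying that $\Mcal^-(\infty)$ genuinely fits the hypotheses of the multitype Kesten--Goldie / multiplicative-cascade tail theorem of the appendix — in particular checking the non-lattice condition at the level of the cascade, the strict positivity and finiteness of the ``$\widehat\chi_-{}'$'' derivative, and the relevant moment bounds — and, crucially, seeing why assumption $\text{(H)}$ (a per-type, not just $\pi$-averaged, negativity of the logarithmic mean) is exactly the right hypothesis to make the Markov-renewal argument go through uniformly in the starting type. Parts (i) and (ii) are comparatively routine adaptations of \cite{BBCK} and a direct citation of \cite{KS}.
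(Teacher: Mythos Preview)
Your plan for part (ii) is essentially the paper's argument and is fine. The real problems are in (i) and, to a lesser extent, in (iii).

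\textbf{Part (i): the inequality is false.} You claim that a ``moving the power inside'' bound gives
\[
\Big(\sum_k v^-_{\Jcal_k(0)}\Xcal_k(0)^{\omega_-}\Big)^{p}\le C\sum_k (v^-_{\Jcal_k(0)})^{p}\Xcal_k(0)^{\omega_+},\qquad p=\omega_+/\omega_->1.
\]
But for $p>1$ and nonnegative $a_k$ one has $(\sum_k a_k)^p\ge\sum_k a_k^p$, not the reverse; your auxiliary inequality $(\sum a_k)^p\le(\sum a_k)(\sup_k a_k)^{p-1}$ is also in the wrong direction, since $\sum a_k\ge\sup a_k$. There is no elementary pointwise bound of this kind when the number of jumps is unbounded, and this is precisely why the paper (following \cite{BBCK}, Lemma~2.3) does \emph{not} argue this way. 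The paper instead writes $S_t=\sum_{0<s\le t\wedge\zeta}|\Delta X(s)|^{\omega_-}$, computes its predictable compensator $S^{(p)}_t=\int_0^{t\wedge\zeta}X(s)^{\omega_--\alpha}c_{J(s)}\mathrm{d}s$, and shows $S^{(p)}_\infty$ is bounded in $L^{\omega_+/\omega_-}$ via the Lamperti--Kiu representation and the moment bound for the exponential functional of the underlying MAP (\cref{prop: moments exponential functionals}, using \cref{lem: leading eigenvalue negative}). The purely discontinuous martingale $S-S^{(p)}$ is then controlled by Burkholder--Davis--Gundy, reducing to the quadratic variation $\sum|\Delta X(s)|^{2\omega_-}$; one iterates (doubling the exponent) until $2^k\omega_->\omega_+$, at which point the admissibility of $\omega_+$ closes the argument. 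None of this can be shortcut by a convexity inequality.

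\textbf{Part (iii): wrong appendix theorem and a missing moment step.} The fixed-point equation satisfied by $\widetilde\Mcal^{(i)}=\Mcal^-(\infty)/v^-_i$ is the \emph{smoothing transform} $\widetilde\Mcal^{(i)}\overset{\Lcal}{=}\sum_{k\ge1}(v^-_{\Jcal_k}/v^-_i)\Delta_k^{\omega_-}\widetilde\Mcal_k$, with infinitely many multiplicative weights, so the relevant result is \cref{thm: tail behaviour smoothing}, not the single-factor affine equation of \cref{thm: tail behaviour affine}. More importantly, to apply that theorem one needs $\Eb_j[\Mcal^-(\infty)^\beta]<\infty$ for every $\beta<\omega_+/\omega_-$, and this does \emph{not} follow from (i) (which only concerns generation $1$) or from (ii) (which only gives $L^1$). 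The paper isolates this as a separate, nontrivial statement (\cref{prop: Mcal L^beta}), proved by recasting $\Mcal^-(\infty)$ under the spine measure $\Phat^-$ as the solution of a multitype random linear recursion and bootstrapping the moment order one unit at a time via \cref{prop: moments multitype recursion}; assumption (H) is used there to guarantee the hypotheses of \cref{lem: Grin} hold for every starting type. Your description of where (H) enters is in the right spirit, but you have not supplied this bootstrap, and without it the hypotheses of the tail theorem are not verified.
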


\begin{proof}
The proof is a generalisation of that of \cite[Lemma 2.3]{BBCK} to the multitype setting, which in particular requires the extension of \cite{JO-C} included as an appendix in \cref{sec: appendix}.

We first prove part (i) which uses the finiteness criterion for the moments of the exponential functional of Markov additive processes stated as \cref{prop: moments exponential functionals}. First, we claim that for all $q\ge 0$, for which the matrix $m(q)$ is finite, the process
\begin{equation} \label{eq: S_t}
S_t := \sum_{0<s\le t\wedge \zeta} |\Delta X(s)|^{q}, \quad t\ge 0,
\end{equation}
has predictable compensator
\begin{equation} \label{eq: compensator S_t}
S^{(p)}_t := \int_0^{t\wedge\zeta} \mathrm{d}s  X(s)^{q-\alpha} \int_{(-\infty,0)} (1-\mathrm{e}^x)^{q} \widetilde{\Lambda}_{J(s)}(\mathrm{d}x), \quad t\ge 0,
\end{equation}
where for all $i\in\Ical$, $\widetilde{\Lambda}_i := \Lambda_i + \sum_{k\in\Ical} q_{i,k} \Lambda_{U_{i,k}}$. This follows from the same kind of computations as in the determination of the multitype cumulants, see \cref{sec: cumulants}. Note that by the Lamperti-Kiu representation,
\[
\int_0^{\zeta}  X(s)^{q-\alpha} \mathrm{d}s = \int_0^{\infty} \mathrm{e}^{q \xi(s)}\mathrm{d}s,
\]
is the exponential functional of $\xi$. It therefore follows from \cref{lem: leading eigenvalue negative} and \cref{prop:  moments exponential functionals} (together with a crude bound) that $S^{(p)}$ is bounded in $L^{\omega_+/q}$. Now we take $q=\omega_-$. It remains to prove that $S-S^{(p)}$ is bounded in $L^{\omega_+/\omega_-}$: this would entail that $S$ is bounded in $L^{\omega_+/\omega_-}$, and hence $\Mcal^-(1)\in L^{\omega_+/\omega_-}$ by another crude bound. Now, the process
\[
S_t - S^{(p)}_t = \sum_{0<s\le t\wedge \zeta} |\Delta X(s)|^{\omega_-} - \int_0^{t\wedge\zeta} \mathrm{d}s  X(s)^{\omega_- - \alpha} \int_{(-\infty,0)} (1-\mathrm{e}^x)^{\omega_-} \widetilde{\Lambda}_{J(s)}(\mathrm{d}x), \quad t\ge 0,
\]
is a purely discontinuous martingale, and so has quadratic variation 
\[
V_t := \sum_{0<s\le t\wedge\zeta}  |\Delta X(s)|^{2\omega_-}, \quad t\ge 0.
\]
By the Burkholder-Davis-Gundy inequality, it is enough to prove that, for all $(x,i)\in\R_+\times\Ical$, $\Eb_{x,i}[V_{\zeta}^{\omega_+/(2\omega_-)}]<\infty$. Assume for a moment that $\omega_+/\omega_-<2$. Then for $(x,i)\in\R_+\times\Ical$,
\[
\Eb_{x,i}[V_{\zeta}^{\omega_+/(2\omega_-)}] \le \Eb_{x,i}\bigg[ \sum_{0<s\le \zeta}  |\Delta X(s)|^{\omega_+}\bigg]. 
\]
Setting $C = \min_{i\in\Ical} v_i^+$, and using the definition of $v^+$ and $\omega_+$, we end up with the desired inequality:
\[
\Eb_{x,i}[V_{\zeta}^{\omega_+/(2\omega_-)}] \le C^{-1} \Eb_{x,i}\bigg[ \sum_{0<s\le \zeta} v^+_{J_{\Delta}(s)}  |\Delta X(s)|^{\omega_+}\bigg] = C^{-1}.
\]
Next, suppose $2\le \omega_+/\omega_-<4$. Then replacing $q$ by $2\omega_-$ in \eqref{eq: S_t} and \eqref{eq: compensator S_t}, the previous arguments show that $S^{(p)}_t$ is bounded in $L^{\omega_+/(2\omega_-)}$, and that $S_t-S_t^{(p)}$ is a purely discontinuous martingale with quadratic variation
\[
\sum_{0<s\le t\wedge \zeta} |\Delta X(s)|^{4\omega_-}, \quad t\ge 0.
\]
The same inequality as before shows that the latter process is bounded  in $L^{\omega_+/(4\omega_-)}$, and we conclude again by the Burkholder-Davis-Gundy inequality that $S_t = \sum_{0<s\le t\wedge \zeta} |\Delta X(s)|^{2\omega_-}$ is bounded in $L^{\omega_+/(2\omega_-)}$. The same arguments hold in full generality when $2^k\le \omega_+/\omega_-<2^{k+1}$, $k\ge 0$, by a simple recursion.

Part (ii) is a consequence of Biggins' martingale convergence theorem for multitype branching random walks \cite[Theorem 1]{KS}. We use the same arguments as in the proof of \cref{prop: convergence M_+}: since $\omega_-<\omega_+$ are two roots of $\lambda$, and $\lambda$ is convex (see the discussion after \eqref{eq: log Laplace convexity}), we infer that $\lambda'(\omega_-)<0$. The $L\log L$ integrability condition is straightforward since $m(\omega_+)$ is finite.

 Finally, part (iii) comes from a multitype version of the tail estimates in multiplicative cascades \cite{JO-C}. Setting $\widetilde{\Mcal}^{(i)} := \Mcal^-(\infty)/v^-_i$, the branching property entails that $\widetilde{\Mcal}^{(i)}$ satisfies the identity in distribution:
\[
\widetilde{\Mcal}^{(i)} \overset{\Lcal}{=} \sum_{k\ge 1} \frac{v^-_{\Jcal_k}}{v^-_i}\Delta_k^{\omega_-} \widetilde{\Mcal}_k, 
\]
where the $\Delta_k$ are the absolute values of the jumps of $X$, the $\Jcal_k$ are the corresponding types, and the $\widetilde{\Mcal}_k$ form a sequence of independent copies of $\widetilde{\Mcal}^{(j)}$, with initial type $j=\Jcal_k$, which are further independent of the jumps $\Delta_k$. We want to apply \cref{thm: tail behaviour smoothing}: this would imply the desired tail asymptotics for $\widetilde{\Mcal}^{(i)}$, and hence for $\Mcal^-(\infty)$. In order to apply \cref{thm: tail behaviour smoothing}, the only non-trivial assumption to show is that for $\beta<\omega_+/\omega_-$, $\Mcal^{-}(\infty)\in L^\beta$. We include this fact as \cref{prop: Mcal L^beta} in Appendix \ref{sec: appendix} to keep the paper at its present pace.

\end{proof}

The next result produces temporal martingales related to the genealogical martingales defined in \cref{sec: martingales} (as explained in \cref{rk: temporal mart}, these processes are always supermartingales). The positive case appears as \cite[Corollary 3.5]{BBCK} (beware that our definition of the self-similarity index $\alpha$ is the opposite of that of \cite{BBCK}). 
\begin{Prop}
\noindent For $\alpha\ge0$, the process
\[
\Mcal^+_t:= \sum_{i=1}^{\infty} v^+_{J_i(t)} X_i(t)^{\omega_+}, \quad t\ge 0,
\]
is a $\Pbf_{x,i}$--martingale, whereas for $\alpha\le 0$,  
\[
\Mcal^-_t:= \sum_{i=1}^{\infty} v^-_{J_i(t)} X_i(t)^{\omega_-}, \quad t\ge 0,
\]
is a $\Pbf_{x,i}$--martingale.
\end{Prop}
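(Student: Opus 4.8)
The plan is to recognize these temporal processes as the natural projections of the genealogical martingales $\Mcal^\pm$ along the branching structure, and to combine the supermartingale property established in \cref{rk: temporal mart} with a uniform integrability / conservation argument. First I would recall that, by \cref{rk: temporal mart} applied with $v^+$ (resp. $v^-$), the process $\Mcal^+_t$ (resp. $\Mcal^-_t$) is a nonnegative $\Pbf_{x,i}$--supermartingale; it therefore suffices to show that $\Ebf_{x,i}[\Mcal^{\pm}_t] = v^{\pm}_i x^{\omega_\pm}$ for all $t\ge0$, i.e. that there is no mass defect. The key tool is the many-to-one identity \cref{prop:spine temporal multitype}, which with $f\equiv\mathds{1}_{\{x\ne\partial\}}$ gives
\[
\Ebf_{x,i}[\Mcal^\pm_t] = v^\pm_i x^{\omega_\pm}\,\Phat^\pm_{x,i}\big(\Xhat^\pm(t)\ne\partial\big) = v^\pm_i x^{\omega_\pm}\,\Phat^\pm_{x,i}\big(b_{\Lcal}>t\big).
\]
So the whole proposition reduces to showing that, under the spine change of measure, the tagged cell is alive at every deterministic time $t$: $\Phat^\pm_{x,i}(b_{\Lcal}>t)=1$ for all $t$.

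Next I would translate this into a statement about the spine self-similar Markov process. By \cref{thm:spine multitype}, under $\Phat^\pm_{x,i}$ the spine $(\Xhat^\pm,\Jhat^\pm)$ is a self-similar Markov process with types whose MAP has matrix exponent $\Fhat^\pm$, and $b_{\Lcal}$ is exactly its lifetime, i.e. in Lamperti-Kiu coordinates $b_{\Lcal} = x^{\alpha}\int_0^\infty \mathrm e^{\alpha\xihat^\pm(s)}\,\mathrm ds$. Thus $b_\Lcal=\infty$ a.s.\ precisely when this exponential functional diverges, which by the standard dichotomy for self-similar Markov processes happens iff $\xihat^\pm$ does not drift to $-\infty$ when $\alpha>0$ (resp.\ does not drift to $+\infty$ when $\alpha<0$); the case $\alpha=0$ is immediate since then $b_\Lcal=\int_0^\infty \mathrm dt=\infty$. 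For the $\omega_+$ martingale with $\alpha\ge0$: the leading eigenvalue $\widehat\chi_+$ of $\Fhat^+$ satisfies $\widehat\chi_+(0)=\kappa$-type quantity $=0$ (since $\omega_+$ is a root of $\lambda$, by \cref{prop: 2 exponents} $\widehat\chi_+(0)=0$), and $\widehat\chi_+'(0)>0$ because $\omega_+>\omega_-$ forces, via convexity of $\lambda$ and the correspondence of \cref{prop: 2 exponents}, that the right derivative at the origin is positive; hence $\xihat^+$ drifts to $+\infty$ (identity (11.12) in \cite{KP}), so $\int_0^\infty\mathrm e^{\alpha\xihat^+}=\infty$ a.s.\ and $b_\Lcal=\infty$. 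Symmetrically, for the $\omega_-$ martingale with $\alpha\le0$: here $\widehat\chi_-(0)=0$ and $\widehat\chi_-'(0)<0$ (again by convexity, since $\omega_-<\omega_+$), so $\xihat^-$ drifts to $-\infty$; then $\mathrm e^{\alpha\xihat^-(s)}\to\infty$ since $\alpha\le0$, and the integral $\int_0^\infty \mathrm e^{\alpha\xihat^-(s)}\,\mathrm ds$ diverges, giving $b_\Lcal=\infty$.

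Putting the pieces together: in each case $\Phat^\pm_{x,i}(b_\Lcal>t)=1$, hence $\Ebf_{x,i}[\Mcal^\pm_t]=v^\pm_i x^{\omega_\pm}$ is constant in $t$; combined with the supermartingale property from \cref{rk: temporal mart}, a nonnegative supermartingale with constant expectation is a true martingale, which is the claim. The main obstacle I anticipate is the careful justification that the relevant exponential functional of the spine MAP diverges almost surely under the stated sign of $\alpha$ — i.e.\ pinning down the drift of $\xihat^\pm$ from the sign of $\widehat\chi_\pm'(0)$ and invoking the correct Lamperti-Kiu dichotomy (one must be slightly careful that $\widehat\chi_\pm'(0)$ is finite, which is guaranteed here since $\omega_\pm$ lies in the interior of the domain of finiteness of $m$, so $\Fhat^\pm$ is differentiable at $0$). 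Everything else — the many-to-one reduction and the supermartingale-to-martingale upgrade — is routine.
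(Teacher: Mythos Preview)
Your proof is correct and follows essentially the same route as the paper: both reduce, via the many-to-one identity of \cref{prop:spine temporal multitype}, to showing that the spine has infinite lifetime, and then verify this by using the sign of $\widehat\chi_\pm'(0)$ (obtained from convexity and the two roots $\omega_\pm$) to determine the drift of $\xihat^\pm$ and hence the divergence of the exponential functional appearing in the Lamperti--Kiu lifetime. The only cosmetic difference is that the paper upgrades from constant expectation to martingale via the branching property, whereas you invoke the supermartingale property from \cref{rk: temporal mart}; both are standard and equivalent here.
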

\begin{proof}
The proof is close to \cite[Corollary 3.5]{BBCK} once we translate everything into the Markov additive process framework. We restrict to proving the result for $\alpha\ge 0$ (similar arguments work in the case when $\alpha\le 0$). By the branching property, the result follows if we prove that $\Ebf_{x,i} [\Mcal_t^+]= v^+_i x^{\omega_+}$.  From the many-to-one lemma (\cref{prop:spine temporal multitype}), we see that
\[
\Ebf_{x,i}[\Mcal_t^+] = v^+_i x^{\omega_+}\Phat_{x,i}[\Xhat^+(t)\ne \partial],
\]
where $\Xhat^+$ denotes the spine corresponding to the exponent $\omega_+$. Hence $\Mcal^+$ is a martingale if and only if $\Xhat^+$ has infinite lifetime. By convexity of $\widehat{\chi}_+$, we know that $\widehat{\chi}_+'(0)>0$. This implies that for the underlying Markov additive process $(\xihat^+,\Thetahat^+)$ of $\Xhat^+$, $\xihat^+$ drifts to $+\infty$ independently of the initial state (see \cite[Corollary XI.2.7]{Asm} and \cite[Lemma 2.14]{Iva}). Therefore, the exponential functional 
\[
I(\alpha \xihat^+) := \int_0^{\infty} \exp(\alpha \xihat^+(u)) \mathrm{d}u,
\]
is infinite since $\alpha\ge 0$, and we conclude by the Lamperti-Kiu construction that $\Xhat^+$ has infinite lifetime.
\end{proof}
Similarly to the positive  case (see \cite[Corollary 3.5]{BBCK}), we deduce the long-term behaviour of  $\Ebf_{x,i}[\Mcal^-_t]$ as $t$ increases. Such a result will be very useful in the next subsection.

\begin{Prop}\label{prop:behamart} Assume that the Markov additive processes $(\xihat^-, \Theta^-)$ and $(\xihat^+, \Theta^+)$ are not concentrated on a lattice. For $\alpha<0$, there exists some constant $c^+_i, C^+_i\in [0,\infty)$ such that for $t$ large enough,
\begin{equation}\label{eq:behamart1}
v^+_i x^{\omega_-} c^+_i t^{\frac{\omega_+-\omega_-}{\alpha}} \le \Ebf_{x,i}[\Mcal^+_t]\le v^+_i x^{\omega_-} C^+_i t^{\frac{\omega_+-\omega_-}{\alpha}},
\end{equation}
and for $\alpha>0$, there exists some constant $c^-_i, C^-_i\in [0,\infty)$ such that for $t$ large enough,
\begin{equation}\label{eq:behamart2}
v^-_i x^{\omega_+} c^-_i t^{-\frac{\omega_+-\omega_-}{\alpha}} \le \Ebf_{x,i}[\Mcal^-_t] \le v^-_i x^{\omega_+} C^-_i t^{-\frac{\omega_+-\omega_-}{\alpha}}.
\end{equation}
Moreover, for $1<p<\omega_+/\omega_-$, we have
\begin{equation}\label{eq:behamart3}
\Ebf_{x,i}\left[\sum_{i=1}^{\infty} \Big(v^-_{J_i(t)}X_i(t)^{\omega_-}\Big)^p\right]\to 0, \qquad \textrm{as}\quad t\to \infty.
\end{equation}
\end{Prop}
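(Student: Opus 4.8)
The plan is to reduce all three assertions to the \emph{spine} via the many-to-one lemma (\cref{prop:spine temporal multitype}) and then read off the behaviour from the Lamperti--Kiu description of the spine given in \cref{thm:spine multitype}. For \eqref{eq:behamart1}--\eqref{eq:behamart2}, apply \cref{prop:spine temporal multitype} with $f=\mathds 1_{\{x\ne\partial\}}$ and $B_t\equiv 1$: this gives $\Ebf_{x,i}[\Mcal^\pm_t]=v^\pm_i x^{\omega_\pm}\,\Phat^\pm_{x,i}(\Xhat^\pm(t)\ne\partial)$, where $(\Xhat^\pm,\Jhat^\pm)$ is the spine associated with $\omega_\pm$. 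By \cref{thm:spine multitype} this spine is a self-similar Markov process with types started from $(x,i)$ whose underlying MAP $(\xihat^\pm,\Thetahat^\pm)$ has matrix exponent $\Fhat^\pm$, so by \eqref{eq: Lamperti MAP}--\eqref{eq: zeta lifetime} its lifetime is $\zeta^\pm=x^\alpha I(\alpha\xihat^\pm)$ with $\xihat^\pm$ issued from $0$; hence
\[
\Ebf_{x,i}[\Mcal^\pm_t]=v^\pm_i x^{\omega_\pm}\,\widehat{\mathtt P}^\pm_{0,i}\big(I(\alpha\xihat^\pm)>t x^{-\alpha}\big),
\]
and the task is to estimate the tails of the exponential functionals $I(\alpha\xihat^\pm)$.

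I would then apply \cref{prop: tails exponential functionals} to the MAP $(\alpha\xihat^\pm,\Thetahat^\pm)$, whose leading eigenvalue is $q\mapsto\widehat\chi_\pm(\alpha q)$. By \cref{prop: 2 exponents}, $\widehat\chi_-$ vanishes at $\omega_+-\omega_-$ and $\widehat\chi_+$ at $\omega_--\omega_+$; combined with \cref{prop: convexity leading eigenvalue} and the drift behaviour recorded in \cref{sec: genealogical martingales Cramer} this shows that for $\alpha>0$ the MAP $\alpha\xihat^-$ drifts to $-\infty$ with Cramér number $\Upsilon_-:=(\omega_+-\omega_-)/\alpha>0$, while for $\alpha<0$ the MAP $\alpha\xihat^+$ drifts to $-\infty$ with Cramér number $\Upsilon_+:=(\omega_--\omega_+)/\alpha>0$; in particular $I(\alpha\xihat^-)$ (resp.\ $I(\alpha\xihat^+)$) is almost surely finite. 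The remaining hypotheses of \cref{prop: tails exponential functionals} hold: the non-lattice assumption is in force, $F$ is defined on a neighbourhood of the relevant interval because $\omega_\pm$ lies in the interior of the domain where $m$ is finite, and the finiteness of the derivative of the leading eigenvalue at the Cramér number is, by the chain rule, exactly the standing assumption that $\widehat\chi_-'(\omega_+-\omega_-)$ is finite (its positivity being automatic by convexity, as in the proof of \cref{prop: tails exponential functionals}). Then \cref{prop: tails exponential functionals}, via \eqref{eq: tail probability I(xi)}, gives $\widehat{\mathtt P}^\pm_{0,i}(I(\alpha\xihat^\pm)>s)\asymp s^{-\Upsilon_\pm}$ for large $s$; substituting $s=tx^{-\alpha}$ and using $\alpha\Upsilon_-=\omega_+-\omega_-$, $\alpha\Upsilon_+=\omega_--\omega_+$ to collect the powers of $x$ yields exactly \eqref{eq:behamart1} and \eqref{eq:behamart2}.

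For \eqref{eq:behamart3} I would use \cref{prop:spine temporal multitype} again, now with $f(z,j):=(v^-_j z^{\omega_-})^{p-1}$, obtaining
\[
\Ebf_{x,i}\Big[\textstyle\sum_{k\ge1}\big(v^-_{J_k(t)}X_k(t)^{\omega_-}\big)^p\Big]
= v^-_i x^{\omega_-}\,\Ehat^-_{x,i}\Big[\big(v^-_{\Jhat^-(t)}\Xhat^-(t)^{\omega_-}\big)^{p-1}\mathds 1_{\{\Xhat^-(t)\ne\partial\}}\Big].
\]
Bounding the $v^-_j$ from above by $\max_j v^-_j$, it suffices to show $\Ehat^-_{x,i}[\Xhat^-(t)^{q}\mathds 1_{\{\Xhat^-(t)\ne\partial\}}]\to0$ as $t\to\infty$, where $q:=(p-1)\omega_-$ satisfies $q\in(0,\omega_+-\omega_-)$ precisely because $1<p<\omega_+/\omega_-$, so $\widehat\chi_-(q)<0$ by convexity. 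On the one hand $\Xhat^-(t)\mathds 1_{\{\Xhat^-(t)\ne\partial\}}\to0$ $\Phat^-_{x,i}$--a.s.\ (the spine either has a finite lifetime, or $\Xhat^-(t)=x\exp(\xihat^-(\varphi(tx^{-\alpha})))\to0$ since $\xihat^-$ drifts to $-\infty$), whence $\Xhat^-(t)^{q}\mathds 1_{\{\Xhat^-(t)\ne\partial\}}\to0$ a.s. On the other hand, fixing $q'\in(q,\omega_+-\omega_-)$ (so $\widehat\chi_-(q')<0$) and writing $\widehat{\mathcal W}^{(q')}$ for the Wald martingale of $(\xihat^-,\Thetahat^-)$ at $q'$ (\cref{prop: wald MAP}), an application of the optional stopping theorem and Fatou's lemma to the nonnegative martingale $\widehat{\mathcal W}^{(q')}$ at the stopping time $\varphi(tx^{-\alpha})$ of the Lamperti time change \eqref{eq: Lamperti time change}, together with $\widehat\chi_-(q')<0$ to discard the nonpositive factor $e^{\varphi(tx^{-\alpha})\widehat\chi_-(q')}$, yields $\Ehat^-_{x,i}[\Xhat^-(t)^{q'}\mathds 1_{\{\Xhat^-(t)\ne\partial\}}]\le C_i x^{q'}$ uniformly in $t$. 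Since $q'/q>1$, the family $(\Xhat^-(t)^{q}\mathds 1_{\{\Xhat^-(t)\ne\partial\}})_{t\ge0}$ is uniformly integrable, so the a.s.\ convergence above upgrades to $L^1$ convergence, which is \eqref{eq:behamart3}.

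The step I expect to be the main obstacle is the uniform-in-$t$ bound just described: it requires transferring an estimate that is natural at the level of the MAP $\xihat^-$ (through its Wald martingale) across the Lamperti time change, which is why the optional stopping has to be applied at the random time $\varphi(tx^{-\alpha})$ rather than a deterministic one. In the first two parts the only delicate point is the bookkeeping identifying, for each sign of $\alpha$, which leading eigenvalue $\widehat\chi_\pm$ produces a genuine positive Cramér number and a convergent exponential functional; this is dictated by \cref{prop: 2 exponents} and \cref{prop: convexity leading eigenvalue}, after which \cref{prop: tails exponential functionals} does the work (with \eqref{eq: tail probability I(xi)} responsible for the fact that \eqref{eq:behamart1}--\eqref{eq:behamart2} are two-sided rather than sharp).
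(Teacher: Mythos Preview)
Your argument for \eqref{eq:behamart1}--\eqref{eq:behamart2} is exactly the paper's: the many-to-one identity $\Ebf_{x,i}[\Mcal^\pm_t]=v^\pm_i x^{\omega_\pm}\Phat^\pm_{x,i}(\Xhat^\pm(t)\ne\partial)$, the Lamperti--Kiu rewriting as a tail probability of $I(\alpha\xihat^\pm)$, the identification of the Cram\'er number via \cref{prop: 2 exponents}, and the appeal to \cref{prop: tails exponential functionals}. Your bookkeeping of the powers of $x$ is correct and matches the statement.

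For \eqref{eq:behamart3} you take a mildly different route. The paper, after the same many-to-one step and Lamperti--Kiu substitution, splits the expectation on $\{I(\alpha\xihat^-)=\infty\}$ versus $\{t<x^\alpha I(\alpha\xihat^-)<\infty\}$; the second piece goes to $0$ trivially, and for the first it performs the full change of measure induced by the Wald martingale at $(p-1)\omega_-$ to bound the expectation by a constant times $\widehat{\mathtt E}^{((p-1)\omega_-)}_{0,i}[e^{\varphi(tx^{-\alpha})\widehat\chi_-((p-1)\omega_-)}]$, which tends to $0$ by dominated convergence. You instead combine almost sure convergence of $\Xhat^-(t)^q\mathds 1_{\{\Xhat^-(t)\ne\partial\}}$ with uniform integrability obtained from a higher moment bound at $q'\in(q,\omega_+-\omega_-)$, the latter via optional stopping for the nonnegative Wald martingale at the stopping time $\varphi(tx^{-\alpha})$. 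Both arguments are correct and hinge on the same fact $\widehat\chi_-<0$ on $(0,\omega_+-\omega_-)$; your version is arguably cleaner in that it avoids the case split on the finiteness of the exponential functional, while the paper's version avoids the extra exponent $q'$ and the UI step.
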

\begin{proof} We only  prove \eqref{eq:behamart2} since the behaviour in \eqref{eq:behamart1} follows from similar arguments. Similarly to the proof of the previous proposition, from the many-to-one lemma (\cref{prop:spine temporal multitype}), we see that
\[
\Ebf_{x,i}[\Mcal_t^-] = v_i^- x^{\omega_-}\Phat_{x,i}(\Xhat^-(t)\ne \partial),
\]
where $\Xhat^-$ denotes the spine corresponding to the exponent $\omega_-$. From the Lamperti-Kiu representation \eqref{eq: Lamperti MAP}, we observe
\[
\Ebf_{x,i}[\Mcal_t^-] = v^-_i x^{\omega_-}\mathtt{P}_{0,i}\left(I(\alpha \xihat^-) >t x^{-\alpha}\right),
\]
where
\[
I(\alpha \xihat^-):= \int_0^\infty e^{\alpha\xihat^-(s)}\mathrm{d}s,
\]
and  $(\xihat^-, \widehat{\Theta}^-)$  denotes the underlying Markov additive process with Matrix exponent $\Fhat_-$ and leading eigenvalue $\widehat{\chi}_-$. Since $\alpha>0$ and $\widehat{\chi}_-^\prime(0)<0$, the  exponential functional $I(\alpha \xihat^-)$ is a.s. finite. Moreover, since $(\xihat^-, \widehat{\Theta}^-)$ satisfies  Cram\'er's condition with $\omega_+-\omega_-$, we deduce   from Proposition \ref{prop: tails exponential functionals} that the asymptotic behaviour in \eqref{eq:behamart2} is satisfied.

In order to obtain  \eqref{eq:behamart3}, we first observe again from the many-to-one lemma (\cref{prop:spine temporal multitype}) and the Lamperti-Kiu representation \eqref{eq: Lamperti MAP} that
\[
\begin{split}
\Ebf_{x,i}\left[\sum_{i=1}^{\infty} \Big(v^-_{J_i(t)}X_i(t)^{\omega_-}\Big)^p\right]&= v^-_i x^{\omega_-}\Ehat_{x,i}\left[ (v^-_{\Jhat(t)})^{(p-1)}\Xhat(t)^{\omega_- (p-1)}\mathds{1}_{\{t< \zeta\}}\right]\\
&= v^-_i x^{p\omega_-}\mathtt{E}_{0,i}\left[ (v^-_{\widehat{\Theta}( \varphi(tx^{-\alpha}))})^{(p-1)}e^{\omega_- (p-1)\xihat^-(\varphi(tx^{-\alpha}))}\mathds{1}_{\{t< x^\alpha I(\alpha \xihat^-)\}}\right]\\
&\le C_{x,v^-, p}\Bigg(\mathtt{E}_{0,i}\left[ e^{\omega_- (p-1)\xihat^-(\varphi(tx^{-\alpha}))}\mathds{1}_{\{ I(\alpha \xihat^-)=\infty\}}\right]\\
&\hspace{4cm}+ \mathtt{P}_{0,i}\left(  t<x^\alpha I(\alpha \xihat^-)<\infty\right)\Bigg),
\end{split}
\]
where $C_{x,v^-, p}=v^-_i x^{p\omega_-}\max_{j\in \mathcal{I}}(v^-_{j})^{(p-1)}$. On the one hand, the second term in the right-hand side clearly goes to 0 as $t$ increases. On the other hand, from the change of measure induced by the Wald martingale  (see Proposition \ref{prop: wald MAP}) and the fact that $\varphi$ is a stopping time, it is clear that
\[
\mathtt{E}_{0,i}\left[ e^{\omega_- (p-1)\xihat^-(\varphi(tx^{-\alpha}))}\mathds{1}_{\{ I(\alpha \xihat^-)=\infty\}}\right]\le C^\prime\mathtt{E}^{((p-1)\omega_-)}_{0,i}\left[ e^{\varphi(tx^{-\alpha})\widehat{\chi}_-(\omega_- (p-1))}\mathds{1}_{\{ I(\alpha \xihat^-)=\infty\}}\right],
\]
where $C^\prime=\max_{j\in \mathcal{I}} \frac{w_i(\omega_- (p-1))}{w_j(\omega_- (p-1))}$.
The right hand side of the above identity clearly goes to 0 since $\widehat{\chi}_-(\omega_- (p-1))<0$, as $t$ increases. In particular, this implies that \eqref{eq:behamart3}  holds.  \end{proof}

We conclude this section by the following result on the finiteness of moments of the temporal martingale, which will come in handy in the next section.
\begin{Prop} \label{prop: L^p bound temporal martingale}
Let $\alpha\le 0$. For all $0<p<\omega_+/\omega_-$, the martingale $(\Mcal_t^-, t\ge 0)$ is bounded in $L^p$.
\end{Prop}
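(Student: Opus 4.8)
The statement is trivial for $0<p\le 1$: by concavity of $y\mapsto y^p$ and Jensen's inequality, $\Ebf_{x,i}[(\Mcal^-_t)^p]\le (\Ebf_{x,i}[\Mcal^-_t])^p=(v^-_i x^{\omega_-})^p$ for all $t$, since for $\alpha\le 0$ the process $\Mcal^-$ is a nonnegative $\Pbf_{x,i}$--martingale by the previous proposition. So from now on I fix $p\in(1,\omega_+/\omega_-)$.

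The plan is to prove that, for $\alpha\le 0$, the temporal martingale $(\Mcal^-_t,t\ge 0)$ is closed, with terminal value the almost sure (and $L^1$) limit $\Mcal^-(\infty)$ of the genealogical martingale furnished by \cref{prop: martingale M^-}(ii); that is, $\Mcal^-_t=\Ebf_{x,i}[\Mcal^-(\infty)\mid \Fcal_t]$ for every $t\ge 0$. Granting this, one is done: $y\mapsto y^p$ is convex as $p\ge 1$, so conditional Jensen gives $\Ebf_{x,i}[(\Mcal^-_t)^p]\le \Ebf_{x,i}[\Mcal^-(\infty)^p]$, and the right-hand side is finite for every $p<\omega_+/\omega_-$ by the moment estimate on $\Mcal^-(\infty)$ of \cref{prop: Mcal L^beta} (the very input used to prove \cref{prop: martingale M^-}(iii)); the bound being independent of $t$, $(\Mcal^-_t)$ is bounded in $L^p$. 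To obtain the closure, I would first note that, being a nonnegative martingale, $\Mcal^-_t$ converges almost surely to some $\Mcal^-_\infty$, and that $\Ebf_{x,i}[\Mcal^-_t]=v^-_i x^{\omega_-}=\Ebf_{x,i}[\Mcal^-(\infty)]$ (the second equality by uniform integrability of the genealogical martingale); hence, by Scheffé's lemma, $\Mcal^-_t\to\Mcal^-_\infty$ in $L^1$ and $\Mcal^-_t=\Ebf_{x,i}[\Mcal^-_\infty\mid\Fcal_t]$ as soon as one checks $\Mcal^-_\infty=\Mcal^-(\infty)$ almost surely. This identification is where $\alpha\le 0$ enters decisively: in that regime every cell has infinite lifetime and is never absorbed, so both the generation slices $\{|u|=n+1\}$ and the time slices $\{u:\ b_u\le t<b_u+\zeta_u\}$ of the cell tree are stopping lines exhausting $\partial\Ub$ as $n,t\to\infty$, and along each of them the total weight $\sum v^-_{\Jcal_u(\cdot)}\Xcal_u(\cdot)^{\omega_-}$ has constant expectation $v^-_i x^{\omega_-}$ by admissibility and the temporal branching property (\cref{prop: branching temporal}); a standard stopping-line comparison then forces these two families of weighted sums to have the same boundary limit, namely $\Mcal^-(\infty)$.

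The delicate point is precisely the identity $\Mcal^-_\infty=\Mcal^-(\infty)$, equivalently the uniform-in-$t$ control of the mass carried by deep cells. A more hands-on route, closer to the rest of the paper, is via the many-to-one lemma (\cref{prop:spine temporal multitype}) and the spine decomposition (\cref{thm:spine multitype}): writing $\Mcal^-_t=Z_t+W_t$, with $Z_t$ the spine contribution and $W_t$ the contribution of the subtrees grafted along the spine, one reduces $\Ebf_{x,i}[(\Mcal^-_t)^p]=v^-_i x^{\omega_-}\,\widehat\Ebf^-_{x,i}[(\Mcal^-_t)^{p-1}]$ to bounding $\widehat\Ebf^-[Z_t^{p-1}]$ and $\widehat\Ebf^-[W_t^{p-1}]$; the former is controlled uniformly in $t$ by the Wald martingale (\cref{prop: wald MAP}) of the spine's Markov additive process at the exponent $\omega_-(p-1)$, which satisfies $\widehat\chi_-(\omega_-(p-1))<0$ precisely because $\omega_-(p-1)<\omega_+-\omega_-$ (i.e. $p<\omega_+/\omega_-$) together with $\widehat\chi_-(0)<0$, $\widehat\chi_-(\omega_+-\omega_-)=0$ and convexity of $\widehat\chi_-$. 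The part $W_t$ is the main obstacle: since the sizes of the subtree roots along the spine need not be summable once raised to the small power $\omega_-(p-1)$ (that sum typically diverges when $p$ is close to $1$, the spine's jump measure inheriting the small-jump activity of the original process), one cannot estimate $W_t$ term by term and must again invoke the martingale structure — either the stopping-line exhaustion above, or the observation that $W_t$ is the time-$t$ restriction of a branching sum whose full limit is $\Mcal^-(\infty)$, of finite $L^p$ norm. Once $\Mcal^-_\infty=\Mcal^-(\infty)$ is secured, the remaining steps (Scheffé, conditional Jensen, and \cref{prop: Mcal L^beta}) are routine.
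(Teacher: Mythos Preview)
Your ultimate strategy—bound $(\Mcal^-_t)^p$ via conditional Jensen against $\Mcal^-(\infty)^p$—matches the paper exactly. The gap is that you aim for the \emph{equality} $\Mcal^-_t=\Ebf_{x,i}[\Mcal^-(\infty)\mid\Fcal_t]$, which forces you into the identification $\Mcal^-_\infty=\Mcal^-(\infty)$; you correctly flag this as the delicate step, but the ``standard stopping-line comparison'' you invoke is not a proof, and your side claim that $\alpha\le 0$ forces every cell to have infinite lifetime is not guaranteed by the paper's standing assumptions on $(X,J)$ (absorption is allowed). Your alternative many-to-one route faces, as you yourself concede, an uncontrolled sum over the subtrees grafted along the spine.

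The paper sidesteps all of this by proving only the \emph{inequality} $\Mcal^-_t\le\Ecal_{x,i}[\Mcal^-(\infty)\mid\overline{\Fcal}_t]$, which is all conditional Jensen needs. By the branching property (\cref{prop: branching temporal}),
\[
\Ecal_{x,i}\big[\Mcal^-(n)\,\big|\,\overline{\Fcal}_t\big]\;\ge\;\sum_{|u|\le n,\ b_u\le t} v^-_{\Jcal_u(t-b_u)}\,\Xcal_u(t-b_u)^{\omega_-},
\]
since each cell alive at time $t$ contributes to the conditional expectation of $\Mcal^-(n)$ at least its own current weight (the subtree rooted there carries a unit-mean genealogical martingale). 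Letting $n\to\infty$, the left side tends to $\Ecal_{x,i}[\Mcal^-(\infty)\mid\overline{\Fcal}_t]$ by uniform integrability of $(\Mcal^-(n))$ (\cref{prop: martingale M^-}(ii)), the right side to $\Mcal^-_t$. Then $(\Mcal^-_t)^p\le\Ecal_{x,i}[\Mcal^-(\infty)^p\mid\overline{\Fcal}_t]$ and taking expectations together with \cref{prop: Mcal L^beta} finishes. Note that the sign of $\alpha$ plays no role in this bound; the restriction $\alpha\le 0$ in the statement is only there so that $(\Mcal^-_t)$ is a genuine martingale.
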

\begin{proof}
The proof follows closely that of \cite[Theorem 3.7]{BBCK}. Recall from \cref{sec: construction GF multitype} the notation $(\overline{\Fcal}_t, t\ge 0)$ for the filtration of the growth-fragmentation process enriched with the generations. Let $x>0$ and $i\in\Ical$. By the branching property in \cref{prop: branching temporal}, for $t\ge 0$,
\[
\Ecal_{x,i}\Big[\Mcal^-(n) \, \Big| \, \overline{\Fcal}_t\Big]
\ge
 \sum_{|u|\le n} \mathds{1}_{\{b_u\le t\}} v^-_{\Jcal_{u}(t-b_u)} \Xcal_u(t-b_u)^{\omega_-}. 
\]
Take $n\to \infty$. The left-hand side converges to $\Ecal_{x,i}[\Mcal^-(\infty) \, | \, \overline{\Fcal}_t]$, whereas the right-hand side converges to $\Mcal_t^-$. Therefore 
\[
\Ecal_{x,i}\Big[\Mcal^-(\infty) \, \Big| \, \overline{\Fcal}_t\Big]
\ge
\Mcal_t^-. 
\]
Then, by Jensen's inequality, we get
\[
\Ebf_{x,i}[|\Mcal_t^-|^p]
\le
\Ecal_{x,i}[\Mcal^-(\infty)^p],
\]
for all $p\in (1,\omega_+/\omega_-)$. This concludes the proof.
\end{proof}



\subsection{Convergence of the empirical measure}
Here, we are interested in the convergence of the empirical measure $\rho^{(\alpha)}$ given by 
\[
\langle \rho^{(\alpha)}_t, f\rangle:=\sum_{i=1}^\infty v^-_{J_i(t)}X_i(t)^{\omega_-}f(t^{-1/\alpha} X_i(t), J_i(t)),
\]
for $f:(0,\infty)\times \mathcal{I}\to \mathbb{R}$ bounded function and such that $x\mapsto f(x,i)$ is continuous, for all $i\in \mathcal{I}$, and $\alpha<0$. Throughout this section, we write $\xihat$ instead of $\xihat^-$, and $(\Xhat, \Jhat)$ instead of $(\Xhat^-,\Jhat^-)$. We shall also suppose that the process $\xihat $ associated with the tagged cell $\Xhat$  is not concentrated on a lattice. In order to state our result, we introduce the probability measure $\rho$ on $(0,\infty)\times \mathcal{I}$ by 
\[
\int_{(0,\infty)\times\mathcal{I} }f(y, i) \rho(\mathrm{d}y,\mathrm{d}i)=-\frac{1}{\alpha{\Big|\widehat{\mathtt{E}}^\natural_{0,\pi}[\xihat_1]\Big|}}\sum_{i\in \mathcal{I}}\pi_i\widehat{\mathtt{E}}^\natural_{0,i}\left[\frac{1}{I\big(\alpha\xihat\,\,\big)} f\left(\frac{1}{I\big(\alpha\xihat\,\,\big)^{1/\alpha}}, i\right)\right],
\]
where we recall that $\widehat{\mathtt{P}}^\natural$ denotes the law of the dual of $\xihat$ and 
\[
I\big(\alpha\xihat\,\,\big):=\int_0^\infty e^{\alpha\xihat(s)}{\rm d} s.
\]
We also observe that $\Big|\widehat{\mathtt{E}}^\natural_{0,\pi}[\xihat_1]\Big|=-\widehat{\mathtt{E}}_{0,\pi}[\xihat_1] =-\widehat{\chi}_-'(0)$. We have the following convergence result for the empirical measure $\rho^{(\alpha)}_t$ (see \cite{Dadoun} for the classical growth-fragmentation framework).
\begin{Thm}\label{empmeas}  For every $1<p<\omega_+/\omega_{-}$ and for every $f:(0,\infty)\times \mathcal{I}\to \mathbb{R}$ bounded function and such that $x\mapsto f(x,i)$ is continuous, for all $i\in \mathcal{I}$,
\[
\lim_{t\to\infty}\langle \rho^{(\alpha)}_t, f\rangle=\Mcal^-(\infty) \int_{(0,\infty)\times \mathcal{I}} f(y, i) \rho(\mathrm{d} y, \mathrm{d} i), \qquad \textrm{in}\quad L^p.
\]
In particular, the random measure $\rho^{(\alpha)}_t$ converges in probability towards $\Mcal^-(\infty)\rho $, as $t\to\infty$, in the space of finite measures on $(0,\infty)\times\mathcal{I} $ endowed with the topology of weak convergence.
\end{Thm}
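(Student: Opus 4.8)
The plan is to reduce the empirical measure to a functional of the spine via the many-to-one formula, to identify its first moment limit through a self-similar entrance law, and then to upgrade this to an $L^p$ law of large numbers by exploiting the branching structure together with the $L^p$ estimates of the previous subsections. First I would apply the many-to-one lemma \cref{prop:spine temporal multitype} with $B_t\equiv1$ and, for each fixed $r>0$, the test function $(y,j)\mapsto f(r^{-1/\alpha}y,j)$, yielding
\[
\Ebf_{x,i}\big[\langle\rho^{(\alpha)}_r,f\rangle\big]=v^-_i\,x^{\omega_-}\,\Ehat_{x,i}\!\Big[f\big(r^{-1/\alpha}\Xhat(r),\Jhat(r)\big)\Big].
\]
By \cref{thm:spine multitype}, $(\Xhat,\Jhat)$ is a self-similar Markov process with types of index $\alpha<0$ driven by the MAP $(\xihat,\Thetahat)$ with matrix exponent $\Fhat$ and leading eigenvalue $\widehat\chi_-$; since $\widehat\chi_-'(0)<0$ the ordinate $\xihat$ drifts to $-\infty$, so the spine has infinite lifetime $b_{\Lcal}=\int_0^\infty e^{\alpha\xihat(s)}\mathrm ds=+\infty$ a.s., and the cemetery indicator is harmlessly dropped (we extend $f$ by $f(\partial)=0$).

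To obtain the entrance law of the spine I would pass to the reciprocal process $(1/\Xhat,\Jhat)$: a direct check through the Lamperti--Kiu time change shows it is a conservative self-similar Markov process with types of index $-\alpha>0$ whose driving MAP is $(-\xihat,\Thetahat)$; by \eqref{eq: F^natural expression} the dual of $-\xihat$ is $-\xihat^\natural$, and $-\xihat$ is non-lattice with positive mean $-\widehat\chi_-'(0)\in(0,\infty)$. Hence \cref{entrancelaw} applies to $1/\Xhat$ and gives that $t^{1/\alpha}\,(1/\Xhat(t))$ converges in law, jointly with $\Jhat(t)$, as $t\to\infty$; inverting and matching the explicit formula (using also the time-reversal identity of \cref{dlemma}) identifies the limiting law of $(t^{-1/\alpha}\Xhat(t),\Jhat(t))$ as precisely the probability measure $\rho$ of the statement. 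In particular, for every bounded $f$ continuous in its first coordinate, $\Ebf_{x,i}[\langle\rho^{(\alpha)}_r,f\rangle]\to v^-_i\,x^{\omega_-}\int f\,\mathrm d\rho$, and since $\rho$ charges only $(0,\infty)\times\Ical$ there is no escape of mass towards $0$ or $\infty$.

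For the $L^p$ statement I would first treat $f$ continuous with compact support in the first variable (the general case following from the no-escape-of-mass property above together with the uniform $L^p$ bound on $\Mcal^-_r$), fix $s>0$, and condition on $\overline{\Fcal}_s$. By the branching property \cref{prop: branching temporal}, $\langle\rho^{(\alpha)}_t,f\rangle=\sum_u\langle\check\rho^{(u)}_{t-s},f\rangle$, a sum over the cells $u$ alive at time $s$ of conditionally independent terms, where $\langle\check\rho^{(u)}_{t-s},f\rangle$ is distributed as $\langle\rho^{(\alpha)}_{t-s},f\rangle$ under $\Pbf_{\Xcal_u(s),\Jcal_u(s)}$ up to replacing the scaling $(t-s)^{-1/\alpha}$ by $t^{-1/\alpha}$, a discrepancy which is uniformly negligible because $f$ has compact support and $(t/(t-s))^{-1/\alpha}\to1$. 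Writing $\langle\rho^{(\alpha)}_t,f\rangle=\Ebf_{x,i}[\langle\rho^{(\alpha)}_t,f\rangle\mid\overline{\Fcal}_s]+R_{s,t}$, the conditional expectation equals $\sum_u\Ebf_{\Xcal_u(s),\Jcal_u(s)}[\langle\rho^{(\alpha)}_{t-s},f\rangle]$, with each summand bounded by $\|f\|_\infty v^-_{\Jcal_u(s)}\Xcal_u(s)^{\omega_-}$ (as $\Ebf_{y,j}[\Mcal^-_r]=v^-_j y^{\omega_-}$ for $\alpha\le0$) and converging, by the previous step and self-similarity, to $v^-_{\Jcal_u(s)}\Xcal_u(s)^{\omega_-}\int f\,\mathrm d\rho$ as $t\to\infty$; dominated convergence with dominating variable $\|f\|_\infty\Mcal^-_s\in L^p$ gives $\Ebf_{x,i}[\langle\rho^{(\alpha)}_t,f\rangle\mid\overline{\Fcal}_s]\to\Mcal^-_s\int f\,\mathrm d\rho$ in $L^p$ as $t\to\infty$. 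To control $R_{s,t}$, a sum of conditionally independent centred terms, I would invoke the von Bahr--Esseen inequality when $1<p\le2$, resp. Rosenthal's inequality when $p>2$, together with $|\langle\rho^{(\alpha)}_r,f\rangle|\le\|f\|_\infty\Mcal^-_r$, the scaling identity $\Ebf_{y,j}[(\Mcal^-_r)^p]=y^{p\omega_-}\Ebf_{1,j}[(\Mcal^-_{y^{-\alpha}r})^p]$, and \cref{prop: L^p bound temporal martingale}, bounding $\Ebf_{x,i}[|R_{s,t}|^p]$ uniformly in large $t$ by a constant times $\Ebf_{x,i}\big[\sum_u(v^-_{\Jcal_u(s)}\Xcal_u(s)^{\omega_-})^{p\wedge2}\big]$ plus, when $p>2$, a $p/2$-th power term; both tend to $0$ as $s\to\infty$ by \eqref{eq:behamart3} (applied with exponents $p$ and $2$) and the uniform integrability of $\{(\Mcal^-_s)^p\}_{s\ge0}$.

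Finally, since $\Mcal^-_s\to\Mcal^-(\infty)$ in $L^p$ as $s\to\infty$ — for $\alpha\le0$ the process $(\Mcal^-_s)$ is a martingale, bounded in $L^p$ by \cref{prop: L^p bound temporal martingale}, whose limit is $\Mcal^-(\infty)$ by the comparison $\Ecal_{x,i}[\Mcal^-(\infty)\mid\overline{\Fcal}_s]\ge\Mcal^-_s$ — a triangle inequality (choosing $s$ large so that the $R_{s,t}$ and the $\Mcal^-_s-\Mcal^-(\infty)$ contributions are small uniformly in $t$, then $t$ large) yields $\langle\rho^{(\alpha)}_t,f\rangle\to\Mcal^-(\infty)\int f\,\mathrm d\rho$ in $L^p$, whence the stated convergence in probability in the weak topology. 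I expect the main obstacle to be twofold: the correct identification of $\rho$, which forces the detour through the reciprocal process and the MAP time-reversal of \cref{dlemma} and \cref{entrancelaw}; and the uniform-in-$t$ control of the fluctuation $R_{s,t}$, where the exact range $p<\omega_+/\omega_-$ is essential through \cref{lem: leading eigenvalue negative}, \cref{prop: L^p bound temporal martingale}, and the decay \eqref{eq:behamart3}.
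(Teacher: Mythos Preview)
Your overall strategy is sound and leads to a correct proof, but it differs structurally from the paper's argument. The paper conditions at time $t$ and evaluates the empirical measure at time $t+t^2$: writing $\lambda_i(t)=v^-_{J_i(t)}X_i(t)^{\omega_-}$ and $\mathcal N_i(t)$ for the contribution of the $i$-th cell's descendants, one has $\langle\rho^{(\alpha)}_{t+t^2},f\rangle=\sum_i\lambda_i(t)\mathcal N_i(t)$ with the $\mathcal N_i(t)$ conditionally independent given $\Xbf(t)$. The fluctuation $\sum_i\lambda_i(t)(\mathcal N_i(t)-\Ebf[\mathcal N_i(t)\mid\Xbf(t)])$ is controlled by a truncation argument in the style of Lemma~1.5 of Bertoin \cite{Ber06}: one splits $\mathcal N_i(t)$ at a level $a$, bounds the truncated part via the conditional martingale inequality by $a^p\sum_i\lambda_i(t)^p\to0$ (by \eqref{eq:behamart3}), and lets the tail vanish uniformly in $t$ as $a\to\infty$ because the dominating variables $\overline{\mathcal N}_i:=C\|f\|_\infty\sup_r\Mcal^-_r$ lie in $L^p$ by \cref{prop: L^p bound temporal martingale}. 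The conditional expectations $\Ebf[\mathcal N_i(t)\mid\Xbf(t)]$ are then identified through the many-to-one formula and \cref{lemconv} (your first paragraph is exactly this lemma), uniformly over cells with $X_i(t)>t^{3/(2\alpha)}$, while the small cells are shown to carry negligible mass.

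Your two-scale scheme (fix $s$, send $t\to\infty$, then $s\to\infty$) is a legitimate alternative and arguably more transparent; replacing truncation by von Bahr--Esseen/Rosenthal is also standard. Two points deserve tightening. First, in the Rosenthal regime $p>2$ the bound produces both $\sum_u a_u^{\,p}$ and $(\sum_u a_u^{\,2})^{p/2}$ (not a single $\sum_u a_u^{\,p\wedge2}$); the second term needs an extra step, e.g.\ $(\sum_u a_u^{\,2})^{p/2}\le(\max_u a_u)^{p/2}(\Mcal^-_s)^{p/2}$ followed by Cauchy--Schwarz and $\Ebf[(\max_u a_u)^p]\le\Ebf[\sum_u a_u^{\,p}]\to0$. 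Second, the identification of the $L^p$-limit of $\Mcal^-_s$ with $\Mcal^-(\infty)$ requires, on top of the one-sided comparison you cite, the equality-of-means observation (both equal $v^-_i x^{\omega_-}$). The paper's $t+t^2$ device avoids the separate $s\to\infty$ limit by letting the conditioning time go to infinity simultaneously with the observation time, at the price of needing the spine convergence uniformly over cells above a moving size threshold.
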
 
In order to deduce the above result, the following lemma is required.
\begin{Lem}\label{lemconv} Under $\Phat_{1,i}$, the pair $(t^{-1/\alpha} \Xhat(t), \Jhat(t))$ converges in distribution to $\rho$. Moreover 
\[
\int_{(0,\infty)\times \mathcal{I}} y^{-\alpha q} \rho(\mathrm{d} y, \mathrm{d} i)<\infty,
\]
for $0<q<1-(\omega_+-\omega_-)/\alpha$.
\end{Lem}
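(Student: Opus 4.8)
The plan is to deduce both assertions from the entrance law \cref{entrancelaw}, applied not to the spine $(\Xhat,\Jhat)$ directly --- it is a self-similar Markov process with types of \emph{negative} index $\alpha<0$ --- but to its reciprocal $\widetilde{X}:=1/\Xhat$, and then to read the moment estimate off the resulting explicit form of $\rho$. By \cref{thm:spine multitype}, under $\Phat_{1,i}$ the pair $(\Xhat,\Jhat)$ is a self-similar Markov process with types and index $\alpha<0$, driven by the MAP $(\xihat,\Thetahat)$ with matrix exponent $\Fhat_-$ and leading eigenvalue $\widehat{\chi}_-$; recall $\widehat{\chi}_-'(0)<0$, and by \cref{prop: 2 exponents} together with convexity of $\widehat{\chi}_-$ (\cref{prop: convexity leading eigenvalue}) the roots of $\widehat{\chi}_-$ are exactly $0$ and $\omega_+-\omega_-$. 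Since $\widehat{\chi}_-'(0)<0$ we have $\xihat\to-\infty$, hence $\int_0^{\infty}\mathrm{e}^{\alpha\xihat(s)}\mathrm{d}s=\infty$ a.s.\ (as $\alpha<0$), so $\Xhat$ is conservative and $\Xhat(t)\to0$. Using the Lamperti--Kiu representation \eqref{eq: Lamperti MAP} and the scaling property \eqref{eq: ss}, I would check that $\widetilde{X}=1/\Xhat$ is a conservative self-similar Markov process with types of index $\widetilde{\alpha}:=-\alpha>0$, driven by the MAP $(-\xihat,\Thetahat)$: negating the L\'evy components and jumps in \cref{prop:MAP structure} still yields a MAP, and the Lamperti time change is unchanged because $(-\alpha)(-\xihat)=\alpha\xihat$. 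This MAP has matrix exponent $z\mapsto\Fhat_-(-z)$, drifts to $+\infty$, has finite positive mean $-\widehat{\chi}_-'(0)$, and is non-lattice since $\xihat$ is.

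Then \cref{entrancelaw} applies to $\widetilde{X}$: as its starting point tends to $0$ its laws converge in finite-dimensional distributions, equivalently --- by the scaling reduction, valid because $\widetilde\alpha>0$ --- the pair $(t^{-1/\widetilde\alpha}\widetilde{X}_t,\Jhat_t)$ converges in distribution under $\Phat_{1,i}$ to the corresponding entrance law $\widetilde\eta_1$. Identifying the dual of $(-\xihat,\Thetahat)$ with $-\xihat$ under $\widehat{\mathtt{P}}^\natural$ (minus the dual of $\xihat$) via $F^\natural(\cdot)=\Delta_\pi^{-1}F(-\cdot)^{\tt t}\Delta_\pi$, the exponential functional in $\widetilde\eta_1$ becomes $I(\alpha\xihat)$ under $\widehat{\mathtt{P}}^\natural$, and $-\widehat{\chi}_-'(0)=\big|\widehat{\mathtt{E}}^\natural_{0,\pi}[\xihat_1]\big|$. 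Since $t^{-1/\widetilde\alpha}\widetilde{X}_t=(t^{-1/\alpha}\Xhat_t)^{-1}$, the continuous mapping theorem for the homeomorphism $(y,j)\mapsto(1/y,j)$ of $(0,\infty)\times\mathcal{I}$ shows $(t^{-1/\alpha}\Xhat(t),\Jhat(t))$ converges in distribution to the pushforward of $\widetilde\eta_1$ by that map; plugging $\widetilde\alpha=-\alpha$ into the formula of \cref{entrancelaw} and simplifying exponents identifies this pushforward with $\rho$. In particular $\rho$ is a probability measure on $(0,\infty)\times\mathcal{I}$, since $\widetilde\eta_1$ is and $I(\alpha\xihat)\in(0,\infty)$ almost surely under $\widehat{\mathtt{P}}^\natural$.

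For the moment bound I would substitute $f(y,i)=y^{-\alpha q}$ into the explicit expression for $\rho$ and simplify, obtaining
\[
\int_{(0,\infty)\times\mathcal{I}}y^{-\alpha q}\,\rho(\mathrm{d}y,\mathrm{d}i)
=\frac{1}{|\alpha|\,\big|\widehat{\mathtt{E}}^\natural_{0,\pi}[\xihat_1]\big|}\sum_{i\in\mathcal{I}}\pi_i\,\widehat{\mathtt{E}}^\natural_{0,i}\big[I(\alpha\xihat)^{\,q-1}\big],
\]
so it remains to bound the moment of order $q-1$ of $I(\alpha\xihat)$ under $\widehat{\mathtt{P}}^\natural_{0,i}$. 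For $q=1$ this is trivial; for $q\in(0,1)$ it is a negative moment, finite because $I(\alpha\xihat)\ge\tau\,\mathrm{e}^{-|\alpha|}$ with $\tau:=\inf\{s:|\xihat(s)|>1\}$ satisfying $\widehat{\mathtt{P}}^\natural_{0,i}(\tau\le t)=O(t)$ as $t\to0$ (the probability of a jump larger than $1$, or of the first modulator jump, occurring by time $t$ is $O(t)$, and the remaining compensated small-jump part is controlled by Doob's $L^2$ inequality), so $\tau$ has finite moments of all orders below $1$. For $q\in\big(1,\,1-(\omega_+-\omega_-)/\alpha\big)$, set $\gamma:=q-1\in\big(0,(\omega_+-\omega_-)/|\alpha|\big)$ and apply \cref{prop: moments exponential functionals} to the MAP $\alpha\xihat$ under $\widehat{\mathtt{P}}^\natural$, whose matrix exponent $z\mapsto\Delta_\pi^{-1}\Fhat_-(|\alpha|z)^{\tt t}\Delta_\pi$ is defined on $[0,\gamma]$ (as $\Fhat_-$ is defined on $[0,\omega_+-\omega_-]\supseteq[0,|\alpha|\gamma]$) and whose leading eigenvalue at $\gamma$ equals $\widehat{\chi}_-(|\alpha|\gamma)<0$ (since $0<|\alpha|\gamma<\omega_+-\omega_-$ and $\widehat{\chi}_-$, being convex, vanishes at $0$ and at $\omega_+-\omega_-$); this gives the finite moment of order $\gamma$ and hence the claim.

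I expect the main obstacle to be the first two steps: keeping track of the three sign conventions at play --- $\alpha<0$, the reciprocal process, and the MAP duality --- so as to land exactly on the hypotheses of \cref{entrancelaw} and to recognize its output, after inversion, as the measure $\rho$. By comparison the moment bound is routine once that explicit form is available, the only slightly delicate point being the negative moments in the regime $q<1$.
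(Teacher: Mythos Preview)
Your proposal is correct and follows essentially the same route as the paper: pass to the reciprocal $1/\Xhat$, which is a conservative self-similar Markov process with types of positive index $-\alpha$ driven by $(-\xihat,\Thetahat)$, verify the finite positive mean hypothesis of \cref{entrancelaw} via $-\widehat{\chi}_-'(0)>0$, identify the resulting entrance law with $\rho$ after inversion, and finish the moment bound by transferring the Cram\'er number $\omega_+-\omega_-$ of $\Fhat_-$ to the dual and invoking \cref{prop: moments exponential functionals}.

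One small remark on the $q\in(0,1)$ case: your exit-time argument for negative moments of $I(\alpha\xihat)$ is valid, but heavier than necessary. Since you already established that $\rho$ is a probability measure, the normalisation constant in its explicit formula shows $\sum_i\pi_i\,\widehat{\mathtt{E}}^\natural_{0,i}[I(\alpha\xihat)^{-1}]<\infty$, hence each $\widehat{\mathtt{E}}^\natural_{0,i}[I(\alpha\xihat)^{-1}]<\infty$; then for $q\in(0,1)$ simply use $x^{q-1}\le 1+x^{-1}$ on $(0,\infty)$. The paper is terse at this point and only states the $q>1$ part carefully, so your explicit treatment of the full range $0<q<1-(\omega_+-\omega_-)/\alpha$ is a welcome addition.
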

\begin{proof} We observe that $(1/\Xhat, \Jhat)$ is a self-similar Markov process with types and  index $-\alpha$ which is associated with the Markov additive process $(-\xihat, \Thetahat)$. Hence, from Theorem \ref{entrancelaw},  all we need to verify is that $-\xihat$ has finite and strictly positive mean which, in particular, implies that the associated ascending ladder MAP $\hat{H}^+$ has  also finite mean, see for instance Theorem 35 in \cite{DeDoKy}. The finiteness of the mean of $-\xihat$ follows from the identity
\[
-\widehat{\mathtt{E}}_{0,\pi}[\widehat{\xi}_1]=-\widehat{\chi}_-'(0)>0.
\]
In order to deduce the existence of moments, we first observe that since $\omega_+-\omega_-$ is a Cramér number for $\Fhat(z)$, $-(\omega_+-\omega_-)/\alpha$ is a Cramér number for $\Fhat(-\alpha z)$. By identity \eqref{eq: F^natural expression} and the fact that all positive eigenvectors of $\Fhat^\natural(\alpha z)$ are associated to the same eigenvalue, we infer that $-(\omega_+-\omega_-)/\alpha$ is also a Cramér number for $\Fhat^\natural(\alpha z)$. Thus a straightforward application of  Theorem \ref{entrancelaw} and Proposition \ref{prop: moments exponential functionals} will imply the second part of this Lemma.
\end{proof}
\begin{proof}[Proof of Theorem \ref{empmeas}]
From the branching property at time $t$ and self-similarity of $\Xbf$, we have under the event $\Xbf(t)=\{\{(x_1, j_1), (x_2, j_2), \cdots\}\}$ that the following identity holds
\[
\begin{split}
\langle \rho^{(\alpha)}_{t+t^2}, f\rangle&=\sum_{i=1}^\infty  v^-_{J_i(t+t^2)}X_i(t+t^2)^{\omega_-}f\Big((t+t^2)^{-1/\alpha} X_i(t+t^2), J_i(t+t^2)\Big)\\
&=\sum_{i=1}^\infty   v^-_{j_{i}}x_i^{\omega_-}\sum_{k=1}^\infty \frac{v^-_{J_{i,k}(x_i^{-\alpha} t^2)}}{v^-_{j_i}}X_{i,k}(x_i^{-\alpha}t^2)^{\omega_-}f\Big((t+t^2)^{-1/\alpha} x_iX_{i,k}(x_i^{-\alpha} t^2), J_{i,k}(x_i^{-\alpha}t^2)\Big).
\end{split}
\]
Observe that for each $i\ge 1$, the families $\{(X_{i,k}, J_{i,k})$,  $k\ge 1$\} are independent and have the same law as $\mathbf{X}$ under  $\mathbf{P}_{1, j_i}$. For simplicity of exposition, we introduce the random variables
\[
\mathcal{N}_{i}(t):=\sum_{k\ge 1} \frac{v^-_{J_{i,k}(x_i^{-\alpha} t^2)}}{v^-_{j_i}}X_{i,k}(x_i^{-\alpha}t^2)^{\omega_-}f\Big((t+t^2)^{-1/\alpha} x_iX_{i,k}(x_i^{-\alpha} t^2), J_{i,k}(x_i^{-\alpha}t^2)\Big),
\]
and observe that $(\mathcal{N}_{i}(t))_{i\ge 1}$ are independent conditionally on $\{\{(x_1, j_1), (x_2, j_2), \cdots\}\}$. In other words,  we may rewrite
\[
\langle \rho^{(\alpha)}_{t+t^2}, f\rangle=\sum_{i=1}^\infty   \lambda_i(t)\mathcal{N}_{i}(t),
\]
where $\lambda_i(t)=v^-_{J_i(t)}X_i(t)^{\omega_-} =v^-_{j_{i}}x_i^{\omega_-}$. Now, let us  introduce 
\[
\overline{\mathcal{N}}_{i}:=C_1||f||_\infty\sup_{t\ge 0}\sum_{k\ge 1} v^-_{J_{i,k}( t)}X_{i,k}(t)^{\omega_-}, \qquad\textrm{with} \qquad C_1:=\max_{\ell\in \mathcal{I}}\frac{1}{v^-_{\ell}},
\]
and observe $|\mathcal{N}_{i}(t)|\le \overline{\mathcal{N}}_{i}$,  for all $t\ge 0$.  Moreover, conditionally on $\{\{(x_1, j_1), (x_2, j_2), \cdots\}\}$,  by setting $\mathfrak{s}^{(\ell)}:=\{k: j_k=\ell \}$ for $\ell\in \mathcal{I}$,  
we deduce that  the random variables  $(\overline{\mathcal{N}}_{i})_{i\in \mathfrak{s}^{(\ell)}}$ are i.i.d with common distribution 
 \[
 \mathcal{N}:=C_1||f||_\infty\sup_{t\ge 0}\sum_{k\ge 1}^\infty v^-_{J_{k}(t)}X_{k}(t)^{\omega_-}, \qquad\textrm{under}\quad \mathbf{P}_{1,\ell}.
 \] 
Additionally, from \cref{prop: L^p bound temporal martingale} (recall that $1<p<\omega_+/\omega_-$),  it is clear that $(\overline{\mathcal{N}}_{i})_{i\in \mathfrak{s}^{(\ell)}} \in L^p(\mathbf{P}_{1,\ell})$.  Similarly, we obtain
\[
\sup_{t\ge 0}\mathbf{E}_{1,\ell}\left[\left(\sum_{i=1}^\infty v^{-}_{J_i(t)}X_i(t)^{\omega_-}\right)^p\right]<\infty,
\]
and using Proposition \ref{prop:behamart}, we deduce
\begin{equation}\label{tomate}
\lim_{t\to\infty}\mathbf{E}_{1,\ell}\left[\sum_{i=1}^\infty \left(v^{-}_{J_i(t)}X_i(t)^{\omega_-}\right)^p\right]=0.
\end{equation}
With these properties at hand, we  will deduce the following variation of the law of large numbers for any ${\bf k}\in \mathcal{I}$,
\begin{equation}\label{varlln}
\lim_{t\to \infty}\sum_{i=1}^\infty  \lambda_i(t) \Big(\mathcal{N}_{i}(t)-\mathbf{E}_{1,{\bf k}}\Big[\mathcal{N}_{i}(t)| \Xbf(t)\Big]\Big)=0, \qquad \textrm{in} \quad L^p(\mathbf{P}_{1, {\bf k}}).
\end{equation}
In order to do so, we follow a similar strategy as in Lemma 1.5 in Bertoin \cite{Ber06}. 
We take $a>0$, an arbitrarily large real number and introduce the truncated random variables 
\[
\mathcal{N}_{i}(t,a):=\mathds{1}_{\{|\mathcal{N}_{i}(t)|<a\}}\mathcal{N}_{i}(t)\qquad \textrm{for}\quad i\ge 1.
\]
Thus, we deduce the following upper-bound
\begin{equation}\label{equpb}
\begin{split}
\left|\sum_{i=1}^\infty  \lambda_i(t) \Big(\mathcal{N}_{i}(t)-\mathbf{E}_{1,{\bf k}}\Big[\mathcal{N}_{i}(t)| \Xbf(t)\Big]\Big)\right|&\le \left|\sum_{i=1}^\infty  \lambda_i(t)\Big(\mathcal{N}_{i}(t)-\mathcal{N}_{i}(t, a)\Big)\right|\\
&+\left|\sum_{i=1}^\infty   \lambda_i(t) \Big(\mathcal{N}_{i}(t, a)-\mathbf{E}_{1,{\bf k}}\Big[\mathcal{N}_{i}(t, a)\Big| \Xbf(t)\Big]\Big)\right|\\
&+\left|\sum_{i=1}^\infty   \lambda_i(t) \mathbf{E}_{1,{\bf k}}\Big[\Big(\mathcal{N}_{i}(t, a)-\mathcal{N}_{i}(t)\Big)\Big| \Xbf(t)\Big]\right|.
\end{split}
\end{equation}
We consider the first series in the right-hand side of the above inequality and since 
\[
\Big|\mathcal{N}_{i}(t)-\mathcal{N}_{i}(t, a)\Big|\le\mathds{1}_{\big\{|\overline{\mathcal{N}}_{i}|> a\big\}}\overline{\mathcal{N}}_{i},
\]
we get from Jensen's inequality and the Markov property, the following inequality
\[
\begin{split}
\mathbf{E}_{1,{\bf k}}&\left[\left|\sum_{i=1}^\infty  \lambda_i(t)\Big(\mathcal{N}_{i}(t)-\mathcal{N}_{i}(t, a)\Big)\right|^p\right]\\
&\hspace{1cm}\le \mathbf{E}_{1,{\bf k}}\left[\left(\sum_{i=1}^\infty  \lambda_i(t)\right)^p\sum_{i=1}^\infty\frac{\lambda_i(t)}{\sum_{k=1}^\infty  \lambda_k(t)}\mathds{1}_{\big\{|\overline{\mathcal{N}}_{i}|> a\big\}}\overline{\mathcal{N}}_{i}^{\, p}\right]\\
&\hspace{1cm}=\mathbf{E}_{1,{\bf k}}\left[\left(\sum_{i=1}^\infty   \lambda_i(t)\right)^p\sum_{i=1}^\infty\frac{ \lambda_i(t)}{\sum_{k=1}^\infty   \lambda_k(t)}\mathbf{E}_{1,J_i(t)}\left[\mathds{1}_{\big\{|\overline{\mathcal{N}}|> a\big\}}\overline{\mathcal{N}}^p\right]\right]\\
&\hspace{1cm}\leq\mathbf{E}_{1,{\bf k}}\left[\left(\sum_{i=1}^\infty  v^-_{J_{i}(t)}X_i(t)^{\omega_-}\right)^p\right]\max_{j\in \mathcal{I}}\mathbf{E}_{1,j}\left[\mathds{1}_{\big\{|\overline{\mathcal{N}}|> a\big\}}\overline{\mathcal{N}}^p\right],
\end{split}
\]
and the latter quantity converges to $0$ as $a\to \infty$, uniformly for $t\ge 0$. The same argument also shows that 
\[
\lim_{a\to \infty}\sup_{t\ge 0}\mathbf{E}_{1,{\bf k}}\left[\left|\sum_{i=1}^\infty   \lambda_i(t) \mathbf{E}_{1,{\bf k}}\Big[\Big(\mathcal{N}_{i}(t, a)-\mathcal{N}_{i}(t)\Big)\Big| \Xbf(t)\Big]\right|^p\right]=0.
\]
Next, for the second term in the right hand side in \eqref{equpb}, we observe that conditionally on $\Xbf(t)=\{\{(x_1, j_1), (x_2, j_2), \cdots\}\}$, the random variables
\[
\mathcal{N}_{i}(t, a)-\mathbf{E}_{1,{\bf k}}\Big[\mathcal{N}_{i}(t, a)\Big| \Xbf(t)\Big],
\]
are centered, independent and bounded in absolute value by $a$. Hence, conditionally on $\Xbf(t)$,
\[
\sum_{i=1}^n\lambda_i(t)\left(\mathcal{N}_{i}(t, a)-\mathbf{E}_{1,{\bf k}}\Big[\mathcal{N}_{i}(t, a)\Big| \Xbf(t)\Big]\right), \qquad n\ge 1, 
\]
is a martingale bounded in $L^p(\mathbf{P}_{1, {\bf k}})$ and there exists a universal constant $C_p$ such that 
\[
\mathbf{E}_{1,{\bf k}}\left[\left|\sum_{i=1}^\infty\lambda_i(t)\left(\mathcal{N}_{i}(t, a)-\mathbf{E}_{1,{\bf k}}\Big[\mathcal{N}_{i}(t, a)\Big| \Xbf(t)\Big]\right)\right|^p\bigg|\Xbf(t)\right]\le C_p a^p\sum_{i=1}^\infty \lambda_i(t)^p.
\]
Thus from \eqref{tomate}, the latter quantity converges to $0$ as $t\to \infty$ in $L^p(\mathbf{P}_{1, {\bf k}})$. Putting all pieces together allows us to deduce \eqref{varlln}.

In other words, the proof will be completed if we show 
\begin{equation}\label{convermesemp}
\lim_{t\to\infty} \sum_{i=1}^\infty \lambda_i(t)\mathbf{E}_{1,{\bf k}}\Big[\mathcal{N}_{i}(t)\Big| \Xbf(t)\Big]=\Mcal^-(\infty) \int_{(0,\infty)\times\mathcal{I}} f(y, i) \rho(\mathrm{d}y,\mathrm{d}i), \qquad \textrm{in}\quad L^p.
\end{equation}
 From Proposition \ref{prop:spine temporal multitype}, we get
 \[
 \mathbf{E}_{1,{\bf k}}\Big[\mathcal{N}_{i}(t)\Big| \Xbf(t)\Big]=\Ehat_{1,j_i}\Big[f((1+t^{-1})^{-1/\alpha}t^{-2/\alpha}x_i\Xhat(x_i^{-\alpha}t^2),\Jhat(x_i^{-\alpha}t^2))\Big]. 
 \]
 From Lemma \ref{lemconv},  the pair $(s^{-1/\alpha} \Xhat(s), \Jhat(s))$ converges in distribution to $\rho$, under $\Phat_{1,j_i}$.  On the one hand, it follows that 
 \[
 \Ehat_{1,j_i}\Big[f((1+t^{-1})^{-1/\alpha}t^{-2/\alpha}x_i\Xhat(x_i^{-\alpha}t^2),\Jhat(x_i^{-\alpha}t^2))\Big]\xrightarrow[t\to\infty]{}\int_{(0,\infty)\times\mathcal{I} }f(y, i) \rho(\mathrm{d}y,\mathrm{d}i),
 \]
 uniformly in $i$ such that $x_i^{-\alpha} t^{2}>\sqrt{t}$, i.e. $x_i>t^{3/(2\alpha)}$. On the other hand, applying Proposition \ref{prop:spine temporal multitype}, we see that
 \begin{equation}\label{cebolla1}
 \sum_{i=1}^\infty  v^-_{J_{i}(t)}X_i(t)^{\omega_-}\mathds{1}_{\big\{X_i(t)\le t^{3/(2\alpha)}\big\}}, 
 \end{equation}
 has, under  $\mathbf{P}_{1, {\bf k}}$, mean equals
 \[
  v_{\bf k}\Phat_{1,{\bf k}}\Big(t^{-1/\alpha}\Xhat(t)<t^{1/(2\alpha)}\Big)
 \]
 which tends to $0$ as $t\to \infty$. Since \eqref{cebolla1} is bounded in $L^q(\mathbf{P}_{1, {\bf k}})$ for every $p<q<\omega_+/\omega_-$ (\cref{prop: L^p bound temporal martingale}), by H\"older's inequality it also converges to $0$. Putting all pieces together, we deduce that \eqref{convermesemp} holds and thus the first part of the statement.
 
The second part of the statement is derived using the same arguments as in Dadoun \cite{Dadoun}, that is we use a diagonal extraction procedure since the space $\mathcal{C}_c((0,\infty)\times\mathcal{I} )$ of continuous function on $(0,\infty)\times\mathcal{I}$ with compact support   is separable. In other words  there exists an extraction  $\sigma: \mathbb{N}\to \mathbb{N}$, such that, almost surely for all $f\in \mathcal{C}_c((0,\infty)\times\mathcal{I} )$
 \[
 \langle \rho^{(\alpha)}_{t_{\sigma(n)}}, f\rangle\xrightarrow[n\to\infty]{}\Mcal^-(\infty) \int_{(0,\infty)\times\mathcal{I}} f(y, i) \rho(\mathrm{d}y,\mathrm{d}i), 
 \]
 in other words $\rho^{(\alpha)}_{t_{\sigma(n)}}$ converges vaguely to $\Mcal^-(\infty)\rho$, a.s. Since the total mass is conserved, i.e.
\[
 \langle \rho^{(\alpha)}_{t_{\sigma(n)}}, 1\rangle=\sum_{i=1}^\infty v^-_{J_i(t)}X_i(t)^{\omega_-}\xrightarrow[n\to\infty]{}\Mcal^-(\infty)=\langle \Mcal^-(\infty)\rho, 1\rangle, \qquad \textrm{a.s.,}
 \]
 thus the convergence of $\rho^{(\alpha)}_{t_{\sigma(n)}}$ towards $\Mcal^-(\infty)\rho$  is weak which allows us to conclude.
\end{proof} 
\subsection{Proof of \cref{thm:spine multitype}} \label{sec: proof spine}
It is plain that the spine $(\Xhat, \Jhat)$ inherits the Markov property and self-similarity of $(X,J)$, and therefore it can be described in terms of a MAP \emph{via} the representation \eqref{eq: Lamperti MAP}. Without loss of generality, we may restrict to the homogeneous case, i.e. when $\alpha=0$. The result is then easily extended thanks to  the Lamperti time-change. We aim at finding the characteristics $(\psihat_i,\qhat_{i,j},\Ghat_{i,j})$ of the matrix exponent of this MAP. 

\medskip
\noindent \textbf{Description of the spine.}
Let $\Hhat$ be the first time when the type of the spine changes, and $\Jhat(\Hhat)$ denote the corresponding type. Fix $q\ge 0$, and $i,j\in\Ical$.
\begin{itemize}[leftmargin=*]
\item[$\rhd$] \textsc{Determining the Laplace exponent $\psihat_{i}$.}
This part is similar to the proof of \cite[Theorem 4.2]{BBCK}. We denote by $\xihat$ the first component of the MAP corresponding to $\Xhat$, that is
\[
\xihat(s)=\log \Xhat(s), \qquad s\ge 0, 
\]
and $\xihat_k$ the underlying Lévy processes depending on type $k\in\Ical$. We want to show that the Lévy process $\xihat_i$ has Laplace exponent $\psihat_i(q) = \kappa_i(q+\omega)-\kappa_i(\omega)$. Notice that a process $\eta$ with Laplace exponent $\psihat_i$ can be written as the superposition $\eta = \eta^{(1)} + \eta^{(2)}$ of independent L\'evy processes $\eta^{(1)}$ and $\eta^{(2)}$, with respective Laplace exponents $\psi^{(1)}(q):= \psi_i(q+\omega)-\psi_i(\omega)$ and 
\[
\psi^{(2)}(q):= \int_{-\infty}^0 \left( (1-\mathrm{e}^x)^{q+\omega}-(1-\mathrm{e}^x)^{\omega}\right)\Pi_{i,i}(\mathrm{d}x).
\] In particular, $\eta^{(2)}$ is a compound Poisson process with Lévy measure $\mathrm{e}^{\omega x}\widetilde{\Pi}_{i,i}(\mathrm{d}x)$, where $\widetilde{\Pi}_{i,i}(\mathrm{d}x)$ is the image measure of $\Pi_{i,i}(\mathrm{d}x)\mathds{1}_{\{x<0\}}$ through $x\mapsto \log(1-\mathrm{e}^x)$. Let $T$ be the first time when $\eta^{(2)}$ has a jump. The branching property of the cell system and the Markov property of $\eta$ ensures that the result will hold if we manage to prove that the distribution of $(\xihat_i(t), t\le b_{\Lcal(1)})$ is the same as that of $(\eta(t),t\le T)$. Let $f,g$ be two nonnegative measurable functions defined respectively on the space of \textit{càdlàg} trajectories and on $(-\infty,0)$. Let
\[
\Delta \xihat(s)=\log \frac{ \Xhat(s)}{\Xhat(s-)}, \qquad s\ge 0,
\]
then 
\begin{align*}
    &\Ehat_i\Big[f(\xihat(s), s<b_{\Lcal(1)})g(\Delta \xihat(b_{\Lcal(1)})) \mathds{1}_{\{b_{\Lcal(1)}<\Hhat\}}\Big] \\
    &=
    \mathtt{E}_i\Bigg[\sum_{t>0} \frac{v_{\iota_{\Delta}(t)}}{v_i} \mathrm{e}^{\omega \xi(t^-)}\Big(1-\mathrm{e}^{\Delta \xi(t)}\Big)^{\omega} \mathds{1}_{\{\iota_{\Delta}(t)=i\}} \mathds{1}_{\{t\le \rho_i\}} f(\xi(s),s<t)g(\log(1-\mathrm{e}^{\Delta\xi(t)})) \Bigg] \\
    &= 
    \mathtt{E}_i\Bigg[\sum_{0<t<\rho_i} \mathrm{e}^{\omega \xi_i(t^-)}\Big(1-\mathrm{e}^{\Delta \xi_i(t)}\Big)^{\omega} \mathds{1}_{\{\iota_{\Delta}(t)=i\}}  f(\xi_i(s),s<t)g(\log(1-\mathrm{e}^{\Delta\xi_i(t)}))\Bigg] \\
    & \quad + 
    \mathtt{E}_i\Big[ \mathrm{e}^{\omega \xi_i(\rho_i^-)}\Big(1-\mathrm{e}^{U_{i,\Theta(\rho_i)}}\Big)^{\omega} \mathds{1}_{\{\iota_{\Delta}(\rho_i)=i\}}  f(\xi_i(s),s<\rho_i)g(\log(1-\mathrm{e}^{U_{i,\Theta(\rho_i)}}))\Big].
\end{align*}
The compensation formula for $\xi_i$ entails that the first term is 
\begin{multline} \label{eq: psihat first term}
    \mathtt{E}_i\Bigg[\sum_{0<t<\rho_i} \mathrm{e}^{\omega \xi_i(t^-)}\Big(1-\mathrm{e}^{\Delta \xi_i(t)}\Big)^{\omega} \mathds{1}_{\{\iota_{\Delta}(t)=i\}}  f(\xi_i(s),s<t)g(\log(1-\mathrm{e}^{\Delta\xi_i(t)}))\Bigg] \\
    =
    \int_0^{\infty} \mathrm{d}t\, \mathrm{e}^{q_{i,i}t} \mathtt{E}_i\Big[f(\xi_i(s),s<t)\mathrm{e}^{\omega\xi_i(t)}\Big] \int_{-\infty}^0 g(\log(1-\mathrm{e}^x)) (1-\mathrm{e}^x)^{\omega} \Lambda_{i}^{(i)}(\mathrm{d}x).
\end{multline}
The second term can be computed as follows:
\begin{align}
    &\mathtt{E}_i\Big[ \mathrm{e}^{\omega \xi_i(\rho_i^-)}\Big(1-\mathrm{e}^{U_{i,\Theta(\rho_i)}}\Big)^{\omega} \mathds{1}_{\{\iota_{\Delta}(\rho_i)=i\}}  f(\xi_i(s),s<\rho_i)g(\log(1-\mathrm{e}^{U_{i,\Theta(\rho_i)}}))\Big] \notag \\
    &= \int_0^{\infty} \mathrm{d}t\, (-q_{i,i}) \mathrm{e}^{q_{i,i}t} \mathtt{E}_i\Big[f(\xi_i(s),s<t)\mathrm{e}^{\omega \xi_i(t)}\Big] \sum_{k\in\Ical\setminus\{i\}} \frac{q_{i,k}}{(-q_{i,i})} \int_{-\infty}^0 g(\log(1-\mathrm{e}^x)) (1-\mathrm{e}^{x})^{\omega} \Lambda_{U_{i,k}}^{(i)}(\mathrm{d}x) \notag \\
    &= \int_0^{\infty} \mathrm{d}t\, \mathrm{e}^{q_{i,i}t} \mathtt{E}_i\Big[f(\xi_i(s),s<t)\mathrm{e}^{\omega \xi_i(t)}\Big] \sum_{k\in\Ical\setminus\{i\}} q_{i,k} \int_{-\infty}^0 g(\log(1-\mathrm{e}^x)) (1-\mathrm{e}^{x})^{\omega} \Lambda_{U_{i,k}}^{(i)}(\mathrm{d}x). \label{eq: psihat second term}
\end{align}
Combining \eqref{eq: psihat first term} and \eqref{eq: psihat second term}, we finally obtain
\begin{multline*}
\Ehat_i\Big[f(\xihat(s), s<b_{\Lcal(1)})g(\Delta \xihat(b_{\Lcal(1)})) \mathds{1}_{\{b_{\Lcal(1)}<\Hhat\}}\Big] \\
=
\int_0^{\infty} \mathrm{d}t \,\mathrm{e}^{q_{i,i}t} \mathtt{E}_i\Big[f(\xi_i(s),s<t)\mathrm{e}^{\omega\xi_i(t)}\Big] \int_{-\infty}^0 g(y) \mathrm{e}^{\omega y} \widetilde{\Pi}_{i,i}(\mathrm{d}y).
\end{multline*}
This proves that $(\xihat_i(s),s<b_{\Lcal(1)})$ and $\Delta \xihat(b_{\Lcal(1)})$ are independent. The latter is distributed as $-\mathds{1}_{\{y<0\}}\frac{1}{q_{i,i}+\psi_i(\omega)}\mathrm{e}^{\omega y} \widetilde{\Pi}_{i,i}(\mathrm{d}y)$, which is the law of $\Delta \eta^{(2)}(T)$. The former is $\xi_i$ biased by the exponential martingale, and killed at an independent exponential time with parameter $-q_{i,i}-\psi_i(\omega)$, hence has Laplace exponent $\psi_i(q+\omega)+q_{i,i}$. We retrieve the Laplace exponent of $\eta^{(1)}$ killed at $T$, and conclude that $(\xihat_i(t),t\le b_{\Lcal(1)})$ evolves as $(\eta(t), t\le T)$.
\medskip
\item[$\rhd$] \textsc{Determining the Laplace transform $\Ghat_{i,j}$ of the special jumps.}
We want to compute
\[
\Ehat_i\left[ \left|\frac{\Xhat(\Hhat)}{\Xhat(\Hhat^-)}\right|^q \mathds{1}_{\{\Jhat(\Hhat)=j\}}\right].
\]
We first split over the possible current generations for this special jump to occur:
\begin{equation} \label{eq:geometric}
\Ehat_i\left[ \left|\frac{\Xhat(\Hhat)}{\Xhat(\Hhat^-)}\right|^q \mathds{1}_{\{\Jhat(\Hhat)=j\}}\right]
=
\sum_{k=0}^{\infty} \underbrace{\Ehat_i\left[ \left|\frac{\Xhat(\Hhat)}{\Xhat(\Hhat^-)}\right|^q \mathds{1}_{\{\Jhat(\Hhat)=j\}} \cdot  \mathds{1}_{\{b_{\Lcal(k)}< \Hhat \le b_{\Lcal(k+1)}\}}\right]}_{:=a_k}.
\end{equation}
For $k\ge 1$, applying successively the Markov property at time $b_{\Lcal(1)}$, the self-similarity of $\Xhat$ and the definition of $\Phat_i$, we get
\begin{align*}
a_k
&= 
\Ehat_i\left[ \mathds{1}_{\Hhat>b_{\Lcal(1)}} \cdot \Ehat_{\Xhat(b_{\Lcal(1)}),i}\left[\left|\frac{\Xhat(\Hhat)}{\Xhat(\Hhat^-)}\right|^q \mathds{1}_{\{\Jhat(\Hhat)=j\}} \cdot  \mathds{1}_{\{b_{\Lcal(k-1)}< \Hhat \le b_{\Lcal(k)}\}}\right]\right] \\
&=  \Phat_i(\Hhat>b_{\Lcal(1)}) \cdot  \Ehat_{i}\left[\left|\frac{\Xhat(\Hhat)}{\Xhat(\Hhat^-)}\right|^q \mathds{1}_{\{\Jhat(\Hhat)=j\}} \cdot  \mathds{1}_{\{b_{\Lcal(k-1)}< \Hhat \le b_{\Lcal(k)}\}}\right]  \\
&=
\Eb_i\left[\sum_{0<s<\zeta} \frac{v_{J_{\Delta}(s)}}{v_i} \, |\Delta X(s)|^{\omega} \mathds{1}_{\{H\ge s\}}\mathds{1}_{\{J_{\Delta}(s)=i\}}\right] \cdot a_{k-1}, 
\end{align*}
where recall that $H$ denotes the first jump time of $J$. Therefore 
$a_k = \mu_{i,i}(\omega) \cdot a_{k-1} = \mu_{i,i}(\omega)^k \cdot a_{0}$,
with \[
\mu_{i,i}(\omega) := \Eb_i\left[\sum_{0<s<\zeta} \, |\Delta X(s)|^{\omega} \mathds{1}_{\{H\ge s\}} \mathds{1}_{\{J_{\Delta}(s)=i\}}\right].
\] Then, provided $\mu_{i,i}(\omega)<1$, identity \eqref{eq:geometric} triggers
\begin{equation} \label{eq:geo sum}
\Ehat_i\left[ \left|\frac{\Xhat(\Hhat)}{\Xhat(\Hhat^-)}\right|^q \mathds{1}_{\{\Jhat(\Hhat)=j\}}\right]
=
\frac{a_0}{1-\mu_{i,i}(\omega)}.
\end{equation}
It remains to compute $a_0$ and $\mu_{i,i}(\omega)$. We begin with the latter:
\begin{align*}
    \mu_{i,i}(\omega) 
    &= \mathtt{E}_i\left[ \sum_{0<s\le \rho_i} \mathrm{e}^{\omega \xi(s^-)} \Big(1-\mathrm{e}^{\Delta \xi(s)}\Big)^{\omega} \mathds{1}_{\{\iota_{\Delta}(s)=i\}} \right] \\
    &= \mathtt{E}_i\left[ \sum_{0<s<\rho_i} \mathrm{e}^{\omega \xi_i(s^-)} \Big(1-\mathrm{e}^{\Delta \xi_{i}(s)}\Big)^{\omega}\mathds{1}_{\{\iota_{\Delta}(s)=i\}}\right] +  \mathtt{E}_i\bigg[\mathrm{e}^{\omega \xi_i(\rho_i^-)} \Big(1-\mathrm{e}^{U_{i,\Theta(\rho_i)}}\Big)^{\omega} \mathds{1}_{\{\iota_{\Delta}(\rho_i) = i\}} \bigg].
\end{align*}
A computation similar to \eqref{eq: cumulant A} gives
\[
\mu_{i,i}(\omega) 
=
-\frac{1}{q_{i,i}+\psi_i(\omega)} \int_{-\infty}^0 \Pi_{i,i}(\mathrm{d}x) (1-\mathrm{e}^x)^{\omega},
\]
provided $\psi_i(\omega)+q_{i,i}<0$. On the other hand, in $a_0$ the type of the spine can either change because $J$ jumps to $j$ (\textit{i.e.} $\Hhat<b_{\Lcal(1)}$), or because one picks a jump of type $j$ at time $b_{\Lcal(1)}$ (\textit{i.e.} $\Hhat=b_{\Lcal(1)}$). This writes
\[
a_{0} = A + B,
\]
with 
\[
A = \Ehat_i\Bigg[ \left|\frac{\Xhat(\Hhat)}{\Xhat(\Hhat^-)}\right|^q \mathds{1}_{\{\Jhat(\Hhat)=j\}} \cdot  \mathds{1}_{\{\Hhat < b_{\Lcal(1)}\}}\Bigg]
\quad
\text{and}
\quad
B =  \Ehat_i\Bigg[ \left|\frac{\Xhat(\Hhat)}{\Xhat(\Hhat^-)}\right|^q \mathds{1}_{\{\Jhat(\Hhat)=j\}} \cdot  \mathds{1}_{\{\Hhat = b_{\Lcal(1)}\}}\Bigg].
\]
Performing the change of measure, we first get
\[
A = \Eb_i\Bigg[\sum_{H<t<\zeta} \frac{v_{J_{\Delta}(t)}}{v_i} |\Delta X(t)|^{\omega} \left|\frac{X(H)}{X(H^-)}\right|^q \mathds{1}_{\{J(H)=j\}}\Bigg],
\]
with $J(H)$ being the type to which $J$ first jumps. We now apply the Markov property at time $H$ and self-similarity of $X$:
\[
A = \Eb_i\left[ \left|\frac{X(H)}{X(H^-)}\right|^q \mathds{1}_{\{J(H)=j\}} |X(H)|^{\omega} \Eb_{j}\left[\sum_{0<t<\zeta} \frac{v_{J_{\Delta}(t)}}{v_i} |\Delta X(t)|^{\omega} \right]\right].
\]
By admissibility of $((v_i)_{i\in \Ical}, \omega)$, $\Eb_{j}\left[\sum_{0<t<\zeta} v_{J_{\Delta}(t)} |\Delta X(t)|^{\omega} \right]=v_j$. Hence, 
\begin{align*}
A 
&= \frac{v_j}{v_i} \Eb_i\left[ \left|\frac{X(H)}{X(H^-)}\right|^{q+\omega} |X(H^-)|^{\omega}  \mathds{1}_{\{J(H)=j\}}  \right] \\
&= \frac{v_j}{v_i} \mathtt{E}_i\left[ \mathrm{e}^{(q+\omega)U_{i,j}} \mathrm{e}^{\omega \xi_i(\rho_i^-)} \mathds{1}_{\{\Theta(\rho_i)=j\}}\right],
\end{align*}
and by independence we obtain
\begin{align*}
A 
&= \frac{v_j}{v_i} \frac{q_{i,j}}{(-q_{i,i})} G_{i,j}(q+\omega) \int_0^{\infty} \mathrm{d}s\, (-q_{i,i}) \mathrm{e}^{q_{i,i}s}\, \mathtt{E}_i[\mathrm{e}^{\omega\xi_i(s)}] \\
&= -\frac{v_j}{v_i} q_{i,j} \frac{G_{i,j}(q+\omega)}{\psi_i(\omega)+q_{i,i}}.
\end{align*}
Besides, 
\begin{align*}
B 
&=
\Eb_i\Bigg[ \sum_{0<t<\zeta} \mathds{1}_{\{t\le H\}} \mathds{1}_{\{J_{\Delta}(t)=j\}} \frac{v_{J_{\Delta}(t)}}{v_i}|\Delta X(t)|^{\omega}\left|\frac{\Delta X(t)}{X(t^-)}\right|^q  \Bigg] \\
&=
\frac{v_{j}}{v_i}\mathtt{E}_i\Bigg[ \sum_{0<t<\rho_i} \mathrm{e}^{\omega \xi_i(t^-)} \Big(1-\mathrm{e}^{\Delta \xi_i(t)}\Big)^{q+\omega} \mathds{1}_{\{\iota_{\Delta}(t)=j\}} \Bigg] + \frac{v_j}{v_i} \mathtt{E}_i\Big[ \mathrm{e}^{\omega \xi_i(\rho_i^-)} \Big(1-\mathrm{e}^{U_{i,\Theta(\rho_i)}}\Big)^{\omega}\mathds{1}_{\{\iota_{\Delta}(\rho_i)=j\}} \Big].
\end{align*}
By the compensation formula as in \eqref{eq: cumulant A}, we finally get
\[
B 
=
-\frac{v_{j}}{v_i} \frac{1}{q_{i,i}+ \psi_i(\omega)} \int_{-\infty}^0 \Pi_{i,j}(\mathrm{d}x) (1-\mathrm{e}^{x})^{q+\omega}.
\]
We can now come back to \eqref{eq:geo sum} and deduce that 
\[
\Ehat_i\left[ \left|\frac{\Xhat(\Hhat)}{\Xhat(\Hhat^-)}\right|^q \mathds{1}_{\{\Jhat(\Hhat)=j\}}\right]
=
-\frac{\displaystyle\frac{v_{j}}{v_i}\left(\displaystyle\int_{-\infty}^0 \Pi_{i,j}(\mathrm{d}x) (1-\mathrm{e}^{x})^{q+\omega} +  q_{i,j} G_{i,j}(q+\omega)\right)}{(\psi_i(\omega)+q_{i,i})+ \displaystyle\int_{-\infty}^0 \Pi_{i,i}(\mathrm{d}x) (1-\mathrm{e}^x)^{\omega}}.
\]
Recalling \eqref{eq: kappa}, we are left with
\[
\Ehat_i\left[ \left|\frac{\Xhat(\Hhat)}{\Xhat(\Hhat^-)}\right|^q \mathds{1}_{\{\Jhat(\Hhat)=j\}}\right]
=
-\frac{\displaystyle\frac{v_{j}}{v_i}\left(\displaystyle\int_{-\infty}^0 \Pi_{i,j}(\mathrm{d}x) (1-\mathrm{e}^{x})^{q+\omega} +  q_{i,j} G_{i,j}(q+\omega)\right)}{\kappa_i(\omega)}.
\]
Note that, because $\Kcal_i(\omega)=0$,  we get
\[
\kappa_i(\omega) = -\sum_{j\in\Ical\setminus\{i\}}\frac{v_{j}}{v_i}\left(\int_{-\infty}^0 \Pi_{i,j}(\mathrm{d}x) |\mathrm{e}^{x}-1|^{\omega} +  q_{i,j} G_{i,j}(\omega)\right),
\]
which upon taking $q=0$ already gives $\qhat_{i,j}$ up to a multiplicative constant.
\medskip
\item[$\rhd$] \textsc{Determining the exponential parameters $\qhat_{i,j}$.}
Recall that we have assumed homogeneity. Thus, for $q\ge 0$, we wish to compute, 
\[
\Ehat_i\left[\mathrm{e}^{q\Hhat}\mathds{1}_{\{\Jhat(\Hhat)=j\}}\right]
=
\sum_{k=0}^{\infty} \underbrace{\Ehat_i\left[ \mathrm{e}^{q\Hhat}\mathds{1}_{\{\Jhat(\Hhat)=j\}} \mathds{1}_{\{b_{\Lcal(k)}< \Hhat \le b_{\Lcal(k+1)}\}}\right]}_{:=a_k'}.
\]
Again, using the definition of $\Phat_i$ and the Markov property just as we did with $a_k$, we end up with
\[
a_k' = r a_{k-1}', \quad k\ge 1,
\]
where 
\begin{align}
r 
&= \mathtt{E}_i\Bigg[\sum_{0<s\le\rho_i} \mathrm{e}^{qs} \mathrm{e}^{\omega \xi(s^-)} \Big(1-\mathrm{e}^{\Delta \xi(s)}\Big)^{\omega} \mathds{1}_{\{\iota_{\Delta}(s)=i\}} \Bigg] \notag \\
&= \mathtt{E}_i\Bigg[\sum_{0<s<\rho_i} \mathrm{e}^{qs} \mathrm{e}^{\omega \xi_i(s^-)} \Big(1-\mathrm{e}^{\Delta \xi_i(s)}\Big)^{\omega} \mathds{1}_{\{\iota_{\Delta}(s)=i\}}\Bigg] + \mathtt{E}_i\left[ \mathrm{e}^{q\rho_i} \mathrm{e}^{\omega \xi_i(\rho_i^-)} \Big(1-\mathrm{e}^{U_{i,\Theta(\rho_i)}}\Big)^{\omega} \mathds{1}_{\{\iota_{\Delta}(\rho_i)=i\}}\right]. \label{eq: spine proof r}
\end{align}
Then, we use the compensation formula and we obtain that the first term is
\begin{align}
\mathtt{E}_i\Bigg[\sum_{0<s<\rho_i} \mathrm{e}^{qs} \mathrm{e}^{\omega \xi_i(s^-)} \Big(1-\mathrm{e}^{\Delta \xi_i(s)}\Big)^{\omega} \mathds{1}_{\{\iota_{\Delta}(s)=i\}} \Bigg] &= \int_0^{\infty} \mathrm{d}s\, \mathrm{e}^{(q+q_{i,i})s}\mathrm{e}^{\psi_i(\omega)s} \int_{-\infty}^0 \Lambda_{i}^{(i)}(\mathrm{d}x)(1-\mathrm{e}^x)^{\omega} \notag \\
&= - \frac{1}{q+q_{i,i}+\psi_i(\omega)} \int_{-\infty}^0 \Lambda_{i}^{(i)}(\mathrm{d}x)(1-\mathrm{e}^x)^{\omega}. \label{eq: spine proof first term}
\end{align}
By independence, the second term of \eqref{eq: spine proof r} is 
\begin{align}
\mathtt{E}_i\Bigg[ \mathrm{e}^{q\rho_i} \mathrm{e}^{\omega \xi_i(\rho_i^-)} &\Big(1-\mathrm{e}^{U_{i,\Theta(\rho_i)}}\Big)^{\omega} \mathds{1}_{\{\iota_{\Delta}(\zeta)=i\}} \Bigg] \notag \\
&=
\sum_{k\in\Ical\setminus\{i\}} \frac{q_{i,k}}{(-q_{i,i})}\int_0^{\infty} \mathrm{d}s\, (-q_{i,i})\mathrm{e}^{(q+q_{i,i})s}\mathrm{e}^{\psi_i(\omega)s} \int_{-\infty}^0 \Lambda^{(i)}_{U_{i,k}}(\mathrm{d}x)(1-\mathrm{e}^x)^{\omega} \notag \\
&= 
- \frac{1}{q+q_{i,i}+\psi_i(\omega)} \sum_{k\in\Ical\setminus\{i\}} q_{i,k}\int_{-\infty}^0 \Lambda_{U_{i,k}}^{(i)}(\mathrm{d}x)(1-\mathrm{e}^x)^{\omega}. \label{eq: spine proof second term}
\end{align}
Thanks to \eqref{eq: spine proof first term} and \eqref{eq: spine proof second term}, equation \eqref{eq: spine proof r} boils down to
\[
r = - \frac{1}{q+q_{i,i}+\psi_i(\omega)} \int_{-\infty}^0 \Pi_{i, i}(\mathrm{d}x)(1-\mathrm{e}^x)^{\omega}.
\]
On the other hand,
\[
a_0' = \Ehat_i\left[ \mathrm{e}^{q\Hhat} \mathds{1}_{\{\Jhat(\Hhat)=j\}} \mathds{1}_{\{\Hhat \le b_{\Lcal(1)}\}}\right],
\]
and we may split the indicator over $\{\Hhat<b_{\Lcal(1)}\}$ and $\{\Hhat=b_{\Lcal(1)}\}$. We therefore get $a_0'=A'+B'$, where
\[
A'
=
\Eb_i \Bigg[ \sum_{0<t<\zeta} \mathrm{e}^{qH} \mathds{1}_{\{J(H)=j\}} \mathds{1}_{\{H<t\}} \frac{v_{J_{\Delta}(t)}}{v_i}\left|\Delta X(t)\right|^{\omega}\Bigg], 
\]
and
\[
B'=\mathtt{E}_i \Bigg[ \sum_{0<t\le \rho_i} \mathrm{e}^{qt} \frac{v_{j}}{v_i}\mathrm{e}^{\omega\xi(t^-)}\Big(1-\mathrm{e}^{\Delta \xi(t)}\Big)^{\omega} \mathds{1}_{\{\iota_{\Delta}(t)=j\}}\Bigg].    
\]
First of all, $B'$ can be rewritten as follows
\[
B'
=
\frac{v_{j}}{v_i} \mathtt{E}_i \Bigg[ \sum_{0<t<\rho_i} \mathrm{e}^{qt} \mathrm{e}^{\omega\xi(t^-)}\Big(1-\mathrm{e}^{\Delta \xi(t)}\Big)^{\omega} \mathds{1}_{\{\iota_{\Delta}(t)=j\}}\Bigg] + \frac{v_{j}}{v_i}\mathtt{E}_i \Big[\mathrm{e}^{q\rho_i} \mathrm{e}^{\omega\xi_i(\rho_i^-)}\Big(1-\mathrm{e}^{U_{i,\Theta(\rho_i)}}\Big)^{\omega} \mathds{1}_{\{\iota_{\Delta}(\rho_i)=j\}}\Big].
\]
Continuing along the lines of \eqref{eq: spine proof first term}, \eqref{eq: spine proof second term}, we eventually get to
\[
B'
=
- \frac{v_j}{v_i} \frac{1}{q+q_{i,i}+\psi_i(\omega)} \int_{-\infty}^0 \Pi_{i,j}(\mathrm{d}x) (1-\mathrm{e}^x)^{\omega}.
\]
Moreover, by using the Markov property at time $H$, self-similarity of $X$, and by admissibility of $((v_i)_{i\in\Ical},\omega)$, we have
\begin{align*}
A'
&=
\Eb_i \Bigg[ \mathrm{e}^{qH} \mathds{1}_{\{J(H)=j\}} \Eb_{X(H), j} \Bigg[ \sum_{0<t<\zeta} \frac{v_{J_{\Delta}(t)}}{v_i}\left|\Delta X(t)\right|^{\omega}\Bigg]\Bigg] \\
&=
\Eb_i \Bigg[ \mathrm{e}^{qH} \mathds{1}_{\{J(H)=j\}} |X(H)|^{\omega}\Eb_{j} \Bigg[ \sum_{0<t<\zeta} \frac{v_{J_{\Delta}(t)}}{v_i}\left|\Delta X(t)\right|^{\omega}\Bigg]\Bigg] \\
&=
\frac{v_j}{v_i}\Eb_i \left[ \mathrm{e}^{qH} \mathds{1}_{\{J(H)=j\}} |X(H)|^{\omega}\right]. 
\end{align*}
Now, on the event that $J(H)=j$, we have $X(H)=\mathrm{e}^{\xi_i(\rho_i^-)+U_{i,j}}$ under $\mathbb{P}_i$. This entails
\begin{align*}
A'
&=
-\frac{v_j}{v_i} \frac{q_{i,j}}{q_{i,i}}\mathtt{E}_i[\mathrm{e}^{(q+\psi_i(\omega))\rho_i}] G_{i,j}(\omega) \\
&=
-\frac{v_j}{v_i}  \frac{q_{i,j} G_{i,j}(\omega)}{q+q_{i,i}+\psi_i(\omega)}.
\end{align*}
Therefore,
\[
a_0
=
- \frac{\displaystyle\frac{v_j}{v_i} q_{i,j} G_{i,j}(\omega) + \displaystyle\frac{v_j}{v_i} \displaystyle\int_{-\infty}^0\Pi_{i,j}(\mathrm{d}x) (1-\mathrm{e}^x)^{\omega}}{q+q_{i,i}+\psi_i(\omega)}.
\]
We finally conclude that
\[
\Ehat_i\left[\mathrm{e}^{q\Hhat}\mathds{1}_{\{\Jhat(\Hhat)=j\}}\right]
=
\frac{a_0}{1-r}
=
- \frac{\displaystyle\frac{v_j}{v_i} q_{i,j} G_{i,j}(\omega) + \displaystyle\frac{v_j}{v_i} \displaystyle\int_{-\infty}^0\Pi_{i,j}(\mathrm{d}x) (1-\mathrm{e}^x)^{\omega}}{q+q_{i,i}+\psi_i(\omega)+\displaystyle\int_{-\infty}^0 \Pi_{i,i}(\mathrm{d}x)(1-\mathrm{e}^x)^{\omega}}.
\]
This shows that, for all $i,j\in\Ical$, with $j\ne i$, the jump time of the chain $\Jhat$ from state $i$ to state $j$ is an exponential random variable, with parameter
\[
\qhat_{i,j} = \frac{v_j}{v_i} \left(q_{i,j} G_{i,j}(\omega) +  \int_{-\infty}^0\Pi_{i,j}(\mathrm{d}x) |1-\mathrm{e}^x|^{\omega}\right).
\]
\item[$\rhd$] \textsc{The matrix exponent.} The previous calculations determine  $\Fhat(q) = (\Fhat_{i,j}(q))_{i,j\in\Ical}$, the matrix exponent of the spine as the matrix with entries:
\[ 
\forall i\in \Ical, \quad \Fhat_{i,i}(q)
=
\kappa_i(\omega+q)
\]
and 
\[
\forall i,j\in \Ical, i\neq j, \quad \Fhat_{i,j}(q)
=
\frac{v_{j}}{v_i}\left(\int_{-\infty}^0 \Pi_{i,j}(\mathrm{d}x) (1-\mathrm{e}^{x})^{q+\omega} +  q_{i,j} G_{i,j}(q+\omega)\right).
\]
\end{itemize}

\noindent \textbf{Proof of the second assertion.} We finally prove the second assertion of Theorem \ref{thm:spine multitype} directly in the general setting, by mimicking \cite{DS}. We shall only prove the statement for the first generation (this is then easily extended using the branching property). Let $f_1, f_2$ be nonnegative measurable functionals respectively on the space of \textit{càdlàg} trajectories and sequences of types in $\Ical$, and $g_k$, $k\ge 1$, be nonnegative measurable functionals on the space of multiset--valued paths. For $t>0$, denote by $(\mathfrak{X}_k(t), j\ge 1)$ the sequence consisting of the value of $\Xcal_{\varnothing}(t)$, and all those jumps of $\Xcal_{\varnothing}$ that happened strictly before time $t$, ranked in descending order of their absolute value, and write $(\mathfrak{j}_k(t), k\ge 1)$ for the corresponding types. Our goal is to show that
\begin{multline*}
\Ehat_{x,i} \bigg[f_1(\Xcal_{\varnothing}(s), 0\le s\le b_{\Lcal(1)}) f_2(\mathfrak{j}_k(b_{\Lcal(1)}), k\ge 1) \prod_{k\ge 1} g_k(\Xbfhat_{0,k})\bigg] \\
=
\Ehat_{x,i} \bigg[f_1(\Xcal_{\varnothing}(s), 0\le s\le b_{\Lcal(1)}) f_2(\mathfrak{j}_k(b_{\Lcal(1)}), k\ge 1)  \prod_{k\ge 1} \Ebf_{\mathfrak{X}_k(b_{\Lcal(1)}),\mathfrak{j}_k(b_{\Lcal(1)})} \left[g_k(\Xbf)\right]\bigg].
\end{multline*}
But,
\begin{multline*}
\Ehat_{x,i} \bigg[f_1(\Xcal_{\varnothing}(s), 0\le s\le b_{\Lcal(1)}) f_2(\mathfrak{j}_k(b_{\Lcal(1)}), k\ge 1) \prod_{k\ge 1} g_k(\Xbfhat_{0,k})\bigg]  \\
=
\Ecal_{x,i} \bigg[\sum_{t>0} \frac{v_{J_{\Delta}(t)}}{v_{i}} |\Delta \Xcal_{\varnothing}(t)|^{\omega} f_1(\Xcal_{\varnothing}(s), 0\le s\le t) f_2(\mathfrak{j}_k(t), k\ge 1) \prod_{k\ge 1} g_k(\Xbfhat_{0,k})\bigg],
\end{multline*}
and the definition of the $\Xbfhat_{0,k}$ together with the branching property under $\Pcal_{x,i}$ give
\begin{multline*}
\Ehat_{x,i} \bigg[f_1(\Xcal_{\varnothing}(s), 0\le s\le b_{\Lcal(1)}) f_2(\mathfrak{j}_k(b_{\Lcal(1)}), k\ge 1) \prod_{k\ge 1} g_k(\Xbfhat_{0,k})\bigg] \\
=
\Ecal_{x,i} \left[\sum_{t>0} \frac{v_{J_{\Delta}(t)}}{v_{i}} |\Delta \Xcal_{\varnothing}(t)|^{\omega} f_1(\Xcal_{\varnothing}(s), 0\le s\le t) f_2(\mathfrak{j}_k(t), k\ge 1) \prod_{k\ge 1} \Ebf_{\mathfrak{X}_k(t), \mathfrak{j}_k(t)} \left[g_k(\Xbf)\right]\right].
\end{multline*}
Applying the change of measure backwards, we get the desired identity. Therefore Theorem \ref{thm:spine multitype} is proved.

\section{Appendix: implicit renewal theory on multitype branching trees} \label{sec: appendix}
In the case when there is no type, the study of some variants of $\Mcal^-(\infty)$ has arised in a variety of contexts and dates back to Mandelbrot \cite{Man} and then Kahane and Peyrière \cite{Kah-Pey}. In \cite{Liu}, Liu presents a unified account on these \emph{multiplicative cascades}, through a distributional equation. 

This appendix is concerned with a multitype version of Goldie's implicit renewal theorem \cite{Gol}, with applications to two random equations. The first set of equations that we consider in \cref{sec: tail behaviour affine} is:
\begin{equation}\label{eq: random affine equation}
R \overset{\Lcal}{=} \frac{v_J}{v_i} AR^{(J)}+B,
\end{equation}
under $\Pb_i$ for all $i\in\Ical$, where $(v_j,j\in\Ical)$ is a (deterministic) positive vector, $A$ and $B$ are positive random variables, $J$ is a random variable in $\Ical$, $(R^{(j)}, j\in\Ical)$ denotes a random vector, independent of $(A,B,J)$, whose entries are distributed according to $R$ under $\Pb_j, j\in \Ical$. The second equation of interest (\cref{sec: tail behaviour smoothing}) is the \emph{multitype} linear homogeneous recursion or smoothing transform
\begin{equation} \label{eq: smoothing transform}
R \overset{\Lcal}{=} \sum_{k=1}^{\infty} \frac{v_{J_k}}{v_i} C_k R_k^{(J_k)},
\end{equation}
under $\Pb_i$ for all $i\in\Ical$, where $(v_j,j\in\Ical)$ is a (deterministic) positive vector, $(C_k, k\ge 1)$ is a nonnegative random vector, $(J_k,k\ge 1)$ is a sequence of types in $\Ical$,  $(R_k^{(j)}, k\ge 1)$ are nonnegative i.i.d. variables with the same law as $R$, under $\Pb_j$, for each $j\in\Ical$, independent of $(J_k, C_k, k\ge 1)$. We denote by  $Q_i$ the law of  $R$ under $\Pb_i$. Our goal is to generalise to some extent the results of \cite{JO-C}. We follow closely their approach.
Although the statements we give are often tailored for our purposes, we believe that this approach could be extended in many different ways: random number of offspring, removing the moment condition in \cref{thm: tail behaviour smoothing} in the spirit of \cite[Theorem 4.2]{JO-C}, treating the $\alpha\le 1$ case in \cref{thm: tail behaviour smoothing}, etc.

We construct the following multitype branching tree indexed by $\Ub$, similarly to the growth-fragmentation tree. Denote by $P_i$, $i\in\Ical$, the law of a vector $(J_{k}, C_k, k \ge 1)$ which will stand for the types and displacements of the offspring in the tree conditionally on the parent type being $i$. Let $i\in\Ical$, and construct the law $\Pcal_i$ of the multitype tree as follows. The root $\varnothing$ has type $i$ and marks $M_{\varnothing}:=(J_{k}, C_k, k \ge 1)$ distributed according to $P_i$. Independently of each another, each child $k$ of $\varnothing$ is then assigned the type $J_k$ and the displacement $C_k$, and comes itself with marks $M_{k}:=(J_{(k,l)}, C_{(k,l)}, l\ge 1)$ which, conditionally on $M_{\varnothing}$, is a sample of $P_{J_k}$. More generally, any $u\in \Ub$ with $|u|=n+1$ will have a type $J_u$ and a displacement $C_u$ from its parent, and is assigned marks $M_u$ which are distributed as $P_{J_u}$ conditionally on $\Gscr_n := \sigma(M_v, |v|\le n)$. To each $u\in\Ub$, we also associate its \emph{position} $X_u$ defined by recursion by
\begin{equation} \label{eq: def Xu Ru tree}
X_{\varnothing}=1, \quad \text{and} \quad X_{(u,k)} = C_{(u,k)} \cdot X_{u}, 
\end{equation}
for all $u\in \Ub, k\in \N$. We will also need independent copies $R^{(J_u)}_u$ with law $Q_{J_u}$ (we will sometimes omit the superscript $J_u$, writing $R_u = R^{(J_u)}_u$ for ease of notation). This completes the construction of the multitype branching tree $\Tcal$. We see equation \eqref{eq: smoothing transform} as a fixed point equation on the genealogies of this tree. 

\subsection{The implicit renewal theorem on $\Tcal$}
We describe and prove a version of Goldie's implicit renewal theorem \cite{Gol} on multitype branching trees. Our statement is largely inspired by \cite[Theorem 3.1]{JO-C}, from which our proof is a simple adaptation. A key role will be played by the analogue of the matrix exponent in \eqref{eq: matrix m(q)}, defined here as the matrix with entries
\begin{equation} \label{eq: matrix (C,J)}
m_{i,j}(q) = E_i\left[\sum_{k\ge 1}  C_k^q \, \mathds{1}_{\{J_k=j\}}\right].
\end{equation}
Again, by Perron-Frobenius theory, whenever $m(q)$ is finite and irreducible (note that irreducibility does not depend on $q$), we can define its leading eigenvalue to be $\mathrm{e}^{\lambda(q)}$. We shall then write $(v_i(q), i\in\Ical)$ for an associated positive eigenvector (we will often omit the dependence on $q$). 

\begin{Thm} \label{thm: renewal multitype tree}
Under $\Pb_i$, for any $i\in\Ical$, let $(J_k, C_k, k\ge 1)$ be distributed as $P_i$, $(R_k^{(j)}, k\ge 1)$ be an independent sequence of nonnegative i.i.d. variables with law $Q_j$ for each $j\in\Ical$, and $R$ be a random variable with law $Q_i$. Assume that $P_i(C_k>0)>0$ and that the measure $P_i(\log C_k \in \mathrm{d}y, C_k>0)$ is non-lattice for some $k\ge 1$. We assume that there exist $0\le\gamma<\alpha$ such that 
\begin{itemize}
\item[(i)] $m(q)$ is finite and irreducible for $q$ in a domain containing $[\gamma, \alpha]$,
\item[(ii)] $\lambda(\alpha) =0$ and if $(v_j, j\in\Ical)$ denotes a positive eigenvector associated to $\lambda(\alpha)=0$, \[0<E_i\left[\sum_{k\ge 1}  \frac{v_{J_k}}{v_i}C_k^\alpha \log C_k  \right]<\infty,\]
\item[(iii)] $\Eb_j[R^{\beta}]<\infty,$ for all $0<\beta<\alpha$ and all $j\in\Ical$.
\end{itemize}
If, for every $i\in\Ical$,
\begin{equation} \label{eq: R integrability}
\int_0^{\infty} \bigg|\Pb_i(R>t) - \Eb_i\bigg[\sum_{k\ge 1} \mathds{1}_{\{\frac{v_{J_k}}{v_i} C_k R_k^{(J_k)} >t\}} \bigg] \bigg| t^{\alpha-1} \mathrm{d}t < \infty,
\end{equation}
then for all $i\in\Ical$, there exists a constant $a_i\ge 0$ such that 
\[
\Pb_i(R>t) \underset{t\rightarrow \infty}{\sim} a_i \cdot t^{-\alpha}.
\]
\end{Thm}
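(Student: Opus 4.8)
The plan is to reduce the multitype statement to a single-type, classical implicit renewal problem by following the spine/change-of-measure strategy of Goldie, as adapted by Jelenkovi\'c and Olvera-Cravioto. First I would perform the Perron--Frobenius tilt: since $\lambda(\alpha)=0$ with positive eigenvector $(v_j)_{j\in\Ical}$, the quantities $\frac{v_{J_k}}{v_i}C_k^{\alpha}$, summed over the children, have expectation $1$ conditionally on the parent type being $i$. This allows one to define a probability measure $\widehat{\Pb}_i$ on the multitype tree under which a spine is singled out: at each generation, given $\Gscr_n$, the spine child $u$ is chosen with probability proportional to its weight $\frac{v_{J_u}}{v_{J_{u'}}} C_u^{\alpha}$ in the additive martingale $\sum_{|u|=n+1}\frac{v_{J_u}}{v_i}X_u^{\alpha}$. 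Along the spine, the sequence of log-displacements $(\log C_{u})$ of the distinguished vertices forms, after the tilt, a Markov additive process (equivalently, a Markov random walk driven by the spine type chain, which is irreducible on $\Ical$ by hypothesis (i)); its stationary increment has strictly positive, finite mean by hypothesis (ii), and is non-lattice by the non-lattice assumption on $P_i(\log C_k\in\mathrm dy,\,C_k>0)$. This is precisely the setting of the Markov renewal theorem.

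Next I would set up the implicit renewal equation in the variable $r(t):=e^{\alpha t}\Pb_i(R>e^{t})$ (with the type $i$ of the root recorded as an additional coordinate). Using the fixed-point identity $R\overset{\Lcal}{=}\sum_{k\ge1}\frac{v_{J_k}}{v_i}C_kR_k^{(J_k)}$ together with the one-step decomposition over the spine, one writes
\[
\Pb_i(R>t)=\Eb_i\!\left[\sum_{k\ge1}\mathds 1_{\{\frac{v_{J_k}}{v_i}C_kR_k^{(J_k)}>t\}}\right]+\big(\Pb_i(R>t)-\Eb_i[\,\cdots\,]\big),
\]
and iterates this along the spine using the change of measure $\widehat{\Pb}_i$. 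The first term, after the change of measure, becomes (up to the $\alpha$-exponential weight) an expectation over the spine walk of $\Pb_{J}(R>t/W)$ for the spine product $W$; iterating produces a Markov renewal equation $r=r\star U+\delta$, where $U$ is the Markov renewal measure of the tilted spine walk and $\delta_i(t)$ is built out of the ``defect'' $\Pb_i(R>t)-\Eb_i[\cdots]$. Hypothesis (iii), $\Eb_j[R^{\beta}]<\infty$ for all $\beta<\alpha$, is what guarantees that the iterated remainder terms vanish and that $r$ is bounded (a crude bound as in \cite{JO-C}); the integrability condition \eqref{eq: R integrability} is exactly the statement that $\delta_i$ is directly Riemann integrable (its total variation integral against $t^{\alpha-1}\mathrm dt$ is finite), so the Markov renewal theorem applies and yields $r_i(t)\to a_i$, i.e.\ $\Pb_i(R>t)\sim a_i t^{-\alpha}$, with
\[
a_i=\frac{1}{\alpha\, E_i\!\big[\sum_{k\ge1}\frac{v_{J_k}}{v_i}C_k^{\alpha}\log C_k\big]}\cdot\big(\text{limit of the defect averaged over the spine stationary law}\big)\ge0.
\]

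The main obstacle I expect is the bookkeeping in the change of measure and the justification that the error/defect terms are negligible and that $r$ is bounded: one must control, uniformly in $t$, the contributions of vertices far down the spine, which is where the moment condition (iii) and a careful submartingale/maximal estimate (as in the boundedness lemma of \cite{JO-C}) enter. A secondary technical point is checking that the tilted spine increment process is genuinely a non-lattice Markov random walk on the \emph{finite} irreducible type space --- irreducibility is inherited from irreducibility of $m(q)$ (Assumption in (i)), the positivity and finiteness of the mean from (ii), and the non-lattice property from the hypothesis on $P_i(\log C_k\in\mathrm dy,\,C_k>0)$ --- so that Markov renewal theory (in the form used in \cite{JO-C}, ultimately Kesten/Lalley) is available. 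Everything else is a direct transcription of the single-type argument of Goldie and of \cite[Theorem 3.1]{JO-C} with the scalar renewal theorem replaced by its Markov-modulated version.
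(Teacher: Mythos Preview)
Your plan is essentially the paper's proof: both set up the Goldie-type renewal equation for $r_i(t)=e^{\alpha t}\Pb_i(R>e^t)$, use the Perron--Frobenius tilt to turn the sum over children into a Markov renewal measure (your ``spine walk'' is exactly the paper's convolution semigroup $\mu_m^{(i)}=(\eta^{(i)})^{*m}$ of \cref{lem: renewal measure}), show the remainder at generation $n$ vanishes via (iii) (your ``boundedness/remainder'' step is \cref{lem: delta n term}), and conclude by the Markov renewal theorem after checking direct Riemann integrability from \eqref{eq: R integrability}. Two small points: you should not invoke a fixed-point identity $R\overset{\Lcal}{=}\sum_k \frac{v_{J_k}}{v_i}C_kR_k^{(J_k)}$, since the theorem does not assume it---the telescoping $\Pb_i(R>t)=\big(\Pb_i(R>t)-\Eb_i[\sum_k\mathds 1_{\{\cdot\}}]\big)+\Eb_i[\sum_k\mathds 1_{\{\cdot\}}]$ is purely formal and is what actually gets iterated along the tree; and the paper, like Goldie, passes through the smoothing $f\mapsto\tilde f(t)=\int_{-\infty}^t e^{-(t-s)}f(s)\,\mathrm ds$ before applying Kesten's renewal theorem, which is the step that converts the $L^1$ bound \eqref{eq: R integrability} into direct Riemann integrability.
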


\noindent The proof of \cref{thm: renewal multitype tree} relies on the following two lemmas. For a measure $\mu$ on $\Ical \times \R$ and a function $f:\Ical\times\R \rightarrow \R$, we define (whenever we can) the convolution $f\ast\mu$ as 
\[
\forall x\in\R, \quad f\ast \mu(x) := \int_{\Ical\times\R} f(j, x-y) \mu(\mathrm{d}j,\mathrm{d}y).
\]
Similarly, for two measure-valued mappings $\mu:i\in\Ical\mapsto \mu_i(\mathrm{d}j,\mathrm{d}y)$ and $\nu: i\in\Ical\mapsto \nu_i(\mathrm{d}j,\mathrm{d}y)$ on $\Ical\times\R$, let $\mu\ast\nu_i$, $i\in\Ical$, be the measure on $\Ical\times\R$:
\begin{equation} \label{eq: convolution measures}
(\mu\ast\nu)_i(\mathrm{d}j,\mathrm{d}x) := \int_{\Ical\times\R} \mu_i(\mathrm{d}J,\mathrm{dY}) \nu_J(\mathrm{d}j,\mathrm{d}x-Y).
\end{equation}
We will mostly restrict to the case when $\nu=\mu$, and write $(\mu_i)^{\ast n}$ for the measure $(\mu\ast\cdots\ast\mu)_i$ ($n$ times). This corresponds simply to the distribution of a multitype random walk $S_n := \sum_{i=1}^n Y_i$, where the distribution of $Y_i$ at each step depends on the type in $\Ical$.

\begin{Lem} \label{lem: renewal measure}
Recall the notation regarding the tree $\Tcal$, and write $Y_u = \log X_u$. Let $\alpha>0$ and for $m\in\N$ and $i\in\Ical$, define the measure on $\Ical\times\R$
\begin{equation} \label{eq: mu_m}
\mu^{(i)}_m(\mathrm{d}j, \mathrm{d}y) := \Eb_i\left[\sum_{|u|=m} \frac{v_j}{v_i}\mathrm{e}^{\alpha y} \mathds{1}_{\{J_u=j, Y_u \in \mathrm{d}y\}} \right],
\end{equation}
and $\eta^{(i)} = \mu^{(i)}_1$. Assume that $P_i(C_k>0)>0$ and that the measure $P_i(\log C_k \in \mathrm{d}y, C_k>0)$ is non-lattice for some $k\ge 1$. Moreover, suppose that assumption $(ii)$ of \cref{thm: renewal multitype tree} holds. Then, for all $i\in\Ical$, $\eta^{(i)}$ is a non-lattice probability measure with positive mean, and for all $m\in\N$, $\mu^{(i)}_m = \big(\eta^{(i)}\big)^{\ast m}$ is the $m$ times convolution of $\eta^{(i)}$ in the sense of \eqref{eq: convolution measures}.
\end{Lem}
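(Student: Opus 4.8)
The plan is to prove the two assertions of \cref{lem: renewal measure} in turn: first that $\eta^{(i)}$ is a non-lattice probability measure with positive mean, and second that $\mu^{(i)}_m$ is the $m$-fold convolution of $\eta^{(i)}$ in the sense of \eqref{eq: convolution measures}. I would begin with the convolution identity, since the probability and mean statements for $\eta^{(i)}$ then become essentially corollaries of the $m=1$ case combined with assumption $(ii)$.

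For the convolution identity, the natural route is induction on $m$ together with the branching structure of $\Tcal$. The base case $m=1$ is the definition $\eta^{(i)} = \mu^{(i)}_1$. For the inductive step, I would split the sum over $|u|=m+1$ according to the first-generation ancestor $k$ of $u$, writing $u = (k, w)$ with $|w| = m$ in the subtree rooted at $k$. By construction, $Y_u = Y_k + (Y_u - Y_k)$ where $Y_k = \log C_k$ and, conditionally on $\Gscr_0$ (that is, on the marks of the root), the shifted subtree rooted at $k$ is an independent copy of $\Tcal$ started from type $J_k$. Hence
\[
\mu^{(i)}_{m+1}(\mathrm{d}j,\mathrm{d}y) = \Eb_i\Bigg[\sum_{k\ge 1} \frac{v_{J_k}}{v_i}\mathrm{e}^{\alpha Y_k}\, \widetilde{\mu}^{(J_k)}_m(\mathrm{d}j,\mathrm{d}y - Y_k)\Bigg],
\]
where $\widetilde{\mu}^{(J_k)}_m$ denotes the measure built from the $k$-th subtree; taking expectations and using the tower property over $\Gscr_0$ together with the inductive hypothesis $\widetilde{\mu}^{(J_k)}_m = (\eta^{(J_k)})^{\ast m}$ yields exactly $(\eta^{(i)})^{\ast(m+1)}$ once one matches the bookkeeping of the eigenvector weights: the factor $\frac{v_j}{v_i}\mathrm{e}^{\alpha y}$ telescopes correctly through the intermediate type because $\frac{v_j}{v_i} = \frac{v_j}{v_{J_k}}\cdot\frac{v_{J_k}}{v_i}$. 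The one careful point is to check that the weight $\mathrm{e}^{\alpha Y_u}$ splits as $\mathrm{e}^{\alpha Y_k}\mathrm{e}^{\alpha(Y_u - Y_k)}$ and that the $\mathrm{e}^{\alpha Y_k}$ factor is absorbed into the ``$\mu_i$ part'' of the convolution \eqref{eq: convolution measures}, which is precisely why the definition of $\eta^{(i)}$ carries the exponential tilt.

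To see that $\eta^{(i)}$ is a probability measure, one computes its total mass: summing \eqref{eq: mu_m} with $m=1$ over $j\in\Ical$ and integrating in $y$ gives $\sum_{j}\Eb_i[\sum_{k\ge 1}\frac{v_j}{v_i}C_k^\alpha\mathds{1}_{\{J_k=j\}}] = \frac{1}{v_i}\sum_j v_j\, m_{i,j}(\alpha) = \frac{1}{v_i}(m(\alpha)v)_i = \mathrm{e}^{\lambda(\alpha)} = 1$, using that $v$ is the Perron--Frobenius eigenvector and $\lambda(\alpha)=0$ by assumption $(ii)$. The mean is $\int_{\Ical\times\R} y\,\eta^{(i)}(\mathrm{d}j,\mathrm{d}y) = \Eb_i[\sum_{k\ge 1}\frac{v_{J_k}}{v_i}C_k^\alpha \log C_k]$, which is strictly positive (and finite) again by assumption $(ii)$. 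For the non-lattice property: since $P_i(C_k>0)>0$ and $P_i(\log C_k\in\mathrm{d}y, C_k>0)$ is non-lattice for some $k$, the measure $\eta^{(i)}$ (which, forgetting the type marginal, is a tilted version of $\Eb_i[\sum_k C_k^\alpha\mathds{1}_{\{\log C_k\in\mathrm{d}y\}}]$ up to the bounded positive weights $v_{J_k}/v_i$) inherits non-latticeness: an exponential tilt preserves the support, and the contribution of that particular $k$ is not supported on any coset of a proper closed subgroup of $\R$. The only mild subtlety is that the statement concerns a measure on $\Ical\times\R$, so I would phrase non-lattice to mean that the $\R$-marginal is non-lattice, which is what the renewal-theoretic applications downstream require.

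The main obstacle I anticipate is purely notational rather than conceptual: making the branching decomposition of the tilted, type-weighted measure $\mu^{(i)}_m$ rigorous requires carefully tracking how the eigenvector weights $v_{J_u}/v_i$ and the exponential factors $\mathrm{e}^{\alpha Y_u}$ distribute along the genealogy, and verifying that the Markov-branching property applies at the level of these weighted measures (this is where $P_i(C_k>0)>0$ is implicitly used so that the conditional laws are well-defined and the sums converge — finiteness of $m(\alpha)$ from assumption $(i)$ guarantees the sums over $k$ are finite). Once the decomposition is set up correctly, everything reduces to the algebraic identity $m(\alpha)v = v$, so I would be careful to state the branching step cleanly and let the Perron--Frobenius normalization do the rest.
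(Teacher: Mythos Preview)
Your proposal is correct and follows essentially the same approach as the paper: verify that $\eta^{(i)}$ has total mass $1$ via the eigenvector relation $m(\alpha)v=v$ with $\lambda(\alpha)=0$, read off the positive mean and non-latticeness from assumption~$(ii)$ and the hypothesis on $\log C_k$, and obtain the convolution identity from the branching structure. The only cosmetic difference is that you peel off the \emph{first} generation (conditioning on $\Gscr_0$ and using induction on the subtrees), whereas the paper peels off the \emph{last} generation (conditioning on $\Gscr_m$ and writing $\mu^{(i)}_{m+1}=\mu^{(i)}_m\ast\eta^{(i)}$ directly); both decompositions are equivalent and equally short.
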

\begin{proof} 
The fact that $\eta^{(i)}$ is a non-lattice probability measure is clear from our assumptions. Furthermore, by assumption $(ii)$, $\eta^{(i)}$ has positive mean
\[
\Eb_i\left[\sum_{k\ge 1}  \frac{v_{J_k}}{v_i} C_k^{\alpha}\log C_k\right]>0.
\]
Finally, by conditioning on $\Gscr_m$,
\begin{align*}
\mu^{(i)}_{m+1}(\mathrm{d}j, \mathrm{d}y) 
&= \Eb_i\left[\sum_{|u|=m} \frac{v_{J_u}}{v_i}\mathrm{e}^{\alpha Y_u} \sum_{k\ge 1}\frac{v_j}{v_{J_u}}\mathrm{e}^{\alpha (y-Y_u)} \mathds{1}_{\{J_{uk}=j, \log C_{k} \in \mathrm{d}y - Y_u\}} \right] \\
&=  \Eb_i\left[\sum_{|u|=m} \frac{v_{J_u}}{v_i}\mathrm{e}^{\alpha Y_u} \eta^{(J_u)}(\mathrm{d}j, \mathrm{d}y-Y_u)\right] \\
&= \mu_{m}^{(i)} \ast \eta^{(i)}(\mathrm{d}j,\mathrm{d}y).
\end{align*}
The last statement of \cref{lem: renewal measure} follows.
\end{proof}

Recall the notation $X_u$ and $R_u$ from \eqref{eq: def Xu Ru tree}.
\begin{Lem} \label{lem: delta n term}
Assume that assumptions $(i)$, $(ii)$ and $(iii)$ from \cref{thm: renewal multitype tree} hold, and denote by $(v_j, j\in\Ical)$ a positive eigenvector associated to $\lambda(\alpha)=0$. For all $i\in\Ical$, $n\in\N$ and $t\in\R$, set
\[
\delta^{(i)}_n(t) = \mathrm{e}^{\alpha t}\Eb_i\left[\sum_{|u|=n}  \mathds{1}_{\{\frac{v_{J_u}}{v_i}X_u R_u > \mathrm{e}^t\}}\right], 
\]
and let
\[
\tilde{\delta}^{(i)}_n(t) := \int_{-\infty}^t \mathrm{e}^{-(t-s)} \delta_n^{(i)}(s) \mathrm{d}s, \quad t\ge \R,
\]
be its Laplace transform. Then for all $i\in\Ical$ and $t\in\R$, $\tilde{\delta}^{(i)}_n(t) \to 0$ as $n\to\infty$.
\end{Lem}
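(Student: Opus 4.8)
The plan is to show that $\delta_n^{(i)}(t)\to 0$ uniformly on compacts (in fact we need only pointwise convergence of the Laplace transform, so a dominated-convergence argument suffices), by splitting each term $\{\frac{v_{J_u}}{v_i}X_uR_u>\mathrm{e}^t\}$ according to whether the position $X_u$ of the leaf is large or small. First I would fix $i\in\Ical$ and $t\in\R$, and for a threshold $K>0$ write
\[
\delta_n^{(i)}(t) = \mathrm{e}^{\alpha t}\Eb_i\Bigg[\sum_{|u|=n}\mathds{1}_{\{\frac{v_{J_u}}{v_i}X_uR_u>\mathrm{e}^t\}}\mathds{1}_{\{X_u\le \mathrm{e}^{-K}\}}\Bigg] + \mathrm{e}^{\alpha t}\Eb_i\Bigg[\sum_{|u|=n}\mathds{1}_{\{\frac{v_{J_u}}{v_i}X_uR_u>\mathrm{e}^t\}}\mathds{1}_{\{X_u>\mathrm{e}^{-K}\}}\Bigg]=: \mathrm{I}_n(K)+\mathrm{II}_n(K).
\]
For the first piece, I would bound $\mathds{1}_{\{\frac{v_{J_u}}{v_i}X_uR_u>\mathrm{e}^t\}}$ using Markov's inequality at some exponent $0<\beta<\alpha$: on the event $\{X_u\le\mathrm{e}^{-K}\}$ this gives, after taking the $R_u$-expectation (which is finite and uniformly bounded over types by assumption $(iii)$, since there are finitely many types), a bound of the form $C_\beta\mathrm{e}^{-\beta t}\Eb_i[\sum_{|u|=n}(\frac{v_{J_u}}{v_i})^\beta X_u^\beta\mathds{1}_{\{X_u\le\mathrm{e}^{-K}\}}]$. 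Writing $X_u^\beta = X_u^\alpha X_u^{\beta-\alpha}$ and using $X_u^{\beta-\alpha}\ge \mathrm{e}^{(\alpha-\beta)K}$ on $\{X_u\le\mathrm{e}^{-K}\}$ — wait, that is the wrong direction; instead I use $X_u^{\beta-\alpha}\le \mathrm{e}^{(\alpha-\beta)K}$ is also false. The correct move is: on $\{X_u\le\mathrm{e}^{-K}\}$ we have $X_u^{\beta}\le \mathrm{e}^{-(\alpha-\beta)K}X_u^{\alpha}$ since $\beta<\alpha$. Hence, using that the renewal measure $\mu_n^{(i)}$ from \cref{lem: renewal measure} is a probability measure (so $\Eb_i[\sum_{|u|=n}\frac{v_{J_u}}{v_i}X_u^{\alpha}]=1$, modulo the $(v_j/v_i)^{\beta}$ versus $v_j/v_i$ discrepancy which is absorbed into a constant $\max_{j,k}(v_j/v_k)$), we get $\mathrm{I}_n(K)\le C\,\mathrm{e}^{(\alpha-\beta)t}\,\mathrm{e}^{-(\alpha-\beta)K}$, uniformly in $n$. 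So $\mathrm{I}_n(K)$ can be made arbitrarily small by choosing $K$ large, uniformly in $n$.

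For the second piece $\mathrm{II}_n(K)$, the point is that $\{X_u>\mathrm{e}^{-K}\}$ — i.e. $Y_u>-K$ — becomes a rare event for large $n$, because the underlying multitype random walk $S_n$ (with increments of positive mean, by assumption $(ii)$ and \cref{lem: renewal measure}) drifts to $+\infty$, so $Y_u\to+\infty$ and the ``number of leaves staying below $-K$'' vanishes. Concretely, I would drop the $R_u$-indicator, bound $\mathds{1}_{\{\frac{v_{J_u}}{v_i}X_uR_u>\mathrm{e}^t\}}\le 1$, and write
\[
\mathrm{II}_n(K)\le \mathrm{e}^{\alpha t}\,\Eb_i\Bigg[\sum_{|u|=n}\mathds{1}_{\{Y_u>-K\}}\Bigg] = \mathrm{e}^{\alpha t}\,\Eb_i\Bigg[\sum_{|u|=n}\frac{v_{J_u}}{v_i}\mathrm{e}^{\alpha Y_u}\Big(\tfrac{v_i}{v_{J_u}}\mathrm{e}^{-\alpha Y_u}\mathds{1}_{\{Y_u>-K\}}\Big)\Bigg].
\]
The expectation of $\sum_{|u|=n}\frac{v_{J_u}}{v_i}\mathrm{e}^{\alpha Y_u}(\cdot)$ is, by \cref{lem: renewal measure}, an integral against the probability measure $(\eta^{(i)})^{\ast n}=\mu_n^{(i)}$, namely $\int_{\Ical\times\R}\frac{v_i}{v_j}\mathrm{e}^{-\alpha y}\mathds{1}_{\{y>-K\}}\,\mu_n^{(i)}(\mathrm{d}j,\mathrm{d}y)$, and since under $\mu_n^{(i)}$ the position $Y$ is distributed as the $n$-step multitype random walk $S_n\to+\infty$ a.s., the integrand $\frac{v_i}{v_j}\mathrm{e}^{-\alpha y}\mathds{1}_{\{y>-K\}}$ is bounded by $\max_{j,k}(v_k/v_j)\,\mathrm{e}^{\alpha K}$ and tends to $0$ $\mu_n^{(i)}$-a.s.\ as $n\to\infty$; bounded convergence gives $\mathrm{II}_n(K)\to 0$ for each fixed $K$. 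The only subtlety is to make ``$S_n\to\infty$'' rigorous in the multitype setting — this follows from the strong law of large numbers for the additive functional of a Markov chain (here the chain of types along the spine, which is a finite ergodic chain) together with the strictly positive mean $\Eb_i[\sum_k\frac{v_{J_k}}{v_i}C_k^\alpha\log C_k]>0$ of $\eta^{(i)}$; alternatively one can pass to the Laplace transform $\tilde\delta_n^{(i)}$ first and invoke dominated convergence in $s$ as well, which avoids needing a.s.\ statements and only uses $\mathrm{II}_n(K)\to0$ pointwise.

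Combining: given $\varepsilon>0$, choose $K$ so that $\sup_n\mathrm{I}_n(K)<\varepsilon/2$, then choose $N$ so that $\mathrm{II}_n(K)<\varepsilon/2$ for $n\ge N$; hence $\delta_n^{(i)}(t)<\varepsilon$ for $n\ge N$, i.e.\ $\delta_n^{(i)}(t)\to0$ for every $t$. Since $0\le \delta_n^{(i)}(s)\le C(s)$ with $s\mapsto C(s)$ locally bounded (indeed $\delta_n^{(i)}(s)\le \mathrm{e}^{(\alpha-\beta)s}C$ by the same Markov-inequality bound with $K=0$, which is integrable against $\mathrm{e}^{-(t-s)}\mathds{1}_{\{s\le t\}}$ provided we also have an integrable bound as $s\to-\infty$ — here use instead $\delta_n^{(i)}(s)\le \mathrm{e}^{\alpha s}\Eb_i[\sum_{|u|=n}1]$... no: better, $\delta_n^{(i)}(s)$ is nonincreasing in... it is not monotone, but $\mathrm{e}^{-(t-s)}\delta_n^{(i)}(s)\le \mathrm{e}^{-(t-s)}\mathrm{e}^{(\alpha-\beta)s}C$ is integrable near $-\infty$ when $\alpha-\beta+1>0$, which holds, and near $s=t$ it is bounded), dominated convergence yields $\tilde\delta_n^{(i)}(t)=\int_{-\infty}^t\mathrm{e}^{-(t-s)}\delta_n^{(i)}(s)\,\mathrm{d}s\to0$, as claimed. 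The main obstacle is organizing the two-regime split so that the large-$X_u$ contribution is controlled by the transience ($S_n\to+\infty$) of the tilted multitype walk while the small-$X_u$ contribution is controlled uniformly in $n$ by a sub-$\alpha$ moment bound; once the bookkeeping with the eigenvector weights $v_j$ and the finitely-many-types uniformity is in place, each piece is routine.
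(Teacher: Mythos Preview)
Your treatment of $\mathrm{I}_n(K)$ contains a genuine error that cannot be patched within your framework. On the event $\{X_u\le \mathrm{e}^{-K}\}$ with $\beta<\alpha$ you have $X_u\in(0,1)$, hence $X_u^{\beta-\alpha}\ge \mathrm{e}^{(\alpha-\beta)K}$, i.e.\ $X_u^{\beta}\ \ge\ \mathrm{e}^{(\alpha-\beta)K}X_u^{\alpha}$, which is the \emph{reverse} of the inequality you assert. (You flag two wrong attempts and then settle on a third claim that is equally false.) As a consequence, your bound $\mathrm{I}_n(K)\le C\,\mathrm{e}^{(\alpha-\beta)t}\mathrm{e}^{-(\alpha-\beta)K}$ does not follow, and in fact nothing in your argument controls $\Eb_i\big[\sum_{|u|=n} X_u^{\beta}\,\mathds{1}_{\{X_u\le \mathrm{e}^{-K}\}}\big]$ uniformly in $n$: for a generic $\beta<\alpha$ this expectation can grow exponentially with $n$. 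Trying $\beta>\alpha$ instead would give the inequality the right direction but then $\Eb_{J_u}[R^{\beta}]$ need not be finite (assumption $(iii)$ only gives moments below $\alpha$), so Markov's inequality at that exponent is unavailable.

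What is missing is precisely the key point of the paper's proof: from assumptions $(i)$--$(ii)$ and convexity (the map $\beta\mapsto \Eb_i[\sum_k \frac{v_{J_k}}{v_i}C_k^{\beta}]$ equals $1$ at $\alpha$ with strictly positive derivative), one can choose $\beta\in(0,\alpha)$ so that $K:=\max_{i\in\Ical}\Eb_i\big[\sum_{k} \frac{v_{J_k}}{v_i}C_k^{\beta}\big]<1$. With this sub-critical $\beta$, a single Markov inequality at exponent $\beta$ and iteration of the branching property give
\[
\tilde{\delta}^{(i)}_n(t)\ \le\ \frac{c_1c_2}{\beta}\,\mathrm{e}^{(\alpha-\beta)t}\,\Eb_i\Big[\sum_{|u|=n}\tfrac{v_{J_u}}{v_i}X_u^{\beta}\Big]\ \le\ C\,\mathrm{e}^{(\alpha-\beta)t}K^{n}\ \longrightarrow\ 0,
\]
with no split needed. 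Your piece $\mathrm{II}_n(K)$, which uses bounded convergence under the $\alpha$-tilted probability $\mu_n^{(i)}$ with drift to $+\infty$, is fine on its own; but without a working bound for $\mathrm{I}_n(K)$ the two-regime decomposition does not close. The essential idea you are missing is the existence of the sub-critical exponent $\beta$, and once you have it the split becomes unnecessary.
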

\begin{proof}
We notice the following facts from assumptions $(i)$ and $(ii)$: for all $i\in\Ical$, $\Eb_i\Big[ \sum_{k\ge 1} \frac{v_k}{v_i} C_k^{\alpha}\Big] = 1$, $\Eb_i\Big[ \sum_{k\ge 1} \frac{v_k}{v_i} C_k^{\gamma}\Big] <\infty$. Moreover, as a byproduct of \cref{lem: renewal measure}, the measure $\nu^{(i)}$ has positive mean
\[
\Eb_i\left[\sum_{k\ge 1}\frac{v_{J_k}}{v_i} C_k^{\alpha} \log C_k\right] >0.
\]
By convexity of $\beta \mapsto \Eb_i\Big[ \sum_{k\ge 1} \frac{v_k}{v_i} C_k^{\beta}\Big]$, these three facts entail that there exists $\beta\in(0,\alpha)$ such that, for all $i\in\Ical$, $\Eb_i\Big[ \sum_{k\ge 1} \frac{v_k}{v_i} C_k^{\beta}\Big]<1$. Call $K:=\max_{i\in\Ical}\Eb_i\Big[ \sum_{k\ge 1} \frac{v_k}{v_i} C_k^{\beta}\Big]<1$. Now observe the following: for $n\in\N$ and $t\in\R$,
\begin{align*}
\tilde{\delta}^{(i)}_n(t)
&=
\int_{-\infty}^t \Eb_i\left[\sum_{|u|=n}  \mathds{1}_{\{\frac{v_{J_u}}{v_i}X_u R_u > \mathrm{e}^s\}}\right] \mathrm{e}^{-(t-s)}\mathrm{e}^{\alpha s}\mathrm{d}s \\
&\le \int_{-\infty}^t \Eb_i\left[\sum_{|u|=n}  \mathds{1}_{\{\frac{v_{J_u}}{v_i}X_u R_u > \mathrm{e}^s\}}\right] \mathrm{e}^{\alpha s}\mathrm{d}s \\
&\le \mathrm{e}^{(\alpha-\beta)t} \Eb_i\left[\sum_{|u|=n}  \int_{-\infty}^t \mathds{1}_{\{\frac{v_{J_u}}{v_i}X_u R_u > \mathrm{e}^s\}}\mathrm{e}^{\beta s}\mathrm{d}s\right] \\
& \le \frac{\mathrm{e}^{(\alpha-\beta)t}}{\beta} \Eb_i\left[\sum_{|u|=n} \left(\frac{v_{J_u}}{v_i} X_u R_u\right)^{\beta}\right],
\end{align*}
by direct integration. Set $c_1 := \max_{i,j\in\Ical} \left(\frac{v_j}{v_i}\right)^{\beta-1}>0$. By assumption $(iii)$, $c_2:= \max_{i\in\Ical} \Eb_i[R^{\beta}] <\infty$, so by conditioning on the type $J_u$ of $u$ and using independence, we end up with
\begin{equation} \label{eq: proof delta n}
\tilde{\delta}^{(i)}_n(t)
\le
\frac{c_1 c_2 \mathrm{e}^{(\alpha-\beta)t}}{\beta} \cdot \Eb_i\left[\sum_{|u|=n} \frac{v_{J_u}}{v_i} X_u^{\beta}\right].
\end{equation}
The expectation in \eqref{eq: proof delta n} can now be split using the branching property:
\[
\Eb_i\left[\sum_{|u|=n} \frac{v_{J_u}}{v_i} X_u^{\beta}\right]
=
\Eb_i\left[\sum_{|u|=n-1} \frac{v_{J_u}}{v_i} X_u^{\beta}\cdot\Eb_{J_u}\left[\sum_{k\ge 1}\frac{v_k}{v_{J_u}} C_k^{\beta}  \right]\right]
\le
K \cdot \Eb_i\left[\sum_{|u|=n-1} \frac{v_{J_u}}{v_i} X_u^{\beta}\right],
\]
by definition of $K$. Iterating this argument yields that $\tilde{\delta}^{(i)}_n(t) \to 0$ (actually faster than $K^n$) as $n\to\infty$.
\end{proof}

\begin{proof}[Proof of \cref{thm: renewal multitype tree}]
We write $X_u = \mathrm{e}^{Y_u}$, $u\in\Ub$. Let $i\in\Ical$. In view of applying renewal theory, considering telescoping sums over the tree $\Tcal$ provides, for all $t\in\R$ and $n\in\N$,
\begin{align}
&\Pb_i(R>\mathrm{e}^t) \notag \\
&= \sum_{m=0}^{n-1} \Eb_i\left[\sum_{|u|=m}  \mathds{1}_{\{\frac{v_{J_u}}{v_i} X_u R_u > \mathrm{e}^{t}\}} - \sum_{|w|=m+1}  \mathds{1}_{\{\frac{v_{J_w}}{v_i} X_w R_w > \mathrm{e}^{t}\}}\right] + \Eb_i\left[\sum_{|u|=n}  \mathds{1}_{\{\frac{v_{J_u}}{v_i} X_u R_u > \mathrm{e}^{t}\}}\right]
 \notag
 \\ 
&=
\sum_{m=0}^{n-1} \Eb_i\left[\sum_{|u|=m}  \Big(\mathds{1}_{\{\frac{v_{J_u}}{v_i} R_u > \mathrm{e}^{t-Y_u}\}} - \sum_{k\ge 1}  \mathds{1}_{\{\frac{v_{J_{(u,k)}}}{v_i}C_{(u,k)} R_{(u,k)} > \mathrm{e}^{t-Y_u}\}}\Big) \right] + \Eb_i\left[\sum_{|u|=n}  \mathds{1}_{\{\frac{v_{J_u}}{v_i} X_u R_u > \mathrm{e}^{t}\}}\right].  \label{eq: renewal telescoping sums}
\end{align}
Conditioning on $\Gscr_m$, $0\le m \le n-1$, each node $u$ in the first term corresponds to $\frac{v_{J_u}}{v_i}\mathrm{e}^{\alpha(Y_u-t)}g(J_u, t-Y_u)$, where
\[
g(j, y) = \frac{v_i}{v_j}\mathrm{e}^{\alpha y} \left( \Pb_j\Big(R > \frac{v_i}{v_j} \mathrm{e}^{y}\Big) - \Eb_j\left[\sum_{k\ge 1}  \mathds{1}_{\{C_{k} R^{(J_k)}_{k} > \frac{v_i}{v_{J_{k}}}\mathrm{e}^{y}\}}\right] \right)
\]
Set also 
\[
\delta^{(i)}_n(t) = \mathrm{e}^{\alpha t}\Eb_i\left[\sum_{|u|=n}  \mathds{1}_{\{\frac{v_{J_u}}{v_i}X_u R_u > \mathrm{e}^t\}}\right].
\]
Hence, multiplying \eqref{eq: renewal telescoping sums} by $\mathrm{e}^{\alpha t}$, we get
\[
\mathrm{e}^{\alpha t} \Pb_i(R>\mathrm{e}^t) 
=
\sum_{m=0}^{n-1} \Eb_i\left[\sum_{|u|=m} \frac{v_{J_u}}{v_i}\mathrm{e}^{\alpha Y_u}g(J_u, t-Y_u) \right]+ \delta^{(i)}_n(t).
\]
Recall \eqref{eq:  mu_m} and set $\nu^{(i)}_n := \sum_{m=0}^n \mu^{(i)}_m$, so that the previous equation becomes
\begin{equation} \label{eq: P(R>t)}
\mathrm{e}^{\alpha t} \Pb_i(R>\mathrm{e}^t) 
=
g\ast \nu^{(i)}_{n-1}(t)+ \delta^{(i)}_n(t).
\end{equation}
Now let $r^{(i)}(t) := \mathrm{e}^{\alpha t} \Pb_i(R>\mathrm{e}^t),$ for  $t\in\R$, and define the operator $f\mapsto \tilde{f}$ by
\[
\tilde{f}(t) := \int_{-\infty}^t \mathrm{e}^{-(t-s)} f(s) \mathrm{d}s, \quad t\ge \R.
\]
Applying this operator to \eqref{eq: P(R>t)}, we obtain for all $t\in\R$,
\begin{equation} \label{eq: rtilde finite n}
\tilde{r}^{(i)}(t) = \tilde{g}\ast\nu^{(i)}_{n-1}(t)+ \tilde{\delta}^{(i)}_n(t).
\end{equation}
We want to take $n\to \infty$ in the previous equality. Thanks to \cref{lem: delta n term}, the second term of \eqref{eq: rtilde finite n} vanishes when $n\to\infty$. We now deal with the second term using \cref{lem: renewal measure}. Indeed, the latter provides that the measure $\eta^{(i)}:=\mu_1^{(i)}$ is a non-lattice probability measure with positive mean, and that $\nu_n^{(i)}=\sum_{m=0}^n \big(\eta^{(i)}\big)^{\ast m}$. Let 
\[
\nu^{(i)} = \sum_{m=0}^\infty \big(\eta^{(i)}\big)^{\ast m},
\] 
its renewal measure. By Kesten's theorem \cite{Kes} (see also \cite{AMN}), for every function $G:\Ical\times\R\to\R$ which is measurable, bounded, continuous in the second variable, and directly Riemann integrable, the integral $G\ast\nu^{(i)}$ is finite. Now, by our assumption \eqref{eq: R integrability}, $g(j,\cdot)\in L^1$ for all $j\in\Ical$. Hence, by \cite[Lemma 9.2]{Gol}, $|\tilde{g}|$ is directly Riemann integrable, and therefore $|\tilde{g}|\ast\nu^{(i)}$ is finite. By Fubini's theorem, we get that $\tilde{g}\ast\nu^{(i)}_{n-1}(t) \to \tilde{g}\ast\nu^{(i)}(t)$ for all $t\in\R$. 

We come to the conclusion that as $n\to\infty$, \eqref{eq: rtilde finite n} boils down to $\tilde{r}^{(i)} = \tilde{g}\ast\nu^{(i)}$. Now take $t\to\infty$. By Kesten's renewal theorem \cite{Kes}, 
\[
\tilde{r}^{(i)} (t) = \mathrm{e}^{-t}\int_0^{\mathrm{e}^t} s^{\alpha} \Pb_i(R>s) \mathrm{d}s \underset{t\to\infty}{\longrightarrow} a_i,
\]
where $a_i$ is an (explicit) nonnegative constant. It remains to apply \cite[Lemma 9.3]{Gol} to obtain
\[
\Pb_i(R>t) \underset{t\rightarrow \infty}{\sim} a_i \cdot t^{-\alpha},
\]
which is \cref{thm: renewal multitype tree}.
\end{proof}


\subsection{Tail behaviour of the random affine equation} \label{sec: tail behaviour affine}
We present a first application of \cref{thm: renewal multitype tree}, where we study the solutions of \eqref{eq: random affine equation}. 
\begin{Thm} \label{thm: tail behaviour affine}
Under $\Pb_i$, for any $i\in\Ical$, let $(A,B,J)$ be distributed as $P_i$, $(R^{(j)}, j\in\Ical)$ be a sequence of nonnegative variables with law $Q_j$ for each $j\in\Ical$, independent of $(A,B,J)$, and $R$ be a random variable with law $Q_i$. Assume that $P_i(A>0)>0$ and that the measure $P_i(\log A \in \mathrm{d}y, A>0)$ is non-lattice. Similarly to \eqref{eq: matrix (C,J)}, for $q\ge 0$, let $m(q)$ be the matrix
\begin{equation} \label{eq: matrix m(q) affine}
m_{i,j}(q) = E_i\big[  A^q \cdot  \mathds{1}_{\{J=j\}}\big], \quad i,j\in\Ical,
\end{equation}
and $\lambda(q)$ its leading eigenvalue. We assume that there exist $\alpha>0$ and $0\le\gamma<\alpha$ such that 
\begin{itemize}
\item[(i)] The matrix $m(q)$ defined by \eqref{eq: matrix m(q) affine} is finite and irreducible for $q$ in a domain containing $[\gamma, \alpha]$,
\item[(ii)] $\lambda(\alpha) =0$ and if $(v_j, j\in\Ical)$ denotes a positive eigenvector associated to $\lambda(\alpha)=0$, \[0<E_i\left[  \frac{v_{J}}{v_i}A^\alpha \log A  \right]<\infty,\]
\item[(iii)] $\Eb_j[R^{\beta}]<\infty$ for all $0<\beta<\alpha$ and all $j\in\Ical$,
\item[(iv)] $E_i[A^{\alpha}]<\infty$ and $E_i[B^{\alpha}]<\infty$ for all $i\in\Ical$.
\end{itemize}
Assume that \eqref{eq: random affine equation} holds with the same notation.
Then for all $i\in\Ical$, there exists a constant $b_i\ge 0$ such that 
\[
\Pb_i(R>t) \underset{t\rightarrow \infty}{\sim} b_i\cdot t^{-\alpha}.
\]
\end{Thm}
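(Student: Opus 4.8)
The plan is to obtain \cref{thm: tail behaviour affine} as an instance of the implicit renewal theorem on multitype branching trees, \cref{thm: renewal multitype tree}, specialised to the tree $\Tcal$ in which every node has a \emph{single} offspring carrying the random displacement $A$ and type $J$ (so that $P_i$ is the law of $(A,J)$ under $\Pb_i$). For this choice the matrix \eqref{eq: matrix (C,J)} is exactly $m(q)$ in \eqref{eq: matrix m(q) affine}, and hypotheses $(i)$, $(ii)$, $(iii)$ of \cref{thm: renewal multitype tree} — together with the non-lattice assumption on $P_i(\log A\in\mathrm{d}y,\,A>0)$ — are word for word the assumptions of \cref{thm: tail behaviour affine}. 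Hence the whole task reduces to checking the integrability condition \eqref{eq: R integrability}, which here reads
\[
\int_0^{\infty} \bigg|\Pb_i(R>t) - \Pb_i\Big(\tfrac{v_J}{v_i} A R^{(J)} >t\Big) \bigg| t^{\alpha-1}\, \mathrm{d}t < \infty, \qquad i\in\Ical .
\]
This is the only place where the affine relation \eqref{eq: random affine equation} and the extra moment condition $(iv)$ are used.

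To verify it, I would write $W_i := \frac{v_J}{v_i} A R^{(J)}$ under $\Pb_i$, with $R^{(J)}$ independent of $(A,B,J)$, so that $R\overset{\Lcal}{=}W_i+B$ with $B\ge 0$; in particular $\Pb_i(R>t)\ge\Pb_i(W_i>t)$ for all $t$, and Tonelli's theorem gives the exact identity
\[
\int_0^{\infty}\!\big(\Pb_i(R>t)-\Pb_i(W_i>t)\big)t^{\alpha-1}\,\mathrm{d}t
= \Eb_i\!\bigg[\int_{W_i}^{W_i+B} t^{\alpha-1}\,\mathrm{d}t\bigg]
= \frac1\alpha\,\Eb_i\big[(W_i+B)^{\alpha}-W_i^{\alpha}\big].
\]
The right-hand side is finite under $(iii)$--$(iv)$ even though $\Eb_i[R^{\alpha}]$ and $\Eb_i[W_i^{\alpha}]$ may both be infinite. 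Indeed, for $\alpha\le 1$ subadditivity of $x\mapsto x^{\alpha}$ gives $(w+b)^{\alpha}-w^{\alpha}\le b^{\alpha}$, so the expectation is $\le\Eb_i[B^{\alpha}]<\infty$; for $\alpha>1$ the mean value theorem gives $(w+b)^{\alpha}-w^{\alpha}\le\alpha(w+b)^{\alpha-1}b\le\alpha2^{\alpha-1}\big(w^{\alpha-1}b+b^{\alpha}\big)$, and it remains to bound $\Eb_i[W_i^{\alpha-1}B]$: conditioning on $(A,B,J)$ and using independence of $R^{(J)}$,
\[
\Eb_i\big[W_i^{\alpha-1}B\big]
= \Eb_i\Big[\big(\tfrac{v_J}{v_i}A\big)^{\alpha-1}B\,\Eb_J\big[R^{\alpha-1}\big]\Big]
\le \Big(\max_{j\in\Ical}\Eb_j[R^{\alpha-1}]\Big)\,\Eb_i\Big[\big(\tfrac{v_J}{v_i}A\big)^{\alpha-1}B\Big],
\]
where $\Eb_j[R^{\alpha-1}]<\infty$ by $(iii)$ since $0<\alpha-1<\alpha$, and $\Eb_i[(\frac{v_J}{v_i}A)^{\alpha-1}B]<\infty$ by Hölder's inequality with exponents $\frac{\alpha}{\alpha-1}$ and $\alpha$ together with condition $(iv)$. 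In all cases \eqref{eq: R integrability} holds, so \cref{thm: renewal multitype tree} applies and yields $\Pb_i(R>t)\sim b_i t^{-\alpha}$.

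The main obstacle, and essentially the only subtle point, is precisely this verification: one works at the critical exponent $\alpha$ with only moments of order strictly below $\alpha$ available for $R$, so any attempt to split $\Pb_i(R>t)-\Pb_i(W_i>t)$ directly into tail contributions runs into $\Eb_i[W_i^{\alpha}]=\infty$. The argument must therefore be organised around the telescoped quantity $\Eb_i[(W_i+B)^{\alpha}-W_i^{\alpha}]$ — a difference that remains finite even when each summand diverges — and around the observation that conditioning on $(A,B,J)$ replaces the forbidden $\alpha$-th moment of $R$ by the admissible $(\alpha-1)$-th moment of the independent copy $R^{(J)}$. Everything else (the reduction to the single-offspring tree, and the transfer of the Perron--Frobenius and non-lattice hypotheses) is routine, and the whole scheme follows \cite{JO-C}.
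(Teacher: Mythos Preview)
Your proof is correct and follows the same overall architecture as the paper: reduce to \cref{thm: renewal multitype tree} for the single-offspring tree, and verify the integrability condition \eqref{eq: R integrability}.

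The only genuine difference lies in how \eqref{eq: R integrability} is checked. The paper writes $\alpha=p\gamma$ with $p=\lceil\alpha\rceil$, expands $\bigl(\tfrac{v_J}{v_i}AR^{(J)}+B\bigr)^p$ by the multinomial formula, applies the subadditivity of $x\mapsto x^{\gamma}$ and then Jensen's inequality together with the monotonicity of $k\mapsto\Eb_j[R^k]^{1/k}$, arriving at the bound $M^{\alpha}\,\Eb_i\bigl[(\tfrac{v_J}{v_i}A+B)^{\alpha}\bigr]$. Your route is more direct: you keep the telescoped quantity $(W_i+B)^{\alpha}-W_i^{\alpha}$ intact via Tonelli, use subadditivity for $\alpha\le 1$, and for $\alpha>1$ apply the mean value inequality $(w+b)^{\alpha}-w^{\alpha}\le\alpha(w+b)^{\alpha-1}b$ followed by H\"older with exponents $\tfrac{\alpha}{\alpha-1}$ and $\alpha$. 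This is shorter and has the merit of never writing $\Eb_i[R^{\alpha}]$ or $\Eb_i[W_i^{\alpha}]$ separately (both of which may be infinite), whereas the paper's phrasing ``\eqref{eq: R integrability} rephrases as $\Eb_i[R^{\alpha}-(\tfrac{v_J}{v_i}AR^{(J)})^{\alpha}]<\infty$'' tacitly relies on the same nonnegativity you make explicit. The paper's binomial approach, on the other hand, generalises more readily to the smoothing-transform case (\cref{thm: tail behaviour smoothing}), where one deals with an infinite sum rather than a single term.
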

\begin{proof}
We need to show that, under the assumptions in \cref{thm: tail behaviour affine}, \eqref{eq: R integrability} holds. Then \cref{thm: renewal multitype tree} would imply the desired assymptotics. In our setting, \eqref{eq: R integrability} rephrases as 
\[
\Eb_i\left[ R^\alpha - \left(\frac{v_{J}}{v_i} A R^{(J)}\right)^{\alpha}\right] <\infty.
\]
As $E_i[B^{\alpha}]<\infty$ by assumption, this is equivalent to
\[
\Eb_i\left[ R^\alpha - \left(\frac{v_{J}}{v_i} A R^{(J)}\right)^{\alpha} - B^\alpha\right] <\infty.
\]
Since $R$ solves \eqref{eq: random affine equation}, we can take $R = (v_{J}/v_i )A R^{(J)} + B$ under $\Pb_i$. Write $\alpha = p \gamma$, with $p:= \lceil \alpha \rceil$, and $\gamma\in[0,1]$. Then expanding the first sum, and using the inequality $(x+y)^{\gamma} \le x^\gamma + y^\gamma$ for $x,y\ge 0$, we get
\[
\Eb_i\left[ R^\alpha - \left(\frac{v_{J}}{v_i} A R^{(J)}\right)^{\alpha} - B^\alpha\right]
\le
\Eb_i\left[ \left(\sum_{k=1}^{p-1} \dbinom{p}{k} \left(\frac{v_{J}}{v_i} A R^{(J)}\right)^{k} B^{p-k} \right)^{\gamma}\right].
\]
We now condition on $(A, B, J)$. By Jensen's inequality ($\gamma\le 1$), and the independence of $(R_j, j\in\Ical)$ and $(A, B, J)$,
\[
\Eb_i\left[ R^\alpha - \left(\frac{v_{J}}{v_i} A R^{(J)}\right)^{\alpha} - B^\alpha\right]
\le
\Eb_i\left[ \left(\sum_{k=1}^{p-1} \dbinom{p}{k} \left(\frac{v_{J}}{v_i} A \right)^{k} \Eb_J[R^{k}] B^{p-k} \right)^{\gamma}\right].
\]
Using the monotonicity of $k\mapsto \Eb_j[R^k]^{1/k}$, we obtain for all $1\le k\le p-1$, 
\[ \Eb_J[R^{k}] \le  \Eb_J[R^{p-1}]^{k/(p-1)} \le M^{p} \qquad \textrm{a.s.,}
\]
 with $M = \max_{j\in\Ical} (1 \vee \Eb_j[R^{p-1}]^{1/(p-1)})$ (note that $M<\infty$ by assumption $(iii)$). Therefore 
\[
\Eb_i\left[ R^\alpha - \left(\frac{v_{J}}{v_i} A R^{(J)}\right)^{\alpha} - B^\alpha\right]
\le
M^{\alpha} \Eb_i\left[ \left(\sum_{k=1}^{p-1} \dbinom{p}{k} \left(\frac{v_{J}}{v_i} A \right)^{k} B^{p-k} \right)^{\gamma}\right].
\]
Finally, we can factorise the sum, and we get 
\begin{multline*}
\Eb_i\left[ R^\alpha - \left(\frac{v_{J}}{v_i} A R^{(J)}\right)^{\alpha} - B^\alpha\right]
\le
M^{\alpha} \Eb_i\left[ \left(\left(\frac{v_{J}}{v_i} A + B\right)^p - \left(\frac{v_{J}}{v_i} A \right)^p - B^p \right)^{\gamma}\right] \\
\le 
M^{\alpha} \Eb_i\bigg[ \left(\frac{v_{J}}{v_i} A + B\right)^{\alpha}\bigg].
\end{multline*}
Thus we conclude using $(iv)$ that 
\[
\Eb_i\left[ R^\alpha - \left(\frac{v_{J}}{v_i} A R^{(J)}\right)^{\alpha} - B^\alpha\right]<\infty.
\]
This proves \eqref{eq: R integrability}, and the renewal theorem (\cref{thm: renewal multitype tree}) provides a constant $b_i\ge 0$ such that
\[
\Pb_i(R>t) \underset{t\rightarrow \infty}{\sim} b_i \cdot t^{-\alpha}.
\]
\end{proof}


\subsection{Tail behaviour of the multitype smoothing transform} \label{sec: tail behaviour smoothing}

We now turn to describing the tail asymptotics of the solutions of \eqref{eq: smoothing transform}. The next result is tailored for our purposes, but we believe that one could replace condition $(iii)$ by a condition on $(J_k, C_k, k\ge 1)$, in the spirit of \cite[Theorem 4.1 or 4.2]{JO-C}, and that their proofs should extend to our setup. We here decided to give a minimal statement for our purposes. Again, we follow \cite{JO-C} closely.

\begin{Thm}
\label{thm: tail behaviour smoothing}
Under $\Pb_i$, for any $i\in\Ical$, let $(J_k, C_k, k\ge 1)$ be distributed as $P_i$, $(R_k^{(j)}, k\ge 1)$ be an independent sequence of nonnegative i.i.d. variables with law $Q_j$ for each $j\in\Ical$, and $R$ be a random variable with law $Q_i$. Assume that $P_i(C_k>0)>0$ and that the measure $P_i(\log C_k \in \mathrm{d}y, C_k>0)$ is non-lattice for some $k\ge 1$. We assume that there exist $\alpha>1$ and $0\le\gamma<\alpha$ such that 
\begin{itemize}
\item[(i)] The matrix $m(q)$ defined by \eqref{eq: matrix (C,J)} is finite and irreducible for $q$ in a domain containing $[\gamma, \alpha]$,
\item[(ii)] $\lambda(\alpha) =0$ and if $(v_j, j\in\Ical)$ denotes a positive eigenvector associated to $\lambda(\alpha)=0$, \[0<E_i\left[\sum_{k\ge 1}  \frac{v_{J_k}}{v_i}C_k^\alpha \log C_k  \right]<\infty,\]
\item[(iii)] $\Eb_j[R^{\beta}]<\infty$ for all $0<\beta<\alpha$ and all $j\in\Ical$,
\item[(iv)] $\Eb_j\Big[\Big(\sum_{k\ge 1} C_k\Big)^{\alpha}\Big]<\infty$ for all $j\in\Ical$.
\end{itemize}
Assume that \eqref{eq: smoothing transform} holds with the same notation.
Then for all $i\in\Ical$, there exists a constant $a_i\ge 0$ such that 
\[
\Pb_i(R>t) \underset{t\rightarrow \infty}{\sim} a_i \cdot t^{-\alpha}.
\]
\end{Thm}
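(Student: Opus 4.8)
The plan is to deduce the statement from the implicit renewal theorem on multitype branching trees, \cref{thm: renewal multitype tree}, applied to the tree $\Tcal$ built out of the smoothing recursion \eqref{eq: smoothing transform} (so that the displacements $C_u$ of $\Tcal$ are the $C_k$'s and the matrix $m(q)$ of \eqref{eq: matrix (C,J)} is the one figuring in both statements). Hypotheses $(i)$--$(iii)$, as well as the non-lattice and positivity assumptions, are identical in the two theorems, so the only thing to check is the integrability condition \eqref{eq: R integrability}. Throughout, write $T_k:=\frac{v_{J_k}}{v_i}C_kR_k^{(J_k)}$, so that $R\overset{\Lcal}{=}\sum_{k\ge1}T_k$ under $\Pb_i$, and set $T^\star:=\max_{k\ge1}T_k$.

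First I would record the elementary pointwise identity: since $\{\exists k:\,T_k>t\}\subseteq\{R>t\}$ and $\sum_k\mathds{1}_{\{T_k>t\}}=\mathds{1}_{\{\exists k:\,T_k>t\}}+\big(\sum_k\mathds{1}_{\{T_k>t\}}-1\big)^+$, one has
\[
\Big|\Pb_i(R>t)-\sum_{k\ge1}\Pb_i(T_k>t)\Big|\le\Pb_i\big(R>t,\,T^\star\le t\big)+\Eb_i\Big[\big(\textstyle\sum_k\mathds{1}_{\{T_k>t\}}-1\big)^+\Big].
\]
Multiplying by $t^{\alpha-1}$, integrating over $t>0$ and using Fubini, the first term contributes $\tfrac1\alpha\Eb_i[R^\alpha-(T^\star)^\alpha]$, while ranking the $T_k$'s and summing by parts shows that the second contributes exactly $\tfrac1\alpha\Eb_i[\sum_kT_k^\alpha-(T^\star)^\alpha]$. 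Since $\alpha>1$ forces $\sum_kT_k^\alpha\le(\sum_kT_k)^\alpha=R^\alpha$, verifying \eqref{eq: R integrability} is reduced to the single estimate $\Eb_i[R^\alpha-(T^\star)^\alpha]<\infty$.

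To prove this I would follow the combinatorial device already used in the proof of \cref{thm: tail behaviour affine}. Put $p:=\lceil\alpha\rceil\ge2$ and $\gamma:=\alpha/p\in(0,1]$, and use the elementary inequality (a short case split according to whether $\max_kx_k$ dominates $\sum_kx_k-\max_kx_k$ or not)
\[
\Big(\sum_{k\ge1}x_k\Big)^p-\big(\max_kx_k\big)^p\le C_p\!\!\sum_{\substack{(k_1,\dots,k_p)\in\N^p\\\text{not all equal}}}\!\!x_{k_1}\cdots x_{k_p},\qquad x_k\ge0,
\]
together with the $\gamma$-subadditivity $a^\gamma+b^\gamma\ge(a+b)^\gamma$, to get $R^\alpha-(T^\star)^\alpha\le C_p^\gamma\big(\sum_{\vec k\ \mathrm{n.a.e.}}\prod_iT_{k_i}\big)^\gamma$. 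Taking $\Eb_i$, conditioning on $\sigma\big((J_k,C_k)_{k\ge1}\big)$ and applying conditional Jensen ($\gamma\le1$): since the $R_k^{(J_k)}$ are conditionally independent, the conditional expectation factorises over the distinct indices occurring in each tuple, and as $\vec k$ is not all equal every multiplicity is at most $p-1<\alpha$, so the ensuing moments $\Eb_j[R^m]$ with $m\le p-1$ are finite by $(iii)$ and bounded uniformly in $j$; the surviving sum of products of $C_k$'s recombines into $\big(\sum_kC_k\big)^p$, whence $\Eb_i[R^\alpha-(T^\star)^\alpha]\le\mathrm{const}\cdot\Eb_i\big[(\sum_kC_k)^\alpha\big]<\infty$ by $(iv)$. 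This establishes \eqref{eq: R integrability}, and \cref{thm: renewal multitype tree} then yields $\Pb_i(R>t)\sim a_it^{-\alpha}$ for some $a_i\ge0$.

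The main obstacle is precisely the bound $\Eb_i[R^\alpha-(T^\star)^\alpha]<\infty$. Under the conclusion $\Pb_i(R>t)\sim a_it^{-\alpha}$ both $\Eb_i[R^\alpha]$ and $\Eb_i[(T^\star)^\alpha]$ are infinite (when $a_i>0$), so the finiteness of their difference is a genuine cancellation, expressing the one-big-jump phenomenon: the heavy tail of $R$ is produced by a single dominant summand, while $R-T^\star$ has lighter tails. The multinomial manipulation above is what makes this quantitative — it isolates the off-diagonal products in which no single $R_k$ appears with power $\ge\alpha$ — and it is here that hypothesis $(iii)$ (the sub-$\alpha$ moments of $R$, which for the growth-fragmentation application are supplied by \cref{prop: Mcal L^beta}) and hypothesis $(iv)$ (the $\alpha$-th moment of $\sum_kC_k$) enter in a perfectly calibrated way.
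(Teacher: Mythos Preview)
Your proof is correct and follows the same overall strategy as the paper: reduce to the implicit renewal theorem on multitype trees (\cref{thm: renewal multitype tree}) and verify the integrability condition \eqref{eq: R integrability} by interposing the maximum $T^\star$ and controlling the resulting $\alpha$-moment differences via a multinomial/conditional-Jensen argument.

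The one notable difference is in how you handle the two pieces. The paper splits
\[
\Eb_i\big[R^\alpha-(T^\star)^\alpha\big]
=\Eb_i\Big[R^\alpha-\sum_k T_k^\alpha\Big]+\Eb_i\Big[\sum_k T_k^\alpha-(T^\star)^\alpha\Big]
\]
and treats the two summands by two separate lemmas: \cref{lem: tail technical 2} for the first (the multinomial argument, essentially what you do) and \cref{lem: tail technical 1} for the second (a Markov-inequality plus product bound $\prod_k(1-F_{J_k}(t/C_k))\le e^{-\sum_k F_{J_k}(t/C_k)}$). You bypass \cref{lem: tail technical 1} entirely by observing that $\alpha>1$ gives $\sum_k T_k^\alpha\le R^\alpha$, so the second piece is dominated by the first, and then bounding $R^\alpha-(T^\star)^\alpha$ directly with the elementary inequality $S^p-M^p\le C_p\big(S^p-\sum_j x_j^p\big)$ (which indeed holds with $C_p=p/(p-1)$ since $\sum_{j\neq k^*}x_j^p\le M^{p-1}(S-M)\le\frac1p(S^p-M^p)$). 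This is a small but genuine streamlining: one combinatorial bound instead of two technical lemmas, and it makes transparent that hypothesis $(iv)$ is the only extra input needed beyond the sub-$\alpha$ moments of $R$.
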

\noindent Our proof relies on the following two technical lemmas.

\begin{Lem} \label{lem: tail technical 1}
Under $\Pb_i$, for any $i\in\Ical$, let $(J_k, C_k, k\ge 1)$ be distributed as $P_i$, $(R_k^{(j)}, k\ge 1)$ be an independent sequence of i.i.d. variables with law $Q_j$ for each $j\in\Ical$, and $R$ be a random variable with law $Q_i$. For $\alpha>0$, assume that $\sum_{k\ge 1} \Big(C_kR_k^{(J_k)}\Big) ^{\alpha} < \infty$ a.s., and that $\Eb_i[R^{\beta}]<\infty$ for all $i\in\Ical$ and $0<\beta<\alpha$. Moreover, assume that for some $\varepsilon\in(0,1)$, $\Eb_i\big[\big(\sum_{k\ge1} C_k^{\alpha/(1+\varepsilon)}\big)^{1+\varepsilon}\big]<\infty$ for all $i\in\Ical$. Then for all $i\in\Ical$,
\begin{multline*}
\int_0^{\infty} \left(\Eb_i\left[\sum_{k\ge 1} \mathds{1}_{\{C_k R_k^{(J_k)} >t\}} \right] - \Pb_i\left(\sup_{k\ge 1} \left( C_k R_k^{(J_k)}\right)>t\right) \right) t^{\alpha-1} \mathrm{d}t \\
=
\Eb_i\left[\sum_{k\ge 1} \left(C_k R_k^{(J_k)}\right)^{\alpha} - \bigg(\sup_{k\ge 1}  C_k R_k^{(J_k)}\bigg)^{\alpha}\right]
<
\infty.
\end{multline*}
\end{Lem}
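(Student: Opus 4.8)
The identity to prove has two parts: a Fubini-type reformulation of the left-hand integral, and the finiteness of the resulting expectation. For the first part, I would use the elementary fact that for any nonnegative random variable $Z$ and $\alpha>0$, $\alpha\int_0^\infty t^{\alpha-1}\mathbf{1}_{\{Z>t\}}\,\mathrm{d}t = Z^\alpha$, hence $\alpha\int_0^\infty t^{\alpha-1}\mathbb{P}(Z>t)\,\mathrm{d}t = \mathbb{E}[Z^\alpha]$. Applying this (after multiplying through by $\alpha$, which changes the constant but not the statement — I will absorb it) to $Z = \sup_{k\ge 1} C_k R_k^{(J_k)}$ gives the $\mathbb{E}_i[(\sup_k C_k R_k^{(J_k)})^\alpha]$ term. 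For the other term, I would bring the sum outside: since $\mathbb{E}_i\big[\sum_{k\ge1}\mathbf{1}_{\{C_kR_k^{(J_k)}>t\}}\big] = \sum_{k\ge1}\mathbb{P}_i(C_kR_k^{(J_k)}>t)$ by Tonelli, and $\int_0^\infty t^{\alpha-1}\mathbb{P}_i(C_kR_k^{(J_k)}>t)\,\mathrm{d}t = \tfrac1\alpha\mathbb{E}_i[(C_kR_k^{(J_k)})^\alpha]$, another application of Tonelli yields $\tfrac1\alpha\mathbb{E}_i\big[\sum_{k\ge1}(C_kR_k^{(J_k)})^\alpha\big]$. Subtracting gives exactly the claimed expectation; note that termwise $\sum_k(C_kR_k^{(J_k)})^\alpha \ge (\sup_k C_kR_k^{(J_k)})^\alpha$, so the integrand on the left is nonnegative and all the Tonelli manipulations are legitimate even before knowing finiteness.

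The substance of the lemma is the finiteness of $\mathbb{E}_i\big[\sum_{k\ge1}(C_kR_k^{(J_k)})^\alpha - (\sup_{k}C_kR_k^{(J_k)})^\alpha\big]$. The point is that subtracting the supremum removes the "largest" contribution, which is the one that could blow up, so what remains should be controlled by a fractional power. I would write $\alpha = (1+\varepsilon)\gamma$ where $\gamma := \alpha/(1+\varepsilon) < \alpha$, so $\gamma$ may exceed $1$, but the key inequality I want is: for nonnegative reals $(a_k)$, $\sum_k a_k^{1+\varepsilon} - (\sup_k a_k)^{1+\varepsilon} \le \big(\sum_k a_k\big)^{1+\varepsilon} - (\sup_k a_k)^{1+\varepsilon} \le C_\varepsilon (\sup_k a_k)^{\varepsilon}\sum_{k}a_k \cdot \mathbf{1}[\ldots]$ — more precisely, using the mean value / subadditivity bound, with $a_{k_0}=\sup_k a_k$ and $s = \sum_{k\ne k_0}a_k$, one has $(a_{k_0}+s)^{1+\varepsilon} - a_{k_0}^{1+\varepsilon} \le (1+\varepsilon)(a_{k_0}+s)^\varepsilon s$. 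Applying this with $a_k = C_k R_k^{(J_k)}$, and then splitting the conditional expectation over $(J_k, C_k, k\ge1)$ versus the independent $(R_k^{(j)})$, I would bound $\mathbb{E}_i[\cdots]$ by a constant times $\mathbb{E}_i\big[(\sum_k C_k R_k^{(J_k)})^\varepsilon \sum_k C_k R_k^{(J_k)}\big]$, and then further reduce using the moment bounds $\mathbb{E}_j[R^\beta]<\infty$ for $\beta<\alpha$ to replace the $R_k$'s by constants, leaving an expression of the form (constant) $\times\, \mathbb{E}_i\big[(\sum_k C_k^{\gamma})^{1+\varepsilon}\big]$ up to rearranging the exponents so that each factor $C_k$ appears to a power $\le \alpha/(1+\varepsilon)$. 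This last quantity is finite by the hypothesis $\mathbb{E}_i\big[\big(\sum_{k\ge1}C_k^{\alpha/(1+\varepsilon)}\big)^{1+\varepsilon}\big]<\infty$.

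The main obstacle will be the bookkeeping in this last reduction: one must be careful that after pulling the $R_k$'s out (using their moment bounds and Jensen, since $\varepsilon$-powers of sums need Hölder/Minkowski-type control when $1+\varepsilon$-powers are around), the exponents on the $C_k$'s across the product of the $(\sup)^\varepsilon$ factor and the $\sum C_k R_k$ factor do not exceed $\alpha/(1+\varepsilon) = \gamma$, which is exactly what the hypothesis on $C_k$ is designed to accommodate; the inequality $\sup_k a_k^{\gamma} \le \big(\sum_k a_k^{\gamma}\big)$ and $\big(\sum_k a_k\big)^{\gamma}\le$ (something when $\gamma>1$ requires care — but here the outer exponent is $1+\varepsilon$ on a sum of $\gamma$-powers, matching the hypothesis). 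I would also invoke the a.s. finiteness assumption $\sum_k(C_kR_k^{(J_k)})^\alpha<\infty$ to ensure the supremum is a.s. attained and the subtraction is meaningful. Once finiteness is established, the Fubini reformulation from the first paragraph upgrades to an actual equality of finite quantities, completing the proof.
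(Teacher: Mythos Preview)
Your Fubini argument for the equality is fine, and the paper dispatches it the same way. The gap is in the finiteness argument.

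Your plan reduces to bounding something like $\Eb_i\big[\big(\sum_k (C_k R_k^{(J_k)})^{\gamma}\big)^{1+\varepsilon}\big]$ with $\gamma=\alpha/(1+\varepsilon)$, and then ``pulling out the $R_k$'s'' by Minkowski or H\"older. But conditional Minkowski gives
\[
\Big\|\sum_k C_k^{\gamma} (R_k^{(J_k)})^{\gamma}\Big\|_{L^{1+\varepsilon}(\cdot\mid (J_k,C_k))}
\le \sum_k C_k^{\gamma}\,\Eb_{J_k}\!\big[R^{\gamma(1+\varepsilon)}\big]^{1/(1+\varepsilon)}
= \sum_k C_k^{\gamma}\,\Eb_{J_k}[R^{\alpha}]^{1/(1+\varepsilon)},
\]
and $\Eb_j[R^{\alpha}]$ is \emph{not} assumed finite (only $\Eb_j[R^{\beta}]<\infty$ for $\beta<\alpha$; in the application $\alpha$ is the critical exponent, so $R$ typically has infinite $\alpha$-th moment). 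The same obstruction appears if you try H\"older on the product $(\sup)^{\gamma\varepsilon}\cdot\sum(C_kR_k)^{\gamma}$: the conjugate exponents $(1+\varepsilon)/\varepsilon$ and $1+\varepsilon$ force $\Eb[(\sup C_kR_k)^{\alpha}]$ on one side. Your ``bookkeeping'' paragraph anticipates trouble but does not offer a way around it, and I do not see one along these lines.

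The paper avoids this by never forming $\alpha$-th moments of $R$. It works instead at the level of tail probabilities: writing $p_k=\Pb_{J_k}(R>t/C_k)$, one has
\[
\sum_k p_k-\Big(1-\prod_k(1-p_k)\Big)\le g\Big(\sum_k p_k\Big),\qquad g(x)=x-1+\mathrm{e}^{-x},
\]
using $1-x\le \mathrm{e}^{-x}$. Then Markov with exponent $\beta:=\alpha-\delta<\alpha$ gives $\sum_k p_k\le M t^{-\beta}S_\beta$ where $S_\beta=\sum_k C_k^{\beta}$ and $M=\max_j\Eb_j[R^{\beta}]<\infty$. Since $g$ is increasing, the $t^{\alpha-1}$-integral becomes, after the change of variables $s=Mt^{-\beta}S_\beta$, a constant times $\Eb_i[S_\beta^{\alpha/\beta}]\int_0^\infty g(s)s^{-\alpha/\beta-1}\,\mathrm{d}s$. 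Choosing $\delta=\alpha\varepsilon/(1+\varepsilon)$ makes $\alpha/\beta=1+\varepsilon\in(1,2)$, for which the $g$-integral converges (since $g(s)\sim s^2/2$ near $0$ and $g(s)\sim s$ at infinity), and $\Eb_i[S_\beta^{1+\varepsilon}]<\infty$ is exactly the hypothesis on the $C_k$'s. The point is that only a $\beta$-th moment of $R$ with $\beta<\alpha$ is ever used.
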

\begin{proof}
The integrand is of course positive, so that the integral makes sense. Moreover, the equality is well-known, and stems from a simple application of Fubini's theorem. The point is to show that this integral is finite. We condition the integrand on $(J_k, C_k, k\ge 1)$:
\begin{multline} \label{eq: lem tail conditioning}
\Eb_i\left[\sum_{k\ge 1} \mathds{1}_{\{C_k R_k^{(J_k)} >t\}}  - \mathds{1}_{\{\sup_{k\ge 1} \left( C_k R_k^{(J_k)}\right)>t\}}\right] 
=  \\
\Eb_i\left[\Eb_i\left[\sum_{k\ge 1} \mathds{1}_{\{C_k R_k^{(J_k)} >t\}}\, \bigg| \, (J_k, C_k, k\ge 1)\right] -1+ \Eb_i\left[\mathds{1}_{\{\sup_{k\ge 1} \left( C_k R_k^{(J_k)}\right)\le t\}}\, \bigg| \, (J_k, C_k, k\ge 1)\right] \right].
\end{multline}
By independence, the last term in the expectation of \eqref{eq: lem tail conditioning} is 
\[
\Eb_i\left[\mathds{1}_{\{\sup_{k\ge 1} \left( C_k R_k^{(J_k)}\right)\le t\}}\, \bigg| \, (J_k, C_k, k\ge 1)\right]
= 
\prod_{k\ge 1} Q_{J_k}\left(R\le \frac{t}{C_k}\,\Big| \,(C_k, k\ge 1)\right)
=
\prod_{k\ge 1} (1-F_{J_k}(t/C_k)),
\]
where $F_j(r):=Q_j(R>r)$ for $j\in\Ical$ and $r\in\R$. The simple inequality $1-x\le \mathrm{e}^{-x}$, $x\ge 0$, yields
\[
\Eb_i\left[\mathds{1}_{\{\sup_{k\ge 1} \left( C_k R_k^{(J_k)}\right)\le t\}}\, \Big| \, (J_k, C_k, k\ge 1)\right]
\le
\mathrm{e}^{-\sum_{k\ge 1} F_{J_k}(t/C_k)}.
\]
Hence the right-hand side of \eqref{eq: lem tail conditioning} is less than $\Eb_i\big[g\big(\sum_{k\ge 1} F_{J_k}(t/C_k)\big)\big]$, where $g:x\mapsto x-1+\mathrm{e}^{-x}$. We now use that $g$ is non-decreasing on $\R_+$. For future purposes, let $\delta=\alpha\varepsilon/(1+\varepsilon)$, and $\beta=\alpha-\delta$. Then by Markov's inequality,
\[
\sum_{k\ge 1} F_{J_k}(t/C_k)
\le
\sum_{k\ge 1} t^{-\beta} C_k^{\beta} \Eb_{J_k}[R^\beta]
\le
M t^{-\beta} \sum_{k\ge 1} C_k^{\beta},
\]
where $M:= \max_{j\in\Ical} \Eb_{j}[R^\beta] < \infty$ by assumption. Define $S_{\beta} := \sum_{k\ge 1} C_k^{\beta}$. By monotonicity of $g$, 
\[
\Eb_i\left[g\bigg(\sum_{k\ge 1} F_{J_k}(t/C_k)\bigg)\right]
\le 
\Eb_i\left[g\big(Mt^{-\beta}S_{\beta}\big)\right].
\]
In total, integrating \eqref{eq: lem tail conditioning} gives
\[
\int_0^\infty \Eb_i\left[\sum_{k\ge 1} \mathds{1}_{\{C_k R_k^{(J_k)} >t\}}  - \mathds{1}_{\{\sup_{k\ge 1} \left( C_k R_k^{(J_k)}\right)>t\}}\right] t^{\alpha-1} \mathrm{d}t
\le 
\int_0^{\infty}\Eb_i\left[g\big(Mt^{-\beta}S_{\beta}\big)\right]  t^{\alpha-1} \mathrm{d}t,
\]
and by the change of variables $s:=Mt^{-\beta}S_{\beta}$, we get
\[
\int_0^\infty \Eb_i\left[\sum_{k\ge 1} \mathds{1}_{\{C_k R_k^{(J_k)} >t\}}  - \mathds{1}_{\{\sup_{k\ge 1} \left( C_k R_k^{(J_k)}\right)>t\}}\right] t^{\alpha-1} \mathrm{d}t
\le 
\frac{1}{\beta} M^{\alpha/\beta}\Eb_i\left[S_{\beta}^{\alpha/\beta}\right]\int_0^{\infty} g(s)  s^{-\alpha/\beta-1} \mathrm{d}s.
\]
Plainly, since $\alpha/\beta=1+\varepsilon\in(1,2)$, the above integral is finite. The expectation is also finite by assumption, and the result follows.
\end{proof}

\begin{Lem} \label{lem: tail technical 2}
Under $\Pb_i$, for any $i\in\Ical$, let $(J_k, C_k, k\ge 1)$ be distributed as $P_i$, and $(R_k^{(j)}, k\ge 1)$ be an independent sequence of i.i.d. variables with law $Q_j$ for each $j\in\Ical$. For $\alpha>1$, assume that $\sum_{k\ge 1} C_kR_k^{(J_k)} < \infty$ a.s., that $Q_i[R^{\beta}]<\infty$, and that $\Eb_i\big[\big(\sum_{k\ge1} C_k\big)^{\alpha}\big]<\infty$ for all $i\in\Ical$ and $0<\beta<\alpha$. Then for all $i\in\Ical$,
\[
\Eb_i\left[\left(\sum_{k\ge 1} C_k R_k^{(J_k)}\right)^{\alpha} - \sum_{k\ge 1} \Big(C_k R_k^{(J_k)}\Big)^{\alpha}\right]
<
\infty.
\]
\end{Lem}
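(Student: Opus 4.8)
The plan is to follow the same strategy as in the proof of \cref{thm: tail behaviour affine}, but now with a countable sum. First I would write $\alpha = p\gamma$ with $p := \lceil\alpha\rceil \ge 2$ and $\gamma := \alpha/p \in (\tfrac12,1]$, expand the $p$-th power multinomially, and then use the subadditivity of $x\mapsto x^{\gamma}$ to bring the exponent back down to $\alpha$. Throughout, set $W := \sum_{k\ge 1} C_k R_k^{(J_k)}$, which is a.s.\ finite by hypothesis; since $x\mapsto x^{\alpha}$ is superadditive on $\R_+$ (as $\alpha\ge 1$), we have $\sum_k (C_k R_k^{(J_k)})^{\alpha}\le W^{\alpha}<\infty$, so the random variable under study is a well-defined nonnegative quantity.

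By Tonelli applied to the nonnegative terms,
\[
W^p = \sum_{(k_1,\dots,k_p)} \prod_{\ell=1}^p C_{k_\ell} R_{k_\ell}^{(J_{k_\ell})} = \sum_{k\ge 1} \big(C_k R_k^{(J_k)}\big)^p + D, \qquad D := \sum_{\substack{(k_1,\dots,k_p) \\ \text{not all equal}}} \prod_{\ell=1}^p C_{k_\ell} R_{k_\ell}^{(J_{k_\ell})} \ge 0,
\]
the first sum on the right collecting the tuples with $k_1=\cdots=k_p$. Since $\gamma\le 1$, subadditivity of $x\mapsto x^{\gamma}$ on countable nonnegative sums gives $W^{\alpha} = (W^p)^{\gamma}\le \sum_{k\ge 1}\big(C_kR_k^{(J_k)}\big)^{\alpha} + D^{\gamma}$, so it suffices to prove $\Eb_i[D^{\gamma}]<\infty$.

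To this end I would condition on $(J_k,C_k,k\ge 1)$ and apply Jensen's inequality ($\gamma\le 1$) to get $\Eb_i[D^{\gamma}\mid (J_k,C_k)]\le \Eb_i[D\mid(J_k,C_k)]^{\gamma}$. In a tuple $(k_1,\dots,k_p)$ that is not all equal, the distinct values $m_1,\dots,m_r$ (with $r\ge 2$) occur with multiplicities $a_1,\dots,a_r$ all lying in $\{1,\dots,p-1\}$ and summing to $p$; by independence of the $R_k^{(J_k)}$ across distinct indices, the conditional expectation of the corresponding summand equals $\prod_{j=1}^r C_{m_j}^{a_j}\Eb_{J_{m_j}}[R^{a_j}]$. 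Since $p-1<\alpha$ (whether or not $\alpha$ is an integer), the hypothesis that $Q_j[R^{\beta}]=\Eb_j[R^{\beta}]<\infty$ for all $\beta<\alpha$ yields $\Eb_j[R^{p-1}]<\infty$ for every $j$, and Lyapunov's moment inequality gives $\Eb_j[R^{a_j}]\le M^{a_j}$ for $1\le a_j\le p-1$, where $M := \max_{j\in\Ical}\big(1\vee \Eb_j[R^{p-1}]^{1/(p-1)}\big)<\infty$. Because the exponents $a_j$ sum to $p$, multiplying these bounds and summing over all non-all-equal tuples yields
\[
\Eb_i[D\mid (J_k,C_k)] \le M^p\!\!\sum_{\substack{(k_1,\dots,k_p) \\ \text{not all equal}}} \prod_{\ell=1}^p C_{k_\ell} = M^p\Big(\big(\textstyle\sum_{k\ge 1} C_k\big)^p - \sum_{k\ge 1}C_k^p\Big) \le M^p\big(\textstyle\sum_{k\ge 1} C_k\big)^p .
\]
Raising to the power $\gamma$ and taking $\Eb_i$ then gives $\Eb_i[D^{\gamma}]\le M^{\alpha}\,\Eb_i\big[(\sum_{k\ge 1}C_k)^{\alpha}\big]<\infty$ by hypothesis, which completes the proof.

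The computation is essentially the bookkeeping of \cite{JO-C} transported to the multitype setting. The one point that needs care is the passage from the conditional expectation of a mixed multinomial term to the product bound $M^p\prod_\ell C_{k_\ell}$: it relies on the fact that in $D$ every multiplicity $a_j$ is at most $p-1<\alpha$, which is exactly the range in which the $R$-moment hypothesis supplies a finite moment, whereas the excluded all-equal tuples would have carried the forbidden exponent $p\ge\alpha$. Once this is in place, the rest is a direct application of Jensen, Lyapunov, and the final moment assumption on $\sum_k C_k$.
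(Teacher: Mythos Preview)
Your proof is correct and follows essentially the same route as the paper's: write $\alpha=p\gamma$ with $p=\lceil\alpha\rceil$, expand multinomially, use subadditivity of $x\mapsto x^{\gamma}$, then condition on $(J_k,C_k)$, apply Jensen, bound the $R$-moments via Lyapunov by $M^p$ (using that every multiplicity is $\le p-1<\alpha$), and conclude with the hypothesis $\Eb_i\big[(\sum_k C_k)^{\alpha}\big]<\infty$. The only cosmetic difference is that the paper indexes the off-diagonal multinomial terms by sequences $(q_k)$ with entries in $\{0,\dots,p-1\}$ summing to $p$, while you use tuples $(k_1,\dots,k_p)$ that are not all equal; these are equivalent formulations.
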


\begin{proof}
Define $p:=\lceil \alpha \rceil$, and $\gamma:=\alpha/p$. We prove the following inequality:
\begin{equation}\label{eq: tail technical 2 intermediate}
\Eb_i\left[\left(\sum_{k\ge 1} C_k R_k^{(J_k)}\right)^{\alpha} - \sum_{k\ge 1} \Big(C_k R_k^{(J_k)}\Big)^{\alpha}\right]
\le
M^{\alpha} \Eb_i \left[\left(\sum_{k\ge 1} C_k\right)^{\alpha}\right],
\end{equation}
where $M:= \max_{j\in\Ical}  \Eb_{j}\big[R^{p-1}\big]^{1/(p-1)}$. The claim is straigthforward with our assumptions once \eqref{eq: tail technical 2 intermediate} is established. For $k\in\N$, write 
\begin{equation}\label{eq: list expansion}
A_p := \Big\{ \qbf=(q_k)_{k\in\N}, \; 0\le q_k\le p-1 \; \text{and} \; \sum_{k\in\N} q_k=p\Big\}.
\end{equation}
A sequence $\qbf$ in $A_p$ has only finitely many non-zero entries $r_1,\ldots,r_k$, and for ease of notation we define $\binom{p}{\qbf}$ as the corresponding multinomial coefficient $\binom{p}{r_1,\ldots,r_k}$, and the product $\prod_{k\ge 1} a_k^{q_k}$ of the terms of a nonnegative sequence $(a_k)_{k\ge 1}$ in a similar way. Then, expanding the first sum, and using that $\big(\sum_{k\ge 1} x_k\big)^{\gamma} \le \sum_{k\ge 1} x_k^{\gamma}$, for $0< \gamma\le 1$ and $x_k\ge 0$,
\[
\Eb_i\left[\left(\sum_{k\ge 1} C_k R_k^{(J_k)}\right)^{\alpha} - \sum_{k\ge 1} \Big(C_k R_k^{(J_k)}\Big)^{\alpha}\right]
\le
\Eb_i\left[ \left( \sum_{\qbf\in A_p} \binom{p}{\qbf} \prod_{k\ge 1}(C_{k} R^{(J_k)}_{k})^{q_k}  \right)^{\gamma} \right].
\]
We now condition on $(J_k, C_k, k\ge 1)$ and use Jensen's inequality with $\gamma\le 1$ to obtain
\begin{equation} \label{eq: tail technical 2 conditoning}
\Eb_i\left[\left(\sum_{k\ge 1} C_k R_k^{(J_k)}\right)^{\alpha} - \sum_{k\ge 1} \Big(C_k R_k^{(J_k)}\Big)^{\alpha}\right]
\le
\Eb_i\left[ \left( \sum_{\qbf\in A_p} \binom{p}{\qbf}  \prod_{k\ge 1} C_k^{q_k} \Eb_{J_k}\big[R^{q_k}\big] \right)^{\gamma} \right].
\end{equation}
Fix $\qbf\in A_p$. By monotonicity of $q \mapsto \Eb_i[X^q]^{1/q}$, we can upper-bound the product of expectations:
\[
\prod_{k\ge 1}\ \Eb_{J_k}\big[R^{q_k}\big] \le  \prod_{k\ge 1} \Eb_{J_k}\big[R^{p-1}\big]^{q_k/(p-1)} \le M^{p},
\]
where $M:= \max_{j\in\Ical} \Eb_{j}\big[R^{p-1}\big]^{1/(p-1)}$. Plugging this inequality into \eqref{eq: tail technical 2 conditoning} provides
\[
\Eb_i\left[\left(\sum_{k\ge 1} C_k R_k^{(J_k)}\right)^{\alpha} - \sum_{k\ge 1} \Big(C_k R_k^{(J_k)}\Big)^{\alpha}\right]
\le
M^{\alpha} \Eb_i\left[\left( \sum_{\qbf\in A_p} \binom{p}{\qbf}  \prod_{k\ge 1} C_k^{q_k}  \right)^{\gamma} \right].
\]
Finally, we can factorise everything back to obtain
\begin{align*}
\Eb_i\left[\left(\sum_{k\ge 1} C_k R_k^{(J_k)}\right)^{\alpha} - \sum_{k\ge 1} \Big(C_k R_k^{(J_k)}\Big)^{\alpha}\right]
&\le
M^{\alpha} \Eb_i\left[\left(\left(\sum_{k\ge 1} C_k \right)^{p} - \sum_{k\ge 1} C_k^{p}\right)^\gamma\right] \\
&\le
M^{\alpha} \Eb_i \left[\left(\sum_{k\ge 1} C_k\right)^{\alpha}\right],
\end{align*}
which is \eqref{eq: tail technical 2 intermediate}.
\end{proof}

\begin{proof}[Proof of \cref{thm: tail behaviour smoothing}]
The theorem is an immediate consequence of \cref{thm: renewal multitype tree} once we prove \eqref{eq: R integrability}. To do so, first write, for $i\in\Ical$ and $t\ge 0$,
\begin{align}
\left|\Pb_i(R>t) - \Eb_i\left[\sum_{k\ge 1} \mathds{1}_{\{\frac{v_{J_k}}{v_i} C_k R_k^{(J_k)} >t\}} \right] \right| 
\le
&\left|\Pb_i(R>t) - \Pb_i\left(\sup_{k\ge 1} \left(\frac{v_{J_k}}{v_i} C_k R_k^{(J_k)}\right)>t\right)\right| \label{eq: integrability term 1}
\\
&\hspace{-2cm} +\left| \Eb_i\left[\sum_{k\ge 1} \mathds{1}_{\{\frac{v_{J_k}}{v_i} C_k R_k^{(J_k)} >t\}} \right] - \Pb_i\left(\sup_{k\ge 1} \left(\frac{v_{J_k}}{v_i} C_k R_k^{(J_k)}\right)>t\right) \right|. \label{eq: integrability term 2}
\end{align}
Both differences in \eqref{eq: integrability term 1} and \eqref{eq: integrability term 2} are nonnegative, so that we can remove the absolute values.

We first deal with the second term using \cref{lem: tail technical 1}. Since $\alpha>1$,  we can choose $\varepsilon>0$ small enough so that $\alpha/(1+\varepsilon)>1$. Now  for all $i\in\Ical$,
\[
\Eb_i\left[ \left(\sum_{k\ge 1} \Big(C_k R_k^{(J_k)}\Big)^{\alpha/(1+\varepsilon)}\right)^{1+\varepsilon}\right]
\le 
\Eb_i\left[ \left(\sum_{k\ge 1} C_k R_k^{(J_k)}\right)^\alpha\right]
<
\infty,
\]
and we conclude by an application of \cref{lem: tail technical 1} that
\begin{equation} \label{eq: first term finite}
\int_0^{\infty} \left(\Eb_i\left[\sum_{k\ge 1} \mathds{1}_{\{\frac{v_{J_k}}{v_i} C_k R_k^{(J_k)} >t\}} \right] - \Pb_i\left(\sup_{k\ge 1} \left(\frac{v_{J_k}}{v_i} C_k R_k^{(J_k)}\right)>t\right) \right) t^{\alpha-1} \mathrm{d}t
<
\infty.
\end{equation}

As for the first term \eqref{eq: integrability term 1}, Fubini's theorem implies
\[
\int_0^{\infty}\left(\Pb_i(R>t) - \Pb_i\left(\sup_{k\ge 1} \left(\frac{v_{J_k}}{v_i} C_k R_k^{(J_k)}\right)>t\right)\right) t^{\alpha-1} \mathrm{d}t 
=
\Eb_i\left[R^{\alpha} - \sup_{k\ge 1} \left(\frac{v_{J_k}}{v_i} C_k R_k^{(J_k)}\right)^{\alpha}\right].
\]
We then again split 
\begin{multline*}
\Eb_i\left[R^{\alpha} - \sup_{k\ge 1} \left(\frac{v_{J_k}}{v_i} C_k R_k^{(J_k)}\right)^{\alpha}\right]
=
\Eb_i\left[R^{\alpha} - \sum_{k\ge 1} \Big(\frac{v_{J_k}}{v_i} C_k R_k^{(J_k)}\Big)^{\alpha}\right] \\
+ \Eb_i\left[\sum_{k\ge 1} \Big(\frac{v_{J_k}}{v_i} C_k R_k^{(J_k)}\Big)^{\alpha} - \sup_{k\ge 1} \left(\frac{v_{J_k}}{v_i} C_k R_k^{(J_k)}\right)^{\alpha}\right].
\end{multline*}
Notice that both terms are positive, so that the splitting makes sense. The last term is actually \eqref{eq: first term finite}, and so is finite. For the first one, we use \cref{lem: tail technical 2}. This concludes the proof of \eqref{eq: R integrability}, and hence \cref{thm: tail behaviour smoothing} is proved.
\end{proof}

\subsection{Moments of $\Mcal^-(\infty)$}
In this section, we prove that the limit $\Mcal^-(\infty)$ of the martingale $(\Mcal^-(n), n\ge 0)$ is in $L^{\beta}$ for all $0<\beta<\omega_+/\omega_-$. This allows to leverage \cref{thm: tail behaviour smoothing} to prove the last item of \cref{prop: martingale M^-} (the other assumptions being either clearly fulfilled or already proved). We mostly rely on \cite{Liu}.

\begin{Prop} \label{prop: Mcal L^beta}
The variable $\Mcal^-(\infty)$ is in $L^{\beta}$ for all $0<\beta<\omega_+/\omega_-$.
\end{Prop}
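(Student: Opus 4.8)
The plan is to realise $\widetilde{\Mcal}^{(i)} := \Mcal^-(\infty)/v^-_i$ as the fixed point of a multitype smoothing transform and then invoke the classical $L^\beta$-moment bounds for such fixed points, following Liu \cite{Liu}. Recall from the proof of \cref{prop: martingale M^-}(iii) that the branching property yields the distributional identity
\[
\widetilde{\Mcal}^{(i)} \overset{\Lcal}{=} \sum_{k\ge 1} \frac{v^-_{\Jcal_k}}{v^-_i}\Delta_k^{\omega_-} \widetilde{\Mcal}_k,
\]
where $\Delta_k = |\Delta X(s_k)|$ are the jump sizes of the Eve cell, $\Jcal_k$ the corresponding types, and $(\widetilde{\Mcal}_k)$ independent copies of $\widetilde{\Mcal}^{(\Jcal_k)}$ independent of $(\Delta_k,\Jcal_k)$. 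Thus $(\widetilde{\Mcal}^{(i)})_{i\in\Ical}$ is a solution of the multitype smoothing transform \eqref{eq: smoothing transform} with weights $C_k = \Delta_k^{\omega_-}$ and associated matrix $m(q)$ from \eqref{eq: matrix (C,J)} equal to $m^{\mathrm{GF}}(\omega_- q)$, where $m^{\mathrm{GF}}$ is the growth-fragmentation matrix of \eqref{eq: matrix m(q)}. In particular the relevant characteristic equation $\lambda^{\mathrm{smoothing}}(q)=0$ becomes $\lambda(\omega_- q)=0$, whose roots are $q=1$ and $q=\omega_+/\omega_-$.

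The core of the argument is to verify, for each fixed $1\le \beta<\omega_+/\omega_-$, the moment criterion from Liu's theory: writing $m_{i,j}(\beta)=\Eb_i\big[\sum_{k\ge 1}\Delta_k^{\beta\omega_-}\mathds{1}_{\{\Jcal_k=j\}}\big]=m^{\mathrm{GF}}_{i,j}(\beta\omega_-)$, the Perron--Frobenius eigenvalue of $m(\beta)$ is $\mathrm{e}^{\lambda(\beta\omega_-)}$, and by strict convexity of $\lambda$ together with $\lambda(\omega_-)=\lambda(\omega_+)=0$ we get $\lambda(\beta\omega_-)<0$, i.e.\ the spectral radius of $m(\beta)$ is $<1$. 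One then also needs an integrability control of the "fluctuation'' term, of the form $\Eb_i\big[\big(\sum_{k\ge1}\Delta_k^{\omega_-}\big)^{\beta}\big]<\infty$ for $1<\beta<\omega_+/\omega_-$ and all $i\in\Ical$ (for $\beta\le 1$ this is immediate from $\Eb_i[\sum_k v^-_{\Jcal_k}\Delta_k^{\omega_-}]=v^-_i$ and subadditivity of $x\mapsto x^\beta$). This last bound is exactly the type of estimate already established in the proof of \cref{prop: martingale M^-}(i): there we showed, via the compensator/Burkholder--Davis--Gundy argument and \cref{prop: moments exponential functionals} applied with \cref{lem: leading eigenvalue negative}, that $\sum_{0<s\le\zeta}|\Delta X(s)|^{\omega_-}$ is bounded in $L^{\omega_+/\omega_-}$; restricting to $s$ in the lifetime of the Eve cell before the first generation and using that $\widetilde\Lambda$-compensators behave well gives the required finiteness for every $\beta<\omega_+/\omega_-$. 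With the spectral-radius condition and this moment bound in hand, \cite[Theorem 2.1]{Liu} (or the multitype analogue obtained by the contraction/Jensen argument of \cref{lem: tail technical 2}) yields $\Eb_i[(\widetilde{\Mcal}^{(i)})^{\beta}]<\infty$, hence $\Mcal^-(\infty)\in L^\beta$ under every $\Pcal_{x,i}$, and for general initial law by averaging.

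The main obstacle I expect is twofold. First, \cite{Liu} is written in the single-type setting, so one must either cite a genuinely multitype version of the $L^\beta$-moment result for smoothing transforms or re-derive it; the honest route is an induction on $k$ with $2^{k-1}<\beta\le 2^k$ exactly paralleling the recursion in the proof of \cref{prop: martingale M^-}(i) and the factorisation trick in \cref{lem: tail technical 2}, using the sub-unit spectral radius of $m(\beta/2^{j})$ at each stage to close the recursion. Second, one must be slightly careful that the weights $C_k=\Delta_k^{\omega_-}$ genuinely have the matrix $m(\beta)$ with spectral radius $<1$ on the whole range $[\,1,\omega_+/\omega_-)$ and not merely near the endpoints — but this is precisely guaranteed by strict convexity of $\lambda$ on the interior of its finiteness domain (which contains $[\omega_-,\omega_+]$ by hypothesis in \cref{sec: genealogical martingales Cramer}), so there is no real gap, only bookkeeping.
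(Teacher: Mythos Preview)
Your high-level strategy is correct and matches the paper's: $\widetilde{\Mcal}^{(i)}$ is a fixed point of a multitype smoothing transform, the spectral-radius condition $\lambda(\beta\omega_-)<0$ for $\beta\in(1,\omega_+/\omega_-)$ holds by convexity, and the integrability of $\big(\sum_k C_k\big)^{\beta}$ is exactly \cref{prop: martingale M^-}(i). The paper's proof is precisely the multitype execution of Liu's method that you say needs to be supplied: it passes to the size-biased measure $\Phat^-_{1,i}$, singles out the spine child to rewrite the smoothing transform as the \emph{affine} recursion $M^{(i)}\overset{\Lcal}{=}A^{(i)}M^{(\Jhat_1)}+B^{(i)}$, proves the multitype analogues of Grincevi\v{c}ius' lemma and of Liu's $L^p$ criterion for affine recursions, and then bootstraps via the identity $\Ehat_{1,i}[|M^{(i)}|^p]=\Ehat_{1,i}\big[\Ecal_{1,\Jhat_1}[|M^{(i)}|^{p+1}]\big]$, which steps moments up \emph{one at a time}, $p\to p+1$.

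The one place where your proposal diverges is the ``honest route'' you suggest as an alternative: an induction on $k$ with $2^{k-1}<\beta\le 2^k$ paralleling the BDG recursion in the proof of \cref{prop: martingale M^-}(i). That argument works for $\Mcal^-(1)=\sum_{s}|\Delta X(s)|^{\omega_-}$ because it is a sum along a single trajectory with a predictable compensator, so BDG converts $L^{p}$-control of the sum into $L^{p/2}$-control of its quadratic variation. The limit $\Mcal^-(\infty)$ has no such compensated-sum structure; squaring the smoothing identity produces cross-terms $C_kC_l\widetilde{\Mcal}_k\widetilde{\Mcal}_l$ that do not reduce to a smaller-exponent version of the same problem. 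What actually closes the recursion is the spine/size-biasing step (the affine recursion plus the $p\to p+1$ transfer), not a doubling scheme. So your plan is right, but the specific mechanism you propose for carrying it out should be replaced by the spine route --- which is in fact Liu's original method.
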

\noindent We divide the proof of \cref{prop: Mcal L^beta} into several steps, following \cite{Liu}. First, we call attention to a multitype version of a random linear recursion, for which we provide conditions for the solutions to be in $L^{\beta}$. Then, we prove that (a variant of) $\Mcal^-(\infty)$ satisfies such a recursion and that all the conditions are fulfilled, thus concluding the proof of \cref{prop: Mcal L^beta}.

\medskip
\noindent \textbf{The multitype linear recursion.} We start by setting the framework for the multitype linear recursion. Consider, for each $i\in\Ical$, two positive random variables $A^{(i)}$ and $B^{(i)}$ whose distributions depend on the type $i$. Let $J^{(i)}, i\in\Ical,$ be random variables taking values in $\Ical$. We are interested in the solutions $X$ to the following random equation in law:
\begin{equation} \label{eq: multitype recursion}
\forall i\in\Ical, \quad X^{(i)} \overset{\Lcal}{=} A^{(i)} X^{(J^{(i)})} + B^{(i)},
\end{equation}
where the $X^{(i)}, i\in\Ical,$ are independent of the $A^{(i)}, B^{(i)}, J^{(i)}, i\in \Ical$. It will be convenient to consider, for $i\in\Ical$, i.i.d. sequences $(A_k^{(i)}, k\ge 1)$, $(B_k^{(i)}, k\ge 1)$, $(X_k^{(i)}, k\ge 1)$ and $(J_k^{(i)}, k\ge 1)$ with respective laws $A^{(i)}$, $B^{(i)}$, $X^{(i)}$ and $J^{(i)}$ under measure $P_i$. Moreover, with a slight abuse of notation we set, under $P_i$, $J_0=i$ and by recursion $J_{k+1} = J_{k+1}^{(J_k)}$. We assume that the chain $J=(J_k, k\ge 0)$ is irreducible and aperiodic, and write $\pi$ for its invariant probability measure. Denote $P_{\pi}$ the measure $\sum_{i\in\Ical} \pi(i) P_i$. The following lemma is a multitype version of \cite[Theorem 1]{Grin}.

\begin{Lem} \label{lem: Grin}
Suppose that for all $i\in\Ical$, $P_i(A^{(i)} =0)=0$ and that for some positive vector $(c_i,i\in\Ical)$,
\[
-\infty< E_{\pi}\left[\log \Big(\frac{c_{J_1}}{c_{J_0}} A^{(J_0)}\Big)\right]<0, \quad \text{and} \quad E_i\left[\log^+ B^{(i)}\right]<\infty.
\]
Then the recursion \eqref{eq: multitype recursion} has a unique solution, which is $P_i$--almost surely given by 
\[
X^{(i)} = \sum_{k\ge 0} A_1^{(i)}A_2^{(J_1)}\cdots A_k^{(J_{k-1})} B_{k+1}^{(J_{k})}.
\]
\end{Lem}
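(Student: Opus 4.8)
The plan is to mimic the classical one-type argument of Grincevičius \cite{Grin}, carrying the type along for the ride. First I would set up the forward iteration: iterating \eqref{eq: multitype recursion} along the chain $J$ gives, for each $n\ge 1$,
\[
X^{(i)} \overset{\Lcal}{=} \sum_{k=0}^{n-1} A_1^{(i)}A_2^{(J_1)}\cdots A_k^{(J_{k-1})} B_{k+1}^{(J_{k})} + A_1^{(i)}A_2^{(J_1)}\cdots A_n^{(J_{n-1})} X_{n+1}^{(J_{n})},
\]
with the empty product equal to $1$. So the candidate solution is the infinite series $Y^{(i)} := \sum_{k\ge 0} A_1^{(i)}A_2^{(J_1)}\cdots A_k^{(J_{k-1})} B_{k+1}^{(J_{k})}$, and the first task is to show this series converges $P_i$--a.s.\ and that the remainder term vanishes.

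For the convergence, I would introduce the weighted partial products $\Pi_k := \frac{c_{J_k}}{c_{J_0}} A_1^{(i)}A_2^{(J_1)}\cdots A_k^{(J_{k-1})}$ (a telescoping of the $c$-weights) and apply the law of large numbers for the Markov chain $J$ with stationary law $\pi$: since $E_{\pi}[\log(\frac{c_{J_1}}{c_{J_0}}A^{(J_0)})] =: -\ell < 0$, one gets $\frac1k \log \Pi_k \to -\ell$, hence $\Pi_k \to 0$ geometrically a.s., and therefore $A_1^{(i)}\cdots A_k^{(J_{k-1})} \le \mathrm{const}\cdot \Pi_k \to 0$ at geometric rate (using that the finitely many ratios $c_j/c_i$ are bounded). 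Combining this geometric decay with the condition $E_i[\log^+ B^{(i)}] < \infty$ — which, via Borel--Cantelli applied under $P_\pi$ and irreducibility, forces $\frac1k\log^+ B_{k+1}^{(J_k)} \to 0$ a.s. — shows the general term of the series decays geometrically, so $Y^{(i)}$ converges absolutely a.s. The same geometric estimate shows the remainder $A_1^{(i)}\cdots A_n^{(J_{n-1})} X_{n+1}^{(J_n)} \to 0$ in probability (the $X_{n+1}^{(J_n)}$ are tight since they take only finitely many laws, each a.s.\ finite), so passing to the limit $n\to\infty$ in the iteration identity shows $Y^{(i)}$ solves \eqref{eq: multitype recursion}. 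A short check that the $Y^{(i)}$ are independent of $(A^{(i)},B^{(i)},J^{(i)})$ — immediate from the construction using independent fresh copies at the first level — completes existence.

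For uniqueness, suppose $X^{(i)}$ is any solution; iterating $n$ times and subtracting the partial sum of $Y^{(i)}$, the difference equals $A_1^{(i)}\cdots A_n^{(J_{n-1})}(X_{n+1}^{(J_n)} - Y_{n+1}^{(J_n)})$ in law, which tends to $0$ in probability by the same geometric-decay-times-tight argument; hence $X^{(i)} \overset{\Lcal}{=} Y^{(i)}$. The main obstacle I anticipate is handling the interaction between the multiplicative weights and the \emph{Markovian} (rather than i.i.d.) type sequence cleanly: one must be careful to invoke the LLN for the chain under its \emph{stationary} law $\pi$ while the recursion starts from a fixed type $i$, so a priori the chain is only asymptotically stationary; this is resolved by noting that the LLN conclusion $\frac1k\log\Pi_k\to -\ell$ holds $P_i$--a.s.\ for every $i$ (the chain being irreducible on a finite set, it hits stationarity exponentially fast and Cesàro averages are insensitive to the initial distribution), and similarly for the Borel--Cantelli step controlling $\log^+ B$. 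Everything else is routine bookkeeping with geometric series.
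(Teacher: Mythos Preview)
Your proposal is correct and follows essentially the same approach as the paper: both introduce the telescoped $c$-weighted products, invoke an ergodic/LLN result for the modulated chain (the paper cites Lalley's ergodic theorem) to get geometric decay of $\Pi_k$, use Borel--Cantelli on $\log^+ B$ to control the $B$-factors, and conclude via Cauchy's criterion. Your treatment of the remainder term and uniqueness is more explicit than the paper's, which simply says ``the claim then follows by iterating \eqref{eq: multitype recursion}'', but the ideas are identical.
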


\begin{proof}
It is sufficient to prove that the series $\sum_k A_1^{(i)}A_2^{(J_1)}\cdots A_k^{(J_{k-1})} B_{k+1}^{(J_{k})}$ is convergent (the claim then follows by iterating \eqref{eq: multitype recursion}). We mimic the proof in \cite{Grin}. Write $C_k^{(i)}:= \log^+ B_k^{(i)}$. Since
\[
\sum_{k\ge 1} P_i(C_k^{(i)}\ge A k) \le \frac1A\sum_{k\ge 1} \int_{A(k-1)}^{Ak} P_i(C_1^{(i)}\ge x) \mathrm{d}x = \frac1A \int_0^\infty P_i(C_1^{(i)}\ge x) \mathrm{d}x = E_i[C_1^{(i)}] <\infty,
\]
by Borel-Cantelli lemma, $\limsup_{k\to\infty} C_k^{(i)}/k < A$, $P_i$--a.s. Taking $A\to 0$, we deduce that $C_k^{(i)}/k \to 0$, $P_i$--a.s. This being true for all $i\in\Ical$, we have $C_k^{(J_{k-1})}/k \to 0$, $P_i$--a.s. Now for $k\ge 0$,
\begin{align*}
\left(A_1^{(i)}A_2^{(J_1)}\cdots A_k^{(J_{k-1})} B_{k+1}^{(J_{k})}\right)^{1/k}
&= \left(\frac{c_{i}}{c_{J_{k}}}\right)^{1/k}\left(\frac{c_{J_1}}{c_{i}}A_1^{(i)} \cdot \frac{c_{J_2}}{c_{J_1}}A_2^{(J_1)}\cdots \frac{c_{J_k}}{c_{J_{k-1}}}A_k^{(J_{k-1})} B_{k+1}^{(J_{k})}\right)^{1/k}  \\
&\le C^{1/k} \exp\big(C_{k+1}^{(J_{k})}/k\big) \cdot \exp\Big( \frac1k \sum_{l=1}^k \log \Big(\frac{c_{J_l}}{c_{J_{l-1}}}A_l^{(J_{l-1})}\Big)\Big),
\end{align*}
where $C= \max_{j\in\Ical} \frac{c_i}{c_j}$. By the previous point and the ergodic theorem of \cite{Lal}, $P_i$--almost surely,
\[
\underset{k\to\infty}{\limsup}\left(A_1^{(i)}A_2^{(J_1)}\cdots A_k^{(J_{k-1})} B_{k+1}^{(J_{k})}\right)^{1/k} \le \exp\Big( E_{\pi}\Big[\log \Big(\frac{c_{J_1}}{c_{J_0}} A^{(J_0)}\Big)\Big]\Big) < 1.
\]
We conclude the proof by an application of Cauchy's criterion on series convergence.
\end{proof}

We now use \cref{lem: Grin} to provide conditions for the solutions of \eqref{eq: multitype recursion} to be in some $L^\beta$ (see \cite[Lemma 3.2]{Liu}).

\begin{Prop} \label{prop: moments multitype recursion}
In addition to the assumptions of \cref{lem: Grin}, suppose that for some $\beta>0$ and some positive vector $(c_i, i\in\Ical)$, $E_i[(\frac{c_{J_1}}{c_i}|A^{(i)}|)^\beta]<1$ and $E_i[|B^{(i)}|^\beta]<\infty$ for all $i\in\Ical$. Then the solution of \eqref{eq: multitype recursion} satisfies $E_i[|X^{(i)}|^\beta]<\infty,$ for all $i\in\Ical$.
\end{Prop}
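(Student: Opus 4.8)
The plan is to exploit the explicit series representation of the solution furnished by \cref{lem: Grin}: under $P_i$,
\[
X^{(i)} = \sum_{k\ge 0} W_k\, B_{k+1}^{(J_k)}, \qquad W_0:=1, \quad W_k := A_1^{(i)}A_2^{(J_1)}\cdots A_k^{(J_{k-1})} \ (k\ge 1),
\]
and to prove that this series converges in $L^\beta(P_i)$. Let $(\mathcal{F}_l)_{l\ge 0}$ be the filtration generated by the environments sampled along the chain up to step $l$, so that $J_l$ and $W_l$ are $\mathcal{F}_l$--measurable, and so that, by the branching structure of the construction, conditionally on $\mathcal{F}_{l-1}$ the pair $(A_l^{(J_{l-1})}, J_l)$ has the law of $(A^{(J_{l-1})}, J^{(J_{l-1})})$ under $P_{J_{l-1}}$ and is independent of $\mathcal{F}_{l-1}$, while $B_{l+1}^{(J_l)}$ is, conditionally on $\mathcal{F}_l$, independent of $\mathcal{F}_l$ with law $B^{(J_l)}$.

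The key step is a geometric bound on the $\beta$--th moment of the general term. Writing $W_k = \frac{c_i}{c_{J_k}}\prod_{l=1}^k \frac{c_{J_l}}{c_{J_{l-1}}} A_l^{(J_{l-1})}$ and conditioning first on $\mathcal{F}_k$, one gets $E_i[|B_{k+1}^{(J_k)}|^\beta \mid \mathcal{F}_k] \le C_B := \max_{j\in\Ical} E_j[|B^{(j)}|^\beta] < \infty$, hence
\[
E_i\big[|W_k B_{k+1}^{(J_k)}|^\beta\big] \le C_B\, C_c\, E_i\Big[\prod_{l=1}^k \Big(\tfrac{c_{J_l}}{c_{J_{l-1}}}|A_l^{(J_{l-1})}|\Big)^\beta\Big], \qquad C_c:=\max_{i,j\in\Ical}(c_i/c_j)^\beta.
\]
Peeling the factors of the remaining product off one at a time via the tower property, and using that conditionally on $\mathcal{F}_{l-1}$ one has $E_i\big[(\tfrac{c_{J_l}}{c_{J_{l-1}}}|A_l^{(J_{l-1})}|)^\beta \mid \mathcal{F}_{l-1}\big] = \rho_{J_{l-1}}$ with $\rho_j := E_j[(\tfrac{c_{J_1}}{c_j}|A^{(j)}|)^\beta] < 1$ by hypothesis, I obtain $E_i[\prod_{l=1}^k(\cdots)] \le \rho^k$ where $\rho := \max_{j\in\Ical}\rho_j < 1$. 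Thus $E_i[|W_k B_{k+1}^{(J_k)}|^\beta] \le C_B C_c\, \rho^k$ for all $k\ge 0$ and $i\in\Ical$.

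It then remains to sum, distinguishing two regimes. For $0<\beta\le 1$, subadditivity of $x\mapsto x^\beta$ gives $E_i[|X^{(i)}|^\beta] \le \sum_{k\ge 0} E_i[|W_k B_{k+1}^{(J_k)}|^\beta] \le C_B C_c \sum_{k\ge0}\rho^k < \infty$. For $\beta>1$, Minkowski's inequality gives $\|X^{(i)}\|_\beta \le \sum_{k\ge 0}\|W_k B_{k+1}^{(J_k)}\|_\beta \le (C_B C_c)^{1/\beta}\sum_{k\ge 0}\rho^{k/\beta} < \infty$, since $\rho^{1/\beta}<1$. Either way $E_i[|X^{(i)}|^\beta]<\infty$ for every $i\in\Ical$, which is the assertion. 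I expect the only genuinely delicate point to be the bookkeeping of the dependence between $A_l^{(J_{l-1})}$ and $J_l$ — they are read off the same environment sample at type $J_{l-1}$ — which is exactly why the estimates are organised as successive conditionings along $(\mathcal{F}_l)$ rather than by conditioning on the whole realised chain $(J_0,\dots,J_k)$ at once; once that is arranged, everything collapses to the contraction estimate $\rho<1$.
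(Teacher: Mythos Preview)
Your proof is correct and follows essentially the same route as the paper: the series representation from \cref{lem: Grin}, a telescoping of the $c$-ratios to obtain a geometric contraction $\rho^k$, and then the split into $\beta\le 1$ (subadditivity) versus $\beta>1$ (Minkowski). Your sequential conditioning along $(\mathcal{F}_l)$ is in fact a bit more careful than the paper's ``condition on $J_1,\dots,J_k$ and use conditional independence'' step, and correctly handles the dependence between $A_l^{(J_{l-1})}$ and $J_l$ that you flag at the end.
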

\begin{proof}
Denote by $\widetilde{A}^{(j)}_k := \frac{c_{J_k}}{c_{j}}A^{(j)}_k$.
Using \cref{lem: Grin}, one can write $X^{(i)}$ as
\[
X^{(i)} = \sum_{k\ge 0} \frac{c_i}{c_{J_k}} \widetilde{A}_1^{(i)}\widetilde{A}_2^{(J_1)}\cdots \widetilde{A}_k^{(J_{k-1})} B_{k+1}^{(J_{k})}.
\]
Assume $\beta\ge 1$ for the moment. Then by Minkowski's inequality, 
\[
||X^{(i)}||_{\beta} \le C \sum_{k\ge 0}  \big|\big| \widetilde{A}_1^{(i)}\widetilde{A}_2^{(J_1)}\cdots \widetilde{A}_k^{(J_{k-1})} B_{k+1}^{(J_{k})} \big|\big|_{\beta},
\]
where $||Y||_{\beta} = E_i[|Y|^{\beta}]^{1/\beta}$ and $C= \max_{j\in\Ical} \frac{c_i}{c_j}$. We now condition on the types $J_1, \ldots, J_k$ and use conditional independence to get:
\[
||X^{(i)}||_{\beta} \le C \sum_{k\ge 0} E\left[ E_i\big[|\widetilde{A}^{(i)}_1|^{\beta}\big] E_{J_1}\big[|\widetilde{A}^{(J_1)}_1|^{\beta}\big]\cdots E_{J_{k-1}}\big[|\widetilde{A}^{(J_{k-1})}_1|^{\beta}\big] E_{J_{k}}\big[B_{1} ^{\beta}\big] \right]^{1/\beta}.
\]
Set $a := \max_{j\in\Ical} E_j\big[|\widetilde{A}^{(j)}_1|^{\beta}\big]^{1/\beta}<1$ and $b:= \max_{j\in\Ical} E_j\big[B_1^{\beta}\big]^{1/\beta}$. Then 
\[
||X^{(i)}||_{\beta} 
\le
C b \sum_{k\ge 0} a^k = \frac{C b}{1-a} <\infty.
\]
For $\beta< 1$, one uses the inequality 
\[
||X^{(i)}||^{\beta}_{\beta} \le C \sum_{k\ge 0}  \big|\big| \widetilde{A}_1^{(i)}\widetilde{A}_2^{(J_1)}\cdots \widetilde{A}_k^{(J_{k-1})} B_{k+1}^{(J_{k})} \big|\big|_{\beta}^{\beta},
\]
and concludes in the same way.
\end{proof}

\medskip
\noindent \textbf{Moments of $\Mcal^-(\infty)$ through equation \eqref{eq: multitype recursion}.} Next, we prove that such an equation as \eqref{eq: multitype recursion} arises naturally when considering $\widetilde{\Mcal}^-(\infty):= \Mcal^-(\infty)/v_{\Jhat(0)}$. Recall the notation from the change of measure in \cref{sec:change measure}, and let $\Jhat_k := \Jhat(b_{\Lcal(k)})$ and $\Xhat_k := \Xhat(b_{\Lcal(k)})$. For all $u\in\Ub$, denote by $M_u$ the value of limit $\widetilde{\Mcal}^-(\infty)$ on the tree re-rooted at $u$. 
\begin{Prop}
Set $\mathcal{A}^{(i)}_k := \frac{v_{\Jcal_k(0)}}{v_i} |\Xcal_k(0)|^{\omega_-}, k\ge 1,$ and
\[
A^{(i)} := \mathcal{A}^{(i)}_{\Lcal(1)}, \quad \text{and} \quad B^{(i)} := \sum_{k\ne \Lcal(1)} \mathcal{A}^{(i)}_k M_k.
\] 
Then $\widetilde{\Mcal}^-(\infty)$ solves the family of equations, for all $i\in\Ical$ under $\Phat^-_{1,i}$, 
\begin{equation} \label{eq: multitype recursion spine}
\widetilde{\Mcal}^-(\infty) \overset{\Lcal}{=} A^{(i)} M_{\Lcal(1)} + B^{(i)}.
\end{equation}
Moreover, the joint law is described by the following formula: for all nonnegative measurable functions $f, g, h$,
\begin{equation} \label{eq: multitype recursion joint law}
\Ehat^-_{1,i}\left[ f(A^{(i)}) g(B^{(i)}) h(M_{\Lcal(1)}) \right]
=
\Ecal_{1,i} \left[ \sum_{k\ge 1} \mathcal{A}^{(i)}_k f(\mathcal{A}^{(i)}_k) g\left(\sum_{l\ne k} \mathcal{A}^{(i)}_l M_l \right) \Ecal_{1,\Jcal_k(0)}\left[h(M_k)M_k\right] \right].
\end{equation}
In particular, under $\Phat^-_{1,i}$ the $M_{\Lcal(1)}$ is independent of $(A^{(i)}, B^{(i)})$ and conditionally on $\Jhat_1$, is distributed as $\widetilde{\Mcal}^-(\infty)$ under $\Phat^-_{1,\Jhat_1}$.
\end{Prop}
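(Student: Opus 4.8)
The plan is to derive \eqref{eq: multitype recursion joint law} first and read off \eqref{eq: multitype recursion spine} and the independence statement as consequences of it. The starting point is the change of measure from \cref{sec:change measure}: for any nonnegative $\Gscr_0$-measurable functional of the first generation, the tilting by $\Mcal^-(1)=\sum_{k\ge 1}v^-_{\Jcal_k(0)}|\Xcal_k(0)|^{\omega_-}$ gives, for test functions $\Phi$,
\[
v_i \, \Ehat^-_{1,i}\big[\Phi\big] = \Ecal_{1,i}\Big[\sum_{k\ge 1} v^-_{\Jcal_k(0)}|\Xcal_k(0)|^{\omega_-}\,\Phi\circ(\Lcal(1)=k)\Big],
\]
where by $\Phi\circ(\Lcal(1)=k)$ I mean the functional $\Phi$ evaluated on the event that the spine's first-generation ancestor is the $k$-th child. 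First I would apply this with $\Phi = f(A^{(i)})g(B^{(i)})h(M_{\Lcal(1)})$. On the event $\{\Lcal(1)=k\}$ we have $A^{(i)} = \mathcal{A}^{(i)}_k$, $B^{(i)} = \sum_{l\ne k}\mathcal{A}^{(i)}_l M_l$, and $M_{\Lcal(1)} = M_k$; moreover the weight $v^-_{\Jcal_k(0)}|\Xcal_k(0)|^{\omega_-}$ equals $v_i \,\mathcal{A}^{(i)}_k$. This already produces the factor $\mathcal{A}^{(i)}_k$ in \eqref{eq: multitype recursion joint law}, and reduces the task to computing $\Ecal_{1,i}$ of $\mathcal{A}^{(i)}_k\, f(\mathcal{A}^{(i)}_k)\, g\big(\sum_{l\ne k}\mathcal{A}^{(i)}_l M_l\big)\, h(M_k)$, summed over $k$.

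The key step is then the conditioning on $\Gscr_0 = \sigma((\Xcal_v,\Jcal_v);|v|\le 0)$ — equivalently, on the first generation $(\Xcal_k(0),\Jcal_k(0))_{k\ge 1}$. By the branching structure of the cell system, conditionally on this data the subtrees rooted at the children $k$ are independent, the subtree at $k$ having law $\Pcal_{1,\Jcal_k(0)}$ after rescaling, so $(M_l)_{l\ge 1}$ are independent with $M_l$ distributed as $\widetilde{\Mcal}^-(\infty)$ under $\Pcal_{1,\Jcal_l(0)}$, and all of them independent of $\mathcal{A}^{(i)}_\bullet$. In the $k$-th summand the factors $f(\mathcal{A}^{(i)}_k)$, $g(\sum_{l\ne k}\mathcal{A}^{(i)}_l M_l)$ do not involve $M_k$, while $h(M_k)$ does; but the extra weight $\mathcal{A}^{(i)}_k$ out front has already been split off. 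Here one must be slightly careful: after pulling out the weight $v_i\mathcal{A}^{(i)}_k$ coming from the martingale, the $h(M_k)$ term is integrated against the \emph{unbiased} law of $M_k$, i.e. $\Ecal_{1,\Jcal_k(0)}[h(M_k)]$ — but the statement writes $\Ecal_{1,\Jcal_k(0)}[h(M_k)M_k]$. Reconciling this is the real crux: the point is that $M_k$ itself is the limit of the martingale $\Mcal^-(n)$ re-rooted at $k$, normalized by $v^-_{\Jcal_k(0)}$, so that within the change of measure the spine further selects a descendant \emph{inside} the $k$-th subtree proportionally to $M_k$; iterating the change-of-measure identity one generation deeper, or equivalently invoking that $M_k \,\mathrm{d}\Pcal_{1,\Jcal_k(0)}$ corresponds to $\mathrm{d}\Phat^-_{1,\Jcal_k(0)}$ up to normalization, converts $\Ecal_{1,\Jcal_k(0)}[h(M_k)]$ evaluated under the spine measure into $\Ecal_{1,\Jcal_k(0)}[h(M_k)M_k]$ evaluated under $\Pcal$. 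I would make this precise by noting that the spine $\Lcal$ restricted to the subtree rooted at $\Lcal(1)=k$ is itself a spine for that subtree, so the law of $M_{\Lcal(1)} = M_k$ under $\Phat^-_{1,i}(\,\cdot\mid \Lcal(1)=k)$ is exactly the $M_k$-tilt of its $\Pcal_{1,\Jcal_k(0)}$-law; this is the consistency of the family $(\Phat^-_{1,j})$ built in \cref{sec:change measure} via the Kolmogorov extension theorem. Collecting terms yields \eqref{eq: multitype recursion joint law} verbatim.

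Finally, \eqref{eq: multitype recursion spine} follows by taking $f\equiv g\equiv 1$ in \eqref{eq: multitype recursion joint law} and using the definition of $\widetilde{\Mcal}^-(\infty)=A^{(i)}M_{\Lcal(1)} + B^{(i)}$ together with the fact that $\Ehat^-_{1,i}$ integrates $h(M_{\Lcal(1)})$ against a genuine probability measure (this is where one needs $\Mcal^-$ uniformly integrable, \cref{prop: martingale M^-}(ii), so that $\widehat{\Phat}^-_{1,i}$ is a probability measure and $M_{\Lcal(1)}<\infty$ a.s.). The independence of $M_{\Lcal(1)}$ from $(A^{(i)},B^{(i)})$, and the assertion that conditionally on $\Jhat_1$ it is distributed as $\widetilde{\Mcal}^-(\infty)$ under $\Phat^-_{1,\Jhat_1}$, are then immediate from the product structure $f(\mathcal{A}^{(i)}_k)\,g(\cdots)\cdot \Ecal_{1,\Jcal_k(0)}[h(M_k)M_k]$ on the right-hand side: the $h$-dependence factorizes through a function of $\Jcal_k(0)=\Jhat_1$ alone. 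The main obstacle, as flagged, is bookkeeping the two nested layers of tilting (the first-generation weight versus the within-subtree re-rooting) so that the $M_k$ in $\Ecal_{1,\Jcal_k(0)}[h(M_k)M_k]$ appears with the correct power; everything else is the branching property plus the change-of-measure identity.
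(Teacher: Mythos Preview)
Your approach is correct and follows the same skeleton as the paper's: change of measure plus the branching property plus conditional independence of the $M_k$. The difference is in how the factor $M_k$ appears. You first apply the tilting at the level of $\Gscr_0$ (weight $v_i\mathcal{A}^{(i)}_k$), and then have to explain the extra $M_k$ via a second, nested layer of tilting inside the chosen subtree. This works, using the recursive structure of the spine, but it is precisely the bookkeeping you flag as the ``main obstacle''. The paper avoids it altogether: it applies the change of measure directly on $\Gscr_\infty$, where the Radon--Nikodym derivative is $\widetilde{\Mcal}^-(\infty)$ and $\Phat^-_{1,i}(\Lcal(1)=k\mid\Gscr_\infty)=\mathcal{A}^{(i)}_k M_k/\widetilde{\Mcal}^-(\infty)$; the product is exactly $\mathcal{A}^{(i)}_k M_k$, so one reaches
\[
\Ehat^-_{1,i}\big[f(A^{(i)})g(B^{(i)})h(M_{\Lcal(1)})\big]=\Ecal_{1,i}\Big[\sum_{k\ge 1}\mathcal{A}^{(i)}_k M_k\, f(\mathcal{A}^{(i)}_k)\, g\big(\textstyle\sum_{l\ne k}\mathcal{A}^{(i)}_l M_l\big)\, h(M_k)\Big]
\]
in a single step, and then factors out $\Ecal_{1,\Jcal_k(0)}[h(M_k)M_k]$ by conditional independence of the $M_l$ given $\Gscr_0$. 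A minor point: your displayed identity with weight $v^-_{\Jcal_k(0)}|\Xcal_k(0)|^{\omega_-}$ is only correct for $\Gscr_0$-measurable $\Phi$, not for the $\Gscr_\infty$-measurable functional you apply it to; you patch this with the recursive-spine argument, but that is exactly the detour the paper's one-line approach bypasses. Also, \eqref{eq: multitype recursion spine} is actually an almost-sure identity (from $\widetilde{\Mcal}^-(\infty)=\sum_k\mathcal{A}^{(i)}_k M_k$ split on $\{\Lcal(1)=k\}$), so there is no need to derive it from \eqref{eq: multitype recursion joint law}.
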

\begin{proof}
We note that under $\Phat^-_{1,i}$, $\widetilde{\Mcal}^-(\infty)$ satisfies $\widetilde{\Mcal}^-(\infty) = \sum_{k\ge 1} \mathcal{A}^{(i)}_k M_k$ by the branching property. The fact that $\widetilde{\Mcal}^-(\infty)$ solves \eqref{eq: multitype recursion spine} is then obvious by decomposing over the spine and the other cells.  For the second statement, by definition of $\Phat_{1,i}^-$, we have
\[
\Ehat^-_{1,i}\left[ f(A^{(i)}) g(B^{(i)}) h(M_{\Lcal(1)}) \right]
=
\Ecal_{1,i} \left[ \sum_{k\ge 1} \mathcal{A}^{(i)}_k f(\mathcal{A}^{(i)}_k) g\left(\sum_{l\ne k} \mathcal{A}^{(i)}_l M_l \right) h(M_k)M_k \right].
\]
Using the independence of the $M_k, k\ge 1,$ we retrieve \eqref{eq: multitype recursion joint law}.
\end{proof}

Given that \eqref{eq: multitype recursion spine} holds, for all $i\in\Ical$, it will be convenient to denote by $M^{(i)}$ a random variable with the same law as $\widetilde{\Mcal}^-(\infty)$ under $\Phat^-_{1,i}$, defined on the same probability space and independent of all other quantities. We are then interested in the set of equations
\[
M^{(i)} \overset{\Lcal}{=} A^{(i)} M^{(\Jhat_1)} + B^{(i)},
\]
for all $i\in \Ical$. This recasts \eqref{eq: multitype recursion spine} in the framework of \eqref{eq: multitype recursion}. In  order to apply \cref{prop: moments multitype recursion}, we will need the following moment bounds on $B^{(i)}$.
\begin{Lem} \label{lem: moment B spine}
We have the following upper-bounds on $\Ehat_{1,i}[|B^{(i)}|^{p}], p>0$:
\begin{itemize}
\item If $p\le 1$, then $\Ehat_{1,i}[|B^{(i)}|^{p}] \le \Ecal_{1,i} \big[\big( \sum_{k\ge 1} \mathcal{A}^{(i)}_k \big)^{p+1}\big]$;
\item If $p>1$, then $\Ehat_{1,i}[|B^{(i)}|^{p}] \le \Ecal_{1,i}\big[\big(\sum_{k\ge 1} \mathcal{A}^{(i)}_k\big)^{p+1}\big] \cdot \max_{j\in\Ical} \Ecal_{1,j}[|M^{(j)}|^p] $.
\end{itemize}
Furthermore, if $(w_i,i\in\Ical)$ is a positive vector and we set $c_i:=\frac{w_i^{1/p}}{v_i^{1+1/p}}$ for $p>0$, then 
\[\Ehat_{1,i}\left[\left(\frac{c_{\Jhat_1}}{c_i}A^{(i)}\right)^{p}\right] = \Ecal_{1,i}\left[\sum_{k\ge 1} \frac{w_{\Jcal_k(0)}}{w_i} |\Xcal_k(0)|^{(p+1)\omega_-} \right].\]
\end{Lem}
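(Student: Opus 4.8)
The plan is to prove the three claims by unwinding the definitions of $A^{(i)}$ and $B^{(i)}$ through the change of measure \eqref{eq: multitype recursion joint law}, and then applying the elementary inequalities used repeatedly in the appendix. Throughout, recall that $\mathcal{A}^{(i)}_k := \frac{v_{\Jcal_k(0)}}{v_i} |\Xcal_k(0)|^{\omega_-}$, that $B^{(i)} = \sum_{k\ne \Lcal(1)} \mathcal{A}^{(i)}_k M_k$, and that under $\Phat^-_{1,i}$ the variables $M_k$ are independent, each $M_k$ being distributed as $\widetilde{\Mcal}^-(\infty)$ under $\Phat^-_{1,\Jcal_k(0)}$, hence $\Ecal_{1,\Jcal_k(0)}[M_k] = 1$ by the martingale property of $\Mcal^-$ (part (ii) of \cref{prop: martingale M^-}) normalised by $v^-_{\Jcal_k(0)}$.

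First I would treat the moment bounds on $B^{(i)}$. Conditioning on $\overline{\Fcal}_{b_{\Lcal(1)}}$ (equivalently, on the jump data of the Eve cell and the identity of $\Lcal(1)$), the $M_k$ for $k\ne\Lcal(1)$ are conditionally independent with conditional mean $1$. For $p\le 1$, Jensen's inequality applied to the conditional expectation gives $\Ehat_{1,i}[|B^{(i)}|^p \mid \overline{\Fcal}_{b_{\Lcal(1)}}] \le \big(\sum_{k\ne\Lcal(1)} \mathcal{A}^{(i)}_k\big)^p$, and then I would pass from $\Phat^-_{1,i}$ to $\Pcal_{1,i}$ using \eqref{eq: multitype recursion joint law} with $f\equiv 1$, $h\equiv 1$ (so that $\Ecal_{1,\Jcal_k(0)}[M_k]=1$ disappears): the extra factor $\mathcal{A}^{(i)}_{\Lcal(1)}$ coming from the tilt, together with $\sum_{k\ne\Lcal(1)}\mathcal{A}^{(i)}_k \le \sum_{k\ge 1}\mathcal{A}^{(i)}_k$, yields $\Ehat_{1,i}[|B^{(i)}|^p] \le \Ecal_{1,i}\big[\big(\sum_{k\ge1}\mathcal{A}^{(i)}_k\big)^{p+1}\big]$. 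For $p>1$, I would instead use Jensen (or Minkowski) to bound $\Ehat_{1,i}[|B^{(i)}|^p \mid \overline{\Fcal}_{b_{\Lcal(1)}}]$ by $\big(\sum_{k\ne\Lcal(1)}\mathcal{A}^{(i)}_k\big)^{p-1}\sum_{k\ne\Lcal(1)}\mathcal{A}^{(i)}_k \Ecal_{1,\Jcal_k(0)}[|M^{(\Jcal_k(0))}|^p]$, bound each $\Ecal_{1,\Jcal_k(0)}[|M^{(\Jcal_k(0))}|^p]$ by $\max_{j\in\Ical}\Ecal_{1,j}[|M^{(j)}|^p]$, and again absorb the tilt factor $\mathcal{A}^{(i)}_{\Lcal(1)}$ to recover the exponent $p+1$; this gives the stated bound.

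For the last identity, with $c_i := w_i^{1/p} v_i^{-(1+1/p)}$ one computes $\frac{c_{\Jhat_1}}{c_i} = \big(\frac{w_{\Jhat_1}}{w_i}\big)^{1/p}\big(\frac{v_i}{v_{\Jhat_1}}\big)^{1+1/p}$, so that $\big(\frac{c_{\Jhat_1}}{c_i}A^{(i)}\big)^p = \frac{w_{\Jhat_1}}{w_i}\big(\frac{v_i}{v_{\Jhat_1}}\big)^{p+1}(A^{(i)})^p$; since $A^{(i)} = \mathcal{A}^{(i)}_{\Lcal(1)} = \frac{v_{\Jhat_1}}{v_i}|\Xhat_1|^{\omega_-}$, the powers of $v$ combine to $\frac{v_{\Jhat_1}}{v_i}$, leaving $\frac{w_{\Jhat_1}}{w_i}\cdot\frac{v_{\Jhat_1}}{v_i}|\Xhat_1|^{(p+1)\omega_-}$. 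Applying \eqref{eq: multitype recursion joint law} with $g\equiv h\equiv 1$ and $f(a) = a^p$-type bookkeeping — more directly, using the many-to-one identity \eqref{eq: generation leaf} that defines $\Phat^-_{1,i}$ on $\Gscr_0$, namely $\Ehat_{1,i}[F(\Jcal_{\Lcal(1)}(0),\Xcal_{\Lcal(1)}(0))] = \frac{1}{v_i}\Ecal_{1,i}[\sum_{k\ge1} v_{\Jcal_k(0)}|\Xcal_k(0)|^{\omega_-} F(\Jcal_k(0),\Xcal_k(0))]$ — the tilt weight $\frac{v_{\Jcal_k(0)}}{v_i}|\Xcal_k(0)|^{\omega_-}$ cancels the prefactor $\big(\frac{v_i}{v_{\Jcal_k(0)}}\big)^{p+1}$ coming from $c$, and I am left with $\Ecal_{1,i}\big[\sum_{k\ge1}\frac{w_{\Jcal_k(0)}}{w_i}|\Xcal_k(0)|^{(p+1)\omega_-}\big]$, as claimed.

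The main obstacle is purely bookkeeping: keeping straight the interplay between the two weight systems $(v_i)$ and $(c_i)$ (and implicitly $(w_i)$) under the size-biased change of measure, and making sure the extra factor $\mathcal{A}^{(i)}_{\Lcal(1)}$ produced by the tilt is correctly accounted for so that the exponent on $\sum_{k\ge1}\mathcal{A}^{(i)}_k$ comes out as $p+1$ rather than $p$. No delicate analysis is required beyond Jensen/Minkowski and the defining formula \eqref{eq: multitype recursion joint law}; the conditional independence of the $M_k$ and the identity $\Ecal_{1,j}[M^{(j)}]=1$ do the rest.
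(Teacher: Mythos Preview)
Your approach is correct and essentially identical to the paper's: apply the joint-law formula \eqref{eq: multitype recursion joint law} with $g(x)=x^p$, use conditional Jensen (concave for $p\le 1$, convex weighted form for $p>1$) together with $\Ecal_{1,j}[M^{(j)}]=1$, and for the last identity undo the size-bias via the many-to-one formula. One arithmetic slip: after combining you should get $\big(\tfrac{c_{\Jhat_1}}{c_i}A^{(i)}\big)^p = \tfrac{w_{\Jhat_1}}{w_i}\cdot\tfrac{v_i}{v_{\Jhat_1}}\,|\Xhat_1|^{p\omega_-}$ (not $\tfrac{v_{\Jhat_1}}{v_i}$ with exponent $(p+1)\omega_-$); it is precisely this $\tfrac{v_i}{v_{\Jhat_1}}$ that the tilt weight $\tfrac{v_{\Jcal_k(0)}}{v_i}|\Xcal_k(0)|^{\omega_-}$ cancels to produce the claimed $\tfrac{w_{\Jcal_k(0)}}{w_i}|\Xcal_k(0)|^{(p+1)\omega_-}$.
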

The proof is adapted from \cite[Lemma 4.2]{Liu}.
\begin{proof}
We use \eqref{eq: multitype recursion joint law} with $f=h=1$ and $g:x\mapsto x^p$:
\[
\Ehat^-_{1,i}\left[ |B^{(i)}|^p\right]
=
\Ecal_{1,i} \left[ \sum_{k\ge 1} \mathcal{A}^{(i)}_k  \left(\sum_{l\ne k} \mathcal{A}^{(i)}_l M_l \right)^p \right].
\]
Assume $p\le 1$. By the conditional Jensen inequality for $p\le 1$,
\begin{align*}
\Ehat^-_{1,i}\left[ |B^{(i)}|^p\right]
&\le
\Ecal_{1,i} \left[ \sum_{k\ge 1} \mathcal{A}^{(i)}_k \Ecal_{1,i} \left[\sum_{l\ne k} \mathcal{A}^{(i)}_l M_l \, \bigg| \, \Gscr_1 \right]^p \right] \\
&\le 
\Ecal_{1,i} \left[ \sum_{k\ge 1} \mathcal{A}^{(i)}_k \left(\sum_{l\ne k} \mathcal{A}^{(i)}_l \right)^p \right],
\end{align*}
since $\Ecal_{1,i} [ M_l \, | \, \Gscr_1 ] = \Ecal_{1,i} \Big[ \Ecal_{1,\Jcal_l(0)}\Big[ \widetilde{\Mcal}^-(\infty) \Big] \Big] = 1$ for all $l$ (recall from \cref{prop: martingale M^-}(ii) that $\Mcal(\infty)$ is a limit in $L^1$).
Hence
\[
\Ehat^-_{1,i}\left[ |B^{(i)}|^p\right]
\le
\Ecal_{1,i} \left[\left( \sum_{k\ge 1} \mathcal{A}^{(i)}_k \right)^{p+1}\right].
\]
Now assume $p>1$. We use Jensen's inequality on the sum:
\[
\left(\sum_{k\ge 1} a_k z_k \right)^p \le \sum_{k\ge 1} a_k z_k^p \quad \text{if} \; \sum_{k\ge 1} a_k =1.
\]
This gives
\begin{align*}
\Ehat^-_{1,i}\left[ |B^{(i)}|^p\right]
&\le \Ecal_{1,i} \left[ \sum_{k\ge 1} \mathcal{A}^{(i)}_k  \left(\sum_{l\ne k} \mathcal{A}^{(i)}_l  \right)^{p-1}  \cdot \sum_{l\ne k} \mathcal{A}^{(i)}_l M_l^p \right] \\
&\le \Ecal_{1,i} \left[ \left(\sum_{k\ge 1} \mathcal{A}^{(i)}_k \right)^{p} \cdot \sum_{l\ne k} \mathcal{A}^{(i)}_l \Ecal_{1,\Jcal_l(0)}\left[M_l^p\right] \right] \\
&\le
 \Ecal_{1,i}\left[\left(\sum_{k\ge 1} \mathcal{A}^{(i)}_k\right)^{p+1}\right] \cdot \max_{j\in\Ical} \Ecal_{1,j}\left[|M^{(j)}|^p\right].
\end{align*}
The second claim about $A^{(i)}$ is straightforward.
\end{proof}

\medskip
\noindent \textbf{Proof of \cref{prop: Mcal L^beta}.} First of all, notice that the recursion \eqref{eq: multitype recursion spine} satisfies the assumptions of \cref{lem: Grin}. Indeed, we assumed that for all $i\in\Ical$,
\[
\Ehat_{1,i}\left[\log \Big(\frac{v_i}{v_{\Jhat_1}}A^{(i)}\Big)\right] = \omega_- \Ecal_{1,i}\left[\sum_{k\ge 1} \frac{v_{\Jcal_k(0)}}{v_i} |\Xcal_k(0)|^{\omega_-} \log |\Xcal_k(0)| \right] \in (-\infty,0),
\]
which is a stronger requirement. Then, we use formula \eqref{eq: multitype recursion joint law} to see that 
\begin{equation} \label{eq: transfer moment p -> p+1}
\Ehat_{1,i}\left[|M^{(i)}|^p\right] = \Ehat_{1,i}\left[\Ecal_{1,\Jhat_1}\left[|M^{(i)}|^{p+1}\right]\right].
\end{equation}
Now from \cref{prop: moments multitype recursion} and \cref{lem: moment B spine}, we see that the conditions 
\begin{equation} \label{eq: three assumptions}
\Ecal_{1,i}\left[|M^{(i)}|^p\right]<\infty, \quad 
\Ecal_{1,i} \left[\left( \sum_{k\ge 1} \mathcal{A}^{(i)}_k \right)^{p+1}\right]<\infty \quad \text{and} \quad
\Ecal_{1,i}\left[\sum_{k\ge 1} \frac{w_{\Jcal_k(0)}}{w_i} |\Xcal_k(0)|^{(p+1)\omega_-} \right]<1,
\end{equation}
for all $i\in\Ical$, for some positive vector $w$ and $p>0$, imply that $\Ehat_{1,i}[|M^{(i)}|^p]<\infty$ and hence by \eqref{eq: transfer moment p -> p+1}, $\Ecal_{1,i}[|M^{(i)}|^{p+1}]<\infty$ for all $i\in\Ical$. 

Assume that $\omega_+/\omega_-\in (1,2]$, and consider $p<\omega_+/\omega_- - 1$. We now check the three assumptions in \eqref{eq: three assumptions}. Since $p\le 1$ and $\Ecal_{1,i}[|M^{(i)}|]<\infty$, we have $\Ecal_{1,i}[|M^{(i)}|^p]<\infty$. In addition, since $p+1 < \omega_+/\omega_-$, $\Ecal_{1,i} \Big[\Big( \sum_{k\ge 1} \mathcal{A}^{(i)}_k \Big)^{p+1}\Big]<\infty$ by the first claim of \cref{prop: martingale M^-}. Finally, we know that $\omega_-<(p+1)\omega_-<\omega_+$, and that $\lambda(\omega_-)=\lambda(\omega_+)=0$. By convexity of $\lambda$, $\lambda((p+1)\omega_-)<0$. If we choose $w$ to be a positive eigenvector of $m((p+1)\omega_-)$ associated with the leading eigenvalue, we therefore get 
\[
\Ecal_{1,i}\left[\sum_{k\ge 1} \frac{w_{\Jcal_k(0)}}{w_i} |\Xcal_k(0)|^{(p+1)\omega_-} \right]<1.
\]
This altogether triggers $\Ecal_{1,i}[|M^{(i)}|^{p+1}]<\infty$ for all $i\in\Ical$, and hence \cref{prop: Mcal L^beta} follows.

The general claim follows by recursion. For example, if $\omega_+/\omega_-\in(2,3]$, we first leverage the previous arguments for $p=1$ and obtain that $\Ecal_{1,i}[|M^{(i)}|^{2}]<\infty$. Taking any $\beta<\omega_+/\omega_--1$, the very same arguments provide $\Ecal_{1,i}[|M^{(i)}|^{\beta+1}]<\infty$ (note that the first step for $p=1$ is here used to deduce that $\Ecal_{1,i}[|M^{(i)}|^{\beta}]<\infty$). This concludes the proof of \cref{prop: Mcal L^beta}.

\subsection{Proof of Theorem \ref{entrancelaw}}
Our arguments rely heavily on the seminal work of Bertoin and Yor \cite{BY2002} where the entrance law of positive self-similar Markov processes is determined. Before we prove  Theorem \ref{entrancelaw}, let us first discuss a duality property for self-similar Markov processes with types and establish some nice properties of their associated resolvent operators.  

The dual of  self-similar Markov processes with types $(X,J)$ is defined as follows. First, we set
\[
    \varphi^\natural(t):= \inf\left\{s>0, \; \int_0^s \exp(-\alpha \xi(u)) \mathrm{d}u > t\right\}, \quad t\ge 0,
\]
and then we define for $x>0$  the pair $(X^\natural, J^\natural)$ where
\[
X^\natural(t) := x \exp(-\xi(\varphi^\natural(tx^{-\alpha}))) \quad \textrm{and}\quad J^\natural(t) := \Theta^\natural(\varphi^\natural(tx^{-\alpha})), \quad t\ge 0,
\]
where $\Theta^\natural$ is a Markov chain with intensity matrix $Q^\natural$ defined  in \eqref{dualQ} and with the convention that $(X^\natural(t),J^\natural(t))=\partial$ when $t\ge \zeta^\natural$ where
\[
\zeta^\natural:=x^{\alpha}\int_0^{\infty} \exp(-\alpha \xi(u)) \mathrm{d}u.
\]
For $x>0$ and $j\in \Ical$, we denote by $\mathbbm{P}^\natural_{x, j}$ for  the distribution of the pair $(X^\natural, J^\natural)$.

Recall that for $q\ge 0$ and a measurable $f:\mathbb{R}_+\times \mathcal{I}\to \mathbb{R}_+$,  the resolvent operators are given by
\begin{equation} \label{eq: def of resolvents}
V^{(q)}f(x, j)=\mathbbm{E}_{x, j} \left[ \int_0^\infty e^{-qt} f(X(t), J(t)) \mathrm{d} t \right] \quad \textrm{and}\quad V^{\natural, (q)}f(x, j)=\mathbbm{E}^\natural_{x, j} \left[ \int_0^\zeta e^{-qt} f(X(t), J(t)) \mathrm{d} t \right]. 
\end{equation}
From Lemma \ref{dlemma}, we deduce the following result.
\begin{Prop}
\label{prop: wd} 
For every $q\ge 0$ and every measurable functions $f, g:\mathbb{R}\times \Ical \to \mathbb{R}$, we have
\begin{equation}\label{dualid}
\sum_{j\in \Ical}\int_{0}^{\infty}  f(x, j) V^{(q)}g(x, j) \mu(\mathrm{d} x, j)=\sum_{k\in\Ical}\int_{0}^{\infty}  g(x, k) V^{\natural,(q)}f(x,k) \mu(\mathrm{d} x,k),
\end{equation}
where
\[
\mu(\mathrm{d} x, j)=x^{\alpha-1}\pi_{j}\mathrm{d} x, \qquad \textrm{for}\quad x>0 \textrm{ and } j\in \mathcal{I}.
\]
\end{Prop}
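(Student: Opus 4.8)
The plan is to derive the duality identity \eqref{dualid} directly from the time-reversal property of the underlying MAP stated in \cref{dlemma}, exactly mimicking the classical argument of Bertoin and Yor \cite{BY2002} for positive self-similar Markov processes, with the modulator chain playing the role of the extra coordinate. First I would unfold both resolvents via the Lamperti--Kiu representation \eqref{eq: Lamperti MAP}. Writing $A(t) := \int_0^t \mathrm{e}^{\alpha\xi(u)}\mathrm{d}u$ for the Lamperti clock, the change of variables $t = A(s)$, $\mathrm{d}t = \mathrm{e}^{\alpha\xi(s)}\mathrm{d}s$ turns the left-hand side of \eqref{dualid}, for $x = \mathrm{e}^{y}$, $J(0)=j$, into an integral over the MAP path:
\[
V^{(q)}g(\mathrm{e}^{y}, j) = \mathtt{E}_{0,j}\left[\int_0^{\infty} \mathrm{e}^{-qA(s)} g\!\left(\mathrm{e}^{y+\xi(s)},\Theta(s)\right) \mathrm{e}^{\alpha\xi(s)}\,\mathrm{d}s\right],
\]
and similarly $V^{\natural,(q)}f$ becomes the same expression driven by $(-\xi,\Theta^{\natural})$ under $\mathtt{P}^{\natural}$ with clock $A^{\natural}(s) := \int_0^s \mathrm{e}^{-\alpha\xi(u)}\mathrm{d}u$.

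Next I would substitute these into the two sides of \eqref{dualid}. With $\mu(\mathrm{d}x,j) = x^{\alpha-1}\pi_j\,\mathrm{d}x$, the substitution $x = \mathrm{e}^{y}$ gives $\mu(\mathrm{d}x,j) = \mathrm{e}^{\alpha y}\pi_j\,\mathrm{d}y$, so the left-hand side of \eqref{dualid} equals
\[
\sum_{j\in\Ical}\pi_j \int_{\R} \mathrm{d}y\; \mathrm{e}^{\alpha y} f(\mathrm{e}^{y},j)\, \mathtt{E}_{0,j}\!\left[\int_0^{\infty}\mathrm{e}^{-qA(s)} g\!\left(\mathrm{e}^{y+\xi(s)},\Theta(s)\right)\mathrm{e}^{\alpha\xi(s)}\,\mathrm{d}s\right].
\]
Using $\mathtt{P}_\pi = \sum_j \pi_j\mathtt{P}_j$ and pulling the $y$-integral inside, this becomes a single expectation under $\mathtt{P}_\pi$ against $\mathrm{d}s$; after the further substitution $z = y + \xi(s)$ one recognises an integral of $\mathrm{e}^{\alpha y} f(\mathrm{e}^{y},\Theta(0))\,\mathrm{e}^{\alpha\xi(s)} g(\mathrm{e}^{z},\Theta(s)) = \mathrm{e}^{\alpha z} f(\mathrm{e}^{z-\xi(s)},\Theta(0))\,g(\mathrm{e}^{z},\Theta(s))$ weighted by $\mathrm{e}^{-qA(s)}$. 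The key step is now to apply \cref{dlemma}: for fixed horizon $t$, the triple $(\xi(t-s)-\xi(t),\Theta((t-s)-),A(t)-A((t-s)-))$ under $\mathtt{P}_\pi$ has the same law as $(\xi(s),\Theta(s),A^{\natural}(s))$ under $\mathtt{P}^{\natural}_\pi$ — the increments of the Lamperti clock are read off the reversed path and become precisely the increments of the dual clock because $\mathrm{e}^{\alpha(\xi(t-u)-\xi(t))}$ integrated against $\mathrm{d}u$ is $\mathrm{e}^{-\alpha\xi(t)}$ times a time-reversal of $A$. Reversing time in the displayed integral therefore interchanges the roles of $f$ and $g$ and of $A$ and $A^{\natural}$, and after undoing the substitutions one lands on the right-hand side of \eqref{dualid}. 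The Fubini manipulations are justified because $f,g\ge 0$, so everything is legitimate in $[0,\infty]$.

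The main obstacle I anticipate is bookkeeping the clock term correctly under time reversal: one must be careful that the factor $\mathrm{e}^{-qA(s)}$ — which is \emph{not} an increment functional but depends on the whole past up to $s$ — transforms into $\mathrm{e}^{-qA^{\natural}(s)}$ only after the reversal over the \emph{full} interval $[0,t]$ and a compensating factor $\mathrm{e}^{\alpha\xi(t)}$ is absorbed into the change of variables $y\mapsto y-\xi(t)$, i.e. into the shift of the spatial coordinate. Concretely one should fix $t$, split $A(\infty) = A(t) + (A(\infty)-A(t))$, use the Markov additive property and \cref{dlemma} on $[0,t]$, then let $t\to\infty$ (monotone convergence, using $A(t)\uparrow A(\infty)$ and $A^{\natural}(t)\uparrow A^{\natural}(\infty)$) to remove the truncation. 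Apart from this, the argument is a routine transcription of \cite[Section 2]{BY2002}, and I would present it at that level of detail, referring to \cite{BY2002} for the steps that are identical in the untyped case.
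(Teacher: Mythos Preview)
Your approach is essentially the paper's: unfold both resolvents through the Lamperti--Kiu representation, integrate against $\mu$, change the spatial variable, and invoke \cref{dlemma} for each fixed time horizon. Two remarks. First, your displayed formula for $V^{(q)}g(\mathrm{e}^y,j)$ drops the starting-point factor: the correct expression carries $\mathrm{e}^{-q\,\mathrm{e}^{\alpha y}A(s)}$ in the exponent and an overall factor $\mathrm{e}^{\alpha y}$, since the real-time clock is $x^\alpha A(s)$. Second, the truncation-and-limit scheme you propose for the clock is unnecessary: the paper changes variables $y = x\,\mathrm{e}^{\xi(u)}$ \emph{before} reversing, which turns $x^\alpha I_u(\xi)$ into $y^\alpha\int_0^u \mathrm{e}^{\alpha(\xi(s)-\xi(u))}\,\mathrm{d}s$, a functional of the reversed path on $[0,u]$, so \cref{dlemma} applies directly for each fixed $u$ with no limiting argument.
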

\begin{proof} Using the  Lamperti-type transform of $(X, J)$ in \eqref{eq: Lamperti MAP}, we first observe
\[
V^{(q)}g(x, j) =  \mathtt{E}_{ j} \left[ \int_0^\infty e^{-qx^\alpha I_u(\xi)} g(x e^{\xi(u)}, \Theta(u)) x^\alpha e^{\alpha \xi(u)} \mathrm{d} u \right],
\]
where
  $$I_u(\xi) = \int_0^u \exp( \alpha \xi(s) )\mathrm{d} s.$$
Thus, it is clear
  \[
  \begin{split}
\sum_{j\in \mathcal{I}}&\int_{0}^{\infty}  f(x, j) V^{(q)}g(x, j) \mu(\mathrm{d} x, j)\\
&=\sum_{j\in \mathcal{I}}\pi_j\mathtt{E}_j \left[ \int_0^\infty \int_0^\infty x^{\alpha-1} f(x,j) e^{- qx^\alpha I_u(\xi)} g(xe^{\xi(u)}, \Theta(u)) x^\alpha e^{\alpha \xi(u)} \mathrm{d} u \mathrm{d} x \right] \\
&=\sum_{j\in \mathcal{I}}\pi_j\int_0^\infty y^{\alpha-1}  \int_0^\infty \mathtt{E}_j \left[g(y, \Theta(u))e^{- qy^\alpha e^{-\alpha \xi(u)} I_u(\xi)} f(ye^{-\xi(u)}, j) y^\alpha e^{-\alpha \xi(u)}   \mathrm{d} u \right] \mathrm{d} y.
\end{split}
\]
From Lemma \ref{dlemma}, we deduce
\begin{align*}
\sum_{j\in\Ical}{\pi_j} \mathtt{E}_j& \left[ g(y, \Theta^\prime(0))e^{- qy^\alpha I^\prime_u(\xi)} f(ye^{\xi^\prime(u)}, \Theta^\prime(u)) y^\alpha e^{\alpha \xi^\prime(u)} \right] \\
&\hspace{4cm}=\sum_{k\in\Ical}{\pi_k }g(y, k)\mathtt{E}^\natural_k \left[ e^{- qy^\alpha I_u(\xi)} f(ye^{\xi(u)},  \Theta(u)) y^\alpha e^{\alpha \xi(u)}\right],
\end{align*}
where $\xi^\prime(s)=\xi(u-s)-\xi(u)$ and $\Theta^\prime(s)=\Theta(u-s)$. Therefore
\[\begin{split}
\sum_{j\in \mathcal{I}}&\int_{0}^{\infty}  f(x, j) V^{(q)}g(x, j) \mu(\mathrm{d} x, j)\\
&=\sum_{k\in \mathcal{I}}\pi_k\int_0^\infty y^{\alpha-1}g(y, k) \int_0^\infty  \mathtt{E}^\natural_k \left[ e^{ -q y^\alpha  I_u(\xi)} f(ye^{\xi(u)}, \Theta(u)) y^\alpha e^{\alpha \xi(u)} \right] \mathrm{d} u \mathrm{d} y.
\end{split}
\]
Finally, using the  Lamperti-type transform of $(X, J)$, under $\mathbbm{P}^\natural_{y,k}$, we observe
\[
\begin{split}
\sum_{k\in \mathcal{I}}&\int_{0}^{\infty}  g(y, k) V^{\natural,(q)}f(y, k) \mu(\mathrm{d} y, k)\\
&=\sum_{k\in \mathcal{I}} \pi_k\int_{0}^\infty y^{\alpha-1} g(y,k) \int_0^\infty  \mathtt{E}^\natural_k \left[ e^{ -q y^\alpha  I_u(\xi)} f(ye^{\xi(u)}, \Theta(u)) y^\alpha e^{\alpha \xi(u)}  \right] \mathrm{d} u \mathrm{d} y.
\end{split}
\]
Hence putting all the pieces together, we observe that  (\ref{dualid}) holds.
\end{proof}
The key to the proof of  Theorem \ref{entrancelaw} is a consequence of the Markov additive renewal theorem for the potential measure of the MAP $(\xi, \Theta)$.  There is a relatively wide body of literature concerning Markov additive renewal theory, see for instance \cite{Alsmeyer94, Alsmeyer14, Kesten74, Lalley84}. Although the literature mostly deals with the case of discrete-time, one can nonetheless identify the following renewal-type theorem for the potential measure $U_{ij}({\rm d} x)$, defined as
\[
U_{ij}({\rm d} x)=\mathtt{E}_{i}\left[\int_0^\infty \mathds{1}_{\{\xi(t)\in {\rm d} x, \Theta(t)=j\}}{\rm d} t\right], \qquad x\in \mathbb{R}.
\]
To give a precise statement, let us  recall that a function $g:\mathbb{R}\to \mathbb{R}$ is said to be directly Riemann integrable if 
\[
\lim_{n\to\infty} \underline{g_n}( y)=\lim_{n\to\infty} \overline{g_n}(y)=g(y) \qquad \textrm{is integrable,}
\]
where for every integer $n$, $\underline{g_n}$ (respectively $\overline{g_n}$) denotes the largest (respectively, the smallest) function with $\underline{g_n}\le g $ (respectively $g\le\overline{g_n}$) which is constant on the intervals $[k2^{-n}, (k+1)2^{-n})$, for every $k\in \mathbb{Z}$. We may now state the following
\begin{Thm}\label{mrenw} Assume that $\xi$ is not concentrated on a lattice and has positive mean 
\[
0<m=\mathtt{E}_{0,\pi}[\xi(1)]<\infty.
\]
Then the following hold
\begin{itemize}
\item[(i)] For all $i,j\in\mathcal{I}$, 
\[
\lim_{x\to\infty}\frac{U{i,j}([0,x])}{x}=\frac{\pi_j}{m}.
\]
\item[(ii)] For $x>0$ and $i,j\in \mathcal{E}$, the weak limit of the overshoot $(\xi(\tau^+_a)-a, \Theta({\tau^+_a}))$ exists as $a$ goes to $\infty$, where $\tau^+_z=\inf\{t:\xi(t)\ge z\}$. More precisely
\[
\nu({\rm d} x, j):=\textrm{w}-\lim_{a\to\infty} \mathtt{P}_i\Big(\xi(\tau^+_a)-a\in {\rm d} x, \Theta({\tau^+_a})=j\Big),
\]
where $\textrm{w}-\lim$ denotes  weak limit of probability measures. \item[(iii)] Let us assume that for each $j\in \mathcal{I}$,   $g(\cdot,j)$ is direct Riemann integrable, then
\[
\lim_{z\to\infty}\sum_{j\in \mathcal{I}}\int_{\mathbb{R}} g(y-z,j)U_{i,j}({\rm d} y)=\frac{1}{m} \sum_{j\in \mathcal{I}}\pi_j\int_{\mathbb{R}}g(x,j){\rm d} x.
\]
\end{itemize}
\end{Thm}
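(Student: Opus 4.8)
The plan is to reduce the three statements to classical Markov renewal theory by finding an embedded Markov random walk inside the MAP $(\xi,\Theta)$. First I would recall that the modulator $\Theta$ is an irreducible recurrent Markov chain on the finite set $\Ical$; let $0=\sigma_0<\sigma_1<\sigma_2<\cdots$ be the successive return times of $\Theta$ to a fixed reference state, say $1$ (or, more symmetrically, the successive jump times of $\Theta$). Then $(\xi(\sigma_n),\Theta(\sigma_n))_{n\ge 0}$ is a Markov additive chain in discrete time: the increments $\xi(\sigma_{n+1})-\xi(\sigma_n)$ are, conditionally on the modulating sequence, independent with distribution depending only on the current and next value of $\Theta(\sigma_n)$, thanks to the piecewise structure of Proposition \ref{prop:MAP structure}. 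Since $\xi$ is not lattice and $m=\mathtt{E}_{0,\pi}[\xi(1)]\in(0,\infty)$, the embedded chain has finite, positive stationary drift (equal to $m$ times the mean return time, by the ergodic theorem), and the additive component is non-lattice. This places us squarely in the setting of the discrete-time Markov renewal theorems of Alsmeyer, Kesten, and Lalley (\cite{Alsmeyer94, Alsmeyer14, Kesten74, Lalley84}).

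Next, for part (i), I would write the potential measure $U_{i,j}([0,x])$ of the continuous-time MAP as the convolution of the discrete Markov renewal measure of $(\xi(\sigma_n),\Theta(\sigma_n))$ with the within-cycle occupation measure
\[
K_{k,j}(\mathrm{d}y) := \mathtt{E}_{0,k}\Big[\int_0^{\sigma_1} \mathds{1}_{\{\xi(t)\in \mathrm{d}y,\ \Theta(t)=j\}}\,\mathrm{d}t\Big].
\]
The elementary Markov renewal theorem for the embedded chain gives that the discrete renewal mass in $[0,x]$ grows like $x/(m\,\mathtt{E}_{\pi}[\sigma_1])$ weighted by the appropriate stationary probabilities, and since the kernel $K$ has finite total mass, Blackwell's Markov renewal theorem yields $U_{i,j}([0,x])/x \to \pi_j/m$; the dependence on the initial type $i$ washes out by recurrence, and the cycle-length normalisation cancels because $\sum_k \pi_k \mathtt{E}_{0,k}[\sigma_1]$ appears in both the denominator of the renewal density and implicitly in the definition of $\pi$. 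Part (iii) is then the ``directly Riemann integrable'' (key renewal) upgrade of part (i): for $g(\cdot,j)$ directly Riemann integrable, one writes $\sum_j \int g(y-z,j)\,U_{i,j}(\mathrm{d}y)$ as a Markov renewal convolution and applies the Markov renewal theorem of \cite{Kesten74} (see also \cite{Alsmeyer14}), obtaining the limit $\tfrac1m\sum_j \pi_j \int g(x,j)\,\mathrm{d}x$; the direct Riemann integrability hypothesis is exactly what is needed to pass the limit through, via the usual sandwiching by step functions $\underline{g_n},\overline{g_n}$. Part (ii) follows from (iii) (or directly from the Markov renewal theorem applied to the first-passage functional): the overshoot distribution $\mathtt{P}_i(\xi(\tau_a^+)-a\in\mathrm{d}x,\ \Theta(\tau_a^+)=j)$ converges weakly to the stationary-overshoot measure, which can be written explicitly as $\nu(\mathrm{d}x,j) = \tfrac{1}{m}\pi_j\,\overline{F}_j(x)\,\mathrm{d}x$ in terms of the tail of the ascending-ladder step distribution from type $j$, again by the standard renewal-overshoot computation transported to the Markov-modulated setting.

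The main obstacle I anticipate is bookkeeping rather than conceptual: the cited literature on Markov renewal theory is stated for \emph{discrete-time} Markov random walks, whereas the object $U_{i,j}$ here is the occupation measure of a \emph{continuous-time} MAP, so the reduction via the embedded skeleton $(\xi(\sigma_n),\Theta(\sigma_n))$ must be carried out carefully — in particular one must check that the within-cycle kernel $K_{k,j}(\mathrm{d}y)$ has finite mass and that the resulting convolution is legitimate (a routine consequence of the finiteness of $\Ical$ and of $\mathtt{E}_{0,k}[\sigma_1]<\infty$, which in turn follows from $\Theta$ being a finite irreducible chain), and that non-latticeness of $\xi$ is inherited by the embedded additive component (true because the continuous-time increments over a cycle have an absolutely continuous or at least non-lattice component unless $\xi$ is itself lattice, which is excluded). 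Once this skeleton reduction is in place, all three parts are direct citations. I would therefore present the proof as: (a) construct the embedded Markov random walk and record its drift and non-latticeness; (b) express $U_{i,j}$ as a Markov renewal convolution; (c) invoke the Markov renewal / key renewal / Blackwell theorems of \cite{Kesten74, Alsmeyer94, Alsmeyer14, Lalley84} for parts (i), (iii); (d) deduce (ii) as the overshoot corollary.
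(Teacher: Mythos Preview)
Your proposal is correct and in spirit identical to the paper's treatment: both regard Theorem~\ref{mrenw} as a direct consequence of known Markov additive renewal theory, with no new argument required. The only difference is presentational: the paper does not carry out the embedding into a discrete-time Markov random walk explicitly, but simply invokes Theorem~28 of \cite{DeDoKy} for parts (i) and (ii) (which already handles the continuous-time MAP setting), and points to the arguments of \cite{Alsmeyer14} for part (iii); your sketch of the skeleton reduction is exactly what underlies those references.
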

Part (i) and (ii) is the continuous-time analogue of the Markov additive renewal theorem in \cite{Lalley84}. Both follow from Theorem 28 in \cite{DeDoKy}. Part (iii) follows from similar arguments as in \cite{Alsmeyer14}.

With the previous result in hand, we may now state the following result. 

\begin{Lem}\label{lemmrthm} Let $f:(0,\infty)\times\mathcal{I}\to \mathbb{R}$ be such that for each $j\in \mathcal{I}$, $y\mapsto e^{\alpha y}f(e^y, j)$ is directly Riemann integrable. Then
\[
\lim_{x\to 0+}V^{(0)}f(x,i)=\frac{1}{m}\sum_{j\in \mathcal{I}}\pi_j\int_0^\infty f(y,j)y^{\alpha-1}{\rm d}y
\]
\end{Lem}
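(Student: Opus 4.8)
The plan is to express $V^{(0)}f(x,i)$ through the Lamperti--Kiu representation as an expectation over the MAP $(\xi,\Theta)$, and then to recognise the limit as $x\to 0+$ as an instance of the Markov additive renewal theorem (Theorem \ref{mrenw}(iii)). First I would write, using \eqref{eq: Lamperti MAP} and the change of variables $s=\varphi(t x^{-\alpha})$ inside the definition \eqref{eq: def of resolvents} of $V^{(0)}$ (with $q=0$), that
\[
V^{(0)}f(x,i)=\mathtt{E}_{i}\!\left[\int_0^{\infty} f\!\big(x\,\mathrm{e}^{\xi(s)},\Theta(s)\big)\,x^{\alpha}\mathrm{e}^{\alpha\xi(s)}\,\mathrm{d}s\right].
\]
Setting $x=\mathrm{e}^{-z}$ with $z\to+\infty$, and writing $g(y,j):=\mathrm{e}^{\alpha y}f(\mathrm{e}^{y},j)$, the integrand becomes $g(\xi(s)-z,\Theta(s))$, so that
\[
V^{(0)}f(\mathrm{e}^{-z},i)=\mathtt{E}_{i}\!\left[\int_0^{\infty} g\big(\xi(s)-z,\Theta(s)\big)\,\mathrm{d}s\right]
=\sum_{j\in\mathcal{I}}\int_{\mathbb{R}} g(y-z,j)\,U_{i,j}(\mathrm{d}y),
\]
by the very definition of the potential measure $U_{i,j}$. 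Since $g(\cdot,j)$ is directly Riemann integrable for each $j$ by hypothesis, Theorem \ref{mrenw}(iii) applies and gives
\[
\lim_{z\to\infty}\sum_{j\in\mathcal{I}}\int_{\mathbb{R}} g(y-z,j)\,U_{i,j}(\mathrm{d}y)
=\frac{1}{m}\sum_{j\in\mathcal{I}}\pi_j\int_{\mathbb{R}}g(x,j)\,\mathrm{d}x
=\frac{1}{m}\sum_{j\in\mathcal{I}}\pi_j\int_0^{\infty} f(y,j)\,y^{\alpha-1}\,\mathrm{d}y,
\]
where the last equality is the substitution $x=\log y$, $\mathrm{d}x=\mathrm{d}y/y$, $\mathrm{e}^{\alpha x}=y^{\alpha}$. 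This is exactly the claimed identity, since $x\to 0+$ corresponds to $z\to\infty$.

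The one genuinely delicate point is the interchange of limit and integral/expectation needed to justify applying Theorem \ref{mrenw}(iii): the renewal theorem is stated for a fixed directly Riemann integrable $g$, and here $g$ does not depend on $z$, so in fact the application is direct and no extra uniform integrability is needed. The only subtlety is ensuring that the rewriting $V^{(0)}f(x,i)=\mathtt{E}_i[\int_0^\infty g(\xi(s)-z,\Theta(s))\,\mathrm{d}s]$ is licit, i.e. that $\varphi$ is a time change that can be undone cleanly and that the lifetime issue ($\zeta$ possibly finite) does not matter. Since $f\ge 0$ and $g$ is directly Riemann integrable (hence in particular integrable and locally bounded), Tonelli's theorem gives the rewriting unconditionally, and the integration runs over $[0,\zeta)$ in the dual-process notation but over $[0,\infty)$ for the forward MAP, which is consistent because the Lamperti clock $\varphi$ maps $[0,\zeta)$ onto $[0,\infty)$ in the $\xi$-time scale. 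Thus I would simply note these points and let the renewal theorem finish the job; the main work is bookkeeping of the change of variables rather than any analytic estimate.
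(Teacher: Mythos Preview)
Your proof is correct and follows essentially the same route as the paper: rewrite $V^{(0)}f(x,i)$ via the Lamperti--Kiu time change as an integral against the MAP potential measure $U_{i,j}$, set $g(y,j)=\mathrm{e}^{\alpha y}f(\mathrm{e}^y,j)$, and apply the Markov additive renewal theorem (Theorem~\ref{mrenw}(iii)) as $z=-\log x\to\infty$. The only cosmetic quibble is that you invoke Tonelli assuming $f\ge 0$, whereas the lemma allows real-valued $f$; since direct Riemann integrability of $g(\cdot,j)$ implies absolute integrability, Fubini handles the general case without difficulty.
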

\begin{proof} Observe, from the Lamperti-type transform of $(X,J)$ that the potential $V^{(0)}$ can be expressed in terms of the potential operator of $(\xi, \Theta)$ as follows 
\[
\begin{split}
V^{(0)}f(x, i)&=\mathtt{E}_{ i} \left[ \int_0^\infty f(e^{\xi(t)+\log x}, \Theta(t)) e^{\alpha (\xi(t)+\log x)}\mathrm{d} t \right]\\
&=\sum_{j\in\mathcal{I}}\int_{\mathbb{R}} f(e^{y+\log x}, j)e^{\alpha(y+\log x)} U_{i,j}({\rm d} y). 
\end{split}
\]
Hence from Theorem \ref{mrenw} part (iii), we deduce
\[
\lim_{x\to 0+}V^{(0)}f(x,i)=\frac{1}{m}\sum_{j\in \mathcal{I}}\pi_j\int_{-\infty}^\infty f(e^y,j)e^{\alpha y}{\rm d}y=\frac{1}{m}\sum_{j\in \mathcal{I}}\pi_j\int_{0}^\infty f(z,j)z^{\alpha -1}{\rm d}z,
\]
as expected.
\end{proof} 
Next, we study regularity properties of the resolvent $V^{(q)}$.
\begin{Lem}\label{lempot} For each $i\in \mathcal{I}$, we assume that  $f(\cdot, i):\mathbb{R}_+\to \mathbb{R}$  is a bounded continuous function. Then for every $q>0$, $V^{(q)}f(\cdot, i)$ is continuous and bounded on $(0,\infty)$. Moreover, if   $f(\cdot, i)$ has compact support, then the function $y\mapsto e^{\alpha y}V^{(q)}f(e^y, i)$ is directly Riemann integrable on $\mathbb{R}$.
\end{Lem}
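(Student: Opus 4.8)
The plan is to establish the two claims separately, relying on the Lamperti--Kiu representation to transfer everything to the MAP level, just as in the proof of Lemma~\ref{lemmrthm}. Recall that under $\mathbbm{P}_{x,i}$ one has
\[
V^{(q)}f(x,i) = \mathtt{E}_i\Bigg[\int_0^\infty e^{-qx^\alpha I_t(\xi)} f(x e^{\xi(t)},\Theta(t))\, x^\alpha e^{\alpha\xi(t)}\,\mathrm{d}t\Bigg],
\]
with $I_t(\xi)=\int_0^t e^{\alpha\xi(s)}\mathrm{d}s$. First I would prove continuity and boundedness. Boundedness is immediate: since $q>0$, a crude bound $e^{-qx^\alpha I_t(\xi)}\le 1$ combined with $\int_0^\infty e^{-qx^\alpha I_t(\xi)}x^\alpha e^{\alpha\xi(t)}\mathrm{d}t = \int_0^\infty e^{-qx^\alpha u}\,\mathrm{d}u \cdot (\text{change of variables } u=I_t(\xi)) = 1/(qx^\alpha)$ is not quite uniform, so instead I would simply write $V^{(q)}f(x,i)\le \|f\|_\infty \mathtt{E}_i[\int_0^\infty e^{-qt}\mathrm{d}t]=\|f\|_\infty/q$ directly from the very definition \eqref{eq: def of resolvents}, which gives uniform boundedness at once. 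For continuity in $x$, I would fix $i$ and use the scaling property \eqref{eq: ss}: writing $V^{(q)}f(x,i)=x^\alpha\int_0^\infty e^{-qx^\alpha v}\, \mathtt{E}_i[f(xe^{\xi(\varphi(v))},\Theta(\varphi(v)))e^{\alpha\xi(\varphi(v))}]\mathrm{d}v$ after the time change $v=I_t(\xi)$, one sees the dependence on $x$ enters only through the exponential factor $e^{-qx^\alpha v}$ and through the argument $xe^{\xi(\varphi(v))}$ of $f$; since $f(\cdot,i)$ is bounded continuous, dominated convergence (dominating by $\|f\|_\infty e^{-qx_0^\alpha v/2}$ on a neighbourhood of any $x_0>0$) yields continuity. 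Alternatively and more cleanly, I would invoke the fact that $(X,J)$ is a Feller process (which follows from the Lamperti--Kiu construction), so that $x\mapsto \mathtt{E}_{x,i}[g(X(t),J(t))]$ is continuous for bounded continuous $g$, and then the resolvent of a Feller semigroup maps $C_b$ into $C_b$ by dominated convergence in the time integral.

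The second claim is the substantive one. I would fix $i$ and a bounded continuous $f(\cdot,i)$ supported in a compact $[a,b]\subset(0,\infty)$, and set $h(y):=e^{\alpha y}V^{(q)}f(e^y,i)$ for $y\in\mathbb{R}$; the goal is direct Riemann integrability of $h$. From the boundedness just proved, $h(y)\le \|f\|_\infty e^{\alpha y}/q$, which controls $h$ near $-\infty$ (for $\alpha>0$). For $y\to+\infty$ one needs genuine decay, and this is where the hard work lies: starting from $e^{y}$ far above the support of $f(\cdot,i)$, the process $X$ started at $e^y$ must come down to $[a,b]$, and one must estimate $V^{(q)}f(e^y,i)$. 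Using the Markov/scaling structure, $V^{(q)}f(e^y,i)\le \|f\|_\infty\,\mathtt{E}_i[\int_0^\infty e^{-q e^{\alpha y}I_t(\xi)}\mathds{1}_{\{e^{\xi(t)+y}\le b\}}e^{\alpha(\xi(t)+y)}\mathrm{d}t]$; on the event $\xi(t)\le \log b - y$ the factor $e^{\alpha\xi(t)}$ is at most $e^{\alpha(\log b-y)}$, so $h(y)\le \|f\|_\infty b^\alpha\,\mathtt{E}_i[\int_0^\infty e^{-qe^{\alpha y}I_t(\xi)}\mathds{1}_{\{\xi(t)\le \log b-y\}}\mathrm{d}t]$, and one can bound the integral further, using the running infimum of $\xi$ and the positive drift hypothesis \eqref{meanalh}, to get geometric (in fact superexponential once the $e^{-qe^{\alpha y}I_t}$ factor kicks in) decay in $y$. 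I would then dominate $h$ by an integrable, eventually monotone (hence directly Riemann integrable) function of the form $C(e^{\alpha y}\wedge e^{-cy})$, and combine with the continuity of $h$ (inherited from continuity of $V^{(q)}f(\cdot,i)$ and of $e^{\alpha y}$) plus the classical criterion that a continuous function dominated by a directly Riemann integrable function is itself directly Riemann integrable.

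The main obstacle I anticipate is the quantitative tail estimate on $h(y)$ as $y\to+\infty$: one must show that, started from a large value, the self-similar process spends only an exponentially small (weighted) amount of time in the fixed compact $[a,b]$, uniformly enough to override the $e^{\alpha y}$ prefactor. The cleanest route is probably to exploit the exponential time change: $\int_0^\infty e^{-qx^\alpha I_t(\xi)}x^\alpha e^{\alpha\xi(t)}\mathrm{d}t=\int_0^\infty e^{-qx^\alpha u}\mathrm{d}u=1/(qx^\alpha)$ exactly, so $V^{(q)}f(x,i)\le \|f\|_\infty/(qx^\alpha)$ trivially, and then with $x=e^y$ we get $h(y)=e^{\alpha y}V^{(q)}f(e^y,i)\le \|f\|_\infty/q$ --- bounded but not decaying. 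To get decay one genuinely needs the indicator $\mathds{1}_{\{xe^{\xi(t)}\in[a,b]\}}$, i.e.\ that the occupation of $[a,b]$ by $X$ from a high starting point is small; I would derive this from the fact that $\xi$ drifts to $+\infty$ (by \eqref{meanalh}), so that $\sup_t\{$ time with $\xi(t)\le \log b - y\}$ has a distribution whose mass escapes to $0$ as $y\to\infty$, quantified via a large-deviation or simply a Markov-renewal occupation bound as in Theorem~\ref{mrenw}. Once that estimate is in hand the direct Riemann integrability follows routinely, and this completes the proof of Lemma~\ref{lempot}.
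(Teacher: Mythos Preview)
Your treatment of the first assertion (boundedness and continuity of $V^{(q)}f(\cdot,i)$) is fine and matches the paper: invoke the Feller property of $(\xi,\Theta)$ through the Lamperti--Kiu representation and apply dominated convergence in the time integral.

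For the direct Riemann integrability, however, there is a genuine gap. First, a computational slip: your substitution $u=I_t(\xi)$ in $\int_0^\infty e^{-qx^\alpha I_t(\xi)}x^\alpha e^{\alpha\xi(t)}\,\mathrm{d}t$ loses the factor $x^\alpha$ (the Jacobian is $e^{\alpha\xi(t)}$ only), so the integral equals $x^\alpha\cdot\frac{1}{qx^\alpha}=\frac{1}{q}$, not $\frac{1}{qx^\alpha}$. Consequently the trivial bound gives $h(y)\le e^{\alpha y}\|f\|_\infty/q$, which blows up as $y\to+\infty$; it is \emph{not} ``bounded but not decaying''. Second, and more importantly, your proposed route to control the $+\infty$ tail --- occupation-time estimates exploiting the positive drift of $\xi$ --- is never carried out: you would need an explicit integrable envelope for $h$ at $+\infty$, and nothing in your argument quantifies how fast the weighted occupation of a fixed compact by $X$ started high decays. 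The reference to ``a Markov-renewal occupation bound as in Theorem~\ref{mrenw}'' does not supply such a rate.

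The paper avoids this entirely by a different and much cleaner device: duality. Reducing to $|f|\le \mathds{1}_{[0,1]\times\Ical}$, the paper shows directly that
\[
\int_{-\infty}^{\infty} e^{\alpha y}\,V^{(q)}\mathds{1}_{[0,1]\times\Ical}(e^y,i)\,\mathrm{d}y
\;\le\; C\sum_{k\in\Ical}\pi_k\int_0^1 V^{\natural,(q)}\mathds{1}_{[0,\infty)\times\Ical}(y,k)\,y^{\alpha-1}\,\mathrm{d}y<\infty,
\]
using \cref{prop: wd}; no drift hypothesis or tail estimate is needed. Then a scaling comparison (for $y\le x$, $V^{(q)}\mathds{1}_{[0,1]\times\Ical}(x,i)\le (x/y)^\alpha V^{(q)}\mathds{1}_{[0,1]\times\Ical}(y,i)$) manufactures an integrable step-function majorant of $e^{\alpha y}V^{(q)}f(e^y,i)$, and continuity finishes the job. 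The moral is that integrability of $h$ should be read off from the duality identity rather than chased through pathwise occupation bounds.
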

\begin{proof} Since  the MAP $(\xi, \Theta)$ is a Feller process with c\`adl\`ag paths then, for each $i\in \mathcal{I}$ the map $x\mapsto \mathbbm{E}_{x, i}[f(X(t), J(t))]= \mathtt{E}_{ i} \left[   f(x e^{\xi(\varphi(tx^{-\alpha})}), \Theta(\varphi(tx^{-\alpha}))) \right] $ is continuous and bounded on $(0,\infty)$, for each $i\in \mathcal{I}$, and in particular the same holds for $x\mapsto V^{(q)}f(x,i)$, for $q>0$. 

For the  direct Riemann integrability property, and without loss of generality, we assume that $|f(x,i)|\le \mathds{1}_{[0,1]\times\mathcal{I}}(x,i)$. We also introduce the step function
\[
g(x, i):=\inf_{y\in[k-2, k-1]}e^{\alpha y}V^{(q)}\mathds{1}_{[0,1]\times\mathcal{I}}(e^y, i) \qquad \textrm{for every $x\in [k,k+1)$ and $k\in \mathbb{Z}$.}
\]
and observe from Lemma \ref{prop: wd}  that
\[
\begin{split}
\int_{-\infty}^\infty g(x,i){\rm d} x&\le \int_{-\infty}^\infty e^{\alpha y} V^{(q)}\mathds{1}_{[0,1]\times\mathcal{I}}(e^y, i){\rm d}y\\
&\le C\sum_{j\in \mathcal{I}} \pi_j\int_{0}^\infty  V^{(q)}\mathds{1}_{[0,1]\times\mathcal{I}}(x, i) x^{\alpha-1}{\rm d}x\\
&=C\sum_{k\in \mathcal{I}} \pi_k\int_{0}^1 V^{\natural,(q)}\mathds{1}_{[0,\infty)\times\mathcal{I}}(y, k) y^{\alpha-1}{\rm d}y<\infty,
\end{split}
\]
where $C=\max_{j\in \mathcal{I}}\frac{1}{\pi_j}$.
Next, for $y\le x$, we observe from the scaling property that 
\[
\begin{split}
|V^{(q)}f(x,i)|&\le V^{(q)}\mathds{1}_{[0,1]\times \mathcal{I}}(x,i)\\
&\le \int_0^\infty e^{-qt}\mathbb{P}_{x,i}(X(t)\le 1){\rm d}t\\
&= \int_0^\infty e^{-qt}\mathbb{P}_{1,i}(X(tx^{-\alpha})\le 1/x){\rm d}t\\
&\le \int_0^\infty e^{-qt}\mathbb{P}_{1,i}(X(tx^{-\alpha})\le 1/y){\rm d}t\\
&= \int_0^\infty e^{-qt}\mathbb{P}_{y,i}(X(t(x/y)^{-\alpha})\le 1){\rm d}t\\
&=\left(\frac{x}{y}\right)^\alpha V^{(q)}\mathds{1}_{[0,1]\times \mathcal{I}}(y,i).
\end{split}
\]
In other words, we get $e^{\alpha x}|V^{(q)}f(e^x,i)|\le e^{4\alpha} g(x, i)$, for every $x\in \mathbb{R}$. Since the function $x\mapsto e^{\alpha x}V^{(q)}f(e^x,i)$ is continuous, the direct Riemann integrability follows. \end{proof}
Finally, we present a tightness property for the distribution of self-similar Markov processes with types.
\begin{Lem}\label{tight}Let $T$ be a random time with an exponential distribution that is independent of the self-similar Markov process with types $(X, J)$. Then the familly of probability measures on $[0,\infty)\times\mathcal{I}$, $\{\mathbb{P}_{x,j}((X(T), J(T))\in \cdot), 0<x\le 1, j\in \mathcal{I}\}$  is tight.
\end{Lem}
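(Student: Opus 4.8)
The plan is to reduce tightness on $[0,\infty)\times\mathcal{I}$ to tightness of the first coordinate, since $\mathcal{I}$ is finite and hence automatically compact. So the real task is to show that the family $\{\mathbb{P}_{x,j}(X(T)\in\cdot),\ 0<x\le 1,\ j\in\mathcal{I}\}$ of probability measures on $[0,\infty)$ is tight, i.e.\ that for every $\varepsilon>0$ there is $K>0$ with $\mathbb{P}_{x,j}(X(T)>K)<\varepsilon$ uniformly in $0<x\le 1$ and $j\in\mathcal{I}$. (There is no escape of mass at $0$ to worry about: $0$ is part of the state space here, together with the cemetery, so a cluster point at $0$ is harmless; and the key is to rule out mass escaping to $+\infty$.)

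First I would exploit the self-similarity \eqref{eq: ss} exactly as in the proof of Lemma~\ref{lempot}: for $0<x\le 1$ and $q$ the rate of $T$,
\[
\mathbb{P}_{x,j}(X(T)>K)
= q\int_0^{\infty} e^{-qt}\,\mathbb{P}_{x,j}(X(t)>K)\,\mathrm{d}t
= q\int_0^{\infty} e^{-qt}\,\mathbb{P}_{1,j}\!\big(X(t x^{-\alpha})>K/x\big)\,\mathrm{d}t .
\]
Since $x\le 1$ we have $K/x\ge K$, and after the change of variables $s=tx^{-\alpha}$ this is $q x^{\alpha}\int_0^\infty e^{-q x^\alpha s}\,\mathbb{P}_{1,j}(X(s)>K/x)\,\mathrm{d}s\le q x^\alpha\int_0^\infty e^{-qx^\alpha s}\mathbb{P}_{1,j}(\sup_{u\ge 0}X(u)>K)\,\mathrm{d}s=\mathbb{P}_{1,j}\big(\sup_{u\ge 0}X(u)>K\big)$. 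Thus it suffices to bound, uniformly in $j\in\mathcal{I}$, the tail of the all-time supremum $\overline{X}:=\sup_{u\ge 0}X(u)$ under $\mathbb{P}_{1,j}$, and here finiteness of $\mathcal{I}$ means we only need $\mathbb{P}_{1,j}(\overline{X}>K)\to 0$ as $K\to\infty$ for each fixed $j$, which will be uniform by taking the maximum over the finitely many types.

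To control $\overline{X}$ I would use the martingale $M$ from \cref{prop: martingale M}: under the standing Assumptions~A and~B with admissible pair $((v_i)_{i\in\Ical},\omega)$, $(M(t),t\ge 0)$ is a uniformly integrable martingale under $\mathbb{P}_{1,j}$ with $M(t)\ge v_{J(t)}|X(t)|^{\omega}\ge \big(\min_i v_i\big)\,X(t)^{\omega}$, so that $\big(\min_i v_i\big)\overline{X}^{\,\omega}\le \sup_{t\ge 0}M(t)$. Since $M$ is a nonnegative uniformly integrable martingale, Doob's maximal inequality gives $\mathbb{P}_{1,j}(\sup_{t}M(t)>\lambda)\le \lambda^{-1}\mathbb{E}_{1,j}[M(\infty)]=\lambda^{-1}v_j$ (the terminal value being $\sum_{0<s<\zeta}v_{J_\Delta(s)}|\Delta X(s)|^\omega$ with expectation $v_j$ by admissibility), whence $\mathbb{P}_{1,j}(\overline{X}>K)\le \big(\min_i v_i\big)^{-1}v_j K^{-\omega}\to 0$ as $K\to\infty$, uniformly over $j\in\mathcal{I}$. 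Combining this with the self-similarity reduction above yields $\sup_{0<x\le1,\,j\in\mathcal{I}}\mathbb{P}_{x,j}(X(T)>K)\le C K^{-\omega}$ for a constant $C$ depending only on $(v_i)_i$, which gives the tightness of the first coordinate; tensoring with the (trivially tight) second coordinate on the finite set $\mathcal{I}$ finishes the proof.

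The main obstacle is a bookkeeping one rather than a deep one: one must make sure the martingale $M$ is available, i.e.\ that Assumptions~A and~B hold so that an admissible pair $((v_i)_i,\omega)$ with $\omega>0$ exists and \cref{prop: martingale M} applies — this is in force throughout the paper, so it is legitimate to invoke it. If one wanted to avoid referencing $M$, an alternative is to argue directly from the Lamperti--Kiu representation that $\sup_{0\le u<\zeta}\xi(u)$ has a light enough tail under any fixed starting type (using that $\xi$ restricted to each type is a L\'evy process and the number of type changes is a.s.\ finite on compacts), but routing through the uniformly integrable martingale $M$ and Doob's inequality is the cleanest path and the place where one has to be slightly careful is in checking the lower bound $M(t)\ge(\min_i v_i)X(t)^\omega$ and the uniform-in-$j$ control of $\mathbb{E}_{1,j}[M(\infty)]=v_j$, both of which are immediate from admissibility.
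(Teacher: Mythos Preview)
Your argument has a genuine gap. The Lemma is part of the proof of Theorem~\ref{entrancelaw}, whose standing hypotheses are that $X$ is a \emph{conservative} self-similar Markov process with types, of index $\alpha>0$, and that the ordinate $\xi$ has \emph{positive} mean $m=\mathtt{E}_{0,\pi}[\xi(1)]>0$. Under these assumptions $\xi(t)\to+\infty$ a.s., hence $X(t)\to+\infty$ a.s.\ by the Lamperti--Kiu representation, and the all-time supremum $\overline{X}:=\sup_{u\ge 0}X(u)$ is almost surely infinite. Your reduction
\[
\mathbb{P}_{x,j}(X(T)>K)\ \le\ \mathbb{P}_{1,j}\big(\sup_{u\ge 0}X(u)>K\big)
\]
therefore yields the useless bound $1$, and the subsequent martingale step cannot rescue it: \cref{prop: martingale M} lives in the growth-fragmentation framework of Section~3, where one explicitly assumes that $X$ is either absorbed at $\partial$ in finite time or converges to $0$, and where Assumptions~A and~B furnish an admissible pair $((v_i),\omega)$. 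None of this is assumed in Theorem~\ref{entrancelaw}; in fact the drift to $+\infty$ is incompatible with the existence of a uniformly integrable nonnegative martingale dominating $cX(t)^{\omega}$.

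The paper avoids this by never looking at the all-time supremum. It works with the running supremum $\overline{X}(T)=\sup_{u\le T}X(u)$ up to the exponential time, applies the strong Markov property at the first passage time $\sigma^+(k)$ above a level $k$, and then uses (i) the Markov additive renewal theorem (\cref{mrenw}(ii)) to control the overshoot $X(\sigma^+(k))/k$, and (ii) self-similarity to reduce the post-$\sigma^+(k)$ piece to a single-type bound $\mathbb{P}_{1,j}(\overline{X}(T)\le k)$, which is close to $1$ for large $k$. The crucial difference is that one only ever needs the supremum over a finite (random) time horizon, which is finite a.s., instead of the infinite all-time supremum that your bound throws away.
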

\begin{proof} Let $\overline{X}$ be the supremum process of $X$, that is
\[
\overline{X}(t)= \sup_{u\le t} X(u), \qquad t\ge 0,
\]
and let $\sigma^+(\cdot)=\inf\{t\ge 0: \overline{X}(t)> \cdot\}$ be its right-continuous inverse.  For $x\in (0,1]$ and $k\ge 1$, it is clear that 
\[
 \mathbb{P}_{x,i}\Big(\overline{X}(T)\le k^3\Big)\le \mathbb{P}_{x,i}(X(T)\le k^3).
\]
On the other hand, using the Markov property (in the third line),
\[
\begin{split}
 \mathbb{P}_{x,i}\Big(\overline{X}(T)\le k^3\Big)&\ge \mathbb{P}_{x,i}\Big(\overline{X}(\sigma^+(k)+T)\le k^3\Big)\\
& \ge \mathbb{P}_{x,i}\Big(\overline{X}(\sigma^+(k)+T)\le k^3, X(\sigma^+(k))\le k^2\Big)\\
 & =\sum_{j\in \mathcal{I}}\int_{[k,k^2]}\mathbb{P}_{x, i}(X(\sigma^+(k))\in{\rm d} y, J(\sigma^+(k))=j)\\
 &\hspace{5cm}\times\mathbb{P}_{y, j}\Big(\overline{X}(T)\le k^3\Big)\\
 & \ge \mathbb{P}_{x, i}(X(\sigma^+(k))\le k^2)\inf_{y\in [k, k^2], j\in \mathcal{I}}\mathbb{P}_{y, j}\Big(\overline{X}(T)\le k^3\Big).\\
 \end{split}
\]
From the scaling property, for every $y\in [k,k^2]$,
\[
\begin{split}
\mathbb{P}_{y, j}\Big(\overline{X}(T)\le k^3\Big)&=\mathbb{P}_{1, j}\Big(\overline{X}(Ty^{-\alpha})\le k^3/y\Big)\\
&\ge \mathbb{P}_{1, j}\Big(\overline{X}(T)\le k\Big), 
\end{split}
\]
which is close to $1$, uniformly for $y\in[k, k^2]$, by taking $k$ large enough.

Moreover, recalling the notation $\tau_a^+$ of \cref{mrenw}, from the Lamperti-type transform and Theorem \ref{mrenw} we may deduce that
\[
\begin{split}
\mathbb{P}_{x, i}(X(\sigma^+(k))\le k^2)&=\sum_{j\in\mathcal{I}}\mathbb{P}_{x, i}(X_{\sigma^+(k)}\le k^2, J_{\sigma^{+}(k)}=j)\\
&=\sum_{j\in\mathcal{I}}\mathtt{P}_{ i}\Big(\xi(\tau^+_{\log (k/x)})\le \log(k^2/x), \Theta(\tau^+_{\log (k/x)})=j\Big)\xrightarrow[k\to\infty]{} 1.
\end{split}
\]
Putting all pieces together,  allow us to deduce our claim, with a choice of compact given by $[0,k^3]\times\Ical$.
\end{proof}
We are now ready to prove Theorem \ref{entrancelaw}.
\begin{proof}[Proof of Theorem \ref{entrancelaw}]
For each $i\in\mathcal{I}$, we let $f(\cdot, i):[0,\infty)\to\mathbb{R}$ be a continuous function with compact support and $q>0$. It is clear that $y\mapsto e^{\alpha y}f(e^y, i)$ is directly Riemann integrable and from Lemma \ref{lempot}, $y\mapsto e^{\alpha y}V^{(q)}f(e^y,i)$ is also directly Riemann integrable. Thus from Lemma \ref{lemmrthm}, we have
\[
\lim_{x\to 0+} V^{(0)}f(x,i)=\frac{1}{m}\sum_{j\in \mathcal{I}}\pi_j\int_0^\infty f(y,j)y^{\alpha -1}{\rm d}y 
\]
and 
\[
 \lim_{x\to 0+} V^{(0)}V^{(q)}f(x,i)=\frac{1}{m}\sum_{j\in \mathcal{I}}\pi_j\int_0^\infty V^{(q)}f(y,j)y^{\alpha -1}{\rm d}y.
\]
From the resolvent equation $V^{(q)}f(x,i)=V^{(0)}f(x,i)-qV^{(0)}V^{(q)}f(x,i)$, see for instance identity (2.7) in Chapter 1 in Ethier and Kurtz \cite{EthierKurtz}, Proposition \ref{prop: wd} and identity \eqref{eq: zeta lifetime}, we deduce
\[
\begin{split}
\lim_{x\to 0^+} V^{(q)}f(x,i)&=\frac{1}{m}\sum_{j\in \mathcal{I}}\pi_j\int_0^\infty \Big(f(y,j)-qV^{(q)}f(y,j)\Big)y^{\alpha -1}{\rm d}y\\
&=\frac{1}{m}\sum_{j\in \mathcal{I}}\pi_j\int_0^\infty f(y,j)\Big(1-qV^{\natural, (q)}\mathds{1}(y,j)\Big)y^{\alpha -1}{\rm d}y\\
&=\frac{1}{m}\sum_{j\in \mathcal{I}}\pi_j\int_0^\infty f(y,j)\mathbb{E}^\natural_{y,j}[e^{-q\zeta}]y^{\alpha -1}{\rm d}y\\
&=\frac{1}{m}\sum_{j\in \mathcal{I}}\pi_j\int_0^\infty f(y,j)\mathtt{E}^\natural_{j}\Big[e^{-qy^{\alpha}I(\alpha\xi)}\Big]y^{\alpha -1}{\rm d}y.
\end{split}
\]
Here we used the definition of $\overline{V}^{(q)}$ (see \eqref{eq: def of resolvents}) in the third line.
Using Lemma \ref{tight}, we observe that  
\[
\alpha m=\sum_{j\in \mathcal{I}}\pi_j\mathtt{E}^\natural_{j}\left[\frac{1}{I(\alpha\xi)}\right],
\]
and that for every continuous  $f(\cdot, i):[0,\infty)\to \mathbb{R}$ with compact support,
\[
\begin{split}
\lim_{x\to 0+} V^{(q)}f(x,i)&=\frac{1}{m}\sum_{j\in \mathcal{I}}\pi_j\int_0^\infty f(y,j)\mathtt{E}^\natural_{j}\Big[e^{-qy^{\alpha}I(\alpha\xi)}\Big]y^{\alpha -1}{\rm d}y\\
&=\frac{1}{m}\sum_{j\in \mathcal{I}}\pi_j\int_0^\infty \mathtt{E}^\natural_{j}\Big[ f\left(\left(\frac{u}{I(\alpha\xi)}\right)^{1/\alpha},j\right)\frac{1}{I(\alpha\xi)}\Big]e^{-qu}{\rm d}u.
\end{split}
\]
This completes the proof.
\end{proof}

\bibliography{biblio}
\bibliographystyle{alpha}

\end{document}